\documentclass[10pt]{amsart}

\usepackage[T1]{fontenc}
\usepackage{amsmath,amssymb,mathtools}
\usepackage{microtype}
\usepackage{tikz-cd}
\usepackage[margin=0.95in]{geometry}
\usepackage[colorlinks=true,citecolor=blue,linkcolor=blue,urlcolor=blue]{hyperref}

\newtheorem{theorem}{Theorem}[section]
\newtheorem{proposition}[theorem]{Proposition}
\newtheorem{lemma}[theorem]{Lemma}
\newtheorem{corollary}[theorem]{Corollary}
\theoremstyle{definition}
\newtheorem{definition}[theorem]{Definition}
\theoremstyle{remark}
\newtheorem{remark}[theorem]{Remark}
\newtheorem{example}[theorem]{Example}
\newtheorem*{theoremA}{Theorem A}
\newtheorem*{theoremB}{Theorem B}
\newtheorem*{theoremC}{Theorem C}

\newcommand{\SH}{\mathrm{SH}}
\newcommand{\THH}{\operatorname{THH}}
\newcommand{\TR}{\operatorname{TR}}
\newcommand{\Sp}{\operatorname{Sp}}
\newcommand{\Map}{\operatorname{Map}}
\newcommand{\Lace}{\operatorname{Lace}}
\newcommand{\Perf}{\operatorname{Perf}}
\newcommand{\Res}{\operatorname{Res}}

\newcommand{\Spec}{\operatorname{Spec}}
\newcommand{\one}{\mathbf 1}
\newcommand{\Gm}{\mathbb G_m}
\newcommand{\Aone}{\mathbb A^1}
\newcommand{\Mcompact}{M^{c}}

\newcommand{\eps}{\varepsilon}
\newcommand{\cC}{\mathcal C}
\newcommand{\cD}{\mathcal D}
\newcommand{\cM}{\mathcal M}

\newcommand{\cF}{\mathcal F}
\newcommand{\Kzero}{K_0}

\title[Intrinsic Restriction Traces and Toric Dynamics]
{Intrinsic Restriction Traces and Toric Dynamics}
\author{Haoyang Liu}
\address{Department of Mathematics, University of California, Santa Barbara,
CA 93106, USA}
\email{haoyangliu@ucsb.edu}
\author{Tianle Liu}
\address{University of Southern California}
\email{tianleli@usc.edu}

\date{}
\makeatletter
\@ifundefined{subjclassname@2020}{%
  \@namedef{subjclassname@2020}{%
    \textup{2020} Mathematics Subject Classification}%
}{}
\makeatother
\subjclass[2020]{19D55, 55P91, 14M25}
\keywords{topological restriction homology, Morita invariance, exact
endofunctor, ghost maps, toric dynamics}
\hypersetup{
  pdftitle={Intrinsic Restriction Traces and Toric Dynamics},
  pdfauthor={Haoyang Liu, Tianle Liu}
}

\begin{document}

\begin{abstract}
We prove Morita invariance of the
Campbell--Lind--Malkiewich--Ponto--Zakharevich restriction-system trace after
passage to perfect modules.  It therefore defines an intrinsic integral
restriction trace for an exact endofunctor of a small idempotent-complete
stable \(\infty\)-category.  On \(\pi_0\), the \(m\)-th ghost is the laced trace
of the \(m\)-fold iterate, compatibly with Frobenius.  For lattice-graded
algebras, the ghost targets have twisted cocenters in degree zero; over the
open parameter torus, this applies to the cyclic bimodule of
Dinkins--Karpov--Krylov.  For finite monomial endomorphisms of toric varieties,
we construct motivic restriction classes whose ghosts are sums over cones
fixed by the iterates.  In one example, two classes have the same first ghost,
while their second ghosts differ after rational Betti realization.
\end{abstract}

\maketitle
\enlargethispage{6pt}

\section{Introduction and main results}
\label{sec:introduction}

Campbell--Lind--Malkiewich--Ponto--Zakharevich construct a genuine
restriction-system refinement of the Dennis trace from a twisting of spectral
Waldhausen categories \cite{CLMPZ}.  Their construction packages all positive
iterates and the maps between their cyclic levels, but is formulated using a
spectral Waldhausen presentation.  We prove that its perfect-module
restriction system and trace arrow are Morita invariant, and hence depend
only on a small idempotent-complete stable \(\infty\)-category \(\cC\) and an
exact endofunctor \(F\colon\cC\to\cC\). Put
\[
 \cM_F(x,y)=\Map(Fx,y).
\]
This is the graph coefficient of \(F\), and enriched co-Yoneda gives
\(\cM_F^{\circ m}\simeq\cM_{F^m}\)
\cite[Example~4.6 and the proof of Proposition~7.15]{CLMPZ}.
The composite of the intrinsic restriction trace with the \(m\)-th ghost map is
naturally equivalent to the one-fold coefficient trace for \(F^m\),
precomposed with the \(m\)-fold iteration map on laced \(K\)-theory.  On
\(\pi_0\), this coefficient trace agrees with the
Harpaz--Nikolaus--Saunier laced trace \cite{HNS}.  Descending the CLMPZ trace
gives the intrinsic morphism of genuine restriction systems.  Restriction
from \(C_{mn}\)-fixed points to \(C_n\)-fixed points, followed by ordered
block composition, gives the Frobenius maps and the trace-compatibility
squares.

For lattice-graded algebras, this construction yields twisted cocenters at
the ghost levels, and Frobenius acts on the grading parameter by
\(z\mapsto z^m\).  For toric monomial endomorphisms, the \(m\)-th ghost of a
canonical compactly supported class is a sum over the cones fixed by the
\(m\)-th iterate.  In the example below, two toric varieties over
\(\mathbb C\) have equal first ghosts but unequal second ghosts after rational
Betti realization.

\subsection{The intrinsic restriction trace}

Fix a Grothendieck universe \(\mathcal U\), and write
\[
 \operatorname{End}
 (\operatorname{Cat}^{\mathrm{perf}}_{\infty,\mathcal U})
 =\operatorname{Fun}
 \bigl(B\mathbb N,
       \operatorname{Cat}^{\mathrm{perf}}_{\infty,\mathcal U}\bigr).
\]
We work in this \(\mathcal U\)-bounded category; the constructions are
compatible with enlargement of \(\mathcal U\).  A \emph{truncation set} is a
nonempty subset \(S\subset\mathbb N_{>0}\) closed under positive divisors, and
\[
 S/m=\{n\geq1\mid mn\in S\}.
\]
We write \(R_S\) for the limit in
\cite[equation~(8.9) and Definition~8.10]{CLMPZ}, restricted to the indices in
\(S\); the precise convention is given in
\S\ref{subsec:restriction-system-conventions}.  The adjective \emph{integral}
means that all
positive-integer cyclic levels are retained, rather than only a
\(p\)-typical subsystem.

The spectrum \(K^{\mathrm{lace}}(\cC,\cM_F)\) is the algebraic
\(K\)-theory of the stable category of pairs \((x,\alpha)\) with
\(\alpha\colon Fx\to x\).  For \(m\geq1\), the iteration functor
\[
 P_m\colon\operatorname{Lace}(\cC,\cM_F)
 \longrightarrow
 \operatorname{Lace}(\cC,\cM_{F^m})
\]
replaces \(\alpha\) by its ordered \(m\)-fold composite
\(F^mx\to x\); we use the same notation for the induced maps on
\(K\)-theory and on \(K_0\).  Projection to the \(m\)-th fixed-point level,
followed by the unwinding equivalence of
\cite[Proposition~7.6]{CLMPZ}, defines the underlying ghost
\[
 \overline g_{m,S}\colon
 R_S\bigl(\mathbf E(\cC,F)\bigr)
 \longrightarrow
 \THH(\cC,\cM_{F^m}).
\]

\begin{theoremA}
Let \(\cC\) be a \(\mathcal U\)-small idempotent-complete stable
\(\infty\)-category and let \(F\colon\cC\to\cC\) be exact.
\begin{enumerate}
\item[(i)] The CLMPZ trace arrows obtained by applying the perfect-module
construction to strict spectral presentations \((A,f)\) of \((\cC,F)\)
factor through Morita localization.  Their target systems are naturally
equivalent to those built from the graph coefficients
\(X_f(a,b)=A(fa,b)\).  Consequently there is an intrinsic genuine
restriction system \(\mathbf E(\cC,F)\), and for every nonempty truncation set
\(S\) a natural transformation
\begin{equation}\label{eq:intro-theorem-a}
 \operatorname{tr}^{\mathrm{res,int}}_S\colon
 K^{\mathrm{lace}}(\cC,\cM_F)
 \longrightarrow
 R_S\bigl(\mathbf E(\cC,F)\bigr),
\end{equation}
compatible under restriction of truncation sets.

\item[(ii)] For every
\(\xi\in K_0(\operatorname{Lace}(\cC,\cM_F))\) and every \(m\in S\),
\[
 \pi_0\overline g_{m,S}
 \Bigl((\pi_0\operatorname{tr}^{\mathrm{res,int}}_S)(\xi)\Bigr)
 =
 \operatorname{tr}^{\mathrm{lace}}\bigl(P_m(\xi)\bigr)
 \quad\text{in }\pi_0\THH(\cC,\cM_{F^m}).
\]

\item[(iii)] For every \(m\in S\), there is a natural Frobenius map
\[
 \operatorname{Fr}_{m,S}\colon
 R_S\bigl(\mathbf E(\cC,F)\bigr)
 \longrightarrow
 R_{S/m}\bigl(\mathbf E(\cC,F^m)\bigr)
\]
and a homotopy-commutative square
\[
\begin{tikzcd}[column sep=large,row sep=large]
 K^{\mathrm{lace}}(\cC,\cM_F)
 \arrow[r,"\operatorname{tr}^{\mathrm{res,int}}_S"]
 \arrow[d,"K(P_m)"'] &
 R_S\bigl(\mathbf E(\cC,F)\bigr)
 \arrow[d,"\operatorname{Fr}_{m,S}"]\\
 K^{\mathrm{lace}}(\cC,\cM_{F^m})
 \arrow[r,"\operatorname{tr}^{\mathrm{res,int}}_{S/m}"'] &
 R_{S/m}\bigl(\mathbf E(\cC,F^m)\bigr).
\end{tikzcd}
\]
\end{enumerate}
\end{theoremA}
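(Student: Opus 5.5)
The plan is to prove the three parts in order, reducing everything to Morita invariance of the ingredients of the CLMPZ construction and to naturality in the strict presentation.

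For (i), fix a strict spectral presentation \((A,f)\) of \((\cC,F)\). This means a small spectral category \(A\) with an enriched endofunctor \(f\), together with an equivalence \(\Perf(A)\simeq\cC\) under which \(f_!\) corresponds to \(F\). Such presentations exist, for example by taking a set of generators and a strictification of \(F\) as a bimodule. The key steps are as follows.

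First, I would identify the CLMPZ coefficient bimodule of the twisting with the graph bimodule \(X_f(a,b)=A(fa,b)\). Under the Morita equivalence \(A\to\Perf(A)\), this bimodule extends to \(\cM_F\). The iterated composites satisfy \(X_f^{\circ m}\simeq X_{f^m}\) by enriched co-Yoneda, as in \cite[Example~4.6]{CLMPZ}.

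Second, I would check that every level of the restriction system is Morita invariant. These levels are the \(C_n\)-genuine spectra \(\THH^{(n)}(A;X_f)\), the cyclic \(n\)-fold tensor of \(X_f\). Morita invariance would follow from the standard argument: the inclusion \(A\hookrightarrow\Perf(A)\) induces an equivalence on each cyclic bar construction, and on \(C_m\)-geometric fixed points this reduces, via the unwinding of \cite[Proposition~7.6]{CLMPZ}, to ordinary Morita invariance of \(\THH(-;X_{f^{m}})\). The equivariant equivalence is detected on all geometric fixed points.

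Third, I would use the same approach for the source. Laced \(K\)-theory is Morita invariant because \(\operatorname{Lace}(A,X_f)\to\operatorname{Lace}(\Perf A,\cM_F)\) is an equivalence after idempotent completion. The trace arrow is natural in maps of presentations, so the homotopy category of presentations is connected. Concretely, any two presentations are linked by a zigzag through a common refinement, namely the full subcategory on the union of the objects.

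Together these give a functor out of the presentations that factors through \(\operatorname{End}(\operatorname{Cat}^{\mathrm{perf}}_{\infty,\mathcal U})\), and this defines \(\mathbf E(\cC,F)\) and \(\operatorname{tr}^{\mathrm{res,int}}_S\). Compatibility under restriction of truncation sets holds because \(R_S\to R_{T}\) for \(T\subset S\) is the projection of limits, and the CLMPZ trace is defined levelwise.

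\textbf{Main obstacle.} I expect the hardest step to be making this descent coherent, rather than just objectwise. It requires exhibiting the presentation category as having contractible fibers over each \((\cC,F)\), and the arrows of the restriction diagram \(R_S\) (restriction and Frobenius-type structure maps) as natural. My first attempt would be to use the canonical presentation given by a strictification of the full \(\mathcal U\)-small category itself, which is functorial in \((\cC,F)\), and then compare any other presentation to it.

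For (ii), I would compute on the generator \([x,\alpha]\). The CLMPZ trace sends it to the class of the cyclic tuple \((\alpha,\dots,\alpha)\) at level \(m\). Applying \(\overline g_{m,S}\) projects to the fixed points and unwinds by \cite[Proposition~7.6]{CLMPZ}, which sends an \(m\)-cyclic tuple to the Dennis trace of its ordered composite \(\alpha\circ F\alpha\circ\cdots\circ F^{m-1}\alpha\). That composite is exactly \(P_m(x,\alpha)\). On \(\pi_0\), the one-fold coefficient trace agrees with the HNS laced trace, since both are characterized by additivity and their value on a single laced object. Linearity of both sides then extends the formula to all of \(K_0\).

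For (iii), I would define \(\operatorname{Fr}_{m,S}\) levelwise. The level \(mn\) of \(\mathbf E(\cC,F)\) with its \(C_{mn}\)-action is restricted to \(C_n\subset C_{mn}\). Grouping consecutive blocks of \(m\) tensor factors, the ordered block composition \(\cM_F^{\circ m}\simeq\cM_{F^m}\) then identifies it with level \(n\) of \(\mathbf E(\cC,F^m)\). One must check that this commutes with the restriction maps in the limit \(R_S\); this follows since restrictions along \(C_k\) commute with passage to subgroups. The square commutes because on laced objects both composites send \((x,\alpha)\) to the cyclic tuple of block composites \(F^m\)-twisted by \(\alpha^{(m)}\). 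Naturality of the CLMPZ trace in the coefficient bimodule, applied to the bimodule map \(X_f^{\circ m}\to X_{f^m}\), supplies the homotopy. By the descent in (i), this homotopy is intrinsic.
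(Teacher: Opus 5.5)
There is a genuine gap in (i), and it is more specific than the coherence issue you flag. The CLMPZ construction is functorial only in \emph{strict} morphisms of twistings: \(uf=gu\) must hold on the nose, and the chosen zero object must be preserved on the nose. The perfect-module assignment \((A,f)\mapsto(\mathcal P_A,f_!)\) is only pseudofunctorial, since \(v_!u_!\cong(vu)_!\), \(u_!f_!\cong g_!u_!\) and \((f_!)^m\cong(f^m)_!\) hold only up to coherent isomorphism. So your claim that ``the trace arrow is natural in maps of presentations'' is false as stated, and neither of your repairs works:
\begin{itemize}
\item two unrelated presentations have no ``union of objects'', and strict endofunctors would not restrict to one;
\item connectedness of a homotopy category gives neither independence of the chosen zigzag nor naturality in \((\cC,F)\);
\item there is no functorial strict presentation of \((\cC,F)\) itself: a bimodule model of \(F\) is not a strict spectral endofunctor, and presentations come from rigidifying \(B\mathbb N\)-diagrams.
\end{itemize}
The missing idea is to strictify first. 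The paper replaces \((\mathcal P_A,f_!)\) by a split Grothendieck construction followed by zero reduction, \(\operatorname{StrictPerf}_{\mathcal U}\). Its transition functors satisfy \((vu)^{\mathrm{st}}_*=v^{\mathrm{st}}_*u^{\mathrm{st}}_*\) and \(u^{\mathrm{st}}_*T^{\mathrm{st}}_x=T^{\mathrm{st}}_yu^{\mathrm{st}}_*\) literally, so CLMPZ naturality gives an honest functor \(N\mathsf{Pres}_{\mathcal U}\to\operatorname{Fun}(\Delta^1,\operatorname{RSys}^g_\infty)\). One then checks that this functor inverts \(W_{\mathrm{Mor}}\) at both vertices. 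It descends via the universal property of localization and the equivalence \(N\mathsf{Pres}_{\mathcal U}[W_{\mathrm{Mor}}^{-1}]\simeq\operatorname{Fun}(B\mathbb N,\operatorname{Cat}^{\mathrm{perf}}_{\infty,\mathcal U})\).

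Your target argument (geometric fixed points plus unwinding to coefficient Morita invariance) is the paper's. On the source, however, you must treat every cyclic level \(K(\mathcal L_r(\cC,F))\), not only \(r=1\). You also cannot use \(\operatorname{Lace}(A,X_f)\) ``up to idempotent completion'': connective \(K\)-theory is not invariant under idempotent completion, which is why the source is built from perfect modules.

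Your (ii) is fine in outline: CLMPZ Proposition~8.20, then unwinding plus co-Yoneda to the ordered composite, then agreement with the HNS trace on zero-simplices, extended by additivity. In (iii), however, no homotopy is actually produced.
\begin{itemize}
\item Agreement of the two composites on laced objects says nothing about maps of spectra.
\item ``Naturality of the CLMPZ trace in the coefficient bimodule'' is not available. The trace is natural in morphisms of twistings, and the passage from \(\Res^{C_{mn}}_{C_n}\) of level \(mn\) for \(f\) to level \(n\) for \(f^m\) is not induced by one. What is needed is the ordered block-composition functor \(\operatorname{Comp}_{m\mid n}\) on cyclic twisted-endomorphism categories. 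It must be applied at every stage \(w_{k_0}S^{(q)}_{k_1,\dots,k_q}\), so that it maps the whole Dennis-trace zigzag, including the backwards twisted-additivity equivalence, to the zigzag for \(f^m\). You also need the identity \(\operatorname{Comp}_{m\mid n}\Delta_{mn}=\Delta_nP_m\).
\item Compatibility of the block maps \(b_{m,n}\) with the restriction arrows is not a matter of subgroup restrictions commuting. Those arrows pass through geometric fixed points, so one needs the norm-diagonal identification \(\Phi^{C_d}b_{m,n}\simeq b_{m,n/d}\); this also shows that \(b_{m,n}\) is a genuine \(C_n\)-equivalence.
\item Block composition lands in the trace of \((\mathcal P^{\mathrm{st}}_x,(T^{\mathrm{st}}_x)^m)\), not of the strict model of \((A,f^m)\). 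These endofunctors differ on the nose. Identifying the bottom row with the intrinsic trace for \(F^m\), naturally and compatibly with \(mn\), requires the paper's power-coherent model \((\widetilde{\mathcal P}_x,S_x)\) with strict maps \(q_{m,x}\) and two-copy bridges. ``The descent in (i)'' does not supply this.
\end{itemize}
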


For strict morphisms of twistings of spectral Waldhausen categories, CLMPZ
naturality assembles the trace maps into a functor
\[
 \mathcal T_{\mathrm{CLMPZ}}\colon
 \mathsf{Tw}^{\mathrm{str},\circ}_{\mathrm{SpWald}}
 \longrightarrow
 \operatorname{Fun}(\Delta^1,\operatorname{RSys}^g_\infty)
\]
in the arrow \(\infty\)-category of genuine restriction systems.  Both
vertices are Morita invariant: the target by coefficient Morita invariance,
and the source because its levels are the \(K\)-theories of the cyclic laced
categories \(\mathcal L_r(\cC,F)\).  The arrow therefore descends along
\[
 N\mathsf{Pres}_{\mathcal U}[W_{\mathrm{Mor}}^{-1}]
 \simeq
 \operatorname{Fun}
 \bigl(B\mathbb N,
       \operatorname{Cat}^{\mathrm{perf}}_{\infty,\mathcal U}\bigr)
\]
and applying \(R_S\) gives the intrinsic trace
\eqref{eq:intro-theorem-a}.

Appendix~\ref{app:split-strictification} replaces the
pseudofunctorial extension-of-scalars construction by a strict
zero-preserving model compatible with powers; the required cofibrant models
and coherence are established in Appendix~\ref{app:technical-coherence}.
The descent is Theorem~\ref{thm:arrow-valued-morita-descent}, and its
compatibility with ghosts and Frobenius is recorded in Corollary~
\ref{cor:integral-intrinsic-ghost-formula} and Theorem~
\ref{thm:restriction-trace-frobenius}.

\subsection{Toric periodicity and higher ghosts}

For toric monomial maps, the higher levels have a direct geometric
interpretation.  Throughout the
motivic applications, \(\SH(-)\) denotes the \(\mathbb A^1\)-invariant
six-functor category of Khan--Ravi \cite{KhanRavi}.

Let \(k\) be a field, let \(N\simeq\mathbb Z^d\), and put \(T=T_N\).  For a
cone \(\sigma\) of a finite fan \(\Sigma\), write \(T_\sigma\subset T\) for
the stabilizer of the orbit \(O_\sigma=T/T_\sigma\), and put
\[
 E_T(T_\sigma)=(BT_\sigma\longrightarrow BT)_!\one_{BT_\sigma}
 \quad\text{in }\SH(BT).
\]
Let \(A\colon N\to N\) be injective with finite cokernel and suppose that
\(A_{\mathbb R}\) maps every cone of \(\Sigma\) \emph{onto} a cone of
\(\Sigma\).
Then \(A\) permutes the cones and induces a finite toric endomorphism
\(f_A\) and an exact functor \(F_A=(B\psi_A)^*\) on \(\SH(BT)\), where
\(\psi_A\) is the associated torus isogeny.

\begin{theoremB}
Let \(\cC\subset\SH(BT)\) be a small \(F_A\)-stable thick subcategory
containing the compactly supported motive of \(X_\Sigma\), its orbit motives,
and the filtration-stage motives used in localization.  For every nonempty
truncation set \(S\), the compactly supported semilinear lacing of \(f_A\)
has an intrinsic integral restriction class over \(S\).  For every
\(m\in S\), its \(m\)-th ghost is
\begin{equation}\label{eq:intro-cones-fixed-by-iterates}
 \sum_{\substack{\sigma\in\Sigma\\A^m\sigma=\sigma}}
 \operatorname{tr}^{\mathrm{lace}}
 \bigl(E_T(T_\sigma),\lambda_{\sigma,m}\bigr)
 \quad\text{in }\pi_0\THH(\cC,\cM_{F_A^m}).
\end{equation}
Here \(\lambda_{\sigma,m}\) is proper pullback along
\(BT_\sigma\to B\psi_A^{-m}(T_\sigma)\), after the canonical identification
\(F_A^mE_T(T_\sigma)\simeq E_T(\psi_A^{-m}(T_\sigma))\).
\end{theoremB}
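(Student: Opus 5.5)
The plan is to obtain the class from Theorem~A and compute its ghosts via the orbit stratification. First I will construct the compactly supported semilinear lacing. Write \(\pi\colon [X_\Sigma/T]\to BT\). Proper base change identifies \(F_A\pi_!\one\) with the compactly supported motive of the twisted quotient pulled back along \(B\psi_A\). The finite equivariant map \(f_A\) then gives a structure map \(\alpha\colon F_A M^c(X_\Sigma)\to M^c(X_\Sigma)\), namely proper pullback along \(f_A\), i.e.\ the unit of \(f_A^*\dashv f_{A*}=f_{A!}\). This defines \(\xi=[(M^c(X_\Sigma),\alpha)]\in K_0(\operatorname{Lace}(\cC,\cM_{F_A}))\). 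Applying \(\pi_0\operatorname{tr}^{\mathrm{res,int}}_S\) from Theorem~A(i) to \(\xi\) gives the intrinsic integral restriction class over \(S\). Compatibility with restriction of truncation sets is part of Theorem~A(i).

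By Theorem~A(ii), the \(m\)-th ghost equals \(\operatorname{tr}^{\mathrm{lace}}(P_m\xi)\). The first step is to identify \(P_m\xi\) with the lacing of \(f_A^m=f_{A^m}\) on \(M^c(X_\Sigma)\). Since the ordered \(m\)-fold composite of proper pullbacks is proper pullback along the composite, this reduces the problem to the case \(m=1\) with \(A\) replaced by \(A^m\). The hypothesis on \(A\) passes to \(A^m\), and \(F_A^m\simeq F_{A^m}\).

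Next I use the orbit filtration. Order the cones by dimension, so that \(Z_{\le j}=\bigcup_{\dim\sigma\ge d-j}O_\sigma\) gives closed \(T\)-stable unions \(Z_0\subset Z_1\subset\cdots\). Each stage is \(f_A\)-stable, because \(A\) maps cones onto cones of the same dimension; injectivity of \(A\) preserves dimension. The localization cofiber sequences \(M^c(U)\to M^c(Z)\to M^c(Z\setminus U)\) are compatible with the lacings, and these filtration-stage motives lie in \(\cC\) by assumption. Additivity in \(K_0\) of the laced category then gives \(\xi=\sum_j[\text{graded piece}]\). Each graded piece is \(\bigoplus_{\dim\sigma=j}M^c_T(O_\sigma)=\bigoplus E_T(T_\sigma)\), with lacing a permutation-type matrix sending the \(\sigma\)-summand to the \(A\sigma\)-summand. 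The identification \(M^c_T(O_\sigma)=E_T(T_\sigma)\) follows from \([O_\sigma/T]\simeq BT_\sigma\).

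Finally, I compute the trace of each permutation-type lacing. By additivity of \(\operatorname{tr}^{\mathrm{lace}}\) and the trace property (the cyclicity of the HNS trace), only diagonal blocks contribute, and these are exactly the cones with \(A\sigma=\sigma\). On such a block the map is \(F_AE_T(T_\sigma)\simeq E_T(\psi_A^{-1}T_\sigma)\to E_T(T_\sigma)\), which is proper pullback along \(BT_\sigma\to B\psi_A^{-1}(T_\sigma)\). This is \(\lambda_{\sigma,1}\); it is well defined because \(f_A\) restricts to \(O_\sigma\to O_\sigma\) as the quotient by the isogeny. Replacing \(A\) by \(A^m\) yields \eqref{eq:intro-cones-fixed-by-iterates}.

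The main obstacle is the coherent compatibility of the localization sequences with the lacing. The filtration must be by objects of \(\operatorname{Lace}(\cC,\cM_{F_A})\), which requires natural base-change identifications \(F_A j_!\simeq j_!F_A\) and \(F_A i_*\simeq i_*F_A\) compatible with \(f_A\). I would try to establish these first using the six-functor exchange isomorphisms of Khan--Ravi applied to the cartesian squares formed by \(B\psi_A\) and the strata.
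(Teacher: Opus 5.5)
Your architecture matches the paper's: the orbit-dimension filtration, localization cofiber sequences in $\Lace(\cC,\cM_{F_A})$, a $K_0$ decomposition into orbit lacings on $E_T(T_\sigma)$, and a block-matrix trace computation (Proposition~\ref{prop:laced-trace-matrix}) in which only fixed cones survive. Your handling of $m$ differs in order, and that difference is harmless. You first use $P_m\Xi_{\Sigma,A}\simeq\Xi_{\Sigma,A^m}$ and $F_A^m\simeq F_{A^m}$ to reduce to $m=1$, then rerun the filtration for $A^m$. The paper instead decomposes once into $A$-cycles, applies the exact functor $P_m$ to the $K_0$ identity, and observes that the cyclic block matrix shifts by $m$ positions. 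Then only cycles of length dividing $m$ contribute, through their return maps $\lambda_{\sigma,m}$. Your route needs the base-change identifications for $F_A^m$ and $F_{A^m}$ to agree; the paper's route needs the shift bookkeeping.

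The genuine gap is in the step you flag as the main obstacle, and the diagnosis there is off. Pullback along $B\psi_A$ commutes with $j_!$ and $i_*$ by exceptional base change, so that is not where the difficulty lies. The difficulty is base change along $f_A$ itself.

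Your claim that each stage is $f_A$-stable holds only set-theoretically. Take consecutive stages $Z=Z_{\le j-1}\subset Y=Z_{\le j}$. The scheme-theoretic preimage $Z'=(f_A|_Y)^{-1}(Z)$ is a nonreduced thickening of $Z$: on a chart, the pulled-back radical monomial ideal satisfies only $f_A^*I\subseteq I=\sqrt{f_A^*I}$. Already $z\mapsto z^r$ at $0\in\Aone$ shows this, as does $f^*(xy)=(xy)^2$ in the paper's example.

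Suppose you build the morphism of localization triangles by applying the unit $\mathrm{id}\to a_*a^*$ (for the quotient map $a$ induced by $f_A$) to the localization triangle of the twisted space and using proper base change. Then the closed slot comes out as proper pullback $F_A\Mcompact_T(Z)\to\Mcompact_T(Z')$, not as the lacing of the reduced stage $Z$. Since the closed stage at one step is the ambient space at the next, your induction over the filtration cannot proceed without identifying the two.

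The missing ingredient is nil-invariance. The nilpotent immersion $\nu\colon Z\hookrightarrow Z'$ induces an equivalence $\nu_c^*\colon\Mcompact_T(Z')\to\Mcompact_T(Z)$. Moreover, $\nu_c^*$ composed with proper pullback along $f_A|_{Z'}$ is proper pullback along $f_A|_{Z}=f_A|_{Z'}\circ\nu$. On the open side you also need $f_A^{-1}(U)=U$. This follows because $A_{\mathbb R}$, being injective on the finite set of cones, permutes them and preserves dimension.

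This is exactly Proposition~\ref{prop:nil-semi-localization}, whose two hypotheses the paper verifies in the proof of Theorem~\ref{thm:cones-fixed-by-iterates-trace}. Without it, your $K_0$ decomposition into orbit lacings, and hence the diagonal-block computation, is not justified.
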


The formula also applies when the characteristic divides the degree of
\(\psi_A\); the scalloped-stack six-functor formalism accommodates the
resulting non-smooth multiplicative-type kernels.

Over \(\mathbb C\), take
\(
 A=\left(\begin{smallmatrix}0&2\\2&0\end{smallmatrix}\right)
\)
and the map \((x,y)\mapsto(y^2,x^2)\) on both
\(\mathbb A^2\setminus\{0\}\) and \(\Gm^2\).  The two resulting restriction
classes have equal first ghosts, because the one-dimensional boundary orbits
are exchanged, whereas rational Betti realization sends the difference of
their second ghosts to \(6\).

A related reconstruction theorem treats \(A=rB\), where \(r>1\) and
\(B\Sigma=\Sigma\).  With the coherent fixed-point realization data of
Section~\ref{sec:finite-reconstruction}, finitely many geometric
fixed-point realizations recover the cycle type on maximal cones.  For scalar
power maps they interpolate
\[
 P_{\Sigma,H}(z)=
 \sum_{\substack{\sigma\in\Sigma\\H\subset T_\sigma}}
 (z-1)^{\dim O_\sigma}.
\]
A separating family of realized first-ghost values instead recovers the
embedded stabilizer multiset for any fixed finite collection of fans, but
neither method recovers cone incidence.  See
Theorems~\ref{thm:finite-ptypical-stabilizer-reconstruction}
and~\ref{thm:toric-fixed-point-cycle-index}, with the additive
limitation recorded in Proposition~\ref{prop:additive-ceiling}.

The toric trace formula and its restriction-class refinement are proved in
Theorem~\ref{thm:cones-fixed-by-iterates-trace} and
Corollary~\ref{cor:intrinsic-cones-fixed-by-iterates-restriction}.
Example~\ref{ex:toric-exchanged-strata-second-ghost} gives the
exchanged-strata example.

\subsection{Graded algebras and the DKK construction}

Dinkins--Karpov--Krylov introduce a cyclic bimodule over a partial
compactification of the grading-parameter torus
\cite[Section~1.6.4]{DKK}.  On the open torus it becomes the graph bimodule
of the universal grading automorphism.  Motivated by this example, we record
the restriction-system construction for an arbitrary graded algebra.

\begin{theoremC}
Let \(R\) be a commutative ring, let \(\Lambda\) be a finite-rank free
abelian group, and let \(A\) be a unital \(\Lambda\)-graded \(R\)-algebra.
For every commutative \(R\)-algebra \(S\) and every grading parameter
\(z\in\operatorname{Hom}(\Lambda,\Gm)(S)\), the twist
\[
 F_z=(-)_{\sigma_z^{-1}}\colon\Perf(A\otimes_RS)\longrightarrow
 \Perf(A\otimes_RS)
\]
 determines an intrinsic genuine restriction system together with a trace map
 from laced \(K\)-theory.  For every \(m\geq1\), the zeroth homotopy group of
 the target of its \(m\)-th ghost is canonically identified with the twisted cocenter
\[
 (A\otimes_RS)\big/\operatorname{span}_S
 \{ab-b\sigma_{z^m}(a)\},
\]
and its intrinsic Frobenius implements \(z\mapsto z^m\).
\end{theoremC}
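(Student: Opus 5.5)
Theorem C follows by specializing Theorem A to \(\cC=\Perf(A_S)\) with \(A_S=A\otimes_RS\) and \(F=F_z\), and then computing \(\pi_0\) of the ghost targets by a Morita reduction to the one-object case.

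First, \(F_z\) is exact and \(\Perf(A_S)\) is small, idempotent-complete and stable. So Theorem A(i) produces the intrinsic genuine restriction system \(\mathbf E(\Perf(A_S),F_z)\) and the trace from \(K^{\mathrm{lace}}(\Perf(A_S),\cM_{F_z})\). No separate construction is needed for this part.

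Second, for the \(m\)-th ghost target I will use \(F_z^m\simeq F_{z^m}\). The grading automorphisms satisfy \(\sigma_z\sigma_w=\sigma_{zw}\), since \(\sigma_z\) acts on \(A_\lambda\) by multiplication by \(z(\lambda)\). Hence the twists \((-)_{\sigma_z^{-1}}\) compose to \((-)_{\sigma_{z^m}^{-1}}\), canonically, by an equivalence of exact functors.

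By co-Yoneda, \(\THH(\Perf(A_S),\cM_{F_{z^m}})\) is computed by restricting to the generator \(A_S\). By Morita invariance of coefficient \(\THH\), the restriction from Theorem A(i), it equals \(\THH(A_S;{}_{\sigma}A_S)\), with coefficients in the twisted bimodule \(M=\cM_{F_{z^m}}(A_S,A_S)=\operatorname{Hom}((A_S)_{\sigma_{z^m}^{-1}},A_S)\). As a bimodule this is \(A_S\) with one action twisted by \(\sigma_{z^m}\). I need to fix which side gets the twist so that the relation comes out as \(ab-b\sigma_{z^m}(a)\) rather than its inverse. I will do this by an explicit check on a rank-one free module, tracking the graph convention \(\cM_F(x,y)=\Map(Fx,y)\).

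Third, since \(A_S\) is discrete (connective with \(\pi_0=A_S\)), \(\pi_0\THH(A_S;M)\) is Hochschild homology \(HH_0(A_S;M)=M/[A_S,M]\), computed over \(\mathbb S\). This is the coequalizer of the two maps \(A_S\otimes M\rightrightarrows M\). With the twisted action it is \(A_S/\operatorname{span}\{ab-b\sigma_{z^m}(a)\}\). The span is taken over \(\mathbb Z\); it is automatically an \(S\)-submodule because \(S\) is central and \(\sigma\) is \(S\)-linear.

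Fourth, for the Frobenius: Theorem A(iii) gives \(\operatorname{Fr}_{m,S}\) with target \(\mathbf E(\Perf(A_S),F_z^m)\). Composing with the identification \(F_z^m\simeq F_{z^m}\), and using naturality of \(\mathbf E\) in the endofunctor (it is functorial on \(\operatorname{End}(\operatorname{Cat}^{\mathrm{perf}})\)), this target is the system attached to the parameter \(z^m\). That is the meaning of ``implements \(z\mapsto z^m\).'' It is also compatible with the ghost identifications: the \(n\)-th ghost of the target sees \((z^m)^n=z^{mn}\), matching the \(mn\)-th ghost of the source.

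The main obstacle is the canonicity bookkeeping: making the identification natural and independent of the chosen generator, and getting the twist direction right. For naturality I would use the trace-class (Dennis) description: the class of \((A_S,\mathrm{id}\text{-induced }\alpha)\) maps to the class of \(\alpha\in A_S\) in the cocenter. I would verify this against Theorem A(ii) on the element \(P_m\).
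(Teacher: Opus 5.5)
Your proposal matches the paper's proof essentially step for step. Existence and Frobenius come from Theorem~A together with $\sigma_{z^m}=\sigma_z^m$ (so $F_z^m\simeq F_{z^m}$), and the ghost target is computed on the regular generator as $HH_0$ of the graph bimodule. The twist-direction check you defer is exactly the paper's graph-coefficient lemma: evaluation at $1$ identifies $\Map(F_zA_S,A_S)$ with $A_S$, with post- and precomposition by (the $F_z$-image of) left multiplication by $a$ acting as $x\mapsto ax$ and $x\mapsto x\sigma_z(a)$. So the bimodule is ${}_1(A_S)_{\sigma_z}$ and the coend relation is $ab\sim b\sigma_{z^m}(a)$, as stated; your remark that the $\mathbb Z$-span already equals the $S$-span is a correct small addition.
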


For the Dinkins--Karpov--Krylov coefficient, Theorem~C applies after
restriction to the open parameter torus.  In this case, the first ghost target
is their localized coefficient Hochschild object, while the \(m\)-th ghost
target uses the \(m\)-th power of the universal grading automorphism.  See
\S\ref{subsec:dkk-open-torus-coefficient}.

\subsection{Motivic trace evaluation and logarithmic zeta functions}

For a smooth proper \(k\)-scheme \(X\) with endomorphism \(\varphi\), the
construction gives an intrinsic restriction class.  If
\(E_X=\Sigma^\infty_+X\) lies in a small thick rigid symmetric-monoidal
subcategory \(\mathcal C_k\subset\SH(k)\) equipped with the trace-evaluation
datum of Definition~\ref{def:trace-evaluation-datum}, then
\[
 \operatorname{gh}^{GW}_m
 \bigl(\mathcal Z^{\TR}_{X,\varphi;\mathcal C_k}\bigr)
 =\operatorname{Tr}_{\SH(k)}(\varphi^m)
\]
for every \(m\geq1\), and hence
\[
 \operatorname{dlog}\zeta^{\mathbb A^1}_{X,\varphi}(t)
 =\sum_{m\geq1}
 \operatorname{gh}^{GW}_m
 \bigl(\mathcal Z^{\TR}_{X,\varphi;\mathcal C_k}\bigr)t^{m-1}.
\]
The class is Frobenius-compatible.  For odd \(q\), let
\(X=\operatorname{Spec}\mathbb F_{q^2}\) over \(\mathbb F_q\), and let
\(\varphi\) be the relative \(q\)-power Frobenius.
Corollary~\ref{cor:no-gw-zeta} gives a realized ghost sequence
outside the image of the standard big-Witt ghost map
\[
 W(GW(\mathbb F_q))\longrightarrow
 GW(\mathbb F_q)^{\mathbb N_{>0}}.
\]

\subsection{Relation to the literature}

Our starting point is the CLMPZ genuine restriction-system trace
\cite[Theorem~8.8, Definition~8.13, and Proposition~8.20]{CLMPZ}.  The Morita
localization results of BGT
\cite[Theorems~3.1 and~4.23 and Proposition~4.28]{BGT} allow us to pass from
its spectral models to stable \(\infty\)-categories.  For the coefficient
terms, we use the HNS coend model
\cite[Definitions~2.2 and~4.4]{HNS} together with the spectral calculation of
\cite[Theorem~4.15 and Corollary~4.20]{CLMPZ}.  Strict functoriality and tensor
powers are handled using the projective diagram model and Eilenberg--Watts
formalism
\cite[Proposition~1.3.4.25]{LurieHA}
\cite[Theorems~4.8.4.1 and~4.8.4.6]{LurieHA};
Appendix~\ref{app:split-strictification} gives the resulting strict
perfect-module model.

The universal additive HNS laced trace
\cite[Section~4.4 and Theorem~4.45, with Theorem~3.11 and
Corollary~4.41 as inputs]{HNS} interprets each
ghost on \(\pi_0\) for the left-representable coefficients used here.  Exact
functors to \(\Perf(R)\) give linear realizations by Morita
invariance and the zero-simplex comparison of
\cite[Theorems~1.2 and~7.8]{CampbellPonto}.  Ramzi constructs a more general
trace map \cite[Theorem~2.1 and Construction~2.8]{Ramzi};
Definition~\ref{def:trace-evaluation-datum} records the comparison with
cyclic-bar zero-simplices used in the motivic applications.

Krause--McCandless--Nikolaus construct shifts and Frobenius maps for polygonic
\(\TR\) \cite[Example~2.28 and Theorem~C']{KMN}.
Agarwal--Campbell--Manco--Ponto--Sun construct the corresponding Frobenius and
Verschiebung operations on reduced \(K_0\) for ordinary rings
\cite[Theorems~1.3 and~1.4]{AgarwalCampbellMancoPontoSun}.  Our Frobenius
square is spectrum-valued, natural for exact endofunctors of small stable
\(\infty\)-categories, and compatible with all restriction levels.  Saunier's
thesis announces, for arbitrary laced
categories \((\cC,M)\), a genuine-polygonic refinement of the HNS trace
\cite[Proposition~7.16 and Theorems~7.18, 7.25, 7.28, and~8.4]{SaunierThesis}.
Its main proofs and cyclotomic comparison are part of work in preparation
\cite{HNRSII,HNRSIII}\cite[p.~113]{SaunierTHHNotes}.  For graph coefficients
of exact endofunctors, which are left-representable, the genuine construction
used here is supplied by CLMPZ.

Semilinear endomorphism \(K\)-theory goes back to Grayson and Braunling
\cite{GraysonSemilinear,Braunling}.  The periodic interpretation is related
to the Fuller trace of Malkiewich--Ponto
\cite[Theorem~1.1 and Corollary~1.3]{MalkiewichPontoPeriodic} and the CLMPZ
zeta applications \cite[Theorems~9.22 and~9.33]{CLMPZ}.  Hilman--Ramzi and
Chan--Gerhardt--Klang give related traces at a fixed finite group
\cite{HilmanRamzi,ChanGerhardtKlang}; the present system treats all cyclic
levels simultaneously, with restriction and Frobenius maps.

\subsection{Acknowledgements}
The authors would like to thank Vasily Krylov for his interest and discussion about this work. 
Generative AI provided valuable feedback during the preparation of this manuscript and assisted in identifying and addressing gaps in the proofs. The authors take full responsibility for the correctness of all mathematical content.

\section{The CLMPZ restriction system}
\label{sec:graph-restriction}

We recall the CLMPZ genuine restriction system and specialize it to the graph
coefficient of an exact endofunctor.

\subsection{Genuine restriction systems and truncation limits}
\label{subsec:restriction-system-conventions}

We use the restriction systems of \cite[Definition~8.1]{CLMPZ}.  Thus a
genuine restriction system is a sequence \(E_\bullet=\{E_n\}\) of orthogonal
\(C_n\)-spectra equipped, for every \(r,s\geq1\), with compatible maps
\[
 \Phi^{C_r}E_{rs}\longrightarrow E_s
\]
that are equivalences when the source is computed by left-derived geometric
fixed points.  CLMPZ construct a stable model structure on pre-restriction
systems whose weak equivalences are the termwise genuine stable equivalences
\cite[Proposition~A.7]{CLMPZ}.  We write \(\operatorname{RSys}^g\) for the
ordinary category of genuine restriction systems and maps of pre-restriction
systems.  We write
\[
 \operatorname{RSys}^g_\infty
\]
for the full \(\infty\)-subcategory of the localization of the model category
of pre-restriction systems spanned by the genuine restriction systems.  In
particular, equivalences in \(\operatorname{RSys}^g_\infty\) are detected at
every genuine cyclic level.  We use \(\mathbf E(A,f)\) for the model attached
to a strict spectral presentation and reserve \(\mathbf E(\cC,F)\) for the
intrinsic
value obtained after Morita descent.

Throughout, in genuine equivariant stable \(\infty\)-categories,
\((-)^H\) denotes categorical fixed points and \(\Phi^H\) denotes geometric
fixed points.  We write
\[
 V_n\colon \operatorname{Sp}^{C_n}\longrightarrow\operatorname{Sp}
\]
for the functor forgetting the \(C_n\)-action.  The canonical inclusion of
categorical fixed points will therefore be written \(E^{C_n}\to V_nE\).
Here \emph{point-set} refers to constructions in orthogonal spectra or spectral
Waldhausen categories before localization.  Raw fixed points are taken on the
indicated cofibrant or fibrant models, where they compute the corresponding
intrinsic fixed-point functors.  We denote homotopy fixed points by
\((-)^{hH}\).

For a nonempty truncation set \(S\), let \(\mathcal I_S\) be the category
whose objects are the integers in \(S\), with one arrow \(n\to m\) when
\(m\mid n\).  If \(n=rs\), the arrow from the term indexed by \(rs\) to the
term indexed by \(s\) is
\begin{equation}\label{eq:derived-divisibility-map}
 E_{rs}^{C_{rs}}
 \simeq
 \bigl(E_{rs}^{C_r}\bigr)^{C_s}
 \longrightarrow
 \bigl(\Phi^{C_r}E_{rs}\bigr)^{C_s}
 \longrightarrow
 E_s^{C_s},
\end{equation}
where the middle arrow is the canonical comparison from categorical to
geometric fixed points.  This is the divisibility diagram of
\cite[equation~(8.9)]{CLMPZ}; the resulting underived and derived limits are
defined in \cite[Definition~8.10]{CLMPZ}.  We define
\begin{equation}\label{eq:derived-truncation-limit}
 R_S(E_\bullet):=\lim_{\mathcal I_S}E_n^{C_n}
\end{equation}
in spectra.  A model before localization is obtained by fibrantly replacing
the restriction system in the model structure of
\cite[Proposition~A.7]{CLMPZ} and then taking raw categorical fixed points
and the homotopy limit.  Thus \(R_{\mathbb N_{>0}}(E_\bullet)\) is the CLMPZ
\(\TR\).  Since categorical fixed points and limits are functors of genuine
equivariant stable \(\infty\)-categories, this construction defines a functor
\[
 R_S\colon\operatorname{RSys}^g_\infty\longrightarrow\Sp.
\]
We reserve \(R_S^{\mathrm{un}}\) for the corresponding underived
limit before fibrant replacement; there is a canonical map
\(R_S^{\mathrm{un}}(E)\to R_S(E)\).

For \(n\geq1\), the principal truncation set generated by \(n\) is
\[
 \langle n\rangle
 =\{d\in\mathbb N_{>0}\mid d\text{ divides }n\}.
\]

\subsection{Twistings and graph coefficients}
\label{subsec:twistings-graph-coefficients}

Following \cite[Definitions~2.16 and~4.5]{CLMPZ}, a twisting of spectral
categories consists of two spectral functors with common source,
\[
 f,g\colon A\longrightarrow C,
\]
and determines the spectral \(A\)-bimodule
\[
 {}_fC_g(a,b)=C(fa,gb).
\]
For a strict spectral endofunctor \(f\colon A\to A\), the twisting
\[
 {}_fA/A_{\operatorname{id}}
\]
has graph coefficient
\begin{equation}\label{eq:clmpz-coefficient}
 X_f(a,b):=A(fa,b).
\end{equation}
When \(A\) carries the structure of a spectral Waldhausen category and \(f\)
is exact, the objects of its twisted-endomorphism category are arrows
\begin{equation}\label{eq:clmpz-left-laced-object}
 \alpha\colon fa\longrightarrow a.
\end{equation}

We call a spectral category--bimodule pair
\((A,M)\) \emph{admissible} if the mapping spectra of \(A\) and the values
of \(M\) are cofibrant.  If the pair is pointed, we also require a unique
chosen zero object \(0\), cofibrations
\(\mathbb S\to A(a,a)\) for \(a\ne0\), and
\(M(0,a)=0=M(a,0)\).  Morphisms are spectral functors together with maps of
bimodules, strictly preserving the chosen zero object in the pointed case.
We write \(\mathsf{Pair}^{\operatorname{adm}}\) for this category.

A compatible spectral Waldhausen presentation
\((\widehat{\cC},\widehat F)\) of \((\cC,F)\) is \emph{admissible} if:

\begin{enumerate}
\item[\textup{(H1)}]
\(\widehat{\cC}\) is pointwise cofibrant and its weak equivalences are exactly
the maps inverted by stable localization.

\item[\textup{(H2)}]
The localization of the endofunctor diagram presents \((\cC,F)\).

\item[\textup{(H3)}]
The twisted-endomorphism category of the chosen enhancement presents the
laced category, and
\[
 K\operatorname{End}
 \bigl({}_{\widehat F}\widehat{\cC}/
       \widehat{\cC}_{\operatorname{id}}\bigr)
 \xrightarrow{\ \simeq\ }
 K^{\operatorname{lace}}(\cC,\cM_F).
\]

\item[\textup{(H4\(_{\mathrm{red}}\))}]
The chosen zero object is unique and has literal zero mapping spectra; the
input spectral category and its graph coefficient form an admissible pair
as above.  The
\(w_\bullet S_\bullet\)- and \(\Sigma_\Delta\)-objects are then the standard
ones of the CLMPZ construction.

\item[\textup{(H5)}]
For each \(n\geq1\), the derived coefficient power
\[
 X_{\widehat F}^{\mathbb L\odot n}
\]
presents \(\cM_{F^n}\).
\end{enumerate}

We call a spectral Waldhausen category \(A\) \emph{zero-reduced} if its chosen
zero object \(0\) is the only zero object of the Waldhausen base and
\(A(0,a)=0=A(a,0)\) as literal equalities of spectra for every \(a\).  A
strict endofunctor twisting \((A,H)\) is zero-reduced when \(A\) is
zero-reduced and \(H\) strictly preserves its chosen zero object.
Lemma~\ref{lem:zero-reduction} supplies a functorial Morita-equivalent
zero-reduced replacement.  Under \textup{(H1)}--\textup{(H3)} and
\textup{(H5)}, the cofibrant replacement \(Q_{\mathrm{red}}\) of
Theorem~\ref{thm:reduced-bar-replacement} supplies
\textup{(H4\(_{\mathrm{red}}\))}.

The \(n\)-th coefficient level of the CLMPZ construction is the cyclic bar
with \(n\) cyclically ordered copies of \(X_f\), equipped with the
\(C_n\)-action rotating the coefficient slots.  We denote it by
\[
 \THH^{(n)}(A;X_f).
\]
For an admissible presentation, the natural norm diagonal of
\cite[Proposition~2.19]{CLMPZ} supplies the restriction arrows and gives the
genuine restriction system
\[
 \mathbf E(A,f)
 =\{\THH^{(n)}(A;X_f)\}_{n\geq1}.
\]
Its topological restriction homology is
\begin{equation}\label{eq:restriction-system-tr}
 \TR_f^{\operatorname{res}}(A)=\TR(A;X_f).
\end{equation}

\subsection{The laced \texorpdfstring{\(K\)}{K}-theory source}

Throughout the intrinsic discussion, \(\Map\) denotes the mapping spectrum;
its underlying mapping space is \(\Omega^\infty\Map\).  Let \(\cC\) be a
small stable \(\infty\)-category.  A categorical bimodule on \(\cC\) is a
functor
\[
 \cM\colon\cC^{\mathrm{op}}\otimes\cC\longrightarrow\Sp
\]
that is exact separately in both variables.  Following
\cite[Definition~2.2]{HNS}, the pair \((\cC,\cM)\) is a laced category.

\begin{definition}
\label{def:laced-category}
The category
\[
 \operatorname{Lace}(\cC,\cM)
\]
is the pullback, along the diagonal \(\cC\to\cC\times\cC\), of the
bifibration obtained by unstraightening \(\Omega^\infty\cM\); see
\cite[Construction~2.5]{HNS}.  Its objects are pairs
\[
 (x,\alpha),
 \qquad
 \alpha\in\Omega^\infty\cM(x,x).
\]
It is stable, and the forgetful functor to \(\cC\) is exact.
\end{definition}

We write
\[
 K^{\operatorname{lace}}(\cC,\cM)
 :=K\bigl(\operatorname{Lace}(\cC,\cM)\bigr).
\]
Coefficient topological Hochschild homology is the coend
\[
 \THH(\cC,\cM):=\int^{x\in\cC}\cM(x,x),
\]
which is equivalently computed by the coefficient cyclic bar construction.
Harpaz--Nikolaus--Saunier construct a natural laced trace
\begin{equation}\label{eq:laced-trace}
 \operatorname{tr}^{\operatorname{lace}}\colon
 K^{\operatorname{lace}}(\cC,\cM)
 \longrightarrow
 \THH(\cC,\cM).
\end{equation}
Now suppose that \(\cC\) is idempotent-complete and that
\(F\colon\cC\to\cC\) is exact.  Its graph bimodule is
\begin{equation}\label{eq:graph-bimodule}
 \cM_F(x,y):=\Map(Fx,y).
\end{equation}
Thus a laced object in \((\cC,\cM_F)\) is a pair
\[
 (x,\alpha),
 \qquad
 \alpha\colon Fx\longrightarrow x.
\]

\subsection{Composition powers and iteration}

Composition of categorical bimodules is the relative tensor product.  We use
the convention
\[
 (\cM\circ\mathcal N)(x,z)
 =\int^{y\in\cC}\cM(x,y)\otimes\mathcal N(y,z).
\]

\begin{proposition}
\label{lem:coefficient-composition}
Let \(F,G\colon\cC\to\cC\) be exact.  Enriched co-Yoneda gives a natural
equivalence
\begin{equation}\label{eq:coefficient-composition}
 \cM_F\circ\cM_G\simeq\cM_{G\circ F}.
\end{equation}
In particular,
\[
 \cM_F^{\circ m}\simeq\cM_{F^m}
\]
for every \(m\geq1\).

Let \(B\mathbb N\) denote the one-object category with endomorphism monoid
\(\mathbb N\), and let
\[
 X_F\colon B\mathbb N\longrightarrow
 \operatorname{Cat}^{\operatorname{perf}}_\infty
\]
be the diagram that carries the generator to \(F\).  We use the right-lax
cone convention in which the structure map of an object is \(Fx\to x\).  The
arrow-fibration description of \cite[Construction~2.5]{HNS} gives a natural
equivalence
\begin{equation}\label{eq:laced-category-right-lax-limit}
 \operatorname{Lace}(\cC,\cM_F)
 \simeq
 \operatorname{RLim}^{\operatorname{lax}}(X_F).
\end{equation}
For \(m\geq1\), let
\[
 \mu_m\colon B\mathbb N\longrightarrow B\mathbb N,
 \qquad 1\longmapsto m.
\]
Restriction of right-lax cones along \(\mu_m\), followed by
\eqref{eq:coefficient-composition}, defines an exact functor
\begin{equation}\label{eq:power-functor}
 P_m\colon
 \operatorname{Lace}(\cC,\cM_F)
 \longrightarrow
 \operatorname{Lace}(\cC,\cM_{F^m}).
\end{equation}
On objects,
\[
 P_m(x,\alpha)=(x,\alpha^{[m]}),
\]
where
\begin{equation}\label{eq:iterated-lacing-arrow}
 \alpha^{[m]}
 =\alpha\circ F(\alpha)\circ\cdots\circ F^{m-1}(\alpha)
 \colon F^mx\longrightarrow x.
\end{equation}
The construction is natural in morphisms of endofunctor diagrams.
\end{proposition}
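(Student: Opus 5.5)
The proof has three parts: the co-Yoneda composition formula, the identification of the laced category with a right-lax limit, and the construction of \(P_m\) together with its formula on objects.

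\emph{Composition.} Apply enriched co-Yoneda in the middle variable. For fixed \(x,z\), the functor \(y\mapsto\Map(Gy,z)\) is exact and contravariant in \(y\), so
\[
 (\cM_F\circ\cM_G)(x,z)=\int^{y}\Map(Fx,y)\otimes\Map(Gy,z)\simeq\Map(GFx,z)=\cM_{G\circ F}(x,z).
\]
This uses the stable co-Yoneda lemma \(\int^y\Map(c,y)\otimes H(y)\simeq H(c)\) for exact covariant \(H\colon\cC\to\Sp\), applied at \(c=Fx\) with \(H=\Map(G(-),z)\). Naturality in \(x\) and \(z\) follows because the co-Yoneda equivalence is natural in both \(c\) and \(H\), and \(H\) depends functorially on \(z\). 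Iterating gives \(\cM_F^{\circ m}\simeq\cM_{F^m}\). For \(F=G\), the order reversal \(G\circ F\) is harmless.

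\emph{Right-lax limit.} By \cite[Construction~2.5]{HNS}, \(\operatorname{Lace}(\cC,\cM)\) is the pullback along the diagonal of the bifibration classifying \(\Omega^\infty\cM\). For \(\cM=\cM_F\), this bifibration is the comma category of \(F\) over \(\operatorname{id}\). Its pullback along the diagonal is therefore the category of pairs \((x,Fx\to x)\) together with the evident commuting squares. This is precisely the standard description of the right-lax limit of a \(B\mathbb N\)-diagram as the lax equalizer (inserter) of \(F\) and \(\operatorname{id}\). To make this precise, I would use the presentation of lax limits by the Grothendieck construction, namely sections of the unstraightened cocartesian fibration over \(B\mathbb N\) that carry the generator to a not-necessarily-cocartesian edge. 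Because \(\mathbb N\) is free on one generator, such sections are determined by their value on that generator. This reduction is the \textbf{main obstacle}: it requires knowing that lax cones over the free monoid are freely generated, with no higher coherence data. I would establish it by writing \(B\mathbb N\) as the pushout \(\Delta^0\sqcup_{\partial\Delta^1}\Delta^1\) of simplicial sets, which is an inner-anodyne-equivalent model, and then computing the section category as a pullback.

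\emph{The functor \(P_m\).} Restriction along \(\mu_m\) gives a functor \(\operatorname{RLim}^{\operatorname{lax}}(X_F)\to\operatorname{RLim}^{\operatorname{lax}}(X_F\circ\mu_m)=\operatorname{RLim}^{\operatorname{lax}}(X_{F^m})\). Transporting this through \eqref{eq:laced-category-right-lax-limit} and \eqref{eq:coefficient-composition} defines \(P_m\).

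Exactness follows because lax limits of exact functors between stable categories are computed pointwise: finite limits and colimits in the inserter are formed objectwise in \(\cC\), and \(F\) preserves them.

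For the formula on objects, a lax cone sends the generator's image \(m=1+\cdots+1\) to the composite of the \(m\) structure maps. The lax structure makes this the pasting \(\alpha\circ F(\alpha)\circ\cdots\circ F^{m-1}(\alpha)\), since each successive application of \(F\) whiskers the earlier \(\alpha\)'s.

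Naturality in morphisms of endofunctor diagrams holds because both restriction along \(\mu_m\) and the co-Yoneda equivalence are functorial in the diagram \(X_F\).
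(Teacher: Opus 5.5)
Your argument is correct and follows the paper's proof. The steps match: co-Yoneda in the middle variable, the HNS arrow-fibration identification of \(\operatorname{Lace}(\cC,\cM_F)\) with pairs \((x,Fx\to x)\), restriction along \(\mu_m\) giving the ordered composite, and exactness because cofiber sequences are detected on underlying objects. Your reduction of \(N(B\mathbb N)\) to the one-loop graph via an inner anodyne inclusion simply fills in detail the paper leaves to \cite[Construction~2.5]{HNS}.

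One slip: the co-Yoneda lemma you quote should be stated for a \emph{contravariant} exact \(H\colon\cC^{\mathrm{op}}\to\Sp\), since \(\int^y\Map(c,y)\otimes H(y)\) only makes sense when \(H\) is contravariant in \(y\). You yourself note that \(\Map(G(-),z)\) is contravariant, so your displayed computation is right; only the quoted lemma has the wrong variance.
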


\begin{proof}
For \(x,z\in\cC\), enriched co-Yoneda gives
\[
 \int^{y\in\cC}
 \Map(Fx,y)\otimes\Map(Gy,z)
 \simeq
 \Map(GFx,z),
\]
which proves \eqref{eq:coefficient-composition}.  The right-lax limit
description identifies an object with a pair
\((x,\alpha\colon Fx\to x)\).  Precomposition with \(\mu_m\) replaces its
structure arrow by the ordered composite
\eqref{eq:iterated-lacing-arrow}.  Since cofiber sequences in the category of
laced objects are detected on underlying objects, \(P_m\) is exact.
\end{proof}

For every nonempty truncation set \(S\) and every \(m\in S\),
Corollary~\ref{cor:integral-intrinsic-ghost-formula} identifies on \(\pi_0\)
the composite of the restriction trace with the \(m\)-th ghost map with
\(\operatorname{tr}^{\operatorname{lace}}\circ K(P_m)\).

\section{Morita descent of the CLMPZ trace}
\label{sec:morita-descent}

We prove that the CLMPZ trace arrow descends from strict spectral
presentations \((A,f)\) to intrinsic pairs \((\cC,F)\).  The required
cofibrant models and strictification results are collected in
Appendices~\ref{app:technical-coherence}
and~\ref{app:split-strictification}.

\subsection{The trace arrow on spectral presentations}
\label{subsec:strict-arrow-construction}

Fix universes \(\mathcal U\in\mathcal V\in\mathcal W\), where
\(\mathcal U\) bounds the input presentations, \(\mathcal V\) the strictified
perfect-module diagrams, and \(\mathcal W\) the CLMPZ constructions and their
resulting restriction systems.  We suppress these bounds from the notation
and form comparisons in a common larger universe when necessary.
Let \(\operatorname{SpCat}^{\Sigma}_{\mathcal U}\) be the BGT model of
\(\mathcal U\)-small categories enriched in symmetric spectra, and let
\(\mathbb O\) be the orthogonal realization functor of
Lemma~\ref{lem:orthogonal-morita-model}.  Let
\[
 \mathsf{Pres}_{\mathcal U}
 =
 \operatorname{Fun}
 (B\mathbb N,\operatorname{SpCat}^{\Sigma}_{\mathcal U})
\]
be its category of strict \(B\mathbb N\)-diagrams.  Its objects are pairs
\(x=(A,f)\) consisting of a symmetric spectral category and a strict spectral
endofunctor, and its strict morphisms satisfy \(uf=gu\) on the nose.  Let
\(W_{\mathrm{Mor}}\) be the objectwise Morita equivalences.  We apply
orthogonal realization to obtain
\(\mathbb O x=(\mathbb O A,\mathbb O f)\), and within point-set arguments
continue to denote this pointwise-cofibrant diagram by \((A,f)\).  Thus \(\mathcal P_A\),
\(A(f-,-)\), and the CLMPZ constructions refer to the orthogonal model.  The
natural equivalence
\eqref{eq:orthogonal-realization-perfect} identifies its intrinsic
perfect-module diagram with that of the original symmetric presentation.

\begin{proposition}
\label{prop:endofunctor-rigidification}
There is a natural equivalence
\begin{equation}\label{eq:endofunctor-diagram-rigidification}
 N\mathsf{Pres}_{\mathcal U}
 [W_{\mathrm{Mor}}^{-1}]
 \simeq
 \operatorname{Fun}
 \bigl(N(B\mathbb N),
       \operatorname{Cat}^{\mathrm{perf}}_{\infty,\mathcal U}\bigr).
\end{equation}
Thus strict spectral endofunctor diagrams present all homotopy-coherent
endomorphism diagrams of \(\mathcal U\)-small idempotent-complete stable
\(\infty\)-categories.
\end{proposition}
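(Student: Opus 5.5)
The plan is to combine the Bergner--Tabuada--BGT presentation of \(\operatorname{Cat}^{\mathrm{perf}}_{\infty,\mathcal U}\) with the standard rigidification of diagrams indexed by a small category. By \cite[Theorems~3.1 and~4.23]{BGT}, \(\operatorname{SpCat}^{\Sigma}_{\mathcal U}\) carries a combinatorial model structure, the Morita model structure, whose weak equivalences are the Morita equivalences. Its underlying \(\infty\)-category is \(N(\operatorname{SpCat}^{\Sigma,c}_{\mathcal U})[W_{\mathrm{Mor}}^{-1}]\simeq\operatorname{Cat}^{\mathrm{perf}}_{\infty,\mathcal U}\), and the comparison is given by \(A\mapsto\Perf(A)\). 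I would put the projective model structure on \(\mathsf{Pres}_{\mathcal U}=\operatorname{Fun}(B\mathbb N,\operatorname{SpCat}^{\Sigma}_{\mathcal U})\). Weak equivalences and fibrations are detected objectwise. This structure exists because the Morita model structure is combinatorial.

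Next I would invoke \cite[Proposition~1.3.4.25]{LurieHA}, which is the rigidification statement for a combinatorial simplicial model category \(\mathbf A\) and a small category \(J\). It says that \(N(\operatorname{Fun}(J,\mathbf A)^{\circ})\simeq\operatorname{Fun}(N J,N(\mathbf A^{\circ}))\), or in localized form \(N\operatorname{Fun}(J,\mathbf A)[W^{-1}]\simeq\operatorname{Fun}(NJ,N\mathbf A[W^{-1}])\). That proposition needs a simplicial model structure. The BGT Morita model structure on spectral categories is not obviously simplicial, so I would use Dugger's theorem to pass to a Quillen-equivalent simplicial combinatorial model. Alternatively I would use the version of the statement for combinatorial model categories valid in the general form of \cite[Proposition~1.3.4.25]{LurieHA} (via Dwyer--Kan localizations). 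Quillen-equivalent model categories have equivalent diagram localizations, so the conclusion transfers. Taking \(J=B\mathbb N\) and composing with the BGT equivalence gives \eqref{eq:endofunctor-diagram-rigidification}.

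I also need the equivalence to be compatible with the orthogonal realization \(\mathbb O\) used later. For this I would note that \(\mathbb O\) preserves Morita equivalences, by Lemma~\ref{lem:orthogonal-morita-model} and \eqref{eq:orthogonal-realization-perfect}. Hence it induces a map on localizations commuting with the functor to \(\operatorname{Cat}^{\mathrm{perf}}_{\infty}\). Naturality follows because the comparison is the functor induced on localizations by the strict functor \(\mathsf{Pres}_{\mathcal U}\to\operatorname{Fun}(N B\mathbb N,\operatorname{Cat}^{\mathrm{perf}}_\infty)\), \((A,f)\mapsto(\Perf A,\Perf f)\).

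The main obstacle is the model-categorical hygiene rather than the idea. First, the projective structure on \(B\mathbb N\)-diagrams in the Morita model structure must exist and be left proper enough for the rigidification theorem. Second, one must check that the composite functor is the one induced by \(\Perf\), with no hidden zigzag. I would address the first by citing combinatoriality and left properness from BGT. I would address the second by working with cofibrant diagrams, where \(\Perf\) is computed strictly.
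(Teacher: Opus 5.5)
Your proposal is correct and follows essentially the paper's route: the paper proves the statement via Lemma~\ref{lem:orthogonal-morita-model}(3), which rectifies $B\mathbb N$-diagrams in the projective model structure using \cite[Proposition~1.3.4.25]{LurieHA} and identifies the result with $\operatorname{Cat}^{\mathrm{perf}}_\infty$ through BGT, and it then applies $\mathbb O$ as in your third paragraph. The only difference is that the paper rectifies in the DK model structure and then postcomposes with BGT's accessible Morita localization (a reflective localization, so it passes to functor $\infty$-categories and inverts exactly the objectwise Morita equivalences), whereas you localize at Morita equivalences already at the model level; both routes work, and neither needs the left-properness you worry about.
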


\begin{proof}
 Apply Lemma~\ref{lem:orthogonal-morita-model}(3) with
 \(I=B\mathbb N\).  Its proof first uses the projective DK model to rectify
 the diagram and then applies the BGT Morita localization objectwise.  The
 ordinary functor \(\mathbb O\) sends the resulting strict presentation to a
 strict orthogonal diagram without changing the represented object of
 \(\operatorname{Cat}^{\mathrm{perf}}_\infty\).
\end{proof}

We use Lemma~\ref{lem:pseudonatural-pair-morita-localization} to
rectify pseudonatural comparisons before localization.  We also suppress the
nerve and write \(B\mathbb N\) for the indexing \(\infty\)-category.

We describe the source levels using cyclic laced categories.

\begin{definition}
\label{def:cyclic-laced-category}
For \(r\geq1\), let \(\mathcal L_r(\cC,F)\) be the right-lax limit of the
directed \(r\)-cycle whose vertices are copies of \(\cC\) and whose edge
functors are \(F\).  Equivalently, by the arrow-fibration description of
\cite[Construction~2.5]{HNS}, its objects are cyclic strings
\[
 (x_i,\alpha_i)_{i\in\mathbb Z/r},
 \qquad
 \alpha_i\colon F(x_i)\longrightarrow x_{i+1}.
\]
It is a stable \(\infty\)-category, and cofiber sequences are computed
coordinatewise.  Cyclic rotation defines an exact \(C_r\)-action on
\(\mathcal L_r(\cC,F)\), and there is a canonical equivalence
\[
 \mathcal L_1(\cC,F)
 \simeq\operatorname{Lace}(\cC,\cM_F).
\]
An equivalence of endofunctor pairs induces a \(C_r\)-equivariant exact
equivalence of the corresponding cyclic laced categories.
\end{definition}

For \(x=(A,f)\), let \(\mathcal P_x=\mathcal P_A\) be the spectral
Waldhausen category of \(\mathcal U\)-small cofibrant perfect right
\(A\)-modules, with the enrichment of
Lemma~\ref{lem:many-object-perfect-model}, and let \(T_x=f_!\) be extension
of scalars along \(f\).  The
zero reduction of Lemma~\ref{lem:zero-reduction} gives
\((\mathcal P_x^\circ,T_x^\circ)\), which has a unique chosen zero object
preserved on the nose.  The composition and intertwining maps are the coherent
isomorphisms
\[
 v_!u_!\xRightarrow{\ \sim\ }(vu)_!,
 \qquad
 u_!f_!\xRightarrow{\ \sim\ }g_!u_!.
\]
The ordinary functor in the following proposition rectifies these
isomorphisms.  Its Grothendieck-construction model and coherence are given in
Appendix~\ref{app:split-strictification}.

Write
\(\mathsf{Tw}^{\operatorname{str},\circ}_{\mathrm{SpWald}}\) for the
ordinary category of twistings of spectral Waldhausen categories having a
unique chosen zero object, with mapping spectra to or from that object
literally zero, and strict morphisms preserving it on the nose.

\begin{proposition}
\label{thm:split-perfect-strictification}
There is an ordinary functor
\begin{equation}\label{eq:split-perfect-functor}
 \operatorname{StrictPerf}_{\mathcal U}\colon
 \mathsf{Pres}_{\mathcal U}
 \longrightarrow
 \mathsf{Tw}^{\operatorname{str},\circ}_{\mathrm{SpWald}},
 \qquad
 x\longmapsto
 \bigl(\mathcal P_x^{\operatorname{st}},T_x^{\operatorname{st}}\bigr),
\end{equation}
together with a strict morphism of twistings
 \begin{equation}\label{eq:standard-to-split-envelope}
 j_x\colon
 {}_{T_x^\circ}\mathcal P_x^\circ/
   (\mathcal P_x^\circ)_{\operatorname{id}}
 \longrightarrow
 {}_{T_x^{\operatorname{st}}}\mathcal P_x^{\operatorname{st}}/
 (\mathcal P_x^{\operatorname{st}})_{\operatorname{id}}
\end{equation}
for every \(x\), such that:
\begin{enumerate}
\item for all composable morphisms,
\begin{equation}\label{eq:split-perfect-literal-relations}
 (vu)_*^{\operatorname{st}}
 =
 v_*^{\operatorname{st}}u_*^{\operatorname{st}},
 \qquad
 u_*^{\operatorname{st}}T_x^{\operatorname{st}}
 =
 T_y^{\operatorname{st}}u_*^{\operatorname{st}}
\end{equation}
as literal equalities of spectral functors;
\item the underlying spectral functor of \(j_x\) is fully faithful and
      essentially surjective by actual isomorphisms; in particular it is a
      DK equivalence, and a Morita equivalence, of presentations;
\item after stable localization, evaluation gives a natural equivalence of
      endofunctor diagrams
\begin{equation}\label{eq:split-evaluation-localized}
 \bigl(\mathcal P_x^{\operatorname{st}},
       T_x^{\operatorname{st}}\bigr)[W^{-1}]
 \simeq
 \bigl(\Perf(A),\mathbf Lf_!\bigr)
\end{equation}
as \(x\) varies;
\item for every \(r\geq1\), the \(r\)-fold cyclic twisted-endomorphism
      category associated with
      \((\mathcal P_x^{\operatorname{st}},T_x^{\operatorname{st}})\)
      presents the cyclic laced category of
      Definition~\ref{def:cyclic-laced-category}, naturally on
      \(N\mathsf{Pres}_{\mathcal U}\):
\begin{equation}\label{eq:split-source-lace-natural}
 K\!\operatorname{End}^{(r)}
 \bigl({}_{T_x^{\operatorname{st}}}\mathcal P_x^{\operatorname{st}}/
       (\mathcal P_x^{\operatorname{st}})_{\operatorname{id}}\bigr)
 \simeq
 K\bigl(\mathcal L_r(\Perf(A),\mathbf Lf_!)\bigr);
\end{equation}
      for \(r=1\) this is a natural equivalence with
      \(K^{\operatorname{lace}}(\Perf(A),\cM_{\mathbf Lf_!})\).
\end{enumerate}
\end{proposition}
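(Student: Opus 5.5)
The plan is to build \(\operatorname{StrictPerf}_{\mathcal U}\) by a Grothendieck-construction strictification, the standard way of turning a pseudofunctor into a strict functor. The input pseudofunctor is \(x\mapsto(\mathcal P_x^\circ,T_x^\circ)\), whose functoriality is given by extension of scalars \(u_!\), with coherent isomorphisms \(v_!u_!\cong(vu)_!\) and \(u_!f_!\cong g_!u_!\).

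\emph{Construction.} For \(x=(A,f)\), let \(\mathcal P_x^{\mathrm{st}}\) have objects the pairs \((w\colon x'\to x,\ M)\), where \(w\) is a morphism of \(\mathsf{Pres}_{\mathcal U}\) and \(M\in\mathcal P_{x'}^\circ\). Mapping spectra are defined by
\[
 \mathcal P_x^{\mathrm{st}}\bigl((w,M),(w',M')\bigr)
 :=\mathcal P_x^\circ(w_!M,\,w'_!M'),
\]
and composition is composition in \(\mathcal P_x^\circ\). A morphism \(u\colon x\to y\) acts by postcomposition, \(u_*^{\mathrm{st}}(w,M)=(uw,M)\). On mapping spectra it applies \(u_!\) and then conjugates by the coherence isomorphisms \(u_!w_!\cong(uw)_!\).

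The endofunctor is \(T_x^{\mathrm{st}}(w,M)=(w,\,f'_!\text{-twisted }M)\), where \(f'\) is the endofunctor of \(x'\). More precisely, \(T_x^{\mathrm{st}}(w,M)=(w,T_{x'}^\circ M)\), acting on maps via the intertwiners \(w_!f'_!\cong f_!w_!\). With these definitions the identities \((vu)_*^{\mathrm{st}}=v_*^{\mathrm{st}}u_*^{\mathrm{st}}\) and \(u_*^{\mathrm{st}}T_x^{\mathrm{st}}=T_y^{\mathrm{st}}u_*^{\mathrm{st}}\) of \eqref{eq:split-perfect-literal-relations} hold on objects by associativity of composition in \(\mathsf{Pres}_{\mathcal U}\) and by \(uf=gu\). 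On morphisms they hold by the pentagon and hexagon coherence of the isomorphisms \(v_!u_!\cong(vu)_!\) and \(u_!f_!\cong g_!u_!\); checking this coherence is the technical content deferred to Appendix~\ref{app:technical-coherence}.

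We take a single zero object \((\mathrm{id},0)\), identify all objects \((w,0)\) with it, and apply the zero reduction of Lemma~\ref{lem:zero-reduction}. This lands the functor in \(\mathsf{Tw}^{\mathrm{str},\circ}_{\mathrm{SpWald}}\). Cofibrations and weak equivalences are transported along the evaluation \((w,M)\mapsto w_!M\).

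\emph{Properties.} The comparison \(j_x\) sends \(M\mapsto(\mathrm{id},M)\). It is fully faithful by definition of the mapping spectra. It is essentially surjective by actual isomorphisms, because every object \((w,M)\) is isomorphic to \((\mathrm{id},w_!M)\) via the identity of \(w_!M\); this gives (2). It intertwines \(T_x^\circ\) and \(T_x^{\mathrm{st}}\) strictly, since the intertwiner at \(w=\mathrm{id}\) is the identity.

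For (3), evaluation \((w,M)\mapsto w_!M\) is an equivalence inverse to \(j_x\). Since \(\mathcal P_x^\circ\) localizes to \(\Perf(A)\) with \(T_x^\circ\) presenting \(\mathbf Lf_!\), the two compose to \eqref{eq:split-evaluation-localized}. Naturality in \(x\) holds because evaluation converts \(u_*^{\mathrm{st}}\) into \(u_!\) up to the given coherent isomorphisms. These isomorphisms assemble into a pseudonatural transformation, which Lemma~\ref{lem:pseudonatural-pair-morita-localization} rectifies after localization.

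For (4), \(j_x\) is a strict DK equivalence of twistings. It therefore induces a levelwise equivalence of the \(r\)-fold cyclic twisted-endomorphism Waldhausen categories. By (H3) for the standard model \((\mathcal P_x^\circ,T_x^\circ)\), generalized to \(r\) cyclic slots, these present \(\mathcal L_r(\Perf(A),\mathbf Lf_!)\). That identification uses the right-lax-limit description of Definition~\ref{def:cyclic-laced-category} together with the description of the underlying category of \(\operatorname{End}^{(r)}\) as cyclic strings \(f a_i\to a_{i+1}\). Naturality on \(N\mathsf{Pres}_{\mathcal U}\) follows from the strictness in (1) together with the naturality already obtained in (3).

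The main obstacle is (1) on morphisms. The identities are literal, but the mapping-spectrum actions are conjugated by coherence isomorphisms, so the check reduces to the cocycle identities for \(u_!\) and the compatibility of the intertwiners with composition. I would first verify these on representable modules, where \(u_!\) is strictly functorial, and then extend along the cofibrant-cell presentation of \(\mathcal P_x^\circ\).
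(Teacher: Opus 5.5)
Your proposal is essentially the paper's proof. It uses the same split Grothendieck construction: objects \((a\colon z\to x,P)\), mapping spectra \(\mathcal P_x(a_!P,b_!Q)\), transition maps conjugated by the compositors \(c_{u,a}\), and the endofunctor conjugated by \(\beta_a=c_{f,a}^{-1}c_{a,h}\). It then applies zero reduction via Lemma~\ref{lem:zero-reduction}, takes \(j_x(P)=(\mathrm{id}_x,P)\), and uses evaluation with pseudonatural rectification for (2)--(3).

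Three small refinements relative to the paper:
\begin{enumerate}
\item Mapping spectra must be taken in the unreduced \(\mathcal P_x\), or with \((a_!)^\circ\), because \(a_!P\) can be a non-chosen zero object and so need not lie in \(\mathcal P_x^\circ\).
\item The literal identities in (1) follow formally from the pentagon for the normalized pseudofunctor of Lemma~\ref{lem:normalized-perfect-module-pseudofunctor}. Normality also gives \(\beta_{\mathrm{id}}=\mathrm{id}\), which your strict intertwining by \(j_x\) relies on. So no reduction to representables or cells is needed, and in any case \(u_!\) is not strictly functorial on representables.
\item For (4), the paper compares via the pseudonatural evaluation \(\epsilon_x(a,P,h)=(a_!P,h\beta_a^{-1})\) on cyclic strict sections, composed with the strict-section comparison of Proposition~\ref{prop:perfect-module-pseudofunctor}. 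That comparison is the precise form of the ``(H3) for the standard model'' input you invoke.
\end{enumerate}
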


\begin{proof}
Apply the split Grothendieck construction and zero reduction of
Appendix~\ref{app:split-strictification} to the normal perfect-module
pseudofunctor of Proposition~\ref{prop:perfect-module-pseudofunctor}.
The compositor identities give~(1); evaluation and its section give~(2)
and~(3), and evaluation on cyclic strict sections gives~(4).  The appendix
verifies the mapping-spectrum formulas, strict zero preservation, and
coherence.
\end{proof}

The twisting
\((\mathcal P_x^{\operatorname{st}},T_x^{\operatorname{st}})\) now varies
strictly functorially with \(x\), so the CLMPZ naturality theorem applies.

\begin{lemma}
\label{lem:clmpz-strict-functoriality}
On
\(\mathsf{Tw}^{\operatorname{str},\circ}_{\mathrm{SpWald}}\), with strictly
commutative morphisms of twistings in the sense of
\cite[Definition~2.16]{CLMPZ}, the suspended and prolonged CLMPZ
source restriction system, target restriction system, and Dennis-trace
zigzag are natural.  After passage to the \(\infty\)-localization this
gives an arrow-valued functor
\begin{equation}\label{eq:clmpz-strict-arrow-functor}
 \mathcal T_{\mathrm{CLMPZ}}\colon
 N\mathsf{Tw}^{\mathrm{str},\circ}_{\mathrm{SpWald}}
 \longrightarrow
 \operatorname{Fun}\bigl(\Delta^1,\operatorname{RSys}^g_\infty\bigr),
 \qquad
 z\longmapsto
 \bigl[\mathbf K^{\mathrm{res}}(z)
       \xrightarrow{\operatorname{tr}^{\mathrm{CLMPZ}}(z)}
       \mathbf E(z)\bigr].
\end{equation}
Here the right-hand side is formed from \(Q_{\mathrm{red}}z\).  Since the
augmentation leaves the Waldhausen base unchanged, its source is literally
\(\mathbf K^{\mathrm{res}}(z)\).
\end{lemma}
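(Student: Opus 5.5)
The plan is to check naturality at the point-set level, on the ordinary category \(\mathsf{Tw}^{\mathrm{str},\circ}_{\mathrm{SpWald}}\), and only then localize. First, apply the reduced cofibrant replacement \(Q_{\mathrm{red}}\) of Theorem~\ref{thm:reduced-bar-replacement} functorially. It is an ordinary functor with an augmentation \(Q_{\mathrm{red}}z\to z\) that is the identity on the Waldhausen base and replaces only the enrichment. Consequently the \(w_\bullet S_\bullet\)-objects of \(z\) and of \(Q_{\mathrm{red}}z\) coincide, and the source \(\mathbf K^{\mathrm{res}}(z)\) is literally unchanged; this is the last sentence of the statement. Meanwhile \(Q_{\mathrm{red}}z\) satisfies (H4\(_{\mathrm{red}}\)), so the CLMPZ target \(\mathbf E(Q_{\mathrm{red}}z)\) and the Dennis-trace zigzag are defined, since the hypotheses of \cite[Theorem~8.8]{CLMPZ} hold.

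Next, verify naturality of each constituent separately for a strict morphism \(u\colon z\to z'\) with \(uT=T'u\) on the nose and zero preserved.
\begin{enumerate}
\item[(a)] The cyclic twisted-endomorphism categories \(\operatorname{End}^{(r)}\) are functorial in strict morphisms, compatibly with the \(C_r\)-rotation, since \(u\) strictly commutes with the twists. Hence the suspended and prolonged \(w_\bullet S_\bullet\) source restriction system is functorial.
\item[(b)] The cyclic bars \(\THH^{(n)}(A;X_T)\) are functorial in maps of admissible pairs. The norm diagonals of \cite[Proposition~2.19]{CLMPZ} are natural transformations, so \(\mathbf E\) is a functor into \(\operatorname{RSys}^g\).
\item[(c)] Each map in the Dennis-trace zigzag (inclusion of zero-simplices, \(\Sigma_\Delta\)-prolongation, and the equivariant trace from endomorphism objects) is built levelwise from the data in (a) and (b) by natural formulas. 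Hence it commutes strictly with \(u_*\).
\end{enumerate}
Together, (a)–(c) give a functor \(\mathsf{Tw}^{\mathrm{str},\circ}_{\mathrm{SpWald}}\to\operatorname{Fun}(\mathcal J,\operatorname{PreRSys})\), where \(\mathcal J\) is the zigzag shape.

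Finally, compose with the localization \(\operatorname{PreRSys}\to\operatorname{RSys}^g_\infty\) of \cite[Proposition~A.7]{CLMPZ}. The backward arrows in the zigzag are termwise genuine equivalences by CLMPZ, hence become invertible. The localization of \(\operatorname{Fun}(\mathcal J,-)\) restricted to diagrams with invertible backward legs maps to \(\operatorname{Fun}(\Delta^1,\operatorname{RSys}^g_\infty)\) by composing the legs. Taking nerves then gives \(\mathcal T_{\mathrm{CLMPZ}}\). One also checks that the source and target land in genuine restriction systems; this is the CLMPZ statement applied objectwise.

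The main obstacle is (c). CLMPZ state naturality of the trace only for the morphisms they consider, and one must confirm that the cofibrant replacements and prolongations inside the zigzag are themselves functorial rather than chosen objectwise. The plan there is to insist that every replacement is a functorial one, namely \(Q_{\mathrm{red}}\) together with the standard functorial cofibrant replacement of orthogonal spectra. This way every intermediate term is an honest functor and no choices enter.
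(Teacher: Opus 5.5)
Your proposal is correct and takes essentially the same route as the paper. You apply the ordinary functor \(Q_{\mathrm{red}}\) to the input twistings first, check strict naturality of the \(S_\bullet\)-constructions, the coefficient cyclic bars (using \(uf=gu\) on the nose) and the norm-diagonal squares, use functoriality of suspension, prolongation and realization, and then invert the backward arrows in the functor category. The paper is only more specific on two points: the backward arrows are the twisted-additivity maps, which are genuine equivalences by \cite[Theorem~7.8]{CLMPZ} because \(Q_{\mathrm{red}}\) supplies the needed pointwise cofibrancy; and prolongation is functorial by \cite[Proposition~A.7]{CLMPZSW}, not by an auxiliary cofibrant replacement of orthogonal spectra.
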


\begin{proof}
Apply \(Q_{\mathrm{red}}\) to the strict diagram of input twistings before
forming the CLMPZ diagram.  By
Theorem~\ref{thm:reduced-bar-replacement}, this is an ordinary functor and
preserves every literal relation among the input endofunctors.  At each
multi-index, the Waldhausen
\(S_\bullet\)-construction is natural because a strict exact morphism
preserves cofibrations, weak equivalences, and the chosen zero object.
The coefficient cyclic-bar maps commute term by term because
\(uf=gu\) holds on the nose, so the induced maps preserve the cyclic order
and every graph-coefficient action.  On the cofibrant wedge summands supplied
by \(Q_{\mathrm{red}}\), the norm-diagonal squares commute by the naturality
of \cite[Proposition~2.19]{CLMPZ}.  Genuine suspension and prolongation are
functorial, the latter by \cite[Proposition~A.7]{CLMPZSW}, and
multisimplicial realization preserves these natural transformations and
their commutative squares.  The CLMPZ Dennis-trace zigzag therefore
defines a functor
\[
 N\mathsf{Tw}^{\mathrm{str},\circ}_{\mathrm{SpWald}}
 \longrightarrow\operatorname{Fun}(J,\operatorname{RSys}^g_\infty),
\]
where \(J\) indexes its terms and arrows.  The replacement satisfies the
pointwise-cofibrancy hypotheses of \cite[Theorem~7.8]{CLMPZ}, so the backwards
twisted-additivity arrows are genuine equivalences at every node.  The
naturality used in the proof of \cite[Theorem~8.8]{CLMPZ} makes them
equivalences in this functor category.
Inverting them and applying the functorial prolongation of
\cite[Proposition~A.7]{CLMPZSW} gives
\eqref{eq:clmpz-strict-arrow-functor}.
\end{proof}

\begin{definition}
\label{def:split-arrow-functor}
Put
\begin{equation}\label{eq:split-clmpz-arrow-functor}
 \mathcal T_{\mathrm{CLMPZ}}^{\mathrm{st}}
 :=\mathcal T_{\mathrm{CLMPZ}}
       \circ N(\operatorname{StrictPerf}_{\mathcal U})\colon
 N\mathsf{Pres}_{\mathcal U}
 \longrightarrow
 \operatorname{Fun}\bigl(\Delta^1,\operatorname{RSys}^g_\infty\bigr).
\end{equation}
For a strict presentation \(x=(A,f)\), write
\(\mathbf K^{\mathrm{res}}(x)\) for its source vertex: the genuine
restriction system obtained by genuinely suspending and prolonging the
naive system
\begin{equation}\label{eq:module-envelope-source-system}
 n\longmapsto
 K\!\operatorname{End}^{(n)}
 \bigl({}_{T_x^{\mathrm{st}}}\mathcal P_x^{\mathrm{st}}/
       (\mathcal P_x^{\mathrm{st}})_{\operatorname{id}}\bigr),
\end{equation}
where the \(C_n\)-action rotates the \(n\) objects and lacing arrows and
the restriction isomorphisms identify a fixed repeated cycle with the
shorter cycle.  Write \(\mathbf E(x)\) for its target vertex, the genuine restriction system
of the graph coefficient of
\((\mathcal P_x^{\operatorname{st}},T_x^{\operatorname{st}})\).  The standard
CLMPZ diagram is formed from the input replacement \(Q_{\mathrm{red}}\) of
Remark~\ref{rem:standing-replacement-convention}, which also determines all
of its \(w_\bullet S_\bullet\)-nodes.
\end{definition}

\begin{theorem}
\label{thm:arrow-valued-morita-descent}
Let \(u\colon(A,f)\to(B,g)\) be a morphism of strict presentations lying
in \(W_{\mathrm{Mor}}\).  Then:
\begin{enumerate}
\item[(i)] the induced source map
\(\mathbf K^{\mathrm{res}}(A,f)\to\mathbf K^{\mathrm{res}}(B,g)\)
is an equivalence of genuine restriction systems;
\item[(ii)] the induced target map
\(\mathbf E(A,f)\to\mathbf E(B,g)\)
is an equivalence of genuine restriction systems;
\item[(iii)] consequently
\(\mathcal T_{\mathrm{CLMPZ}}^{\mathrm{st}}(u)\) is an equivalence in
the arrow category, and
\(\mathcal T_{\mathrm{CLMPZ}}^{\mathrm{st}}\) factors essentially
uniquely through the Morita localization:
\begin{equation}\label{eq:localized-clmpz-arrow-functor}
 \overline{\mathcal T}_{\mathrm{CLMPZ}}^{\mathrm{st}}\colon
 N\mathsf{Pres}_{\mathcal U}[W_{\mathrm{Mor}}^{-1}]
 \longrightarrow
 \operatorname{Fun}\bigl(\Delta^1,\operatorname{RSys}^g_\infty\bigr).
\end{equation}
Under the presentation-localization equivalence
\eqref{eq:endofunctor-diagram-rigidification}, this defines, for
every \(\mathcal U\)-small idempotent-complete stable \(\infty\)-category
\(\cC\) with exact endofunctor \(F\), an intrinsic arrow of genuine
restriction systems
\begin{equation}\label{eq:intrinsic-restriction-system-arrow}
 \mathbf K^{\mathrm{cyc}}(\cC,F)
 \longrightarrow
 \mathbf E(\cC,F),
\end{equation}
natural in \((\cC,F)\) and independent of the chosen presentation.
\end{enumerate}
\end{theorem}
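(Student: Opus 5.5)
The proof checks levelwise that both vertices of the arrow are Morita invariant, and then applies the universal property of the localization. Since equivalences in \(\operatorname{RSys}^g_\infty\) are detected at every genuine cyclic level, it suffices to show for each \(n\geq1\) that the maps \(\mathbf K^{\mathrm{res}}(A,f)_n\to\mathbf K^{\mathrm{res}}(B,g)_n\) and \(\mathbf E(A,f)_n\to\mathbf E(B,g)_n\) are genuine \(C_n\)-equivalences. For genuine \(C_n\)-spectra this amounts to checking geometric fixed points \(\Phi^{C_d}\) for all \(d\mid n\). The restriction-system structure maps \(\Phi^{C_r}E_{rs}\simeq E_s\) reduce \(\Phi^{C_d}\) at level \(n\) to the underlying spectrum at level \(n/d\). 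So in both cases it is enough to prove that the underlying non-equivariant maps are equivalences at every level. This reduction must be done naturally in \(u\), using that the structure maps of both systems are natural for strict morphisms, by Lemma~\ref{lem:clmpz-strict-functoriality}.

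For (i), the underlying spectrum at level \(r\) is \(K\operatorname{End}^{(r)}\) of the split envelope. By Proposition~\ref{thm:split-perfect-strictification}(4), it is naturally equivalent to \(K(\mathcal L_r(\Perf(A),\mathbf Lf_!))\). A Morita equivalence \(u\) induces an equivalence \((\Perf(A),\mathbf Lf_!)\simeq(\Perf(B),\mathbf Lg_!)\) of endofunctor diagrams, via part (3) of the same proposition. By the last sentence of Definition~\ref{def:cyclic-laced-category}, this gives a \(C_r\)-equivariant exact equivalence of cyclic laced categories, and hence an equivalence on \(K\)-theory. Genuine suspension and prolongation preserve levelwise underlying equivalences of restriction systems whose geometric fixed points are computed by the restriction maps, so (i) follows.

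For (ii), the underlying spectrum at level \(n\) is the cyclic bar construction \(\THH^{(n)}\) with \(n\) copies of the graph coefficient of \((\mathcal P^{\mathrm{st}}_x,T^{\mathrm{st}}_x)\), formed after \(Q_{\mathrm{red}}\). I would identify it with the coend \(\THH(\Perf(A),\cM_{\mathbf Lf_!}^{\circ n})\). The inputs are \textup{(H5)}, the spectral calculation cited from CLMPZ (Theorem~4.15 / Corollary~4.20), and the fact that \(j_x\) is a DK equivalence by Proposition~\ref{thm:split-perfect-strictification}(2). Proposition~\ref{lem:coefficient-composition} then rewrites this as \(\THH(\Perf(A),\cM_{(\mathbf Lf_!)^n})\). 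Coefficient \(\THH\) is invariant under an equivalence of laced pairs, which settles (ii).

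For (iii), these two facts make \(\mathcal T^{\mathrm{st}}_{\mathrm{CLMPZ}}(u)\) an equivalence in \(\operatorname{Fun}(\Delta^1,\operatorname{RSys}^g_\infty)\), because equivalences there are detected vertexwise. The universal property of \(N\mathsf{Pres}_{\mathcal U}[W_{\mathrm{Mor}}^{-1}]\) then gives the essentially unique factorization. Composing with the inverse of Proposition~\ref{prop:endofunctor-rigidification} produces the intrinsic arrow, natural in \((\cC,F)\) and independent of the presentation.

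The main obstacle is step (ii). The norm-diagonal restriction maps are built on point-set models, so the comparison must be made compatibly with the \(C_n\)-actions and with the cofibrant replacement \(Q_{\mathrm{red}}\). This is why I route the argument through underlying equivalences plus the restriction-system identification of geometric fixed points, rather than comparing fixed points directly. I would first check that \(Q_{\mathrm{red}}\) preserves Morita equivalences, using Theorem~\ref{thm:reduced-bar-replacement}, so that Dundas–McCarthy-type Morita invariance of cyclic bar constructions with bimodule coefficients applies to \(u\) directly.
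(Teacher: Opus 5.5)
Your proof is correct. For (i) and for the formal descent in (iii) it is the paper's argument: Lemma~\ref{lem:module-envelope-source-morita}, followed by vertexwise detection of equivalences and the universal property of localization. For (ii) you take a more direct route than the paper.

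You run the argument of geometric fixed points, unwinding and coefficient-\(\THH\) Morita invariance (in the paper, Theorem~\ref{thm:admissible-pair-morita-invariance}) directly on the replaced strictified pair \(Q_{\mathrm{red}}(\mathcal P_x^{\mathrm{st}},T_x^{\mathrm{st}})\). For this you only need that the single strict map \(Q_{\mathrm{red}}u_*^{\mathrm{st}}\) is a coefficient Morita equivalence. That follows from part~(3) of Proposition~\ref{thm:split-perfect-strictification}, the literal intertwining \(u_*^{\mathrm{st}}T_x^{\mathrm{st}}=T_y^{\mathrm{st}}u_*^{\mathrm{st}}\), and two-out-of-three with the augmentations, as in Proposition~\ref{prop:replacement-comparison-diagrams}.

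The paper instead proves invariance for the graph system of the original presentation. That is the functor \(\mathbf E^{\mathrm{adm}}_{\mathrm{gr}}\) built from \((Q^{\mathrm{cat}}A,Q^{\mathrm{cat}}A(Q^{\mathrm{cat}}f-,-))\) in Proposition~\ref{prop:graph-coefficient-functor}. It then transports the result to the CLMPZ target vertex along the pseudonatural coefficient-Morita zigzag of Lemma~\ref{lem:graph-perfect-morita-comparison}, rectified by Lemma~\ref{lem:pseudonatural-pair-morita-localization}.

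Your route avoids the zigzag and its rectification, and it suffices for the statement. To show the one map induced by \(u\) is an equivalence, a homotopy-commutative square is enough, so the merely pseudonatural behaviour of \(j_x\) (up to \(\gamma_u\)) causes no trouble there.

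What the paper's detour buys is something your argument does not supply. It gives a natural identification of the descended target \(\mathbf E(\cC,F)\) with \(\{\THH^{(n)}(A;A(f-,-))\}\) for an admissible presentation, i.e.\ with the functor of Theorem~\ref{thm:restriction-target-descent}. This is what the final sentences of the paper's proof establish. The later ghost identifications \(V_m\mathbf E_m\simeq\THH(\cC,\cM_{F^m})\) and the descent of the block-shift Frobenius rely on it, so you would have to supply this comparison separately if you continued past this theorem.
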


\subsection{Morita invariance of the target}
\label{subsec:target-morita}

We first formulate coefficient Morita invariance for stable
\(\infty\)-categories with bimodules; Appendix~
\ref{app:technical-coherence} compares this formulation with the
orthogonal-spectrum model used by CLMPZ.

For a small idempotent-complete stable \(\infty\)-category \(\cC\), let
\[
 \operatorname{Bimod}(\cC)
 :=\operatorname{Fun}^{\operatorname{ex}}
   (\cC^{\operatorname{op}}\otimes\cC,\Sp)
\]
be the \(\infty\)-category of categorical bimodules, exact separately in
both variables.  By \cite[Definition~2.2 and Proposition~2.3]{HNS}, extension
and restriction of bimodules along exact functors assemble these categories
into a bicartesian fibration over
\(\operatorname{Cat}^{\operatorname{perf}}_\infty\).  We write
\[
 \operatorname{Pair}^{\operatorname{perf}}_\infty
 :=\int_{\cC\in\operatorname{Cat}^{\operatorname{perf}}_\infty}
       \operatorname{Bimod}(\cC)
\]
for its total \(\infty\)-category; this is the category of laced categories
of \cite{HNS}.  Thus a morphism
\[
 (u,\eta)\colon(\cC,\cM)\longrightarrow(\cD,\mathcal N)
\]
consists of an exact functor \(u\colon\cC\to\cD\) and a map
\(\eta\colon\cM\to(u^{\operatorname{op}}\otimes u)^*\mathcal N\).

\begin{definition}
\label{def:coefficient-morita-equivalence}
A morphism \((u,\eta)\colon(\cC,\cM)\to(\cD,\mathcal N)\) is a
\emph{coefficient Morita equivalence} if \(u\) is an equivalence in the
Morita \(\infty\)-category of small stable categories and the adjoint map
\[
 (u^{\operatorname{op}}\otimes u)_!\cM\longrightarrow\mathcal N
\]
is an equivalence of \(\cD\)-bimodules.  For spectral-category models, all
Kan extensions in this definition are derived.
\end{definition}

For an admissible spectral pair \((A,M)\), let \(\cM_M\) be the categorical
bimodule obtained by extending \(M\) from representable modules and then
restricting to perfect modules.  We call a morphism of admissible spectral
pairs a coefficient Morita equivalence when its induced morphism
\[
 (\Perf(A),\cM_M)\longrightarrow(\Perf(B),\cM_N)
\]
satisfies Definition~\ref{def:coefficient-morita-equivalence}.

There is a natural comparison
\begin{equation}\label{eq:spectral-coefficient-coend-comparison}
 \THH(A;M)\simeq\THH(\Perf(A),\cM_M).
\end{equation}
Indeed, derived co-Yoneda identifies the restriction of \(\cM_M\) to
representables with \(M\).  On cofibrant spectral models, the Yoneda
embedding of Lemma~\ref{lem:many-object-perfect-model} is DK fully faithful
and its image generates the perfect modules under finite cofibers and
retracts.  Thus \cite[Theorem~4.15]{CLMPZ} identifies their coefficient
cyclic bars.  The latter bar realizes the coend defining
\(\THH(\Perf(A),\cM_M)\): both constructions preserve colimits in the
coefficient and agree on representable bimodules, as in
\cite[Definition~4.4, Example~4.5, and Remark~4.8]{HNS}.
Both the derived coefficient extension and the bar comparison are natural
in maps of spectral pairs.

\begin{lemma}
\label{rem:coefficient-morita-unfolded}
Let \((u,\eta)\colon(\cC,\cM)\to(\cD,\mathcal N)\) be a coefficient Morita
equivalence.  Transport along \(u\) is compatible with relative tensor
products.  In particular, for every \(s\geq1\) there is a natural
equivalence
\begin{equation}\label{eq:monoidal-base-change}
 (u^{\operatorname{op}}\otimes u)_!
 \bigl(\cM^{\circ s}\bigr)
 \simeq
 \mathcal N^{\circ s}.
\end{equation}
These equivalences are unital and coherently multiplicative in \(s\).
\end{lemma}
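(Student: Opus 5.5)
The plan is to show that extension of bimodules along \(u\) is a monoidal functor for relative tensor products whenever \(u\) is a Morita equivalence. Iterating this then gives \eqref{eq:monoidal-base-change}. First I reduce to the case where \(u\) is an equivalence of small idempotent-complete stable \(\infty\)-categories. In \(\operatorname{Cat}^{\mathrm{perf}}_\infty\) a Morita equivalence is an equivalence after idempotent completion, and bimodules are exact functors on \(\cC^{\mathrm{op}}\otimes\cC\). These extend uniquely to the idempotent completion and have the same coends. So, working in \(\operatorname{Pair}^{\mathrm{perf}}_\infty\), we may assume \(u\colon\cC\xrightarrow{\sim}\cD\) is an equivalence with inverse \(v\). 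For spectral models all Kan extensions are derived, and the comparison \eqref{eq:spectral-coefficient-coend-comparison} together with its naturality carries the statement over.

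The main step is a natural equivalence
\[
 (u^{\mathrm{op}}\otimes u)_!(\cM\circ\cM')\simeq
 (u^{\mathrm{op}}\otimes u)_!\cM\circ(u^{\mathrm{op}}\otimes u)_!\cM'
\]
for arbitrary \(\cC\)-bimodules \(\cM,\cM'\). When \(u\) is an equivalence, \((u^{\mathrm{op}}\otimes u)_!\) agrees with restriction along \(v^{\mathrm{op}}\otimes v\). The coend \(\int^{y\in\cD}\mathcal N(v x,vy)\otimes\mathcal N'(vy,vz)\) is then identified with \(\int^{y'\in\cC}\) by reindexing along the equivalence \(v\); this is invariance of coends under equivalence of indexing \(\infty\)-categories. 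Both sides commute with colimits in each variable, so the identification is natural. To obtain coherence rather than merely objectwise equivalences, I will state the result more structurally. Relative tensor product makes \(\operatorname{Bimod}(\cC)\) a monoidal \(\infty\)-category, and by \cite[Theorems~4.8.4.1 and~4.8.4.6]{LurieHA} it is equivalent to \(\operatorname{Fun}^L(\operatorname{Ind}\cC,\operatorname{Ind}\cC)\) under composition. In this Eilenberg--Watts model, extension along \(u\) becomes conjugation \(\Phi\mapsto \operatorname{Ind}(u)\circ\Phi\circ\operatorname{Ind}(u)^{-1}\). Conjugation by an equivalence is evidently a monoidal equivalence of endomorphism monoids, with unit \(\operatorname{id}\mapsto\operatorname{id}\).

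Coherent multiplicativity in \(s\) then follows formally. A monoidal functor preserves the free \(\mathbb N\)-indexed power maps \(s\mapsto \cM^{\circ s}\), that is, the \(E_1\)-map from the free monoid on one generator. Applying the monoidal functor to this map gives the equivalences \eqref{eq:monoidal-base-change}, compatible with the associativity maps \(\cM^{\circ s}\circ\cM^{\circ t}\simeq\cM^{\circ(s+t)}\) and with the unit at \(s=0\). Finally, \(\eta\) supplies \((u^{\mathrm{op}}\otimes u)_!\cM\simeq\mathcal N\) by the definition of a coefficient Morita equivalence. Composing with the \(s\)-th power of this equivalence gives the stated form \(\mathcal N^{\circ s}\).

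I expect the main obstacle to be the reduction from a general Morita equivalence to an honest equivalence of categories with compatible monoidal structure. One must check that Ind-extension and idempotent completion are themselves monoidal for relative tensor products, which is where the presentable Eilenberg--Watts identification is used. A further point is that in the spectral setting "derived" must be used consistently, so that the cofibrant replacements do not spoil the composition. I would handle the latter through the naturality of \eqref{eq:spectral-coefficient-coend-comparison}, reducing everything to the intrinsic statement above.
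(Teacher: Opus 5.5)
Your proposal is correct and follows essentially the paper's route: under the \(\infty\)-categorical Eilenberg--Watts equivalence, transport along \(u\) becomes conjugation by an equivalence, which is monoidal for composition. Formula \eqref{eq:monoidal-base-change} and its coherences in \(s\) are then the images of the associativity and unit constraints, composed with the equivalence \((u^{\operatorname{op}}\otimes u)_!\cM\simeq\mathcal N\). The reduction to an honest equivalence that you flag as the main obstacle is automatic here, because objects of \(\operatorname{Pair}^{\operatorname{perf}}_\infty\) are idempotent-complete; the paper uses this same observation in the proof of Theorem~\ref{thm:coefficient-thh-morita-invariance}.
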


\begin{proof}
Under the \(\infty\)-categorical Eilenberg--Watts equivalence, bimodules are
colimit-preserving module functors and relative tensor product is their
composition; see
\cite[Theorems~4.8.4.1 and~4.8.4.6 and Remark~4.8.4.9]{LurieHA}.
Conjugation by the equivalence of module categories induced by \(u\)
therefore gives a monoidal equivalence of bimodule \(\infty\)-categories.
Formula \eqref{eq:monoidal-base-change} and all of its coherences are
the images of the associativity and unit constraints for composition.
\end{proof}

\begin{theorem}
\label{thm:coefficient-thh-morita-invariance}
Coefficient topological Hochschild homology defines a functor
\[
 \THH\colon
 \operatorname{Pair}^{\operatorname{perf}}_\infty\longrightarrow\Sp
\]
which sends coefficient Morita equivalences to equivalences.  More
precisely, if \((u,\eta)\) is a coefficient Morita equivalence, then the
canonical map
\begin{equation}\label{eq:coefficient-thh-morita-invariance}
 \THH(\cC,\cM)\longrightarrow\THH(\cD,\mathcal N)
\end{equation}
is an equivalence, naturally in commutative squares of coefficient Morita
equivalences.  For each fixed \(s\geq1\), applying \(\THH\) to the
base-change equivalence
\eqref{eq:monoidal-base-change} gives a commutative comparison square
for the \(s\)-fold coefficients.
\end{theorem}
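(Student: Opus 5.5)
We will treat \(\THH\) on \(\operatorname{Pair}^{\operatorname{perf}}_\infty\) as a composite of two functorial operations, and reduce Morita invariance to the trace property of the coend.

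\emph{Functoriality.} A morphism \((u,\eta)\colon(\cC,\cM)\to(\cD,\mathcal N)\) induces a map of coends
\[
 \int^{x\in\cC}\cM(x,x)\xrightarrow{\ \eta\ }\int^{x\in\cC}\mathcal N(ux,ux)\longrightarrow\int^{y\in\cD}\mathcal N(y,y),
\]
where the second arrow is the canonical comparison for restriction along \(u\). To make this coherent, we use that \(\operatorname{Pair}^{\operatorname{perf}}_\infty\) is the total category of the bicartesian fibration of \cite[Proposition~2.3]{HNS}. Equivalently, \(\THH(\cC,\cM)\) is the composite of the relative tensor product \(\cM\otimes_{\cC^{\operatorname{op}}\otimes\cC}\Map_\cC\) with the identity bimodule. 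The map above then factors as
\[
 \THH(\cC,\cM)\simeq\THH\bigl(\cD,(u^{\operatorname{op}}\otimes u)_!\cM\bigr)\longrightarrow\THH(\cD,\mathcal N).
\]
The first equivalence holds for every exact \(u\). Indeed, by the co-Yoneda/Kan extension formula,
\[
 \int^{y}\bigl((u^{\operatorname{op}}\otimes u)_!\cM\bigr)(y,y)\simeq\int^{x}\cM(x,x),
\]
and this identification is the trace property \(\THH(\cC,P\circ Q)\simeq\THH(\cD,Q\circ P)\) applied to the bimodules \(P=\cD(u-,-)\) and \(Q=\cD(-,u-)\), written in Eilenberg--Watts form \cite[Theorem~4.8.4.6]{LurieHA}. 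The second arrow is \(\THH(\cD,-)\) applied to the adjoint map \((u^{\operatorname{op}}\otimes u)_!\cM\to\mathcal N\). Both pieces are natural: the first because left Kan extension is the cocartesian transport of the fibration, the second because \(\THH(\cD,-)\) is a functor on \(\operatorname{Bimod}(\cD)\).

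\emph{Morita invariance.} Suppose \((u,\eta)\) is a coefficient Morita equivalence. Then the adjoint map is an equivalence by Definition~\ref{def:coefficient-morita-equivalence}, so the second arrow is an equivalence. The first arrow is an equivalence for every exact \(u\), by the trace property. Hence \eqref{eq:coefficient-thh-morita-invariance} is an equivalence. Here the Morita hypothesis on \(u\) is used only to place \(\cM\) and \(\mathcal N\) in equivalent bimodule categories, so the transported coefficient is really \(\mathcal N\). For commutative squares of such equivalences, naturality follows from the functoriality established in the previous step.

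\emph{The \(s\)-fold coefficients.} For fixed \(s\), we apply the same factorization to \((u,\eta^{\circ s})\colon(\cC,\cM^{\circ s})\to(\cD,\mathcal N^{\circ s})\). By Lemma~\ref{rem:coefficient-morita-unfolded}, the adjoint map becomes the base-change equivalence \eqref{eq:monoidal-base-change}. Applying \(\THH(\cD,-)\) to it and precomposing with the trace-property identification yields the comparison square: its top edge is \(\THH(\cC,\cM^{\circ s})\to\THH(\cD,\mathcal N^{\circ s})\) and its bottom edge is \(\THH\) of \eqref{eq:monoidal-base-change}.

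The main obstacle is the coherent naturality of the first (trace) identification as \(u\) varies. A pointwise identification is easy; the difficulty is upgrading it to a natural transformation on the whole total category. I would first try to realize \(\THH\) as the composite \(\operatorname{Pair}^{\operatorname{perf}}_\infty\to\operatorname{Pr}^L\)-valued module functors \(\to\Sp\) given by the trace in the symmetric monoidal \((\infty,2)\)-category of presentable stable categories, that is, the trace of a \(\operatorname{Ind}\cC\)-linear endofunctor. Naturality would then be inherited from the known functoriality of traces along lax morphisms. Failing that, I would reduce to the spectral model via \eqref{eq:spectral-coefficient-coend-comparison} and invoke the Morita invariance of \cite[Theorem~4.15]{CLMPZ} together with BGT localization.
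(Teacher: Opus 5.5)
There is a genuine error in your first step. You claim that the transport map \(\THH(\cC,\cM)\to\THH\bigl(\cD,(u^{\operatorname{op}}\otimes u)_!\cM\bigr)\) is an equivalence for \emph{every} exact \(u\). That is false. Expanding the Kan extension and applying co-Yoneda in the \(\cD\)-variable gives
\[
 \int^{y\in\cD}\bigl((u^{\operatorname{op}}\otimes u)_!\cM\bigr)(y,y)
 \simeq
 \int^{x,x'\in\cC}\Map_{\cD}(ux',ux)\otimes\cM(x,x'),
\]
which is \(\THH(\cC,\cM\circ u^*\cD)\), where \(u^*\cD(x',x)=\Map_{\cD}(ux',ux)\).

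Cyclic invariance with \(P=\cD(u-,-)\) and \(Q=\cD(-,u-)\) only rotates \(P\circ Q=u^*\cD\) around the trace. It never replaces it by the identity bimodule of \(\cC\). The two sides agree exactly when \(\Map_{\cC}(x',x)\to\Map_{\cD}(ux',ux)\) is an equivalence, i.e.\ when \(u\) is fully faithful. For a counterexample, take the zero functor \(\Perf(\mathbb S)\to0\) and let \(\cM\) be the identity bimodule: the source is \(\THH(\mathbb S)\simeq\mathbb S\) and the target is \(0\).

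So your remark that the Morita hypothesis on \(u\) is used ``only to place \(\cM\) and \(\mathcal N\) in equivalent bimodule categories'' is backwards. Full faithfulness of \(u\) is precisely what makes the first arrow an equivalence, and your argument never invokes it there.

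The repair is the paper's argument, and it also removes the coherence problem you identify as the main obstacle. Because \(\cC\) and \(\cD\) are idempotent-complete, a Morita equivalence \(u\) is an equivalence of stable \(\infty\)-categories. Hence the cocartesian edge \((\cC,\cM)\to\bigl(\cD,(u^{\operatorname{op}}\otimes u)_!\cM\bigr)\) lies over an equivalence and is itself an equivalence in \(\operatorname{Pair}^{\operatorname{perf}}_\infty\). Composing with the fiberwise equivalence \((u^{\operatorname{op}}\otimes u)_!\cM\simeq\mathcal N\) shows that \((u,\eta)\) is an equivalence in the total category.

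The HNS coend is already a functor on \(\operatorname{Pair}^{\operatorname{perf}}_\infty\) (\cite[Definition~4.4]{HNS}). It therefore inverts \((u,\eta)\), and naturality in commutative squares comes for free. You need neither the hand-built comparison map, nor an \((\infty,2)\)-categorical trace, nor a detour through spectral models.

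The \(s\)-fold square is then just this functor applied to the coherently multiplicative transport equivalences of Lemma~\ref{rem:coefficient-morita-unfolded}. That is essentially what your last step does, once the first step is corrected.
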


\begin{proof}
Harpaz--Nikolaus--Saunier define coefficient topological Hochschild homology
on the \(\infty\)-category of laced categories by the coend
\begin{equation}\label{eq:hns-coefficient-thh-coend}
 \THH(\cC,\cM)
 =\int^{x\in\cC}\cM(x,x);
\end{equation}
see \cite[Definition~4.4]{HNS}.  In particular, this construction is a
functor on \(\operatorname{Pair}^{\operatorname{perf}}_\infty\).

Because \(\cC\) and \(\cD\) are idempotent-complete, a Morita equivalence
\(u\colon\cC\to\cD\) is an equivalence of stable \(\infty\)-categories.  In
the bicartesian fibration of bimodules, the morphism \((u,\eta)\) factors as
the cocartesian transport
\[
 (\cC,\cM)\longrightarrow
 \bigl(\cD,(u^{\operatorname{op}}\otimes u)_!\cM\bigr)
\]
followed by the adjoint coefficient map to \((\cD,\mathcal N)\).  Both
arrows are equivalences under the hypotheses of
Definition~\ref{def:coefficient-morita-equivalence}; hence
\((u,\eta)\) is an equivalence in
\(\operatorname{Pair}^{\operatorname{perf}}_\infty\).  The functor
\eqref{eq:hns-coefficient-thh-coend} therefore sends it to
\eqref{eq:coefficient-thh-morita-invariance}, naturally in
commutative squares.  Applying the same functor to the coherently
multiplicative transport equivalences of
Lemma~\ref{rem:coefficient-morita-unfolded} gives the comparison for
the \(s\)-fold coefficients.
\end{proof}

\begin{lemma}
\label{lem:coefficient-thh-presentation}
Let \((A,f)\) be an admissible strict spectral presentation of an exact
endofunctor pair \((\cC,F)\), and let \(m\geq1\).  There is a natural
equivalence
\[
 \THH\bigl(A;A(f^m-,-)\bigr)
 \simeq
 \THH(\cC,\cM_{F^m}).
\]
On \(\pi_0\), the cyclic-bar zero-simplex class of an arrow
\(f^ma\to a\) corresponds to the zero-simplex class of its laced image.
\end{lemma}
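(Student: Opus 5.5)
The plan is to factor the comparison through the spectral coefficient comparison \eqref{eq:spectral-coefficient-coend-comparison}. Two pieces are needed: an identification of the derived extension of the spectral bimodule \(A(f^m-,-)\) to perfect modules with \(\cM_{F^m}\), and an argument that the resulting equivalence carries cyclic-bar zero-simplices to laced zero-simplices.

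First I would apply \eqref{eq:spectral-coefficient-coend-comparison} to the admissible pair \((A,M)\) with \(M=A(f^m-,-)\). This gives a natural equivalence
\[
 \THH\bigl(A;A(f^m-,-)\bigr)\simeq\THH(\Perf(A),\cM_M).
\]
Admissibility guarantees that the mapping spectra and the values of \(M\) are cofibrant, so no further replacement is needed.

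Next I would identify \(\cM_M\) with \(\cM_{F^m}\) on \(\Perf(A)\simeq\cC\). By \textup{(H2)}, the functor \(F\) corresponds to \(\mathbf Lf_!\), so \(F^m\) corresponds to \(\mathbf L(f^m)_!\simeq(\mathbf Lf_!)^m\). On representables, derived co-Yoneda gives
\[
 \cM_{F^m}(h_a,h_b)=\Map\bigl(\mathbf L(f^m)_!h_a,h_b\bigr)=\Map(h_{f^ma},h_b)\simeq A(f^ma,b)=M(a,b).
\]
Both \(\cM_M\) and \(\cM_{F^m}\) are separately exact in each variable, and \(\Perf(A)\) is generated from representables under finite colimits and retracts. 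Hence the left Kan extension characterizing \(\cM_M\) agrees with \(\cM_{F^m}\). Equivalently, the pair morphism \((A,M)\to(\Perf(A),\cM_{F^m})\) is a coefficient Morita equivalence, and Theorem~\ref{thm:coefficient-thh-morita-invariance} finishes this step. Naturality in \((A,f)\) follows because every step is natural in strict maps of presentations.

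I could alternatively argue via \textup{(H5)} and Proposition~\ref{lem:coefficient-composition} using \(\cM_F^{\circ m}\). For \(A(f^m-,-)\), however, the direct representable computation is simpler. It avoids the derived power \(X_f^{\mathbb L\odot m}\) entirely, since here the coefficient is the single-slot graph of \(f^m\).

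For the \(\pi_0\) claim, I would take an arrow \(\alpha\colon f^ma\to a\) in \(A\), viewed as a point of \(\Omega^\infty M(a,a)\). Its cyclic-bar zero-simplex maps under \cite[Theorem~4.15]{CLMPZ} to the zero-simplex of the representable object \(h_a\) with coefficient element \(\alpha\in\cM_M(h_a,h_a)\). The HNS coend receives each \(\cM(x,x)\) through its canonical coprojection. By \cite[Definition~4.4, Example~4.5]{HNS}, the laced trace of a laced object \((x,\beta)\) on \(\pi_0\) is exactly the image of \(\beta\) under this coprojection. Transporting along the identification of \(\cM_M\) with \(\cM_{F^m}\) sends \(\alpha\) to the laced object \((h_a,\alpha)\), which proves the claim.

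The main obstacle is the compatibility of the zero-simplex inclusion with both comparison equivalences. Each of the bar-to-coend comparison and the coefficient identification must commute with the inclusion of the degree-zero term \(\bigvee_a M(a,a)\). I would check this on representables, where both maps are the identity on \(M(a,a)\) after co-Yoneda.
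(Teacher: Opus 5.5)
Your proposal is correct and takes essentially the paper's route. The paper identifies the derived extension of \(A(f^m-,-)\) with \(\cM_{F^m}\) using the enriched isomorphism \((f^m)_!y_A\cong y_Af^m\) of Lemma~\ref{lem:many-object-perfect-model}; this is exactly your representable co-Yoneda computation, and that lemma is what makes it natural in \(a,b\) as a bimodule map. The paper then applies \eqref{eq:spectral-coefficient-coend-comparison}, transports along \(\Perf(A)\simeq\cC\) by Theorem~\ref{thm:coefficient-thh-morita-invariance}, and reads off the \(\pi_0\) assertion from the Yoneda comparison in simplicial degree zero. Your remark about the laced trace in the last step is not needed, since the statement concerns only the zero-simplex class of the laced image.
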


\begin{proof}
The enriched comparison \((f^m)_!y_A\cong y_Af^m\) of
Lemma~\ref{lem:many-object-perfect-model} identifies the derived extension
of \(A(f^m-,-)\) with \(\cM_{F^m}\).  Apply
\eqref{eq:spectral-coefficient-coend-comparison} and transport along
\(\Perf(A)\simeq\cC\) using
Theorem~\ref{thm:coefficient-thh-morita-invariance}.  In
simplicial degree zero the Yoneda comparison sends \(f^ma\to a\) to the
corresponding point of \(\Map(F^ma,a)\), proving the last assertion.
\end{proof}

The point-set construction needed to retain the genuine cyclic actions is
Proposition~\ref{prop:admissible-pair-restriction-system} in
Appendix~\ref{app:pair-replacement-details}: it makes
\(\{\THH^{(n)}(A;M)\}_{n\geq1}\) functorial in admissible pairs and
compatible with all restriction arrows.

\begin{theorem}
\label{thm:admissible-pair-morita-invariance}
A coefficient Morita equivalence
\((u,\eta)\colon(A,M)\to(B,N)\) of admissible spectral pairs induces an
equivalence
\[
 \{\THH^{(n)}(A;M)\}_{n\geq1}
 \xrightarrow{\ \simeq\ }
 \{\THH^{(n)}(B;N)\}_{n\geq1}
\]
of genuine restriction systems.
\end{theorem}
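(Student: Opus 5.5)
Since equivalences in \(\operatorname{RSys}^g_\infty\) are detected at every genuine cyclic level, it suffices to show that for each \(n\geq1\) the map
\[
 \THH^{(n)}(A;M)\longrightarrow\THH^{(n)}(B;N)
\]
is a genuine \(C_n\)-equivalence.  We check this on geometric fixed points \(\Phi^{C_d}\) for every \(d\mid n\), rather than on categorical fixed points, since the family \(\{\Phi^{C_d}\}_{d\mid n}\) jointly detects genuine equivalences of \(C_n\)-spectra.  The point-set model is that of Proposition~\ref{prop:admissible-pair-restriction-system}: it makes the cyclic bars functorial in admissible pairs and compatible with the restriction arrows.  The restriction system structure then supplies, for \(n=ds\), equivalences
\[
 \Phi^{C_d}\THH^{(n)}(A;M)\simeq\THH^{(s)}(A;M),
\]
and these are natural in maps of admissible pairs.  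This reduction uses that the source is computed by left-derived geometric fixed points, which holds because admissible pairs are pointwise cofibrant.  Under these identifications, the map on \(\Phi^{C_d}\) is the map on \(\THH^{(s)}\).  So it suffices to prove that the underlying map of \(\THH^{(s)}\) is an equivalence of spectra for every \(s\geq1\).

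For the underlying level, we use the cyclic invariance of the \(s\)-fold cyclic bar: the underlying spectrum of \(\THH^{(s)}(A;M)\) is equivalent to \(\THH(A;M^{\mathbb L\odot s})\), the one-fold coefficient cyclic bar of the derived \(s\)-fold tensor power.  This is the unwinding equivalence of \cite[Proposition~7.6]{CLMPZ}, applied to admissible pairs and natural in them.  By the comparison \eqref{eq:spectral-coefficient-coend-comparison}, this equals \(\THH(\Perf(A),\cM_M^{\circ s})\).  Here we use that derived extension from representables carries derived relative tensor powers of \(M\) to the composition powers \(\cM_M^{\circ s}\), since both preserve colimits in each variable and agree on representables.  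Lemma~\ref{rem:coefficient-morita-unfolded} gives
\[
 (u^{\mathrm{op}}\otimes u)_!\bigl(\cM_M^{\circ s}\bigr)\simeq\cM_N^{\circ s}.
\]
Hence \((u,\eta^{\circ s})\) is again a coefficient Morita equivalence.  Theorem~\ref{thm:coefficient-thh-morita-invariance}, in its \(s\)-fold form, then shows that the induced map on \(\THH\) is an equivalence.

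The main obstacle is compatibility: the composite of these identifications must agree with the map actually induced by \((u,\eta)\) on the point-set cyclic bars.  This requires the unwinding, the derived-extension comparison, and the base-change square of Theorem~\ref{thm:coefficient-thh-morita-invariance} to be natural in maps of admissible pairs and compatible with one another.  To handle it, we would write every equivalence as a zigzag of natural transformations of functors on \(\mathsf{Pair}^{\operatorname{adm}}\), using the functoriality from Appendix~\ref{app:pair-replacement-details}.  It is then enough to check that the square commutes on representable coefficients and in simplicial degree zero.  Colimit preservation in the coefficient then extends the commutativity to all coefficients.
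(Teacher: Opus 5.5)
Your proposal is correct and follows essentially the same route as the paper. You detect the equivalence on geometric \(C_d\)-fixed points for all \(d\mid n\), pass through the CLMPZ identification and unwinding to \(\THH(A;M^{\odot_A s})\to\THH(B;N^{\odot_B s})\), replace ordinary powers by derived ones, and conclude by the monoidal base change of Lemma~\ref{rem:coefficient-morita-unfolded} together with Theorem~\ref{thm:coefficient-thh-morita-invariance}. The compatibility issue in your last paragraph is handled in the paper simply by naturality of the unwinding, the coend comparison, and the norm diagonal in maps of admissible pairs, so your more explicit zigzag argument is an elaboration rather than an extra needed step.
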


\begin{proof}
Fix \(n\geq1\), let \(C_d\leq C_n\), and write \(n=ds\).
Proposition~7.4 and the unwinding equivalence of Proposition~7.6 in
\cite{CLMPZ} identify the induced geometric \(C_d\)-fixed-point map, after
forgetting the residual action, with
\[
 \THH\bigl(A;M^{\odot_A s}\bigr)
 \longrightarrow
 \THH\bigl(B;N^{\odot_B s}\bigr).
\]
Lemma~\ref{lem:pointwise-cofibrant-graph-multibar} identifies these ordinary
multibars with the corresponding derived coefficient powers.
Under~\eqref{eq:spectral-coefficient-coend-comparison},
Lemma~\ref{rem:coefficient-morita-unfolded} identifies the target power with
the Morita transport of the source power.
Theorem~\ref{thm:coefficient-thh-morita-invariance} therefore makes this map an
equivalence.  This holds for every subgroup, hence at every genuine cyclic
level.  Naturality of the norm diagonal makes the level maps a morphism of
restriction systems, proving the result.
\end{proof}

For graph coefficients we use the global pointwise-cofibrant replacement
\[
 Q^{\mathrm{cat}}\colon\operatorname{SpCat}^{\mathcal O}
 \longrightarrow\operatorname{SpCat}^{\mathcal O}
\]
of \cite[Theorem~3.11]{CLMPZ}.  It is an ordinary functor with a natural
pointwise-equivalence \(Q^{\mathrm{cat}}\to\operatorname{id}\).  Applied to
the strict diagram \((A,f)\), it gives the strict diagram
\((Q^{\mathrm{cat}}A,Q^{\mathrm{cat}}f)\); its graph coefficient is rebuilt
from the replaced mapping spectra.

\begin{proposition}
\label{prop:graph-coefficient-functor}
There is a functor
\begin{equation}\label{eq:strict-graph-restriction-system-functor}
 \mathbf E_{\operatorname{gr}}^{\operatorname{adm}}\colon
 \mathsf{Pres}_{\mathcal U}
 \longrightarrow\operatorname{RSys}^{g}_{\infty,\mathcal W}
\end{equation}
obtained from the graph pair
\[
 \bigl(Q^{\mathrm{cat}}A,\,
 Q^{\mathrm{cat}}A(Q^{\mathrm{cat}}f-,-)\bigr).
\]
It sends every morphism of
\(W_{\mathrm{Mor}}\) to an equivalence.  If \((A,f)\) is already admissible,
its value is represented by
\[
 \bigl\{\THH^{(n)}(A;A(f-,-))\bigr\}_{n\geq1}.
\]
\end{proposition}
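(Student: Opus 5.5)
The proof has three parts: construct the functor, show it inverts Morita equivalences, and identify its value on admissible inputs.

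\emph{The functor.} First I check that the graph pair
\[
 x=(A,f)\longmapsto\bigl(Q^{\mathrm{cat}}A,\,Q^{\mathrm{cat}}A(Q^{\mathrm{cat}}f-,-)\bigr)
\]
is an ordinary functor \(\mathsf{Pres}_{\mathcal U}\to\mathsf{Pair}^{\operatorname{adm}}\).
\begin{itemize}
\item \(Q^{\mathrm{cat}}\) is an ordinary functor, so a strict morphism \(u\) with \(uf=gu\) gives \(Q^{\mathrm{cat}}u\,Q^{\mathrm{cat}}f=Q^{\mathrm{cat}}g\,Q^{\mathrm{cat}}u\) on the nose.
\item The bimodule map on graph coefficients is then the map on mapping spectra \(Q^{\mathrm{cat}}A(Q^{\mathrm{cat}}f a,b)\to Q^{\mathrm{cat}}B(Q^{\mathrm{cat}}g\,ua,ub)\).
\item Cofibrancy of the values holds because \(Q^{\mathrm{cat}}\) is pointwise cofibrant.
\item In the pointed case, I first apply the zero reduction of Lemma~\ref{lem:zero-reduction} (or \(Q_{\mathrm{red}}\)) so the chosen zero object is preserved strictly.
\end{itemize}
Composing with the point-set functor of Proposition~\ref{prop:admissible-pair-restriction-system}, \((A,M)\mapsto\{\THH^{(n)}(A;M)\}_n\), and passing to \(\operatorname{RSys}^g_{\infty,\mathcal W}\) defines \(\mathbf E_{\operatorname{gr}}^{\operatorname{adm}}\). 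This is the restriction system of genuine cyclic levels with norm-diagonal restriction arrows.

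\emph{Morita invariance.} Let \(u\in W_{\mathrm{Mor}}\). By Theorem~\ref{thm:admissible-pair-morita-invariance}, it suffices to show that the induced morphism of graph pairs is a coefficient Morita equivalence in the sense of Definition~\ref{def:coefficient-morita-equivalence}.
\begin{itemize}
\item The underlying functor \(Q^{\mathrm{cat}}A\to Q^{\mathrm{cat}}B\) is Morita, since \(Q^{\mathrm{cat}}\to\operatorname{id}\) is a pointwise equivalence and \(u\) is Morita.
\item For the coefficient, I use the enriched comparison \(f_!y_A\cong y_Af\) of Lemma~\ref{lem:many-object-perfect-model}. This identifies the derived extension of \(A(f-,-)\) to \(\Perf(A)\) with the graph bimodule \(\cM_{\mathbf Lf_!}\), and likewise for \(B\).
\item So the adjoint map \((u^{\operatorname{op}}\otimes u)_!\cM_{\mathbf Lf_!}\to\cM_{\mathbf Lg_!}\) is the comparison \(\Map(\mathbf Lf_!x,y)\to\Map(u_!\mathbf Lf_!x,u_!y)\simeq\Map(\mathbf Lg_!u_!x,u_!y)\), transported along the equivalence \(u_!\colon\Perf(A)\simeq\Perf(B)\).
\item This is an equivalence because \(u_!\) is fully faithful and essentially surjective, and \(u_!f_!\simeq g_!u_!\) follows from \(uf=gu\).
\end{itemize}

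\emph{Admissible inputs.} If \((A,f)\) is already admissible, the counit \(Q^{\mathrm{cat}}A\to A\) is a DK equivalence commuting strictly with the endofunctors. It therefore induces a morphism of admissible graph pairs that is a coefficient Morita equivalence by the argument above. Theorem~\ref{thm:admissible-pair-morita-invariance} then gives the stated representative \(\{\THH^{(n)}(A;A(f-,-))\}\).

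The main obstacle is the middle step: verifying that the derived extension of the point-set graph coefficient is the categorical graph bimodule \emph{naturally} in strict morphisms. Without that naturality, the coefficient-Morita comparison is only objectwise. I would handle it through the naturality of the Yoneda comparison in Lemma~\ref{lem:many-object-perfect-model} together with the naturality of \eqref{eq:spectral-coefficient-coend-comparison}.
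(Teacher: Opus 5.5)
Your proposal is correct and follows essentially the paper's proof. Functoriality and admissibility come from \(Q^{\mathrm{cat}}\). A coefficient Morita equivalence of graph pairs comes from the strict intertwining \(uf=gu\), so \(\mathbf Lu_!\mathbf Lf_!\simeq\mathbf Lg_!\mathbf Lu_!\). Then Theorem~\ref{thm:admissible-pair-morita-invariance} is applied, and applied again to the augmentation \(Q^{\mathrm{cat}}A\to A\) for admissible inputs.

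The differences are minor:
\begin{enumerate}
\item The paper runs the coefficient argument for \(u\) itself, after noting via Lemma~\ref{lem:orthogonal-morita-model}(1) that its orthogonal realization is still Morita. It then transfers the result to \(Q^{\mathrm{cat}}u\) by two-out-of-three with the augmentations. Your argument can instead be applied to \(Q^{\mathrm{cat}}u\) directly.
\item The graph pairs here are unpointed, so your zero-reduction aside is unnecessary.
\item Invariance is a property of each morphism separately, so the naturality you flag as the main obstacle is not actually needed.
\end{enumerate}
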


\begin{proof}
Functoriality follows from \cite[Theorem~3.11 and Example~8.7]{CLMPZ}.
The cited theorem also makes \(Q^{\mathrm{cat}}A\) and its graph coefficient
pointwise cofibrant, so the displayed graph pair is admissible in the sense
of \S\ref{subsec:twistings-graph-coefficients}.
Lemma~\ref{lem:pointwise-cofibrant-graph-multibar} identifies its
successive ordinary bar powers with the derived coefficient powers, while
\cite[Proposition~7.4 and Example~8.7]{CLMPZ} supplies the restriction-system
structure.

Let
\(u\colon(A,f)\to(B,g)\) lie in \(W_{\mathrm{Mor}}\).  By
Lemma~\ref{lem:orthogonal-morita-model}(1), its orthogonal realization,
again denoted \(u\), is a Morita equivalence.  Derived extension of scalars
\[
 \mathbf Lu_!\colon\operatorname{Mod}_A\xrightarrow{\ \simeq\ }
 \operatorname{Mod}_B
\]
is an equivalence, and the equality \(uf=gu\), which holds on the nose,
induces a natural equivalence
\[
 \mathbf Lu_!\,\mathbf Lf_!
 \simeq
 \mathbf Lg_!\,\mathbf Lu_!.
\]
Under the \(\infty\)-categorical Eilenberg--Watts equivalence of
\cite[Theorems~4.8.4.1 and~4.8.4.6]{LurieHA}, conjugation by
\(\mathbf Lu_!\) therefore identifies the graph bimodules:
\[
 (u^{\operatorname{op}}\otimes u)_!A(f-,-)
 \simeq
 B(g-,-).
\]
Thus the graph-pair morphism is a coefficient Morita equivalence.  The
natural map \(Q^{\mathrm{cat}}A\to A\) is also a coefficient Morita
equivalence on graph pairs, naturally in \((A,f)\).  The natural square formed
by these augmentations, \(Q^{\mathrm{cat}}u\), and \(u\), together with
two-out-of-three in
\(\operatorname{Pair}^{\operatorname{perf}}_\infty\), shows that the graph
pair map induced by \(Q^{\mathrm{cat}}u\) is a coefficient Morita
equivalence.  Applying
Theorem~\ref{thm:admissible-pair-morita-invariance} to this admissible
coefficient Morita equivalence proves the invariance assertion.  Applying
it to the augmentation when \((A,f)\) is admissible proves the final assertion.
\end{proof}

\begin{lemma}
\label{lem:graph-perfect-morita-comparison}
For every strict presentation \(x=(A,f)\), spectral Yoneda, zero reduction,
and the strict morphism \(j_x\) of
Proposition~\ref{thm:split-perfect-strictification} give a
pseudonatural zigzag of coefficient Morita equivalences
\[
 (A,A(f-,-))
 \longrightarrow
 (\mathcal P_A,\mathcal P_A(f_!-,-))
 \xleftarrow{\ i_{\mathcal P_A}\ }
 (\mathcal P_A^\circ,
  \mathcal P_A^\circ((f_!)^\circ-,-))
 \xrightarrow{\ j_x\ }
 (\mathcal P_x^{\operatorname{st}},
  \mathcal P_x^{\operatorname{st}}(T_x^{\operatorname{st}}-,-)).
\]
The first arrow is induced by the Yoneda embedding, the middle arrow by the
inclusion \(i_{\mathcal P_A}\colon\mathcal P_A^\circ\to\mathcal P_A\), and
the last by the strict morphism of twistings \(j_x\).  Its image in the
Morita localization is a natural equivalence between the first and last graph
pairs.
\end{lemma}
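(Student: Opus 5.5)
The plan is to treat the three arrows separately, checking for each that it is a morphism of graph pairs and that it satisfies Definition~\ref{def:coefficient-morita-equivalence}: the underlying functor is a Morita equivalence, and the adjoint coefficient map is an equivalence of bimodules after derived extension. After that, the arrows are assembled pseudonaturally in \(x\), and we pass to the Morita localization.

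\emph{First arrow.} The Yoneda embedding \(y_A\colon A\to\mathcal P_A\) of Lemma~\ref{lem:many-object-perfect-model} is DK fully faithful. Its image generates \(\mathcal P_A\) under finite cofibers and retracts, so it is a Morita equivalence. The coefficient map \(A(fa,b)\to\mathcal P_A(f_!y_Aa,y_Ab)\) is obtained from the enriched isomorphism \(f_!y_A\cong y_Af\) followed by Yoneda, and it is an equivalence on representables. Derived co-Yoneda then identifies \((y_A^{\mathrm{op}}\otimes y_A)_!A(f-,-)\) with \(\mathcal P_A(f_!-,-)\) on all perfect modules, since both sides are exact in each variable and agree on generators. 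This isomorphism \(f_!y_A\cong y_Af\) is only coherent up to isomorphism in \(f\), and this is where the arrow becomes pseudonatural rather than strictly natural.

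\emph{Middle and last arrows.} The inclusion \(i_{\mathcal P_A}\) of Lemma~\ref{lem:zero-reduction} is fully faithful, and it differs from the identity only at the chosen zero object. The zero objects are identified by an equivalence after localization, so \(i_{\mathcal P_A}\) is a Morita equivalence. The coefficient \(\mathcal P_A^\circ((f_!)^\circ-,-)\) is literally the restriction of \(\mathcal P_A(f_!-,-)\) away from zero, and the zero values on both sides are contractible, so the adjoint map is an equivalence. For \(j_x\), Proposition~\ref{thm:split-perfect-strictification}(2) says it is fully faithful and essentially surjective by isomorphisms. Because \(j_x\) is a strict morphism of twistings, \(T^{\mathrm{st}}_xj_x=j_xT_x^\circ\), and therefore the graph coefficients correspond isomorphically under \(j_x\); this is a coefficient Morita equivalence.

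\emph{Naturality.} For a morphism \(u\colon x\to y\), the comparison squares for the first arrow commute up to the coherent isomorphisms \(u_!y_A\cong y_Bu\) and \(u_!f_!\cong g_!u_!\). Those for the last arrow commute up to the compositors rectified in Appendix~\ref{app:split-strictification}. Together these give a pseudonatural zigzag. I would then apply Lemma~\ref{lem:pseudonatural-pair-morita-localization} to rectify this pseudonatural zigzag to a strict zigzag of functors after localization. Since every component is a coefficient Morita equivalence, Theorem~\ref{thm:coefficient-thh-morita-invariance} and the fact that such maps are equivalences in \(\operatorname{Pair}^{\mathrm{perf}}_\infty\) let us invert the backward arrow, which yields a natural equivalence between the first and last graph pairs.

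The main obstacle I expect is coherence: the 2-cells \(u_!f_!\cong g_!u_!\), the Yoneda isomorphisms, and the compositors must satisfy the cocycle conditions needed to form a pseudonatural transformation in the first place. My first step would be to verify that all of these 2-cells are the canonical ones arising from the normal pseudofunctor of Proposition~\ref{prop:perfect-module-pseudofunctor}, so that coherence follows from its pseudofunctoriality.
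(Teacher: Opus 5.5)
Your proposal is correct and follows essentially the same route as the paper. You verify each arrow is a coefficient Morita equivalence (Yoneda density with \(f_!y_A\cong y_Af\), Lemma~\ref{lem:zero-reduction} for \(i_{\mathcal P_A}\), full faithfulness and essential surjectivity of \(j_x\)), take pseudonaturality from \(u_!y_A\cong y_Bu\), the zero-reduction inclusions and compositor-defined 2-cells for \(j_x\), and rectify with Lemma~\ref{lem:pseudonatural-pair-morita-localization}; the paper's one extra explicitness is the 2-cell \(\gamma_u\colon u_*^{\operatorname{st}}j_x\Rightarrow j_y(u_!)^\circ\) given by the identity of \(u_!P\), with compatibility with the distinguished endofunctors supplied by \eqref{eq:split-beta-coherence}, which is exactly the coherence check you flag as your first step.
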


\begin{proof}
Spectral Yoneda is Morita-dense and carries the graph bimodule of \(f\) to
the bimodule representing extension of scalars by \(f\).  For a spectral
functor \(u\colon A\to B\), the enriched Yoneda isomorphisms
\[
 u_!A(-,a)\cong B(-,u(a))
\]
give the pseudonaturality constraint for this first arrow; the
extension-of-scalars compositors give its unit and composition coherence.
The middle arrow is a coefficient Morita equivalence by
Lemma~\ref{lem:zero-reduction}: the comparison
\[
 i_{\mathcal P_A}(f_!)^\circ
 \xRightarrow{\ \sim\ }
 f_!i_{\mathcal P_A}
\]
induces its map of graph coefficients.  The last arrow is a coefficient
Morita equivalence because \(j_x\) is spectrally fully faithful and
essentially surjective.  The pseudonaturality of the middle comparison is the
pseudonaturality of the inclusions in
Lemma~\ref{lem:zero-reduction}.  For a morphism of presentations
\(u\colon x\to y\), the identity of \(u_!P\) defines an enriched natural
isomorphism
\[
 \gamma_u\colon u_*^{\operatorname{st}}j_x
 \xRightarrow{\ \sim\ }j_y(u_!)^\circ;
\]
at a zero image, take the identity of \(0_y\).  The mapping-spectrum formula
\eqref{eq:split-transition-map} gives naturality, normality and the
pseudofunctor pentagon give unit and composition coherence, and
\eqref{eq:split-beta-coherence} gives
compatibility with the distinguished endofunctors.  Thus the last comparison
is pseudonatural as well.  Compatibility of rectification with composition,
as stated in
Lemma~\ref{lem:pseudonatural-pair-morita-localization}, turns the
entire zigzag into the asserted natural equivalence after localization.
\end{proof}

Assertion~(ii) of
Theorem~\ref{thm:arrow-valued-morita-descent} now follows from
Proposition~\ref{prop:graph-coefficient-functor} and
Lemma~\ref{lem:graph-perfect-morita-comparison}.

\begin{theorem}
\label{thm:restriction-target-descent}
There is a functor
\begin{equation}\label{eq:derived-restriction-target-functor}
 \mathbf E_{\mathcal U}\colon
 \operatorname{Fun}
 \bigl(B\mathbb N,
       \operatorname{Cat}^{\mathrm{perf}}_{\infty,\mathcal U}\bigr)
 \longrightarrow
 \operatorname{RSys}^{g}_{\infty,\mathcal W}.
\end{equation}
If \((A,f)\) is an admissible strict spectral presentation of \((\cC,F)\),
then \(\mathbf E(\cC,F)\) is represented by
\begin{equation}\label{eq:derived-restriction-target-model}
 \mathbf E(A,f)=\{\THH^{(n)}(A;A(f-,-))\}_{n\geq1}.
\end{equation}
For every \(m\geq1\), there is a natural equivalence
\begin{equation}\label{eq:derived-restriction-target-level}
 V_m\mathbf E_m(\cC,F)
 \simeq
 \THH(\cC,\cM_F^{\circ m})
 \simeq
 \THH(\cC,\cM_{F^m}).
\end{equation}
The functor \(\mathbf E\) and the equivalences
\eqref{eq:derived-restriction-target-level} are compatible with
enlargement of universes.
\end{theorem}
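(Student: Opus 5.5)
The functor \(\mathbf E_{\mathcal U}\) will be obtained by descending the strict graph-coefficient functor of Proposition~\ref{prop:graph-coefficient-functor} along the presentation localization. Concretely, \(\mathbf E^{\mathrm{adm}}_{\mathrm{gr}}\colon\mathsf{Pres}_{\mathcal U}\to\operatorname{RSys}^g_{\infty,\mathcal W}\) sends \(W_{\mathrm{Mor}}\) to equivalences. By the universal property of \(\infty\)-localization it therefore factors essentially uniquely through \(N\mathsf{Pres}_{\mathcal U}[W_{\mathrm{Mor}}^{-1}]\). Precomposing with an inverse of the equivalence \eqref{eq:endofunctor-diagram-rigidification} of Proposition~\ref{prop:endofunctor-rigidification} defines \(\mathbf E_{\mathcal U}\) on \(\operatorname{Fun}(B\mathbb N,\operatorname{Cat}^{\mathrm{perf}}_{\infty,\mathcal U})\). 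The representation claim \eqref{eq:derived-restriction-target-model} for an admissible presentation is then the last assertion of Proposition~\ref{prop:graph-coefficient-functor}. The augmentation \(Q^{\mathrm{cat}}A\to A\) induces an equivalence of restriction systems, so the value at \((\cC,F)\) is computed by \(\{\THH^{(n)}(A;A(f-,-))\}\). I will also check consistency with the perfect-module vertex \(\mathbf E(x)\) of Definition~\ref{def:split-arrow-functor}. This uses Lemma~\ref{lem:graph-perfect-morita-comparison}: its zigzag of coefficient Morita equivalences becomes a natural equivalence after localization, and Theorem~\ref{thm:admissible-pair-morita-invariance} turns it into an equivalence of restriction systems.

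For the level formula \eqref{eq:derived-restriction-target-level}, I will work with an admissible model. The underlying spectrum \(V_m\THH^{(m)}(A;X_f)\) is, by definition, the cyclic bar construction with \(m\) cyclically ordered copies of \(X_f\). Forgetting the \(C_m\)-action, this is \(\THH(A;X_f^{\odot m})\); this is the case \(d=1\) of the unwinding of \cite[Proposition~7.6]{CLMPZ}, or simply cyclic reordering of the multisimplicial bar. Lemma~\ref{lem:pointwise-cofibrant-graph-multibar} identifies the ordinary power \(X_f^{\odot m}\) with the derived power, which by (H5) presents \(\cM_F^{\circ m}\). The comparison \eqref{eq:spectral-coefficient-coend-comparison}, followed by transport along \(\Perf(A)\simeq\cC\) via Theorem~\ref{thm:coefficient-thh-morita-invariance}, then gives \(\THH(\cC,\cM_F^{\circ m})\). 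Finally, Proposition~\ref{lem:coefficient-composition} gives \(\cM_F^{\circ m}\simeq\cM_{F^m}\); alternatively, one can go directly through Lemma~\ref{lem:coefficient-thh-presentation}, using \(X_f^{\odot m}\cong A(f^m-,-)\) from enriched co-Yoneda.

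The main obstacle is naturality of this chain in \((\cC,F)\). The chain is built from choices of presentation, and these choices must be shown to assemble into a natural transformation on the localized category rather than a family of objectwise equivalences. My plan is to build each comparison as a natural transformation of functors on \(\mathsf{Pres}_{\mathcal U}\), taking values in spectra, between \(V_m\circ(\text{level }m)\circ\mathbf E^{\mathrm{adm}}_{\mathrm{gr}}\) and \(\THH(\Perf(-),\cM_{\mathbf Lf_!^{\,m}})\). Each step qualifies: the unwinding is natural in admissible pairs by Proposition~\ref{prop:admissible-pair-restriction-system}, the bar comparison \eqref{eq:spectral-coefficient-coend-comparison} is natural in spectral pairs, and co-Yoneda is natural for strict morphisms. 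Both sides invert \(W_{\mathrm{Mor}}\), so the transformation descends. The coherent multiplicativity in Lemma~\ref{rem:coefficient-morita-unfolded} is what ensures that the identification of the \(m\)-fold power commutes with Morita transport. I expect the delicate point to be the compatibility of the ordinary cofibrant multibar with the derived composite under morphisms that are not themselves admissible. I would handle this by applying \(Q^{\mathrm{cat}}\) uniformly and using its natural augmentation.

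Compatibility with enlargement of universes comes from commuting the constructions with the inclusion \(\mathcal U\subset\mathcal U'\). The inclusion \(\mathsf{Pres}_{\mathcal U}\subset\mathsf{Pres}_{\mathcal U'}\) preserves Morita equivalences, and it is compatible with \eqref{eq:endofunctor-diagram-rigidification} by BGT. The functors \(Q^{\mathrm{cat}}\), the cyclic bar constructions, and the norm diagonals are defined by the same formulas in both universes, so the strict functors agree on the nose on \(\mathcal U\)-small inputs. The descended functors and level equivalences therefore agree up to the canonical equivalence.
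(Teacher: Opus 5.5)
Your proposal is correct and follows essentially the same route as the paper. You descend \(\mathbf E^{\mathrm{adm}}_{\mathrm{gr}}\) via Proposition~\ref{prop:graph-coefficient-functor}, the universal property of localization and \eqref{eq:endofunctor-diagram-rigidification}; you read off the admissible model from the \(Q^{\mathrm{cat}}\)-augmentation; and you compute level \(m\) by CLMPZ unwinding, Lemma~\ref{lem:pointwise-cofibrant-graph-multibar}, enriched co-Yoneda and Lemma~\ref{lem:coefficient-thh-presentation}. Two small corrections. At level \(m\), use the co-Yoneda route rather than (H5), since (H5) is a hypothesis on spectral Waldhausen presentations and is not given for an arbitrary admissible \((A,f)\). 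For universes, do not claim on-the-nose agreement of \(Q^{\mathrm{cat}}\) across universes; as the paper does, comparing in a common larger universe is enough.
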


\begin{proof}
The functor
\eqref{eq:strict-graph-restriction-system-functor} inverts
\(W_{\mathrm{Mor}}\).  The universal property of localization and
\eqref{eq:endofunctor-diagram-rigidification} give
\eqref{eq:derived-restriction-target-functor}.  For an admissible
presentation, the augmentation of \(Q^{\mathrm{cat}}\) is a coefficient
Morita equivalence, proving
\eqref{eq:derived-restriction-target-model}.  At level \(m\), the
unwinding equivalence of \cite[Proposition~7.6]{CLMPZ} gives the spectrum
\(\THH(A;A(f-,-)^{\odot m})\).  Lemma~
\ref{lem:pointwise-cofibrant-graph-multibar} replaces the ordinary
power by the derived one, and enriched co-Yoneda identifies its coefficient
with \(A(f^m-,-)\).  Lemma~
\ref{lem:coefficient-thh-presentation} then gives
\eqref{eq:derived-restriction-target-level}.  The replacement and
all comparisons are defined in a common larger universe, which gives the
last assertion.
\end{proof}

\subsection{Morita invariance of the source}
\label{subsec:source-morita}

The source of the CLMPZ trace is, level by level, the \(K\)-theory of the
cyclic laced categories of
Definition~\ref{def:cyclic-laced-category}.

\begin{lemma}
\label{lem:module-envelope-source-morita}
Let \(u\colon(A,f)\to(B,g)\) be a strict morphism lying in
\(W_{\mathrm{Mor}}\).  Then the exact functor
\(u_*^{\mathrm{st}}\) induces an equivalence
\[
 \mathbf K^{\mathrm{res}}(A,f)
 \xrightarrow{\ \simeq\ }
 \mathbf K^{\mathrm{res}}(B,g)
\]
of genuine restriction systems.
\end{lemma}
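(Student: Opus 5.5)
The plan is to check the equivalence at every genuine cyclic level, using that equivalences in \(\operatorname{RSys}^g_\infty\) are detected levelwise, and at each level to reduce to a statement about \(K\)-theory of cyclic laced categories, where Morita invariance is a formal consequence of an equivalence of stable \(\infty\)-categories. Since \(\mathbf K^{\mathrm{res}}(x)\) is obtained from the naive system \eqref{eq:module-envelope-source-system} by genuine suspension and prolongation, and these are functorial in maps of naive restriction systems, it suffices to show two things. First, \(u_*^{\mathrm{st}}\) induces a map of naive systems, compatible with the \(C_n\)-actions and the restriction isomorphisms. Second, this map is an equivalence on the fixed-point data that prolongation depends on.

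\textbf{Step 1: strict map of naive systems.} By the literal relation \(u_*^{\mathrm{st}}T_x^{\mathrm{st}}=T_y^{\mathrm{st}}u_*^{\mathrm{st}}\) of Proposition~\ref{thm:split-perfect-strictification}(1), the functor \(u_*^{\mathrm{st}}\) sends a cyclic string \((a_i,\alpha_i\colon T_xa_i\to a_{i+1})\) to the cyclic string \((u_*a_i,u_*\alpha_i)\). This assignment strictly commutes with rotation and with the identification of a repeated cycle with the shorter cycle. It is also exact, so it induces a map of \(w_\bullet S_\bullet\)-objects, hence a strict map of naive restriction systems.

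\textbf{Step 2: levelwise equivalences.} For the underlying spectra, Proposition~\ref{thm:split-perfect-strictification}(4) identifies level \(r\) naturally on \(N\mathsf{Pres}_{\mathcal U}\) with \(K(\mathcal L_r(\Perf(A),\mathbf Lf_!))\). A Morita equivalence \(u\) gives an equivalence \((\Perf(A),\mathbf Lf_!)\simeq(\Perf(B),\mathbf Lg_!)\) of endofunctor pairs, via \eqref{eq:split-evaluation-localized} and the invertibility of \(\mathbf Lu_!\) used in Proposition~\ref{prop:graph-coefficient-functor}. By Definition~\ref{def:cyclic-laced-category}, this induces a \(C_r\)-equivariant exact equivalence of cyclic laced categories, so \(K\) of it is an equivalence.

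For the genuine structure, I need the map to be an equivalence on all categorical (equivalently geometric) fixed points. The \(C_d\)-fixed points of the naive level-\(n=ds\) system are, by the restriction isomorphisms, the level-\(s\) system. Its suspension and prolongation is built functorially from these levels. Hence a map of naive systems that is an equivalence at every level, compatibly with the restriction isomorphisms, prolongs to a genuine equivalence. Here I would invoke the functoriality of prolongation in \cite[Proposition~A.7]{CLMPZSW}, applied as in Lemma~\ref{lem:clmpz-strict-functoriality}.

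\textbf{Main obstacle.} The hard part is the last reduction: that genuine suspension plus prolongation turns a levelwise underlying equivalence of naive systems into an equivalence of genuine systems. This needs the geometric fixed points of the prolonged system at \(C_d\subset C_n\) to be computed by the level-\(n/d\) term of the naive system, as in the CLMPZ source construction. I would first try to extract exactly this from the construction underlying \cite[Theorem~8.8]{CLMPZ}. Given that formula, the equivalence on geometric fixed points for every subgroup follows from Step 2, and genuine equivalence follows by induction over subgroups via isotropy separation.
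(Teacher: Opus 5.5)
Your proposal is correct and is essentially the paper's proof: identify each naive level with \(K(\mathcal L_r)\) via Proposition~\ref{thm:split-perfect-strictification}(4), obtain levelwise equivalences from the induced equivalence of endofunctor pairs, and pass to genuine systems through the fixed-cycle naive system and the functorial prolongation of \cite[Proposition~A.7]{CLMPZSW}. The paper settles the step you flag as the main obstacle by citing the CLMPZ source construction, and no separate isotropy-separation induction is needed: once both sides are genuine restriction systems, the equivalences \(\Phi^{C_d}\mathbf K_n\simeq\mathbf K_{n/d}\) reduce each geometric fixed-point map to an underlying level map, and you have already shown those are equivalences.
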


\begin{proof}
Derived extension of scalars is an exact equivalence
\(\Perf(A)\simeq\Perf(B)\), intertwined with the endofunctors by the
canonical isomorphism \(\beta_u\colon u_!f_!\cong g_!u_!\); it is
therefore an equivalence of endofunctor pairs
\[
 \bigl(\Perf(A),\mathbf Lf_!\bigr)
 \xrightarrow{\ \simeq\ }
 \bigl(\Perf(B),\mathbf Lg_!\bigr).
\]
Cyclic laced categories are functorial in equivalences of endofunctor
pairs, so for every \(r\geq1\) there is an induced exact equivalence
\(\mathcal L_r(\Perf(A),\mathbf Lf_!)\simeq
  \mathcal L_r(\Perf(B),\mathbf Lg_!)\).
By the cyclic source comparison
\eqref{eq:split-source-lace-natural}, the map induced by
\(u^{\mathrm{st}}_*\) on the \(r\)-th naive level of
\eqref{eq:module-envelope-source-system} is identified with
\(K\) of this equivalence, hence is an equivalence of spectra for every
\(r\).

These levelwise \(K\)-theory spectra form the particular naive restriction
system of \cite[Example~8.6]{CLMPZ}: the structure maps on categorical fixed
points are the canonical isomorphisms identifying a fixed \(r\)-cycle with
its repeated shorter cycle.  The construction is natural in strict morphisms
of twistings.  The suspension of this naive fixed-cycle system and its
passage to the prolonged genuine system are described in
\cite[Example~8.5, Theorem~8.8, Examples~A.3--A.4, and
Proposition~A.1]{CLMPZ}.  The fixed-cycle description of the CLMPZ source
shows that these levelwise equivalences commute with the restriction maps.
The derived prolongation of \cite[Proposition~A.7]{CLMPZSW} preserves them, so
they assemble to an equivalence of genuine restriction systems.
\end{proof}

\subsection{Descent of the trace arrow}
\label{subsec:arrow-descent}

\begin{proof}[Proof of Theorem~\ref{thm:arrow-valued-morita-descent}]
Assertion~(i) is Lemma~\ref{lem:module-envelope-source-morita}.
Assertion~(ii) is
Proposition~\ref{prop:graph-coefficient-functor}, transported along
the coefficient-Morita zigzag of
Lemma~\ref{lem:graph-perfect-morita-comparison}.

By~(i) and~(ii),
\(\mathcal T_{\mathrm{CLMPZ}}^{\mathrm{st}}\) sends
\(W_{\mathrm{Mor}}\) to equivalences, since equivalences in the arrow
\(\infty\)-category are detected at the two vertices.  The universal property
of localization gives
\eqref{eq:localized-clmpz-arrow-functor}, and
\eqref{eq:endofunctor-diagram-rigidification} identifies its domain
with the intrinsic endofunctor category.  The two vertices of the descended
functor are, by definition, \(\mathbf K^{\mathrm{cyc}}\) and \(\mathbf E\).
On spectral presentations, the pseudonatural coefficient-Morita zigzag of
Lemma~\ref{lem:graph-perfect-morita-comparison} compares the latter
with the functor of Theorem~\ref{thm:restriction-target-descent}.
Applying \(Q^{\mathrm{cat}}\) gives a zigzag of admissible pairs whose
augmentation is a natural coefficient Morita equivalence.  Lemma~
\ref{lem:pseudonatural-pair-morita-localization} rectifies this
zigzag and identifies the two descents.
\end{proof}

\subsection{The intrinsic trace and its ghosts}
\label{subsec:intrinsic-trace}

We now extract from
\eqref{eq:intrinsic-restriction-system-arrow} the trace
transformation of Theorem~A and compute its images under the ghost maps.

\begin{lemma}
\label{lem:module-envelope-truncated-source}
For every nonempty truncation set \(S\), there is a natural transformation
\begin{equation}\label{eq:intrinsic-source-extraction}
 \kappa_S\colon
 K^{\mathrm{lace}}(\cC,\cM_F)
 \longrightarrow
 R_S\bigl(\mathbf K^{\mathrm{cyc}}(\cC,F)\bigr),
\end{equation}
compatible with inclusions of truncation sets.  For a strict presentation
\(x=(A,f)\), it is represented by the composite
\begin{equation}\label{eq:module-envelope-source-to-derived-tr}
 K\!\operatorname{End}
 \bigl({}_{T_x^{\mathrm{st}}}\mathcal P_x^{\mathrm{st}}/
       (\mathcal P_x^{\mathrm{st}})_{\operatorname{id}}\bigr)
 \xrightarrow{\ \simeq\ }
 R_S^{\mathrm{un}}\bigl(\mathbf K^{\mathrm{res}}(A,f)\bigr)
 \longrightarrow
 R_S\bigl(\mathbf K^{\mathrm{res}}(A,f)\bigr).
\end{equation}
\end{lemma}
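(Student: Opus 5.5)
We first build \(\kappa_S\) on strict presentations and then descend it along the Morita localization, as in the proof of Theorem~\ref{thm:arrow-valued-morita-descent}.

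\emph{Step 1: identify the underived limit.} Fix \(x=(A,f)\) and a truncation set \(S\). The source system \(\mathbf K^{\mathrm{res}}(x)\) comes from the naive fixed-cycle system \eqref{eq:module-envelope-source-system}. Its restriction maps on categorical fixed points are the canonical isomorphisms identifying an \(n\)-cycle fixed by \(C_r\) with the repeated shorter \(n/r\)-cycle. In that naive system the level-\(n\) fixed points are \(K\!\operatorname{End}\) of the \(C_n\)-fixed cyclic twisted endomorphisms. These are the \(n\)-fold repetitions of a single lacing. So every arrow of the diagram \(\mathcal I_S\), evaluated on the naive model, is an isomorphism onto the level-\(1\) term \(K\!\operatorname{End}({}_{T_x^{\mathrm{st}}}\mathcal P_x^{\mathrm{st}}/(\mathcal P_x^{\mathrm{st}})_{\operatorname{id}})\). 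Since \(\mathcal I_S\) has the terminal object \(1\), the underived limit \(R_S^{\mathrm{un}}\) is that term, up to this canonical isomorphism. Compatibility with the suspension and prolongation, via \cite[Example~8.5, Theorem~8.8, Examples~A.3--A.4]{CLMPZ}, then yields the first arrow of \eqref{eq:module-envelope-source-to-derived-tr}. Composing with the canonical map \(R_S^{\mathrm{un}}\to R_S\) of \S\ref{subsec:restriction-system-conventions} defines the composite. The source is identified with \(K^{\mathrm{lace}}(\Perf(A),\cM_{\mathbf Lf_!})\simeq K^{\mathrm{lace}}(\cC,\cM_F)\) by \eqref{eq:split-source-lace-natural} with \(r=1\).

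\emph{Step 2: naturality and compatibility.} Everything in Step 1 is natural in strict morphisms of \(\mathsf{Pres}_{\mathcal U}\). The fixed-cycle isomorphisms commute with \(u_*^{\mathrm{st}}\) by the literal relations \eqref{eq:split-perfect-literal-relations}, and \(R^{\mathrm{un}}_S\to R_S\) is natural in restriction systems. For \(S'\subset S\), restricting the limit along \(\mathcal I_{S'}\subset\mathcal I_S\) commutes with the construction, because both limits are evaluated through the common terminal index \(1\). This gives a natural transformation of functors \(N\mathsf{Pres}_{\mathcal U}\to\Sp\), with source \(K\!\operatorname{End}^{(1)}\) and target \(R_S\circ\mathbf K^{\mathrm{res}}\).

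\emph{Step 3: descent.} For \(u\in W_{\mathrm{Mor}}\), the source map is an equivalence by Lemma~\ref{lem:module-envelope-source-morita} at level \(r=1\). The target map is \(R_S\) of an equivalence by Theorem~\ref{thm:arrow-valued-morita-descent}(i). So the transformation, viewed as a functor to \(\operatorname{Fun}(\Delta^1,\Sp)\), inverts \(W_{\mathrm{Mor}}\). It therefore factors through \eqref{eq:endofunctor-diagram-rigidification}. Its vertices are then \(K^{\mathrm{lace}}(\cC,\cM_F)\), by naturality of \eqref{eq:split-source-lace-natural} on \(N\mathsf{Pres}_{\mathcal U}\), and \(R_S(\mathbf K^{\mathrm{cyc}}(\cC,F))\), by definition of \(\mathbf K^{\mathrm{cyc}}\). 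Compatibility with inclusions of truncation sets descends in the same way.

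The main obstacle is Step 1. One must check that the prolonged genuine system's underived fixed-point diagram really is the naive fixed-cycle diagram, with its maps the given isomorphisms. This requires unwinding the CLMPZ prolongation: the geometric-fixed-point comparisons in \eqref{eq:derived-divisibility-map} must reduce, on suspension spectra of the naive system, to the inclusions of fixed cycles. I would try to verify this first on the \(\Sigma_\Delta\)-suspended model, using that the naive restriction maps are isomorphisms of categories.
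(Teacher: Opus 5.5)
Your proposal is correct and follows essentially the same route as the paper. The repeated-cycle isomorphisms of the naive source system make the underived divisibility diagram over $\mathcal I_S$ canonically constant at level $1$; one composes with the canonical map $R_S^{\mathrm{un}}\to R_S$, identifies the first level with laced $K$-theory by Proposition~\ref{thm:split-perfect-strictification}(4), and descends along Theorem~\ref{thm:arrow-valued-morita-descent}. The paper does not unwind the step you flag as the main obstacle either: it takes it from the fixed-cycle description of the source in \cite[Example~8.6]{CLMPZ}, and the resulting identification is the truncation-set version of \cite[Lemma~8.12]{CLMPZ}.
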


\begin{proof}
Every nonempty truncation set contains \(1\).  For the naive source system of
\cite[Example~8.6]{CLMPZ}, the repeated-cycle structure maps identify each
term \(\mathbf K_n^{\mathrm{res}}(A,f)^{C_n}\) with the first level.
Consequently the underived divisibility diagram over \(\mathcal I_S\) is
canonically constant.
This is the truncation-set version of \cite[Lemma~8.12]{CLMPZ}, and it gives
the first equivalence in
\eqref{eq:module-envelope-source-to-derived-tr}.  The second arrow is the
canonical CLMPZ map \(R_S^{\mathrm{un}}\to R_S\), induced by fibrant
replacement and passage to the limit in
\eqref{eq:derived-truncation-limit}.  Finally,
Proposition~\ref{thm:split-perfect-strictification}(4) identifies the first
level naturally with \(K^{\mathrm{lace}}(\cC,\cM_F)\), and
Theorem~\ref{thm:arrow-valued-morita-descent} descends the
construction to the intrinsic endofunctor category.
\end{proof}

To identify the \(r\)-th ghost, we first fix the following nonequivariant
unwinding equivalence.

\begin{definition}
\label{def:marked-unwinding}
Let \((A,f)\) be a pointwise-cofibrant strict presentation and put
\[
 X_f(a,b)=A(fa,b).
\]
After forgetting the \(C_r\)-action, the diagonal model of
\(\THH^{(r)}(A;X_f)\) has \(r\) cyclically ordered coefficient slots.  Choosing
slot \(0\) as the initial slot gives an equivalence
\[
 \omega_r^f\colon
 V_r\THH^{(r)}(A;X_f)
 \xrightarrow{\ \simeq\ }
 \THH(A;X_f^{\odot_A r}).
\]
Degreewise, it preserves the order of all mapping and coefficient factors.
This is a nonequivariant equivalence; another choice of initial slot differs
by cyclic rotation.
\end{definition}

For a nonempty truncation set \(S\) and \(r\in S\), let
\begin{equation}\label{eq:intrinsic-raw-truncated-ghost}
 \widetilde g^{F}_{r,S}\colon
 R_S\bigl(\mathbf E(\cC,F)\bigr)
 \longrightarrow
 V_r\mathbf E_r(\cC,F)
\end{equation}
be projection to \(\mathbf E_r(\cC,F)^{C_r}\), followed by the canonical map
\(\mathbf E_r(\cC,F)^{C_r}\to V_r\mathbf E_r(\cC,F)\).  Denote by
\(\omega_r^F:V_r\mathbf E_r(\cC,F)\simeq
\THH(\cC,\cM_{F^r})\) the equivalence above followed by enriched co-Yoneda
and the presentation comparison
\eqref{eq:derived-restriction-target-level}, and put
\begin{equation}\label{eq:intrinsic-truncated-ghost}
 \overline g^{F}_{r,S}:=
 \omega_r^F\widetilde g^F_{r,S}.
\end{equation}
The superscript \(F\) will be suppressed when \(F\) is clear from context.

\begin{theorem}
\label{thm:integral-module-envelope-trace-descent}
Fix a universe bound \(\mathcal U\).  For every nonempty
truncation set \(S\), there is a natural transformation
\begin{equation}\label{eq:intrinsic-integral-restriction-trace}
 \operatorname{tr}^{\mathrm{res,int}}_S\colon
 K^{\mathrm{lace}}(\cC,\cM_F)
 \longrightarrow
 R_S\bigl(\mathbf E(\cC,F)\bigr)
\end{equation}
on the full subcategory of endofunctor diagrams admitting an
\(\mathcal U\)-small spectral presentation.  It is the composite
\begin{equation}\label{eq:intrinsic-trace-as-composite}
 K^{\mathrm{lace}}(\cC,\cM_F)
 \xrightarrow{\ \kappa_S\ }
 R_S\bigl(\mathbf K^{\mathrm{cyc}}(\cC,F)\bigr)
 \xrightarrow{\ R_S(\overline{\operatorname{tr}}^{\mathrm{CLMPZ}})\ }
 R_S\bigl(\mathbf E(\cC,F)\bigr),
\end{equation}
where the second arrow is obtained from
\eqref{eq:intrinsic-restriction-system-arrow}.  These
transformations are compatible with inclusions of truncation sets and with
enlargement of universes.

For a strict presentation \(x=(A,f)\), this intrinsic arrow is represented
by the CLMPZ trace for
\((\mathcal P_x^{\operatorname{st}},T_x^{\operatorname{st}})\).  The strict
map \(j_x\) identifies it, through equivalences at both endpoints, with the
trace on \((\mathcal P_A^\circ,(f_!)^\circ)\).  The Morita equivalence
\(\mathcal P_A^\circ\to\mathcal P_A\) then recovers the standard
perfect-module presentation after localization.
\end{theorem}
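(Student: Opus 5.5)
The transformation will be defined as the composite \eqref{eq:intrinsic-trace-as-composite}, and both factors are already available. The first factor \(\kappa_S\) comes from Lemma~\ref{lem:module-envelope-truncated-source}. That lemma gives it as a natural transformation on the intrinsic endofunctor category, compatible with inclusions of truncation sets. The second factor is obtained by applying the functor \(R_S\colon\operatorname{RSys}^g_\infty\to\Sp\) of \S\ref{subsec:restriction-system-conventions} to the descended arrow \eqref{eq:intrinsic-restriction-system-arrow} of Theorem~\ref{thm:arrow-valued-morita-descent}. Since \(\overline{\mathcal T}^{\mathrm{st}}_{\mathrm{CLMPZ}}\) is a functor to \(\operatorname{Fun}(\Delta^1,\operatorname{RSys}^g_\infty)\), postcomposing with \(R_S\) gives a natural transformation \(R_S\mathbf K^{\mathrm{cyc}}\Rightarrow R_S\mathbf E\) of functors on \(\operatorname{Fun}(B\mathbb N,\operatorname{Cat}^{\mathrm{perf}}_{\infty,\mathcal U})\). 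We restrict it to the full subcategory of diagrams admitting a \(\mathcal U\)-small presentation; by Proposition~\ref{prop:endofunctor-rigidification} this is all of it, but stating the restriction costs nothing. Composing the two natural transformations gives \(\operatorname{tr}^{\mathrm{res,int}}_S\).

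Compatibility with inclusions \(S'\subset S\) goes as follows. The restriction \(\mathcal I_{S'}\subset\mathcal I_S\) induces natural projections \(R_S\to R_{S'}\) on every restriction system. These commute with \(R_\bullet\) applied to any map of restriction systems, so the second factor is compatible. The first factor is compatible by Lemma~\ref{lem:module-envelope-truncated-source}. For enlargement of universes, I will invoke the last clause of Theorem~\ref{thm:restriction-target-descent} for the target. For the source, the levels are \(K\)-theories of the cyclic laced categories, which are universe-independent. The arrow itself is formed in a common larger universe as in \S\ref{subsec:strict-arrow-construction}. The restriction of the descended functor along the universe-enlargement inclusion of localizations then agrees with the functor built in the larger universe, by uniqueness in the universal property of localization.

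For the representability statement, I take \(x=(A,f)\). By Definition~\ref{def:split-arrow-functor} the value of \(\mathcal T^{\mathrm{st}}_{\mathrm{CLMPZ}}\) at \(x\) is the CLMPZ arrow for \((\mathcal P_x^{\mathrm{st}},T_x^{\mathrm{st}})\). The localization map sends it to the intrinsic arrow at the image of \(x\), so the representative is the CLMPZ trace for \((\mathcal P_x^{\mathrm{st}},T_x^{\mathrm{st}})\). By \eqref{eq:module-envelope-source-to-derived-tr}, \(\kappa_S\) is represented on that same model. Next, \(j_x\) is a strict morphism of twistings, so Lemma~\ref{lem:clmpz-strict-functoriality} gives a commutative square of CLMPZ arrows from the \((\mathcal P_A^\circ,(f_!)^\circ)\) trace to the \(\mathcal P^{\mathrm{st}}_x\) trace. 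Its vertical maps are equivalences: on targets by Lemma~\ref{lem:graph-perfect-morita-comparison} together with Theorem~\ref{thm:admissible-pair-morita-invariance}, and on sources because \(j_x\) is fully faithful and essentially surjective by isomorphisms (Proposition~\ref{thm:split-perfect-strictification}(2)). Hence it induces equivalences of cyclic twisted-endomorphism \(K\)-theories. Finally, the middle arrow \(i_{\mathcal P_A}\) of Lemma~\ref{lem:graph-perfect-morita-comparison} identifies the remaining comparison with the standard presentation after localization.

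The main obstacle is this last identification. \(i_{\mathcal P_A}\) is not a strict morphism of twistings; it intertwines the endofunctors only up to the isomorphism \(i(f_!)^\circ\cong f_!i\). So Lemma~\ref{lem:clmpz-strict-functoriality} does not apply directly. I would handle it with the pseudonatural rectification of Lemma~\ref{lem:pseudonatural-pair-morita-localization}, which produces a strict zigzag whose localization realizes the comparison. For the target vertex, this is exactly how Lemma~\ref{lem:graph-perfect-morita-comparison} is used. For the source vertex, \(K\)-theory of the cyclic laced categories sees only the equivalence of endofunctor pairs \((\Perf(A),\mathbf Lf_!)\), by \eqref{eq:split-source-lace-natural}.
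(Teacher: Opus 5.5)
Your proposal is correct and takes the same approach as the paper. The paper also defines the trace as \(R_S\) of the descended arrow precomposed with \(\kappa_S\), gets compatibility in \(S\) by restricting the divisibility diagram and universe independence from a common larger universe, and compares with \((\mathcal P_A^\circ,(f_!)^\circ)\) through the strict morphism \(j_x\) and Lemma~\ref{lem:clmpz-strict-functoriality}.

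Two small remarks. The vertical equivalences for \(j_x\) are most cleanly cited as Proposition~\ref{prop:replacement-comparison-diagrams}, because the CLMPZ arrows are formed after \(Q_{\mathrm{red}}\) and the unreplaced graph pairs need not be admissible. Your observation that \(i_{\mathcal P_A}\) is only pseudonatural, so that the final identification must go through Lemma~\ref{lem:pseudonatural-pair-morita-localization} at the level of the two vertices, is a sound elaboration of a point the paper leaves implicit.
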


\begin{proof}
Apply \(R_S\) to the intrinsic restriction-system arrow
\eqref{eq:intrinsic-restriction-system-arrow} and precompose with the map \(\kappa_S\) of
Lemma~\ref{lem:module-envelope-truncated-source}.  This gives
\eqref{eq:intrinsic-trace-as-composite}.  Naturality in \(S\)
follows by restriction of the divisibility diagram, and universe
independence follows by forming the presentation models in a common larger
universe.  The comparison with the standard perfect-module model is
induced by the strict morphism of twistings \(j_x\) and
Lemma~\ref{lem:clmpz-strict-functoriality}.
\end{proof}

To compare the traces for \(F\) and \(F^m\), we use the power-coherent
strictification constructed in Appendix~\ref{app:split-strictification}.

\begin{proposition}
\label{lem:power-coherent-perfect-envelope}
There is an ordinary functor
\(x\mapsto(\widetilde{\mathcal P}_x,S_x)\) from strict presentations to
strict twistings.  Its underlying spectral Waldhausen category has the same
objects and mapping spectra as \(\mathcal P_x^{\operatorname{st}}\), and it
has the following properties.
\begin{enumerate}
\item For every \(m\geq1\), there is a strict exact morphism of twistings
\begin{equation}\label{eq:power-comparison-map}
 q_{m,x}\colon
 \bigl(\widetilde{\mathcal P}_x,S_x^m\bigr)
 \longrightarrow
 \bigl(\widetilde{\mathcal P}_{x^{[m]}},S_{x^{[m]}}\bigr),
 \qquad x^{[m]}=(A,f^m),
\end{equation}
which is fully faithful and essentially surjective by actual isomorphisms
before applying \(Q_{\mathrm{red}}\).  It is natural in \(x\) and satisfies
\begin{equation}\label{eq:power-comparison-coherence}
 q_{m,y}\widetilde u_*=
 \widetilde u_*q_{m,x},
 \qquad
 q_{1,x}=\operatorname{id},
 \qquad
 q_{n,x^{[m]}}q_{m,x}=q_{mn,x}
\end{equation}
literally.

\item For every \(m\geq1\), there is a natural span of strict exact morphisms
\begin{equation}\label{eq:power-envelope-bridge}
 \bigl(\mathcal P_x^{\operatorname{st}},
       (T_x^{\operatorname{st}})^m\bigr)
 \longrightarrow
 \bigl(\mathcal B_x^{(m)},U_x^{(m)}\bigr)
 \longleftarrow
 \bigl(\widetilde{\mathcal P}_x,S_x^m\bigr).
\end{equation}
Its two legs are DK equivalences before replacement.  After applying
\(Q_{\mathrm{red}}\), they and the maps \(q_{m,x}\) induce equivalences on
the source and target restriction systems at every cyclic level, and hence
equivalences of the corresponding CLMPZ trace arrows.

\item The induced equivalence from the one-fold trace of
\((\mathcal P_x^{\operatorname{st}},(T_x^{\operatorname{st}})^m)\) to the
CLMPZ trace associated with \(x^{[m]}\) is natural in \(x\).  Under the
identification of the source with laced \(K\)-theory, ordered \(m\)-fold
composition of a lacing is the intrinsic iteration functor \(P_m\).  These
equivalences are unital and compatible with multiplication of the indices:
restricting the equivalence for \(m\) along \(\mu_n\) and then applying the
equivalence for \(n\) gives the equivalence for \(mn\).
\end{enumerate}
\end{proposition}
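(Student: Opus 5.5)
The construction mirrors that of Proposition~\ref{thm:split-perfect-strictification}, but the split Grothendieck construction is indexed over strict presentations together with all powers of their endofunctors. Concretely, I would index the normal perfect-module pseudofunctor of Proposition~\ref{prop:perfect-module-pseudofunctor} by the category whose objects are pairs \((x,m)\), with \(x=(A,f)\) a strict presentation and \(m\geq1\). Morphisms would be generated by strict morphisms of presentations together with the formal relations \(x\mapsto x^{[m]}\), \(x^{[m]\,[n]}=x^{[mn]}\). Applying the split construction objectwise yields \(\widetilde{\mathcal P}_x\). Its objects are pairs \((a,P)\) with \(P\) a cofibrant perfect \(A\)-module, and its mapping spectra are those of \(\mathcal P_x^{\mathrm{st}}\). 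I would choose the endofunctor \(S_x\) as the split section induced by \(f_!\), but with the compositor data recorded as part of the object label. Then \(S_x^m\) and \(S_{x^{[m]}}\) differ only by relabelling: \(S_x^m\) carries an \(m\)-fold string of compositor isomorphisms \(f_!\cdots f_!\cong(f^m)_!\), while \(S_{x^{[m]}}\) carries the single label.

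The functor \(q_{m,x}\) is then the identity on underlying modules. On labels it composes the \(m\)-fold string of compositors into one, and on mapping spectra it conjugates by the resulting isomorphism \((f_!)^m\cong(f^m)_!\). It is therefore fully faithful and bijective on isomorphism classes by actual isomorphisms. The literal identities \eqref{eq:power-comparison-coherence} are exactly where normality and the pentagon for the perfect-module pseudofunctor enter:
\begin{itemize}
\item \(q_{1,x}=\mathrm{id}\) follows from normality;
\item \(q_{n,x^{[m]}}q_{m,x}=q_{mn,x}\) follows from associativity of the compositors, i.e.\ the pentagon;
\item commutation with \(\widetilde u_*\) follows from the compatibility \eqref{eq:split-beta-coherence} of the intertwiners \(\beta_u\) with compositors.
\end{itemize}
Strict zero preservation is handled by the zero reduction of Lemma~\ref{lem:zero-reduction}, applied after the relabelling so that it commutes with \(q\).

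For (2), the bridge \(\mathcal B_x^{(m)}\) would be a mapping-cylinder or "both labels" category. Its objects are modules equipped with either a \(\mathcal P^{\mathrm{st}}\)-label or a \(\widetilde{\mathcal P}\)-label, with morphisms computed in \(\mathcal P_A\). The endofunctor \(U_x^{(m)}\) acts as \((T^{\mathrm{st}})^m\) or \(S_x^m\) on the respective label types. Both inclusions are fully faithful and essentially surjective by isomorphisms, hence DK equivalences. The claims after \(Q_{\mathrm{red}}\) then follow by invoking:
\begin{itemize}
\item Theorem~\ref{thm:reduced-bar-replacement}, which preserves these strict relations;
\item Lemma~\ref{lem:module-envelope-source-morita} and the argument of Proposition~\ref{prop:graph-coefficient-functor}, which show that DK equivalences of twistings give coefficient Morita equivalences of graph pairs;
\item Theorem~\ref{thm:admissible-pair-morita-invariance} on the target side, and the cyclic laced-category comparison \eqref{eq:split-source-lace-natural} on the source side.
\end{itemize}
Lemma~\ref{lem:clmpz-strict-functoriality} then promotes these levelwise equivalences to equivalences of trace arrows.

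For (3), naturality in \(x\) comes from functoriality of the whole construction in the indexing category. The identification with \(P_m\) is checked on laced objects. A lacing \(\alpha\colon S_x P\to P\) in \(\widetilde{\mathcal P}_x\), viewed as an \((S_x^m)\)-lacing, is by definition the composite \(\alpha\circ S_x\alpha\circ\cdots\), which is \eqref{eq:iterated-lacing-arrow}. Transported through \(q_{m,x}\) and Proposition~\ref{lem:coefficient-composition}, this gives \(P_m\). Multiplicativity in the index follows from the literal identity \(q_{n,x^{[m]}}q_{m,x}=q_{mn,x}\), together with \(\mu_m\mu_n=\mu_{mn}\).

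I expect the main obstacle to be arranging that the literal identities survive zero reduction and \(Q_{\mathrm{red}}\) while the bridge legs remain strict. I would first try to perform all relabelling before zero reduction and to make \(Q_{\mathrm{red}}\) a functor on the indexing category. If that fails, I would fall back on Lemma~\ref{lem:pseudonatural-pair-morita-localization} to rectify the coherences after localization.
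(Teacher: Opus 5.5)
Your proposal has a genuine gap in part (1), which is the heart of the statement: you never define an $S_x$ for which $q_{m,x}S_x^m=S_{x^{[m]}}q_{m,x}$ holds literally, and the version you sketch cannot satisfy it.

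\textbf{Why the sketched construction fails.} You take $S_x$ to be ``the split section induced by $f_!$'' and let $S_x^m$ carry the string $f_!\cdots f_!\cong(f^m)_!$. Suppose $S_x$ acts on the module coordinate, as $T_x^{\mathrm{st}}(a,P)=(a,h_!P)$ does. Then $S_x^m(a,P)$ has module $(h_!)^mP$, while $S_{x^{[m]}}(a,P)$ has module $(h^m)_!P$. These are only isomorphic, not equal. So no $q_{m,x}$ that is ``the identity on underlying modules'' can intertwine the two endofunctors on objects.

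Conjugating a map $a_!P\to b_!Q$ by $(f_!)^m\cong(f^m)_!$ does not typecheck either, since no $f_!$ appears in $a_!P$ or $b_!Q$. Recording compositor strings in the labels would change the object set, contradicting the requirement that $\widetilde{\mathcal P}_x$ have the objects and mapping spectra of $\mathcal P_x^{\mathrm{st}}$. Note also that in the split construction $P$ lies in $\mathcal P_z$ for the source $z$ of $a$, not in $\mathcal P_A$.

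The enlarged index category of pairs $(x,m)$ with formal arrows $x\to x^{[m]}$ is left unspecified: you do not say which transition functor and intertwiner the formal arrow gets. It is also unnecessary, because $x^{[m]}=(A,f^m)$ is already an object of $\mathsf{Pres}_{\mathcal U}$.

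\textbf{The missing idea: act on the label, not the module.} If $a\colon z=(B,h)\to x$ is a strict morphism, then $ah=fa$. Put $S_x(a,P)=(ah,P)$, and send $v\colon a_!P\to b_!Q$ to $c_{f,b}\,f_!(v)\,c_{f,a}^{-1}$. This is just the split transition functor of $f\colon x\to x$. So $S_x^m(a,P)=(ah^m,P)$, and the compositor coherence plus naturality give $S_x^m(v)=c_{f^m,b}\,(f^m)_!(v)\,c_{f^m,a}^{-1}$. This is the same argument that makes the split construction strictly functorial.

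Since $ah^m=f^ma$, the same $a$ is a strict morphism $z^{[m]}\to x^{[m]}$. Hence $q_{m,x}(a,P)=(a,P)$, reinterpreted, is the identity on mapping spectra. Full faithfulness is then immediate. The identities in \eqref{eq:power-comparison-coherence} are literal relabelings, and strict intertwining is exactly the displayed computation.

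\textbf{Consequences for (2) and (3).} Your bridge also needs this construction. To define $U_x^{(m)}$ on mixed-copy morphisms you need a natural isomorphism $\theta_x^{(m)}\colon(T_x^{\mathrm{st}})^m\Rightarrow S_x^m$. It is assembled from the components $c_{a,h}\colon a_!h_!P\to(ah)_!P$, and it only exists once $S_x$ is pinned down.

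Finally, in (3) the literal identity $q_{n,x^{[m]}}q_{m,x}=q_{mn,x}$ together with $\mu_n\mu_m=\mu_{mn}$ does not by itself give multiplicativity. You must also compare the two-copy spans for $m$, $n$ and $mn$. That comparison is the compositor (pentagon) coherence for the iterated $\theta$'s.
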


\begin{proof}
The construction of Appendix~\ref{app:split-strictification}
relates the two strictifications by the two-copy category
\(\mathcal B_x^{(m)}\) and provides the maps \(q_{m,x}\).
Proposition~\ref{prop:replacement-comparison-diagrams} shows that this
comparison remains an equivalence after applying \(Q_{\mathrm{red}}\).
Ordered composition of strict sections is restriction along \(\mu_m\) by
Lemma~\ref{lem:strict-sections-iteration}, and its compatibility with
the unit and repeated iteration is
Lemma~\ref{lem:powered-bridge-multiplicative-coherence}.
\end{proof}

At the \(m\)-th cyclic level, cyclic composition of the lacing arrows defines
a map from the \(m\)-fold source to the source of the one-fold trace for
\(f^m\).  The corresponding
identification on the target is the unwinding equivalence of
Definition~\ref{def:marked-unwinding}.  The following theorem makes
this comparison precise; its multisimplicial compatibility is proved in
Appendix~\ref{app:power-comparison-details}.

\begin{theorem}
\label{lem:clmpz-graph-power-comparison}
Let \(A\) be a pointwise-cofibrant spectral Waldhausen category and let
\[
 f\colon A\longrightarrow A
\]
be a strict exact spectral endofunctor.  There is an exact functor
\[
 \operatorname{Comp}_m\colon
 \operatorname{End}^{(m)}
 \bigl({}_fA/A_{\operatorname{id}}\bigr)
 \longrightarrow
 \operatorname{End}
 \bigl({}_{f^m}A/A_{\operatorname{id}}\bigr)
\]
sending
\[
 fa_i\xrightarrow{\alpha_i}a_{i+1},
 \qquad i\in\mathbb Z/m,
\]
to
\[
 \left(
 a_0,\,
 \alpha_{m-1}f(\alpha_{m-2})\cdots f^{m-1}(\alpha_0)
 \colon f^ma_0\to a_0
 \right).
\]

Let
\begin{equation}\label{eq:clmpz-marked-unwinding-power}
 \overline\omega_m^f\colon
 V_m\THH^{(m)}(A;X_f)
 \xrightarrow{\ \simeq\ }
 \THH(A;X_f^{\odot m})
 \xrightarrow{\ \simeq\ }
 \THH(A;X_{f^m})
\end{equation}
be the equivalence \(\omega_m^f\) followed by enriched co-Yoneda.  Then there
is a natural
homotopy-commutative square
\begin{equation}\label{eq:clmpz-graph-power-comparison}
\begin{tikzcd}[column sep=large]
 K\operatorname{End}^{(m)}({}_fA/A_{\operatorname{id}})
 \arrow[r,"V_m\operatorname{trc}^{(m)}"]
 \arrow[d,"K(\operatorname{Comp}_m)"'] &
 V_m\THH^{(m)}(A;X_f)
 \arrow[d,"\overline\omega_m^f"]\\
 K\operatorname{End}({}_{f^m}A/A_{\operatorname{id}})
 \arrow[r,"\operatorname{trc}^{(1)}"'] &
 \THH(A;X_{f^m}).
\end{tikzcd}
\end{equation}
It is natural for strict morphisms \(uf=gu\).  If \(\Delta_m\) denotes CLMPZ
duplication, then
\begin{equation}\label{eq:comp-duplication-iteration}
 \operatorname{Comp}_m\Delta_m(a,\alpha)
 =
 \bigl(a,\alpha f(\alpha)\cdots f^{m-1}(\alpha)\bigr).
\end{equation}
\end{theorem}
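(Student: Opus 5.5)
The plan is to build \(\operatorname{Comp}_m\) by hand, then compare the two traces on the Waldhausen \(w_\bullet S_\bullet\) model, degreewise in the cyclic bar direction.

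First, \(\operatorname{Comp}_m\). An object of \(\operatorname{End}^{(m)}({}_fA/A_{\operatorname{id}})\) is a cyclic string \((a_i,\alpha_i)_{i\in\mathbb Z/m}\). We keep \(a_0\) and apply the strict functor \(f\) iteratively to form the ordered composite. Morphisms are tuples of compatible maps, and they go to their \(0\)-th component. Because \(f\) is strict and exact, this assignment preserves cofibrations, weak equivalences, pushouts and the chosen zero object on the nose, so it is an exact functor. It is also natural for strict morphisms \(uf=gu\), since \(u\) commutes literally with iterates of \(f\). Formula \eqref{eq:comp-duplication-iteration} then follows directly from the definition, because \(\Delta_m\) repeats \((a,\alpha)\) in every slot.

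Next, the square \eqref{eq:clmpz-graph-power-comparison}. Recall that the CLMPZ trace is the Dennis-trace zigzag. It includes the \(0\)-simplices \(\operatorname{End}\to\) cyclic bar, applied levelwise to \(S_\bullet\)-filtered objects, and it uses the twisted-additivity equivalence. I would prove commutativity on the underived level before inverting the backwards arrows. At the inclusion of objects (the zero-simplex), a cyclic string maps under \(V_m\operatorname{trc}^{(m)}\) to the \(m\)-simplex-free element \(\alpha_0\wedge\cdots\wedge\alpha_{m-1}\) in the \(m\) coefficient slots. The unwinding \(\omega_m^f\) starts at slot \(0\) and preserves order, and enriched co-Yoneda composes the factors into \(\alpha_{m-1}f(\alpha_{m-2})\cdots f^{m-1}(\alpha_0)\). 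This is exactly the zero-simplex of \(\operatorname{trc}^{(1)}(\operatorname{Comp}_m(-))\).

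The extension to higher cyclic-bar simplices and to the full multisimplicial \(w_\bullet S_\bullet\) diagram is the main obstacle. I would construct an explicit multisimplicial map from the source-side CLMPZ diagram for \(m\) copies to the one-fold diagram for \(f^m\). This map applies \(\operatorname{Comp}_m\) on the \(K\)-theory nodes and applies the co-Yoneda contraction \(X_f^{\odot m}\to X_{f^m}\) on the coefficient nodes; the contraction is strictly associative because \(f\) is a strict functor. I would then check that it commutes with face and degeneracy maps term by term, which is the content deferred to Appendix~\ref{app:power-comparison-details}. The point needing care is that the cyclic-bar faces of the \(m\)-fold bar that pass between coefficient slots must correspond, after composition, to faces of the one-fold bar. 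The plan is to handle this by using the diagonal (edgewise) model of Definition~\ref{def:marked-unwinding}, where slot \(0\) is marked and the remaining slots are absorbed into the coefficient.

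Finally, the additivity backwards arrows. On the compared diagrams they are equivalences by \cite[Theorem~7.8]{CLMPZ}, since pointwise cofibrancy is preserved, and the comparison maps commute with them strictly. Inverting them therefore yields a homotopy-commutative square. Naturality in strict morphisms holds because every construction used is an ordinary functor of \((A,f)\).
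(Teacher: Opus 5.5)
Your proposal is correct and follows the paper's proof essentially step for step. Like the paper, you define \(\operatorname{Comp}_m\) by keeping the \(0\)-th component on morphisms and apply it levelwise over the \(w_\bullet S_\bullet\) (\(\Sigma_\Delta\)) diagram. On the target you use the marked unwinding \(\omega_m^f\) followed by ordered bar composition, match the zero-simplex \(\alpha_0\wedge\cdots\wedge\alpha_{m-1}\) with the ordered composite, and invert the backwards twisted-additivity arrow after realization.

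The one step you assert rather than justify is that your comparison commutes strictly with those backwards arrows. The paper gets this from the fact that they are wedges of subquotient inclusions, and the endofunctor commutes strictly with these inclusions.
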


\begin{proof}
Ordered composition, retaining the component at the chosen initial vertex on
morphisms, is exact because the cyclic lacing relations are preserved and the
Waldhausen structure is coordinatewise.  Applied at every iterated
\(S\)-construction, it gives the source comparison.  On the target,
\(\overline\omega_m^f\) gives the corresponding comparison by sending the
coefficient factors to the same ordered composite.
This comparison respects the backwards twisted-additivity arrow and hence
passes through realization, genuine suspension, and derived prolongation.
Duplication
repeats the same lacing arrow in every slot and therefore gives
\eqref{eq:comp-duplication-iteration}.  The verification for the
complete \(\Sigma_\Delta\)-diagram is carried out in
Appendix~\ref{app:power-comparison-details}.
\end{proof}

\begin{proposition}
\label{prop:powered-trace-ghost-square}
For an exact endofunctor \(G\), let
\[
 \tau_G^{\mathrm{CL},1}:=
 \operatorname{tr}^{\mathrm{res,int}}_{\{1\}}\colon
 K^{\mathrm{lace}}(\cC,\cM_G)
 \longrightarrow R_{\{1\}}\bigl(\mathbf E(\cC,G)\bigr)
 \simeq\THH(\cC,\cM_G).
\]
For every \(m\geq1\),
Proposition~\ref{lem:power-coherent-perfect-envelope} gives a natural
equivalence between the one-fold CLMPZ trace formed from
\((\mathcal P_x^{\mathrm{st}},(T_x^{\mathrm{st}})^m)\) and
\(\tau_{F^m}^{\mathrm{CL},1}\).  We denote the resulting transformation by
\begin{equation}\label{eq:intrinsic-powered-clmpz-trace}
 \tau_{F;m}^{\mathrm{CL}}\colon
 K^{\mathrm{lace}}(\cC,\cM_{F^m})
 \longrightarrow\THH(\cC,\cM_{F^m}),
\end{equation}
and thus have a canonical natural equivalence
\begin{equation}\label{eq:rooted-separate-trace-identification}
 \tau_{F;m}^{\mathrm{CL}}\simeq\tau_{F^m}^{\mathrm{CL},1}.
\end{equation}
For every \(m\in S\), there is a natural homotopy-commutative square
\begin{equation}\label{eq:integral-intrinsic-spectrum-ghost}
\begin{tikzcd}[column sep=large]
 K^{\mathrm{lace}}(\cC,\cM_F)
 \arrow[r,"\operatorname{tr}^{\mathrm{res,int}}_S"]
 \arrow[d,"K(P_m)"'] &
 R_S\bigl(\mathbf E(\cC,F)\bigr)
 \arrow[d,"\overline g_{m,S}"]\\
 K^{\mathrm{lace}}(\cC,\cM_{F^m})
 \arrow[r,"\tau_{F;m}^{\mathrm{CL}}"'] &
 \THH(\cC,\cM_{F^m}).
\end{tikzcd}
\end{equation}
\end{proposition}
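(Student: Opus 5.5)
The square will be reduced to a strict presentation \(x=(A,f)\) of \((\cC,F)\), assembled there from three earlier pieces, and then descended. The first piece is the extraction \(\kappa_S\) of Lemma~\ref{lem:module-envelope-truncated-source}, which realizes \(K^{\mathrm{lace}}(\cC,\cM_F)\) as the constant underived limit of the naive fixed-cycle system. The second is the power comparison of Theorem~\ref{lem:clmpz-graph-power-comparison}. The third is the power-coherent envelope of Proposition~\ref{lem:power-coherent-perfect-envelope}. Both legs of the square are natural transformations of functors on \(\operatorname{Fun}(B\mathbb N,\operatorname{Cat}^{\mathrm{perf}}_{\infty,\mathcal U})\). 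By Theorem~\ref{thm:arrow-valued-morita-descent} and the localization equivalence \eqref{eq:endofunctor-diagram-rigidification}, it therefore suffices to produce the homotopy natural in strict morphisms of presentations. The square for \(x\) will be computed using the twisting \(z=(\mathcal P_x^{\mathrm{st}},T_x^{\mathrm{st}})\), after applying \(Q_{\mathrm{red}}\).

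The first step is to unwind the top-right composite at level \(m\). The projection of \(R_S(\mathbf E)\) to \(\mathbf E_m^{C_m}\) is compatible with the projection of \(R_S(\mathbf K^{\mathrm{res}})\) to \(\mathbf K^{\mathrm{res},C_m}_m\). By Lemma~\ref{lem:module-envelope-truncated-source}, the latter projection composed with \(\kappa_S\) is the duplication map \(\Delta_m\colon K\operatorname{End}(z)\to K\operatorname{End}^{(m)}(z)^{C_m}\) from the naive fixed-cycle system of \cite[Example~8.6]{CLMPZ}. Composing with \(E^{C_m}\to V_mE\) and using the fact that the CLMPZ trace is a map of restriction systems, the composite \(\overline g_{m,S}\circ\operatorname{tr}^{\mathrm{res,int}}_S\) becomes \(\omega_m^F\circ V_m\operatorname{trc}^{(m)}\circ K(\Delta_m)\). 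The inclusion \(E^{C_m}\to V_mE\) commutes with the equivariant trace by naturality of the forgetful map. The second step applies the square \eqref{eq:clmpz-graph-power-comparison} to the twisting \(z\). This replaces \(\overline\omega_m\circ V_m\operatorname{trc}^{(m)}\) by \(\operatorname{trc}^{(1)}\circ K(\operatorname{Comp}_m)\), and \eqref{eq:comp-duplication-iteration} identifies \(\operatorname{Comp}_m\Delta_m\) with ordered \(m\)-fold iteration. The result is the one-fold CLMPZ trace of \((\mathcal P_x^{\mathrm{st}},(T_x^{\mathrm{st}})^m)\) precomposed with iteration.

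The third step transports this to the intrinsic bottom row. Proposition~\ref{lem:power-coherent-perfect-envelope}(2),(3) identifies the one-fold trace of \((\mathcal P_x^{\mathrm{st}},(T_x^{\mathrm{st}})^m)\), naturally in \(x\), with the trace for \(x^{[m]}\). This identification is exactly the definition of \(\tau^{\mathrm{CL}}_{F;m}\) together with \eqref{eq:rooted-separate-trace-identification}. Proposition~\ref{lem:power-coherent-perfect-envelope}(3) also identifies ordered composition with \(K(P_m)\) under the source identification of Proposition~\ref{thm:split-perfect-strictification}(4). On the target side, \(\overline\omega_m^f\) must agree with \(\omega_m^F\) as defined in \eqref{eq:intrinsic-truncated-ghost}. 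I would check this via Lemma~\ref{lem:coefficient-thh-presentation} and Theorem~\ref{thm:restriction-target-descent}: both maps are the slot-\(0\) unwinding followed by co-Yoneda and the presentation comparison.

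The main obstacle is bookkeeping the naturality of these homotopies in \(x\) so that the square descends, rather than only holding objectwise. Particular care is needed because the \(m\)-fold target in the top row is computed from \(T_x^{\mathrm{st}}\), while \(\tau_{F^m}^{\mathrm{CL},1}\) is computed from the presentation \(x^{[m]}\). I would handle this by observing that every comparison used is either a strict natural transformation of ordinary functors on \(\mathsf{Pres}_{\mathcal U}\) or one of the natural equivalences provided by Proposition~\ref{lem:power-coherent-perfect-envelope}. All of these carry \(W_{\mathrm{Mor}}\) to equivalences, so they pass to the localization in the same way as in the proof of Theorem~\ref{thm:arrow-valued-morita-descent}.
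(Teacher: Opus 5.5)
Your proposal is correct and follows essentially the same route as the paper. After $Q_{\mathrm{red}}$, you identify ghost-after-trace with the $m$-fold Dennis trace after duplication, and you use Theorem~\ref{lem:clmpz-graph-power-comparison} and Proposition~\ref{lem:power-coherent-perfect-envelope} to turn this into the one-fold trace for $x^{[m]}$ precomposed with $P_m$. You then match the target unwinding with $\overline g_{m,S}$ via Lemma~\ref{lem:coefficient-thh-presentation}, and descend the resulting square, a functor $N\mathsf{Pres}_{\mathcal U}\to\operatorname{Fun}(\Delta^1\times\Delta^1,\Sp)$ whose vertices invert $W_{\mathrm{Mor}}$, through Morita localization. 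The differences are minor: you derive the first step from $\kappa_S$ and the restriction-system compatibility where the paper cites \cite[Proposition~8.20]{CLMPZ}, and you treat \eqref{eq:rooted-separate-trace-identification} as definitional where the paper obtains it by applying Theorem~\ref{thm:integral-module-envelope-trace-descent} to the presentation $x^{[m]}$ of $(\cC,F^m)$.
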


\begin{proof}
Apply \(Q_{\mathrm{red}}\) to the finite strict diagram of input twistings, as
in Remark~\ref{rem:standing-replacement-convention}.  Proposition~8.20 of
\cite{CLMPZ} identifies the composite of the CLMPZ trace with the \(m\)-th
ghost as duplication followed by the \(m\)-fold Dennis trace, naturally in
strict morphisms of twistings.  Theorem~
\ref{lem:clmpz-graph-power-comparison} turns this into the one-fold
trace of \((\mathcal P_x^{\mathrm{st}},(T_x^{\mathrm{st}})^m)\), and
Proposition~\ref{lem:power-coherent-perfect-envelope} compares that
twisting with the strictified perfect-module presentation for \(x^{[m]}\).

Naturality of the CLMPZ ghost comparison gives a diagram of trace arrows,
natural in \(x\), whose comparison maps are induced by \(q_{m,x}\) and the
inclusions into \(\mathcal B_x^{(m)}\).  These maps are equivalences by
Proposition~\ref{prop:replacement-comparison-diagrams}.  Inverting them leaves
a natural square and hence a functor
\[
 N\mathsf{Pres}_{\mathcal U}
 \longrightarrow
 \operatorname{Fun}(\Delta^1\times\Delta^1,\Sp).
\]
The lower horizontal arrow is the standard one-fold trace for \(x^{[m]}\).
Theorem~\ref{thm:integral-module-envelope-trace-descent} identifies its
descent with the intrinsic trace, giving
\eqref{eq:rooted-separate-trace-identification}.

Under the equivalence with laced \(K\)-theory from
Proposition~\ref{thm:split-perfect-strictification}(4),
Lemma~\ref{lem:strict-sections-iteration} identifies
\(\operatorname{Comp}_m\Delta_m\) with the intrinsic functor
\(P_m\).  On the target, the unwinding equivalence for
the chosen initial coefficient slot, enriched co-Yoneda, and
Lemma~\ref{lem:coefficient-thh-presentation} identify the right
vertical map with \(\overline g_{m,S}\).  The four vertex functors of this
square invert \(W_{\mathrm{Mor}}\) by the source and target descent theorems,
so the square descends through Morita localization and gives
\eqref{eq:integral-intrinsic-spectrum-ghost}.
\end{proof}

\begin{proposition}
\label{prop:laced-trace-on-objects}
On \(\pi_0\), the laced trace sends a laced object
\[
 (x,\alpha),
 \qquad
 \alpha\in\Omega^\infty\cM(x,x),
\]
to the class represented by \(\alpha\) in simplicial degree zero of the
coefficient cyclic bar.
\end{proposition}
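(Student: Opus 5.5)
The plan is to reduce to the universal property of the Harpaz--Nikolaus--Saunier trace on \(K_0\) and then evaluate on generators.

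\emph{Step 1: reduce to generators.} By \cite[Section~4.4 and Theorem~4.45]{HNS}, \(\operatorname{tr}^{\mathrm{lace}}\) is the unique (up to contractible choice) natural transformation of additive invariants of laced categories from \(K^{\mathrm{lace}}\) to \(\THH\) that is normalized on the universal laced object. On \(\pi_0\) the source \(K_0(\operatorname{Lace}(\cC,\cM))\) is generated by classes \([(x,\alpha)]\) of laced objects, and the map to \(\pi_0\THH(\cC,\cM)\) is additive. It therefore suffices to compute the image of a single object \((x,\alpha)\).

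\emph{Step 2: use naturality and corepresentability.} A laced object \((x,\alpha)\) is the same as a morphism of laced categories
\[
(\Perf(\mathbb S),\mathbb S)\longrightarrow(\cC,\cM)
\]
that sends the unit to \(x\) and the coefficient generator to \(\alpha\in\Omega^\infty\cM(x,x)\). This is the corepresenting pair for \(\Omega^\infty\cM(x,x)\), in the sense of \cite[Construction~2.5]{HNS}. By naturality of both \(\operatorname{tr}^{\mathrm{lace}}\) and the cyclic bar comparison, it is enough to treat the universal case. In that case the laced object is \((\mathbb S,\mathrm{id})\) with coefficient \(\mathbb S\), and \(\THH(\Perf(\mathbb S),\mathbb S)\simeq\mathbb S\). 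The normalization in \cite[Theorem~4.45]{HNS}, with inputs \cite[Theorem~3.11 and Corollary~4.41]{HNS}, says the trace sends the universal class to the unit, that is, to the zero-simplex \(1\in\pi_0\mathbb S\).

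\emph{Step 3: transport back.} Pushing the universal zero-simplex forward along the induced map of coefficient cyclic bars sends \(1\) to the degree-zero simplex \(\alpha\in\cM(x,x)\). This map is the inclusion of the \(x\)-summand of the coend \(\int^{x}\cM(x,x)\) at simplicial level zero, so we obtain the asserted formula.

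\emph{Expected obstacle.} The main difficulty is the identification in Step 2 of the HNS normalization with the zero-simplex of the cyclic bar model, rather than with some abstract coend class. I would handle this using the comparison \eqref{eq:spectral-coefficient-coend-comparison} together with \cite[Definition~4.4, Example~4.5]{HNS}. Both models restrict, on representable bimodules, to the inclusion of \(\cM(x,x)\) as the summand indexed by \(x\), and the comparison is natural in pairs. Hence the two descriptions of the class of \(\alpha\) agree.
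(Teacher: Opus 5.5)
Your proposal is correct and is essentially the paper's argument. The paper invokes the same HNS results to say that the laced trace is the additive extension of the natural object-level map to coefficient \(\THH\), and that this map is the inclusion of simplicial degree zero in the coend model. Your Yoneda reduction along the corepresenting laced category \((\Perf(\mathbb S),\mathbb S)\) to the normalization \(1\in\pi_0\THH(\mathbb S)\) makes that last identification explicit. Your Step~1 is harmless but unnecessary, since the statement only concerns individual laced objects.
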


\begin{proof}
The HNS trace is the additive extension of the natural map from a laced object
to coefficient \(\THH\), and that map is the inclusion of simplicial degree
zero in the coend model; see
\cite[Section~4.4 and Theorem~4.45, with Theorem~3.11 and
Corollary~4.41 as inputs]{HNS}.
\end{proof}

\begin{corollary}
\label{cor:integral-intrinsic-ghost-formula}
On \(\pi_0\), the trace \(\tau_{F;m}^{\mathrm{CL}}\) agrees with the HNS
laced trace:
\begin{equation}\label{eq:powered-clmpz-hns-pi0}
 \pi_0\tau_{F;m}^{\mathrm{CL}}
 =\pi_0\operatorname{tr}^{\mathrm{lace}}.
\end{equation}
Consequently, for every \(\xi\in K_0\Lace(\cC,\cM_F)\),
\begin{equation}\label{eq:integral-intrinsic-ghost-formula}
 \pi_0\overline g_{m,S}
 \bigl(\operatorname{tr}^{\mathrm{res,int}}_S(\xi)\bigr)
 =
 \operatorname{tr}^{\mathrm{lace}}
 \bigl(P_m(\xi)\bigr)
 \quad\text{in }\pi_0\THH(\cC,\cM_{F^m}).
\end{equation}
\end{corollary}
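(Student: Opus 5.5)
The argument has two steps: identify \(\pi_0\tau_{F;m}^{\mathrm{CL}}\) with the laced trace on generators, then read off the ghost formula from the square \eqref{eq:integral-intrinsic-spectrum-ghost}.

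For \eqref{eq:powered-clmpz-hns-pi0}, first use \eqref{eq:rooted-separate-trace-identification} to replace \(\tau_{F;m}^{\mathrm{CL}}\) by \(\tau_{F^m}^{\mathrm{CL},1}\). Since \(K_0\) of the stable category \(\Lace(\cC,\cM_{F^m})\) is generated by classes of objects \((x,\beta)\), with \(\beta\colon F^mx\to x\), and both maps are additive on \(\pi_0\), it suffices to compare the two traces on such classes.

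Choose an admissible strict presentation \(x=(A,f)\). By Proposition~\ref{lem:power-coherent-perfect-envelope} we may compute \(\tau_{F^m}^{\mathrm{CL},1}\) through the one-fold CLMPZ Dennis trace for \(x^{[m]}\), or for \((\mathcal P^{\mathrm{st}}_x,(T^{\mathrm{st}}_x)^m)\). The one-fold CLMPZ trace is built from the Waldhausen \(S_\bullet\)-construction. On the zero-skeleton it sends a twisted endomorphism \((a,\beta)\), viewed as a \(1\)-simplex of \(S_\bullet\), to the zero-simplex of the coefficient cyclic bar represented by \(\beta\in X_{f^m}(a,a)\); this is the CLMPZ analogue of the classical Dennis trace on \(K_0\). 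Transporting along Proposition~\ref{thm:split-perfect-strictification}(4) on the source and Lemma~\ref{lem:coefficient-thh-presentation} on the target, this zero-simplex class becomes the zero-simplex class of \(\beta\in\Omega^\infty\Map(F^mx,x)\) in \(\THH(\cC,\cM_{F^m})\). By Proposition~\ref{prop:laced-trace-on-objects}, that is exactly \(\operatorname{tr}^{\mathrm{lace}}(x,\beta)\). Hence the two maps agree on generators, and so on all of \(\pi_0\).

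For \eqref{eq:integral-intrinsic-ghost-formula}, apply \(\pi_0\) to the homotopy-commutative square \eqref{eq:integral-intrinsic-spectrum-ghost}. This gives \(\pi_0\overline g_{m,S}\circ\pi_0\operatorname{tr}^{\mathrm{res,int}}_S=\pi_0\tau^{\mathrm{CL}}_{F;m}\circ K_0(P_m)\), and substituting \eqref{eq:powered-clmpz-hns-pi0} yields the claim.

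The main obstacle is the first part. One must check that the backwards twisted-additivity inversion, genuine suspension and prolongation in the CLMPZ zigzag do not alter the zero-skeleton class. One must also check that the Yoneda and strictification comparisons send the cyclic-bar zero-simplex of \(\beta\) to the HNS zero-simplex. The first point should follow because all of these maps are equivalences compatible with the inclusion of zero-simplices; this is exactly the content of \cite[Proposition~8.20 and Theorem~7.8]{CLMPZ} applied at \(m=1\). The second point is the final sentence of Lemma~\ref{lem:coefficient-thh-presentation}.
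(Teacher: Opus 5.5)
Your proposal is correct and follows the paper's proof. Both compare the one-fold CLMPZ trace and the HNS laced trace on object classes through the common cyclic-bar zero-simplex, extend to all of \(K_0\) by additivity and generation by objects, and then apply \(\pi_0\) to the square \eqref{eq:integral-intrinsic-spectrum-ghost}. The one difference is the source for the zero-simplex description of the one-fold CLMPZ trace: the paper cites \cite[Lemma~7.13]{CLMPZ} directly, rather than deriving it from \cite[Proposition~8.20 and Theorem~7.8]{CLMPZ}.
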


\begin{proof}
On an object of the twisted-endomorphism category in a chosen spectral
Waldhausen presentation, the one-fold CLMPZ trace
\cite[Lemma~7.13]{CLMPZ} and the HNS trace of
Proposition~\ref{prop:laced-trace-on-objects} are represented by the
same coefficient cyclic-bar zero-simplex.  Additivity and generation of
\(K_0\) by object classes prove
\eqref{eq:powered-clmpz-hns-pi0}.  Applying \(\pi_0\) to
\eqref{eq:integral-intrinsic-spectrum-ghost} gives
\eqref{eq:integral-intrinsic-ghost-formula}.
\end{proof}

\section{Iteration and Frobenius}
\label{sec:iteration-frobenius}

The comparison with iteration is governed by ordered block composition.  At
level \(n\), restriction from \(C_{mn}\) to \(C_n\) groups the \(mn\)
coefficient slots into \(n\) blocks of length \(m\), and co-Yoneda identifies
each block with the coefficient of \(F^m\).  These levelwise maps commute with
the CLMPZ restriction arrows and give the Frobenius square of Theorem~A.

We use the fixed-point conventions of
\S\ref{subsec:restriction-system-conventions}.  The norm-diagonal calculation
is made on cofibrant CLMPZ models, whose point-set geometric fixed points
compute the derived ones by the left-deformability discussion preceding
\cite[Proposition~2.19]{CLMPZ}.

\begin{lemma}
\label{lem:iteration-multiplicative-coherence}
For \(m,n\geq1\), restriction of right-lax cones supplies a canonical natural
equivalence
\[
 P_n^{F^m}P_m^F\simeq P_{mn}^F.
\]
These equivalences satisfy the unit and associativity coherences.
\end{lemma}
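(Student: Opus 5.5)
The plan is to treat \(P_m\) as restriction along the monoid map \(\mu_m\colon B\mathbb N\to B\mathbb N\) and use strict functoriality of restriction of right-lax limits along composable functors. By \eqref{eq:laced-category-right-lax-limit}, \(\operatorname{Lace}(\cC,\cM_F)\simeq\operatorname{RLim}^{\mathrm{lax}}(X_F)\). By Proposition~\ref{lem:coefficient-composition}, \(P_m^F\) is the composite of two maps. The first is the restriction map \(\operatorname{RLim}^{\mathrm{lax}}(X_F)\to\operatorname{RLim}^{\mathrm{lax}}(X_F\circ\mu_m)\). The second is the identification \(X_F\circ\mu_m\simeq X_{F^m}\) of diagrams on \(B\mathbb N\), which is supplied by \eqref{eq:coefficient-composition}.

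\textbf{Step 1: strict identities of indexing maps.} We have \(\mu_m\circ\mu_n=\mu_{mn}\) and \(\mu_1=\mathrm{id}\), literally, as functors \(B\mathbb N\to B\mathbb N\). Restriction of right-lax cones is contravariantly functorial in the indexing functor: it is pullback of the unstraightened cocartesian fibration followed by sections, equivalently precomposition in the lax-limit formula. Hence \(\mu_n^*\mu_m^*=\mu_{mn}^*\), canonically, together with the evident unit and associativity coherences, which come from the functoriality of \(\operatorname{RLim}^{\mathrm{lax}}(-\circ -)\) on the ordinary category of indexing maps.

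\textbf{Step 2: compatibility of the diagram identifications.} We must check that the equivalence \(X_F\circ\mu_{mn}\simeq X_{F^{mn}}\) agrees with the composite of two identifications. The first is \(X_F\circ\mu_m\circ\mu_n\simeq X_{F^m}\circ\mu_n\), obtained by whiskering the \(m\)-identification along \(\mu_n\). The second is the \(n\)-identification \(X_{F^m}\circ\mu_n\simeq X_{(F^m)^n}\). In the diagram language this is immediate, since a functor \(B\mathbb N\to\operatorname{Cat}^{\mathrm{perf}}_\infty\) is determined by the image of the generator, and \(X_F\circ\mu_m\) sends it to \(F^m\). The only content is the coherence of the equivalences \(\cM_F^{\circ m}\simeq\cM_{F^m}\) on the bimodule side. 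I would argue that co-Yoneda is the monoidal equivalence \(F\mapsto\cM_F\) from the monoidal category \((\operatorname{Fun}^{\mathrm{ex}}(\cC,\cC)^{\mathrm{rev}},\circ)\) to bimodules, under the Eilenberg--Watts equivalence of \cite[Theorems~4.8.4.1 and~4.8.4.6]{LurieHA}. Its associativity constraints then give the compatibility. The multiplicativity of the power maps is then the restriction of a monoidal functor along \(\mathbb N\xrightarrow{\cdot m}\mathbb N\).

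\textbf{Step 3: assemble and check on objects.} Combining Steps 1 and 2 gives a natural equivalence \(P_n^{F^m}P_m^F\simeq P_{mn}^F\) that is unital and associative. On objects I would sanity-check it against \eqref{eq:iterated-lacing-arrow}: \((\alpha^{[m]})^{[n]}=\alpha^{[m]}\circ F^m(\alpha^{[m]})\circ\cdots\) reassociates to \(\alpha^{[mn]}\) by functoriality of \(F\). This matches the ordered block grouping used in the Frobenius construction.

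The main obstacle is Step 2: making the co-Yoneda identifications coherently multiplicative, rather than only pointwise. If the monoidal Eilenberg--Watts argument is awkward, I would instead avoid bimodules entirely. That means working purely with \(B\mathbb N\)-diagrams, where \(X_F\circ\mu_m\) and \(X_{F^m}\) are canonically equal as diagrams. The comparison with \(\operatorname{Lace}(\cC,\cM_{F^m})\) would then be transported once through \eqref{eq:laced-category-right-lax-limit}, so that no iterated identification appears.
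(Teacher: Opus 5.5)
Your proposal is correct and follows the paper's argument. Like the paper, you use the strict identities \(\mu_n\mu_m=\mu_{mn}\) and \(\mu_1=\mathrm{id}\) together with functoriality of restriction of right-lax cones, and the associator for relative tensor products, i.e.\ the monoidality of \(F\mapsto\cM_F\), to make the bimodule identifications coherent, exactly as in your Steps 1--2. One small imprecision: \(F\mapsto\cM_F\) is a monoidal embedding onto the left-representable bimodules, not a monoidal equivalence with all bimodules, but a monoidal functor is all your Step 2 needs.
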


\begin{proof}
The monoid maps of Section~\ref{sec:graph-restriction} satisfy
\(\mu_n\mu_m=\mu_{mn}\).  Functoriality of precomposition gives the displayed
comparison, and associativity follows from associativity of composition in
\(B\mathbb N\), together with the associator for relative tensor products.
\end{proof}

The strict model comparisons in
\eqref{eq:power-comparison-coherence} represent these equivalences after
Morita localization.

\subsection{Frobenius and ordered block maps}

\begin{definition}
\label{def:restriction-system-block-shift}
For each \(q\geq1\), write \(C_q=\langle\rho_q\rangle\), where \(\rho_q\)
is the chosen standard generator.  For a genuine restriction system \(E\)
and \(m\geq1\), define its block
shift by
\begin{equation}\label{eq:restriction-system-block-shift}
 (\operatorname{sh}_mE)_n
 :=\operatorname{Res}^{C_{mn}}_{C_n}E_{mn}.
\end{equation}
Here \(C_n=\langle\rho_{mn}^{m}\rangle\subset
C_{mn}=\langle\rho_{mn}\rangle\).  For \(r\mid n\), the shifted
restriction arrow is the CLMPZ restriction arrow from level \(mn\) to level \(mr\),
restricted along \(\langle\rho_{mn}^{m}\rangle\) and
\(\langle\rho_{mr}^{m}\rangle\).  The inherited restriction maps make
\(\operatorname{sh}_mE\) a restriction system.  Concretely, its arrow from
level \(qr\) to level \(r\) is
\begin{equation}\label{eq:block-shift-restriction-arrow}
 \Phi^{C_q}\operatorname{Res}^{C_{mqr}}_{C_{qr}}E_{mqr}
 \cong
 \operatorname{Res}^{C_{mr}}_{C_r}\Phi^{C_q}E_{mqr}
 \longrightarrow
 \operatorname{Res}^{C_{mr}}_{C_r}E_{mr},
\end{equation}
with the identifications of cyclic groups induced by the chosen generators
suppressed; the last map is the
restriction arrow of \(E\).
\end{definition}

This block shift is the genuine analogue of the polygonic shift of
\cite[Example~2.28]{KMN}.

\begin{proposition}
\label{prop:graph-block-shift-frobenius}
For every small idempotent-complete stable \(\infty\)-category \(\cC\)
equipped with an exact endofunctor \(F\),
 grouping coefficient slots and applying
enriched co-Yoneda gives a natural equivalence of
genuine restriction systems
\begin{equation}\label{eq:intrinsic-graph-block-shift}
 b_m\colon
 \operatorname{sh}_m\mathbf E(\cC,F)
 \xrightarrow{\ \simeq\ }
 \mathbf E(\cC,F^m).
\end{equation}
Its level \(n\) representative for a
pointwise-cofibrant strict
presentation \((A,f)\) is the genuine \(C_n\)-equivalence
\begin{equation}\label{eq:graph-block-shift-level}
 b_{m,n}\colon
 \operatorname{Res}^{C_{mn}}_{C_n}\THH^{(mn)}(A;X_f)
 \xrightarrow{\ \simeq\ }
 \THH^{(n)}(A;X_{f^m}),
\end{equation}
obtained by grouping the \(mn\) coefficient slots into \(n\) consecutive
blocks of length \(m\) and applying enriched co-Yoneda
\(X_f^{\odot m}\simeq X_{f^m}\).  Using the equivalences \(\omega\) of
Definition~\ref{def:marked-unwinding}, write
\begin{equation}\label{eq:unwound-block-shift}
 \overline b_{m,n}^{F}:=
 \omega_n^{F^m}\,V_n(b_{m,n})\,(\omega_{mn}^{F})^{-1}:
 \THH(\cC,\cM_{F^{mn}})\xrightarrow{\ \simeq\ }
 \THH(\cC,\cM_{F^{mn}}).
\end{equation}
For every pair \(m,n\), the equivalences satisfy the natural binary
multiplicativity comparison
\begin{equation}\label{eq:block-shift-associativity}
 (b_n)^{(F^m)}\circ
 \operatorname{sh}_n\bigl((b_m)^{(F)}\bigr)
 \simeq (b_{mn})^{(F)}
\end{equation}
after the canonical identification
\(\operatorname{sh}_n\operatorname{sh}_m\simeq
\operatorname{sh}_{mn}\).
\end{proposition}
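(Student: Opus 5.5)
We first construct \(b_{m,n}\) on a pointwise-cofibrant admissible presentation \((A,f)\), with \(Q_{\mathrm{red}}\) applied as in Remark~\ref{rem:standing-replacement-convention}, and then descend. In simplicial multidegree, the diagonal model of \(\THH^{(mn)}(A;X_f)\) has \(mn\) cyclically ordered coefficient slots separated by mapping factors of \(A\). The subgroup \(C_n=\langle\rho_{mn}^m\rangle\) rotates by multiples of \(m\) slots, so it permutes the \(n\) consecutive blocks \(\{km,\dots,km+m-1\}\). Within each block we contract the interior mapping and coefficient factors, using the composition \(X_f\odot_A X_f\to X_{f^2}\), \(A(fa,b)\wedge A(fb,c)\to A(f^2a,c)\) given by \(\beta\circ f(\alpha)\), and iterate. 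This is a point-set map from the block-grouped multibar to \(\THH^{(n)}(A;X_f^{\odot m})\) and then to \(\THH^{(n)}(A;X_{f^m})\). It is \(C_n\)-equivariant because block rotation corresponds to the generator of the target's cyclic action. By Lemma~\ref{lem:pointwise-cofibrant-graph-multibar}, the ordinary bar powers compute derived powers, and enriched co-Yoneda makes \(X_f^{\odot m}\simeq X_{f^m}\) an equivalence. So \(b_{m,n}\) is an underlying equivalence, and by Definition~\ref{def:marked-unwinding} it is compatible with \(\omega\).

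Next we show \(b_{m,n}\) is a genuine \(C_n\)-equivalence. For \(C_d\le C_n\) with \(n=ds\), we apply \cite[Propositions~7.4 and~7.6]{CLMPZ} to both sides. Geometric \(C_d\)-fixed points of \(\operatorname{Res}^{C_{mn}}_{C_n}\THH^{(mn)}(A;X_f)\) are the \(C_d\subset C_{mn}\) geometric fixed points, which unwind to \(\THH^{(ms)}\) of \(X_f\), that is, \(\THH(A;X_f^{\odot ms})\) after forgetting. The target unwinds to \(\THH(A;X_{f^m}^{\odot s})\). The induced map is again block contraction. Since \(X_f^{\odot ms}\simeq X_{f^m}^{\odot s}\simeq X_{f^{ms}}\) by co-Yoneda, it is an equivalence. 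The norm diagonal is natural in cofibrant bimodule maps \cite[Proposition~2.19]{CLMPZ}, and it commutes with block contraction because the diagonal repeats blocks. Hence the \(b_{m,n}\) commute with the restriction arrows \eqref{eq:block-shift-restriction-arrow} and assemble into a map of restriction systems. The verification on all geometric fixed points then gives an equivalence at every genuine level.

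For descent, the construction is natural in strict morphisms \(uf=gu\), because block contraction uses only composition and \(f\). Taking presentations via \(\operatorname{StrictPerf}_{\mathcal U}\) and the power-coherent model of Proposition~\ref{lem:power-coherent-perfect-envelope}, we obtain a natural transformation of functors \(N\mathsf{Pres}_{\mathcal U}\to\operatorname{RSys}^g_\infty\). The target is identified with \(\mathbf E(x^{[m]})\) through \(q_{m,x}\) and the bridge \eqref{eq:power-envelope-bridge}. Both sides invert \(W_{\mathrm{Mor}}\): the source by Theorem~\ref{thm:arrow-valued-morita-descent}(ii) together with \(\operatorname{sh}_m\) preserving equivalences, and the target by Theorem~\ref{thm:restriction-target-descent}. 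So the transformation descends through \eqref{eq:endofunctor-diagram-rigidification} to \(b_m\).

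For \eqref{eq:block-shift-associativity}, on the point-set model we group \(mn\cdot k\) slots into blocks of \(m\) and then blocks of \(n\). The result is literally grouping into blocks of \(mn\), because contraction is associative: the composite \(\beta\circ f(\alpha)\) is strictly associative in \(A\), and \(q_{n,x^{[m]}}q_{m,x}=q_{mn,x}\). The remaining comparison is the associator of relative tensor products, as in Lemma~\ref{rem:coefficient-morita-unfolded}.

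\textbf{Main obstacle.} The main difficulty is the second step: making block contraction a genuine map on the cofibrant CLMPZ model, compatible with the norm diagonal on all \(\Sigma_\Delta\)-multisimplicial pieces. I would handle it as in Appendix~\ref{app:power-comparison-details}, checking it wedge-summand-wise on \(Q_{\mathrm{red}}\).
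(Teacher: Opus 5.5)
Your construction of the level maps matches the paper's Lemma~\ref{lem:admissible-graph-target-block}, and so does your proof that each \(b_{m,n}\) is a genuine \(C_n\)-equivalence commuting with the restriction arrows. In both, row-wise bar augmentation plus co-Yoneda gives the map, and equivariance holds because rotating by \(m\) slots rotates the rows once. Likewise \(\Phi^{C_d}b_{m,n}\simeq b_{m,n/d}\) follows from the norm-diagonal interchange of Lemma~\ref{lem:norm-diagonal-block-interchange} and \cite[Propositions~7.4 and~7.6]{CLMPZ}.

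The gap is in the descent step. First, \(Q_{\mathrm{red}}\) cannot be applied to a presentation \((A,f)\). It replaces zero-reduced spectral Waldhausen categories, while \((A,f)\) is only a spectral category with an endofunctor. It is also not needed: Lemma~\ref{lem:pointwise-cofibrant-graph-multibar} makes the unpointed graph pair of a pointwise-cofibrant \((A,f)\) admissible as it stands.

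Second, routing the descent through \(\operatorname{StrictPerf}_{\mathcal U}\), \((\widetilde{\mathcal P}_x,S_x)\), \(q_{m,x}\) and the two-copy bridges can be made to give \emph{a} natural equivalence \(\operatorname{sh}_m\mathbf E(\cC,F)\simeq\mathbf E(\cC,F^m)\). But its level representative is then the block map on \(Q_{\mathrm{red}}(\widetilde{\mathcal P}_x,S_x)\), not \(b_{m,n}\) on \((A,X_f)\) as the statement asserts. Strict naturality cannot bridge this. The Yoneda comparison from \((A,f)\) into the perfect-module envelope intertwines \(f\) with \(T_x^{\mathrm{st}}\) or \(S_x\) only up to the isomorphisms \(f_!y_A\cong y_Af\) and the compositors relating \((f_!)^m\) to \((f^m)_!\). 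You would need a separate check that block contraction is compatible with these isomorphisms along the zigzag of Lemma~\ref{lem:graph-perfect-morita-comparison} and the bridges, and your proposal does not supply it.

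The paper avoids this by applying the block lemma to the functorial graph pair \((Q^{\mathrm{cat}}A,Q^{\mathrm{cat}}A(Q^{\mathrm{cat}}f-,-))\) of Proposition~\ref{prop:graph-coefficient-functor}. Since \(Q^{\mathrm{cat}}\) is an ordinary functor, \(Q^{\mathrm{cat}}(f^m)=(Q^{\mathrm{cat}}f)^m\). Hence the block maps form a strictly natural equivalence \(\operatorname{sh}_m\mathbf E^{\mathrm{adm}}_{\mathrm{gr}}\to\mathbf E^{\mathrm{adm}}_{\mathrm{gr}}\circ(-)^{[m]}\) on \(N\mathsf{Pres}_{\mathcal U}\). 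Both sides invert \(W_{\mathrm{Mor}}\), using also that \((-)^{[m]}\) preserves \(W_{\mathrm{Mor}}\), so the equivalence descends, and Theorem~\ref{thm:restriction-target-descent} identifies the result with \(b_m\). For pointwise-cofibrant \((A,f)\), the augmentation \(Q^{\mathrm{cat}}A\to A\) is a \emph{strict} morphism of graph pairs, so strict naturality identifies the descended map with \(b_{m,n}\). Your own remark that block contraction is natural in strict morphisms is exactly what is needed; apply it to the graph-pair functor instead of the perfect-module envelope.

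For \eqref{eq:block-shift-associativity}, the relevant input is Lemma~\ref{lem:ordered-multibar-fubini}. Both composites come from one expanded multibar, and the comparison of groupings is the relative-bar associator. Its compatibility with the norm diagonal makes it a comparison of restriction systems, not only a levelwise one. Lemma~\ref{rem:coefficient-morita-unfolded} concerns Morita transport and does not provide this. The identity \(q_{n,x^{[m]}}q_{m,x}=q_{mn,x}\) enters only because of your detour through the perfect-module envelope.
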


\begin{proof}
The comparison
\(\operatorname{sh}_m\THH(R,M)\simeq\THH(R,M^{\otimes_Rm})\) is due to
Krause--McCandless--Nikolaus in the polygonic setting
\cite[Example~2.28]{KMN}; here we construct its genuine
restriction-system refinement for graph coefficients.
For a pointwise-cofibrant strict presentation \((A,f)\),
Lemma~\ref{lem:admissible-graph-target-block} constructs
\eqref{eq:graph-block-shift-level} directly on the admissible graph
pair.  It proves at the same time that these level maps commute with all
restriction arrows.  Its geometric-fixed-point calculation is
\begin{equation}\label{eq:block-shift-geometric-fixed}
 \begin{aligned}
  \Phi^{C_d}\operatorname{Res}^{C_{mn}}_{C_n}
        \THH^{(mn)}(A;X_f)
  &\simeq
  \operatorname{Res}^{C_{mn/d}}_{C_{n/d}}
        \THH^{(mn/d)}(A;X_f) \\
  &\xrightarrow{\ b_{m,n/d}\ }
  \THH^{(n/d)}(A;X_{f^m})
  \simeq \Phi^{C_d}\THH^{(n)}(A;X_{f^m}).
 \end{aligned}
\end{equation}
Here \(d\mid n\), and every term retains the residual
\(C_n/C_d\cong C_{n/d}\)-action.  After forgetting this action, the marked
unwinding equivalences identify the middle map with enriched co-Yoneda at
level \(n/d\).  This also proves the asserted description of the level
representative.

For descent, apply the lemma to the functorial graph pair
\[
 \bigl(Q^{\mathrm{cat}}A,\,
 Q^{\mathrm{cat}}A(Q^{\mathrm{cat}}f-,-)\bigr)
\]
of Proposition~\ref{prop:graph-coefficient-functor}.  Since
\(Q^{\mathrm{cat}}\) is an ordinary functor, it preserves composition
strictly, so
\[
 Q^{\mathrm{cat}}(f^m)=(Q^{\mathrm{cat}}f)^m.
\]
Consequently, for each \(m\) the maps of the lemma assemble on
\(N\mathsf{Pres}_{\mathcal U}\) into a natural equivalence
\[
 \operatorname{sh}_m\mathbf E_{\operatorname{gr}}^{\operatorname{adm}}
 \xrightarrow{\ \simeq\ }
 \mathbf E_{\operatorname{gr}}^{\operatorname{adm}}\circ(-)^{[m]}.
\]
Both sides take \(W_{\mathrm{Mor}}\) to equivalences.
For \(\mathbf E_{\operatorname{gr}}^{\operatorname{adm}}\) this is
Proposition~\ref{prop:graph-coefficient-functor}; on the other side one
also uses that the power functor preserves \(W_{\mathrm{Mor}}\) and that
\(\operatorname{sh}_m\) preserves termwise genuine equivalences.  The natural
equivalence therefore descends.  Under
\eqref{eq:endofunctor-diagram-rigidification} and
Theorem~\ref{thm:restriction-target-descent}, its descent is
\eqref{eq:intrinsic-graph-block-shift}.

If \(A\) is already pointwise cofibrant, the augmentation
\(Q^{\mathrm{cat}}A\to A\) is a coefficient Morita equivalence of graph
pairs, and strict naturality in
Lemma~\ref{lem:admissible-graph-target-block} identifies the descended
map with \(b_{m,n}\).  The unit, binary associativity, and fourfold coherence
diagrams in that lemma descend as well, yielding
\eqref{eq:block-shift-associativity} and its coherence.
\end{proof}

\begin{proposition}
\label{prop:restriction-frobenius}
For a truncation set \(S\) and \(m\geq1\), put
\begin{equation}\label{eq:truncation-quotient}
 S/m:=\{n\geq1:mn\in S\}.
\end{equation}
This set is closed under positive divisors and is nonempty exactly when
\(m\in S\).  If \(m\in S\), then for each \(n\in S/m\), restriction from
\(C_{mn}\) to \(C_n\), followed by the block equivalence, gives
\begin{equation}\label{eq:restriction-frobenius-coordinate}
 R_S\bigl(\mathbf E(\cC,F)\bigr)\longrightarrow
 \mathbf E_{mn}(\cC,F)^{C_{mn}}\longrightarrow
 \bigl(\operatorname{Res}^{C_{mn}}_{C_n}
        \mathbf E_{mn}(\cC,F)\bigr)^{C_n}
 \xrightarrow{\ b_{m,n}^{C_n}\ }
 \mathbf E_n(\cC,F^m)^{C_n}.
\end{equation}
Taking the limit over \(\mathcal I_{S/m}\) defines a natural map
\begin{equation}\label{eq:intrinsic-restriction-frobenius}
 \operatorname{Fr}_{m,S}\colon
 R_S\bigl(\mathbf E(\cC,F)\bigr)
 \longrightarrow
 R_{S/m}\bigl(\mathbf E(\cC,F^m)\bigr).
\end{equation}
For \(m=1\), the block maps are the identity and
\(\operatorname{Fr}_{1,S}\simeq\operatorname{id}\).  They satisfy
\begin{equation}\label{eq:restriction-frobenius-coherence}
\operatorname{Fr}_{n,S/m}\operatorname{Fr}_{m,S}
 \simeq\operatorname{Fr}_{mn,S}
\end{equation}
whenever \(mn\in S\).  On underlying ghosts,
\begin{equation}\label{eq:restriction-frobenius-ghost}
 \overline g^{F^m}_{n,S/m}\operatorname{Fr}_{m,S}
 \simeq \overline b_{m,n}^{F}\,
 \overline g^{F}_{mn,S}.
\end{equation}
\end{proposition}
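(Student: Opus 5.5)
The plan is to realize \(\operatorname{Fr}_{m,S}\) as a composite of two natural maps: first a map \(R_S(E)\to R_{S/m}(\operatorname{sh}_mE)\), defined for every genuine restriction system \(E\), and then \(R_{S/m}(b_m)\) from Proposition~\ref{prop:graph-block-shift-frobenius}.

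First, the truncation-set assertions. If \(d\mid n\) and \(mn\in S\), then \(md\mid mn\), so \(md\in S\); hence \(S/m\) is closed under positive divisors. Since \(1\in S/m\) exactly when \(m\in S\), and \(S/m\) is divisor-closed, it is nonempty exactly when \(m\in S\).

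Next, multiplication by \(m\) defines a functor \(\mathcal I_{S/m}\to\mathcal I_S\), \(n\mapsto mn\). Restriction of the limit along this functor gives a map \(\lim_{\mathcal I_S}E_k^{C_k}\to\lim_{\mathcal I_{S/m}}E_{mn}^{C_{mn}}\). Levelwise, the restriction of categorical fixed points \(E_{mn}^{C_{mn}}\to(\operatorname{Res}^{C_{mn}}_{C_n}E_{mn})^{C_n}\) then lands in the terms of \(R_{S/m}(\operatorname{sh}_mE)\).

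The key check is that these levelwise maps form a natural transformation of \(\mathcal I_{S/m}\)-diagrams. For \(n=qr\), one compares the divisibility arrow~\eqref{eq:derived-divisibility-map} for \(E\) at \(mqr\to mr\) with the shifted arrow~\eqref{eq:block-shift-restriction-arrow}. Both factor as
\[
 E^{C_{mqr}}\simeq(E^{C_q})^{C_{mr}}\longrightarrow(\Phi^{C_q}E)^{C_{mr}}\longrightarrow E_{mr}^{C_{mr}},
\]
where \(C_q\subset C_{mqr}\) is the subgroup generated by \(\rho_{mqr}^{mr}\), which is contained in \(C_{qr}=\langle\rho_{mqr}^m\rangle\). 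Restricting the residual \(C_{mr}\)-action to \(C_r\) commutes with the comparison from categorical to geometric fixed points, because \(\Phi^{C_q}\) commutes with restriction to subgroups containing \(C_q\). It also commutes with the iterated fixed point identification. This yields a natural transformation of diagrams and hence a map on limits. I expect the main obstacle to be making this compatibility coherent, and not merely homotopy commutative at each arrow, with the generator identifications suppressed in Definition~\ref{def:restriction-system-block-shift}. I would handle it by working in genuine equivariant \(\infty\)-categories, where \(\operatorname{Res}\), \((-)^H\) and \(\Phi^H\) are functors with canonical base-change equivalences. The required coherence is then naturality of these transformations in the orbit category.

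Composing with \(R_{S/m}(b_m)\) gives~\eqref{eq:intrinsic-restriction-frobenius}. Its \(n\)-th coordinate is~\eqref{eq:restriction-frobenius-coordinate}, and naturality in \((\cC,F)\) follows from naturality of \(b_m\).

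For \(m=1\), \(\operatorname{sh}_1=\operatorname{id}\), reindexing is the identity, and \(b_1\) is the identity because a block of length one requires no co-Yoneda step. Hence \(\operatorname{Fr}_{1,S}\simeq\operatorname{id}\).

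For~\eqref{eq:restriction-frobenius-coherence}, note that reindexing along \(\mathcal I_{S/mn}\to\mathcal I_{S/m}\to\mathcal I_S\) composes strictly. Transitivity of restriction gives \(\operatorname{Res}\operatorname{Res}\simeq\operatorname{Res}\), which is the identification \(\operatorname{sh}_n\operatorname{sh}_m\simeq\operatorname{sh}_{mn}\). Naturality of the reindexing map in \(E\) lets us move \(R(b_m)\) past the second reindexing. The remaining identity is then exactly~\eqref{eq:block-shift-associativity}.

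Finally, for the ghost formula~\eqref{eq:restriction-frobenius-ghost}, project to coordinate \(n\) and follow with \((-)^{C_n}\to V_n\). Since \(V_n\operatorname{Res}^{C_{mn}}_{C_n}=V_{mn}\), the composite \(E_{mn}^{C_{mn}}\to(\operatorname{Res}E_{mn})^{C_n}\to V_n\operatorname{Res}E_{mn}\) is the canonical map \(E_{mn}^{C_{mn}}\to V_{mn}E_{mn}\). Forgetting fixed points is natural, so we obtain \(\widetilde g^{F^m}_{n,S/m}\operatorname{Fr}_{m,S}\simeq V_n(b_{m,n})\,\widetilde g^{F}_{mn,S}\). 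Applying \(\omega_n^{F^m}\) and inserting \((\omega^F_{mn})^{-1}\omega^F_{mn}\) gives \(\overline b^F_{m,n}\,\overline g^F_{mn,S}\), using the definition~\eqref{eq:unwound-block-shift}.
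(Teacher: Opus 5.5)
Your proposal is correct and follows the paper's argument. The paper also builds \(\operatorname{Fr}_{m,S}\) from subgroup forgetting followed by the block equivalence. It writes this explicitly as \(R_{S/m}(b_m)\circ\pi_{\mathbf E}\) in the proof of Theorem~\ref{thm:restriction-trace-frobenius}, and takes \(m=1\) and the composition law from associativity of restriction together with \eqref{eq:block-shift-associativity}. It then gets the ghost identity by projecting to level \(n\) and conjugating by the two \(\omega\)'s, just as you do; you only spell out the cone compatibility that the paper attributes to the restriction-system axioms.
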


\begin{proof}
The middle arrow of
\eqref{eq:restriction-frobenius-coordinate} forgets from
\(C_{mn}\)-fixed points to \(C_n\)-fixed points.  The restriction-system
axioms and compatibility of \(b_m\) just proved make these arrows a cone on
the \(S/m\)-diagram.  The universal property of the limit gives
\eqref{eq:intrinsic-restriction-frobenius}.  Restricting a truncation
set restricts the same cone, so these maps are natural under inclusions of
truncation sets.  For \(m=1\), subgroup forgetting and one-fold block
composition are identities, which gives
\(\operatorname{Fr}_{1,S}\simeq\operatorname{id}\).  Associativity of
subgroup forgetting together with
\eqref{eq:block-shift-associativity} gives
\eqref{eq:restriction-frobenius-coherence}; projection to level \(n\),
followed by the two equivalences \(\omega\), gives
\eqref{eq:restriction-frobenius-ghost}.
\end{proof}

\subsection{The Frobenius theorem}

\begin{theorem}
\label{thm:restriction-trace-frobenius}
For every truncation set \(S\) and \(m\in S\), there is a natural
homotopy-commutative square
\begin{equation}\label{eq:restriction-trace-frobenius-square}
\begin{tikzcd}[column sep=large,row sep=large]
 K^{\mathrm{lace}}(\cC,\cM_F)
 \arrow[r,"\operatorname{tr}^{\mathrm{res,int}}_S"]
 \arrow[d,"K(P_m)"'] &
 R_S\bigl(\mathbf E(\cC,F)\bigr)
 \arrow[d,"\operatorname{Fr}_{m,S}"]\\
 K^{\mathrm{lace}}(\cC,\cM_{F^m})
 \arrow[r,"\operatorname{tr}^{\mathrm{res,int}}_{S/m}"'] &
 R_{S/m}\bigl(\mathbf E(\cC,F^m)\bigr).
\end{tikzcd}
\end{equation}
The homotopies are compatible with inclusions of truncation sets.  Whenever
\(mn\in S\), the square for \(mn\) agrees with the pasting of the squares for
\(m\) and \(n\) through
\eqref{eq:restriction-frobenius-coherence}.  For three positive
integers, the two iterated pastings agree; hence the trace squares form a
coherently multiplicative natural transformation under
\((F,S)\mapsto(F^m,S/m)\).
\end{theorem}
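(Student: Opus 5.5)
The strategy is to reduce the square to a levelwise statement on the source and target restriction systems, and then apply $R_S$. By Theorem~\ref{thm:integral-module-envelope-trace-descent}, $\operatorname{tr}^{\mathrm{res,int}}_S=R_S(\overline{\operatorname{tr}}^{\mathrm{CLMPZ}})\circ\kappa_S$. The Frobenius $\operatorname{Fr}_{m,S}$ is built levelwise from restriction $C_{mn}\to C_n$ followed by the block equivalence $b_m$ of Proposition~\ref{prop:graph-block-shift-frobenius}.

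The first step is to define a source analogue of $b_m$: a map of genuine restriction systems
\[
c_m\colon \operatorname{sh}_m\mathbf K^{\mathrm{cyc}}(\cC,F)\to\mathbf K^{\mathrm{cyc}}(\cC,F^m).
\]
At level $n$ it is $K$ of the exact functor $\mathcal L_{mn}(\cC,F)\to\mathcal L_n(\cC,F^m)$ that groups a cyclic string of length $mn$ into $n$ consecutive blocks of length $m$ and composes the $m$ lacing arrows in each block, as in $\operatorname{Comp}_m$ of Theorem~\ref{lem:clmpz-graph-power-comparison}. This functor is $C_n$-equivariant for $C_n=\langle\rho_{mn}^m\rangle$. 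It is compatible with the repeated-cycle structure maps, because composing blocks of a repeated cycle gives a repeated cycle of block composites.

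Via the fixed-cycle identification of Lemma~\ref{lem:module-envelope-truncated-source}, $R_S(\mathbf K^{\mathrm{cyc}})\simeq K^{\mathrm{lace}}(\cC,\cM_F)$. Composing duplication $\Delta_{mn}$ with block composition gives $\Delta_n\circ P_m$ by \eqref{eq:comp-duplication-iteration}, together with Lemma~\ref{lem:strict-sections-iteration}. Hence the source square $\operatorname{Fr}^{\mathbf K}_{m,S}\circ\kappa_S\simeq\kappa_{S/m}\circ K(P_m)$ commutes, where $\operatorname{Fr}^{\mathbf K}$ denotes restriction followed by $c_m$. It holds already on the underived limit, where everything is constant.

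The second and main step is to show that the CLMPZ trace arrow intertwines $c_m$ and $b_m$:
\[
b_m\circ\operatorname{sh}_m(\overline{\operatorname{tr}}^{\mathrm{CLMPZ}}_F)\simeq\overline{\operatorname{tr}}^{\mathrm{CLMPZ}}_{F^m}\circ c_m
\]
as maps of genuine restriction systems. I would prove this on strict presentations after $Q_{\mathrm{red}}$, as a $C_n$-equivariant refinement of the square \eqref{eq:clmpz-graph-power-comparison}. At multisimplicial level the Dennis-trace zigzag for the $mn$-cyclic twisting sends a string of lacings and $S_\bullet$-data to cyclic-bar simplices. Block composition on the source and block co-Yoneda grouping on the target then agree term by term, since both preserve the order of factors. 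They also commute with the backwards twisted-additivity equivalences, which are natural in strict exact functors.

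To pass from the $C_1$ statement to genuine $C_n$-compatibility, I would use the geometric fixed point description \eqref{eq:block-shift-geometric-fixed} together with the corresponding fixed-cycle description of the source. This reduces the check on each $\Phi^{C_d}$ to the same statement at level $n/d$. The power-coherent envelope of Proposition~\ref{lem:power-coherent-perfect-envelope}, via $q_{m,x}$ and the bridge $\mathcal B_x^{(m)}$, then identifies the target twisting with the presentation of $(\cC,F^m)$ naturally in $x$.

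Morita descent, by Theorem~\ref{thm:arrow-valued-morita-descent} and the invertibility of all four vertices, makes the square intrinsic. Applying $R_S$ and taking limits over $\mathcal I_{S/m}$ then gives \eqref{eq:restriction-trace-frobenius-square}. I expect this equivariant, multisimplicial compatibility to be the main obstacle. I would first try to extend the appendix verification for Theorem~\ref{lem:clmpz-graph-power-comparison} blockwise, because the block maps are literally products of $n$ copies of $\operatorname{Comp}_m$ and of co-Yoneda.

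The compatibility with inclusions of truncation sets is immediate, because the homotopies are built levelwise before taking limits. For multiplicativity, three ingredients combine:
\begin{itemize}
\item on the source, $c_n^{F^m}\circ\operatorname{sh}_n c_m^F = c_{mn}^F$ holds literally, from associativity of ordered composition as in Lemma~\ref{lem:iteration-multiplicative-coherence};
\item on the target, \eqref{eq:block-shift-associativity} supplies the corresponding identity;
\item the strict relation $q_{n,x^{[m]}}q_{m,x}=q_{mn,x}$ of \eqref{eq:power-comparison-coherence} identifies the presentations.
\end{itemize}
Together with \eqref{eq:restriction-frobenius-coherence}, these show that the pasting of the squares for $m$ and $n$ agrees with the square for $mn$. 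The triple coherence follows in the same way from the fourfold coherence recorded in Proposition~\ref{prop:graph-block-shift-frobenius} and Proposition~\ref{lem:power-coherent-perfect-envelope}(3).
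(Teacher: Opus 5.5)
Your proposal follows essentially the same route as the paper. It builds a levelwise \(C_n\)-equivariant block square from \(\operatorname{Comp}_{m\mid n}\) and \(b_{m,n}\) after \(Q_{\mathrm{red}}\), uses the source identity \(\operatorname{Comp}_{m\mid n}\Delta_{mn}=\Delta_nP_m\) on the constant underived limit, compares presentations through the power-coherent envelope, and finishes with Morita descent and pasting with the projections to the levels \(mn\). The only difference is cosmetic: the paper forms the block square directly in \((\widetilde{\mathcal P}_x,S_x)\) and uses \(q_{m,x}\) with the \(\mathcal B^{(1)}\) spans rather than \(\mathcal B_x^{(m)}\), so multiplicativity reduces to the literal identity \(q_{n,x^{[m]}}q_{m,x}=q_{mn,x}\), block associativity, and Lemma~\ref{lem:powered-bridge-multiplicative-coherence}.
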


\begin{proof}
Work in \((\widetilde{\mathcal P}_x,S_x)\), after applying
\(Q_{\mathrm{red}}\) to the strict diagram of zero-reduced input twistings as
in Remark~\ref{rem:standing-replacement-convention}.
The construction in Appendix~\ref{app:block-comparison-details}, applied to
\(H=S_x\), gives, for each \(n\), the square of genuine \(C_n\)-spectra
\[
\begin{tikzcd}[column sep=large]
 \operatorname{Res}^{C_{mn}}_{C_n}\mathbf K_{mn}(S_x)
 \arrow[r,"\operatorname{Res}\operatorname{trc}^{(mn)}"]
 \arrow[d,"K(\operatorname{Comp}_{m\mid n})"'] &
 \operatorname{Res}^{C_{mn}}_{C_n}\mathbf E_{mn}(S_x)
 \arrow[d,"b_{m,n}"]\\
 \mathbf K_n(S_x^m)
 \arrow[r,"\operatorname{trc}^{(n)}"'] &
 \mathbf E_n(S_x^m).
\end{tikzcd}
\]
Here \(\mathbf K_r(S_x)\) and \(\mathbf E_r(S_x)\) denote the source and
target systems of the replaced twisting
\((\widetilde{\mathcal P}_x,S_x)\).  The same square is compatible with the
complete \(\Sigma_\Delta\)-diagram and all CLMPZ restriction arrows.  On the
source,
\[
 \operatorname{Comp}_{m\mid n}\Delta_{mn}=\Delta_nP_m
\]
as an identity of source functors.  The span through
\(\mathcal B_x^{(1)}\) compares the upper row with the strictified
perfect-module model for \(x\).  For the lower row, \(q_{m,x}\) and the span
through \(\mathcal B_{x^{[m]}}^{(1)}\) give the corresponding comparison with
the model for \(x^{[m]}\).  These comparison maps are strict morphisms of
zero-reduced twistings.  After applying
\(Q_{\mathrm{red}}\), Proposition~\ref{prop:replacement-comparison-diagrams}
makes them equivalences in
\(\operatorname{Fun}(\Delta^1,\operatorname{RSys}^g_\infty)\), naturally in
\(x\).
The two spans are the \(m=1\) case of
Lemma~\ref{lem:powered-bridge-multiplicative-coherence} and identify the two
rows of the block square naturally in \(x\).

After taking categorical fixed points levelwise and passing to the limit over
\(\mathcal I_{S/m}\), let
\[
 \pi_{\mathbf K}\colon
 R_S\bigl(\mathbf K^{\mathrm{cyc}}(\cC,F)\bigr)
 \longrightarrow
 R_{S/m}\bigl(\operatorname{sh}_m
                    \mathbf K^{\mathrm{cyc}}(\cC,F)\bigr),
 \qquad
 \pi_{\mathbf E}\colon
 R_S\bigl(\mathbf E(\cC,F)\bigr)
 \longrightarrow
 R_{S/m}\bigl(\operatorname{sh}_m\mathbf E(\cC,F)\bigr)
\]
be induced by projection to the levels \(mn\) and restriction from
\(C_{mn}\)-fixed points to \(C_n\)-fixed points.  In this diagram, the
comparison maps above are equivalences.  The vertex functors of the middle
block square invert
\(W_{\mathrm{Mor}}\), so that square descends to
\[
\begin{tikzcd}[column sep=large,row sep=large]
 R_{S/m}\bigl(\operatorname{sh}_m
                    \mathbf K^{\mathrm{cyc}}(\cC,F)\bigr)
 \arrow[r,"{R_{S/m}(\operatorname{sh}_m
                    \overline{\operatorname{tr}}^{\mathrm{CLMPZ}})}"]
 \arrow[d,"R_{S/m}(c_m)"'] &
 R_{S/m}\bigl(\operatorname{sh}_m\mathbf E(\cC,F)\bigr)
 \arrow[d,"R_{S/m}(b_m)"]\\
 R_{S/m}\bigl(\mathbf K^{\mathrm{cyc}}(\cC,F^m)\bigr)
 \arrow[r,"{R_{S/m}(
                    \overline{\operatorname{tr}}^{\mathrm{CLMPZ}})}"'] &
 R_{S/m}\bigl(\mathbf E(\cC,F^m)\bigr).
\end{tikzcd}
\]
Here \(c_m\) is induced by ordered block composition and \(b_m\) is the
graph block equivalence.

For the strict source model, \cite[Example~8.6]{CLMPZ} gives the natural
underived identification
\[
 R^{\mathrm{un}}_{S/m}\bigl(\operatorname{sh}_m
                    \mathbf K^{\mathrm{cyc}}(\cC,F)\bigr)
 \cong
 K\!\operatorname{End}^{(m)}
 \bigl({}_{T_x^{\operatorname{st}}}\mathcal P_x^{\operatorname{st}}/
       (\mathcal P_x^{\operatorname{st}})_{\operatorname{id}}\bigr).
\]
The duplication \(\Delta_m\), followed by the canonical comparison
\(R^{\mathrm{un}}_{S/m}\to R_{S/m}\), defines
\(\kappa^{\operatorname{sh}}_{m,S}\).  Naturality of the underived
fixed-cycle identification gives
\[
 \kappa^{\operatorname{sh}}_{m,S}
 =\pi_{\mathbf K}\circ\kappa_S.
\]
The identity
\(\operatorname{Comp}_{m\mid n}\Delta_{mn}=\Delta_nP_m\), together with
Lemma~\ref{lem:strict-sections-iteration}, identifies the left outer
square with \(K(P_m)\) and \(\kappa_{S/m}\).  On the target,
\[
 \operatorname{Fr}_{m,S}
 =R_{S/m}(b_m)\circ\pi_{\mathbf E}.
\]
Pasting the two outer squares with the localized middle square gives
\eqref{eq:restriction-trace-frobenius-square}.  An inclusion of
truncation sets restricts this diagram and its limiting cone, which proves
naturality in \(S\).

For multiplication of indices, the identity
\(q_{n,x^{[m]}}q_{m,x}=q_{mn,x}\), the block associativity proved in
Appendix~\ref{app:block-comparison-details}, and
Lemma~\ref{lem:powered-bridge-multiplicative-coherence} identify the
square for \(mn\) with the composite of the squares for \(m\) and \(n\).  In
the last comparison, the restriction of \(\chi_{m,x}\) along \(\mu_n\),
followed by \(\chi_{n,x^{[m]}}\), agrees with \(\chi_{mn,x}\).  The one-fold
block map supplies the unit, and
Lemma~\ref{lem:iteration-multiplicative-coherence} identifies the source
pasting with \(P_{mn}^F\).  For three indices, the block maps reassociate the
same ordered relative tensor product, so the pentagon and the associativity
coherence of the \(\chi_{m,x}\) identify the two pastings.
\end{proof}

\begin{remark}
\label{rem:scope}
The ghost map need not be injective.  Indeed,
\(\pi_0\TR(H\mathbb F_p)\cong W(\mathbb F_p)\) has nonzero elements with all
integral ghosts zero; this follows from the Witt ghost formula
\cite[equation~(9.1) and Lemma~9.6]{CLMPZ}.
\end{remark}

\section{Twisted traces of graded algebras}
\label{sec:twisted-traces-graded}

We apply the intrinsic construction to grading automorphisms.  Let
\(R\) be a commutative ring, let \(\Lambda\) be a free abelian group of
finite rank, and let
\[
 A=\bigoplus_{\lambda\in\Lambda}A_\lambda
\]
be a unital \(\Lambda\)-graded \(R\)-algebra.  Write
\[
 Z=\operatorname{Hom}(\Lambda,\Gm)
\]
for the grading-parameter torus.  For a commutative \(R\)-algebra \(S\) and
\(z\in Z(S)\), put \(A_S=A\otimes_R S\) and define an \(S\)-linear map
\[
 \sigma_z\colon A_S\longrightarrow A_S,
 \qquad
 \sigma_z(a)=z(\lambda)a\quad(a\in A_\lambda).
\]

Then \(\sigma_z\) is an \(S\)-algebra automorphism with inverse
\(\sigma_{z^{-1}}\), and \(\sigma_{z^m}=\sigma_z^m\) for \(m\geq1\).

Define an exact endofunctor on perfect right modules by
\[
 F_z=(-)_{\sigma_z^{-1}}\colon\Perf(A_S)\longrightarrow\Perf(A_S),
 \qquad
 m*a=m\sigma_z^{-1}(a).
\]

\begin{lemma}
\label{lem:graded-graph-identification}
The functor \(F_z\) is an exact automorphism of \(\Perf(A_S)\).  Its graph
coefficient
\[
 \cM_{F_z}(x,y)=\Map(F_zx,y)
\]
has, on the regular generator, the bimodule
\[
 \cM_{F_z}(A_S,A_S)\cong {}_1(A_S)_{\sigma_z},
\]
whose underlying left module is free of rank one and whose right action is
through \(\sigma_z\).
\end{lemma}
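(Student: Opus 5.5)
The plan is to verify the two assertions directly at the level of discrete modules, and only at the end pass to mapping spectra. Everything is concentrated in degree zero because the relevant modules are projective.

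\emph{Step 1: \(F_z\) is an exact automorphism.} Restriction of scalars along the \(S\)-algebra automorphism \(\sigma_z^{-1}\) is an exact functor on \(\operatorname{Mod}_{A_S}\), since it does not change underlying \(S\)-modules or complexes. By \(\sigma_{z^{-1}}=\sigma_z^{-1}\), we have \(F_{z^{-1}}F_z=\operatorname{id}=F_zF_{z^{-1}}\) on the nose, so it is an equivalence. To see that it preserves \(\Perf(A_S)\), I compute its value on the generator. The map
\[
 \varphi\colon A_S\to (A_S)_{\sigma_z^{-1}},\qquad \varphi(x)=\sigma_z^{-1}(x),
\]
satisfies \(\varphi(xa)=\sigma_z^{-1}(x)\sigma_z^{-1}(a)=\varphi(x)*a\). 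Hence it is an isomorphism of right modules, and \(F_zA_S\cong A_S\). Since \(F_z\) is an exact equivalence fixing the generator up to isomorphism, it preserves the thick subcategory generated by the generator, namely \(\Perf(A_S)\). Its restriction is therefore an exact automorphism with inverse \(F_{z^{-1}}\).

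\emph{Step 2: the graph coefficient on the generator.} Because \(F_zA_S\cong A_S\) is projective, \(\Map(F_zA_S,A_S)\) is discrete, with \(\pi_0\) equal to \(\operatorname{Hom}_{A_S}((A_S)_{\sigma_z^{-1}},A_S)\). A right-module map \(g\) must satisfy \(g(c\sigma_z^{-1}(a))=g(c)a\). Evaluating at \(c=1\) shows that \(g(c)=b\,\sigma_z(c)\) with \(b=g(1)\). Conversely, every \(b\in A_S\) defines such a map. This gives an \(S\)-linear bijection \(g\leftrightarrow b\).

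\emph{Step 3: the bimodule structure.} I identify \(\operatorname{End}(A_S)=A_S\) through left multiplications \(\ell_c\). The covariant (target) variable acts by postcomposition: \(\ell_a\circ g\) corresponds to \(ab\). The contravariant variable acts by precomposition with \(F_z(\ell_c)\). This map is \(\ell_c\) on underlying modules, and
\[
 g\circ\ell_c(x)=b\,\sigma_z(c)\,\sigma_z(x),
\]
which corresponds to \(b\,\sigma_z(c)\). Hence \(\cM_{F_z}(A_S,A_S)\cong{}_1(A_S)_{\sigma_z}\): the underlying left module is free of rank one, and the right action is through \(\sigma_z\).

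\emph{Main obstacle.} The computation itself is routine; the delicate point is bookkeeping of variances. One must check that the contravariant slot of \(\cM_F(x,y)=\Map(Fx,y)\) really produces the \emph{right} action twisted by \(\sigma_z\), and not by \(\sigma_z^{-1}\). One must also check that this matches the convention for relative tensor products in Proposition~\ref{lem:coefficient-composition}. I would settle this by the explicit evaluation at \(1\) above. As a consistency check, \(\cM_{F_z}^{\circ m}\) should come out as \({}_1(A_S)_{\sigma_z^m}={}_1(A_S)_{\sigma_{z^m}}\), in agreement with \(\cM_{F_z}^{\circ m}\simeq\cM_{F_z^m}\) and \(F_z^m=F_{z^m}\).
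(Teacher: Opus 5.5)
Your proof is correct and follows essentially the same route as the paper, which also identifies \(\Map(F_zA_S,A_S)\) with \(A_S\) by evaluation at \(1\) via \(\phi(c)=\phi(1)\sigma_z(c)\), and then reads off postcomposition as left multiplication and precomposition by \(b\) as \(x\mapsto x\sigma_z(b)\). Your extra details fill in points the paper states in one line: the explicit isomorphism \(F_zA_S\cong A_S\), preservation of the thick subcategory \(\Perf(A_S)\), and discreteness of the mapping spectrum because \(F_zA_S\) is projective.
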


\begin{proof}
Twisting by an algebra automorphism preserves perfect modules and exact
sequences, and twisting by \(\sigma_z\) is inverse to \(F_z\).  Evaluation at
\(1\) identifies \(\Map(F_zA_S,A_S)\) with \(A_S\), since
\(\phi(c)=\phi(1)\sigma_z(c)\).  Under this identification, postcomposition
by \(a\) and precomposition by \(b\) act by \(x\mapsto ax\) and
\(x\mapsto x\sigma_z(b)\), respectively.  Hence the graph coefficient on the
regular generator is \({}_1(A_S)_{\sigma_z}\).
\end{proof}

\begin{theorem}
\label{thm:intrinsic-graded-twisted-traces}
For every \(A\), \(S\), and \(z\in Z(S)\) as above, the pair
\((\Perf(A_S),F_z)\) has an intrinsic genuine restriction system and, for
every nonempty truncation set \(\mathcal S\), an intrinsic trace
\[
 \operatorname{tr}^{\mathrm{res,int}}_{\mathcal S}\colon
 K^{\mathrm{lace}}\bigl(\Perf(A_S),\cM_{F_z}\bigr)
 \longrightarrow
 R_{\mathcal S}\bigl(\mathbf E(\Perf(A_S),F_z)\bigr).
\]
For every \(m\in\mathcal S\), the target of the \(m\)-th ghost on
\(\pi_0\) is canonically the \(\sigma_{z^m}\)-twisted cocenter
\begin{equation}\label{eq:graded-twisted-cocenter}
 \pi_0\THH\bigl(\Perf(A_S),\cM_{F_z^m}\bigr)
 \cong
 A_S\big/\operatorname{span}_S
 \{ab-b\sigma_{z^m}(a):a,b\in A_S\}.
\end{equation}
Moreover, the intrinsic Frobenius implements the substitution
\(z\mapsto z^m\): it is a natural map
\[
 \operatorname{Fr}_{m,\mathcal S}\colon
 R_{\mathcal S}\bigl(\mathbf E(\Perf(A_S),F_z)\bigr)
 \longrightarrow
 R_{\mathcal S/m}\bigl(\mathbf E(\Perf(A_S),F_{z^m})\bigr)
\]
compatible with \(\sigma_{z^m}=\sigma_z^m\) and with iteration of laced
objects.
\end{theorem}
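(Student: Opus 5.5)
The statement has three parts: existence of the system and trace, the identification of the ghost target on \(\pi_0\), and the Frobenius. Existence and Frobenius follow from the general theory. The real content is the computation of the ghost target on \(\pi_0\).

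\emph{Existence of the system and trace.} \(\Perf(A_S)\) is a small idempotent-complete stable \(\infty\)-category. By Lemma~\ref{lem:graded-graph-identification}, \(F_z\) is an exact automorphism of it. Theorem~\ref{thm:arrow-valued-morita-descent} therefore supplies the intrinsic genuine restriction system \(\mathbf E(\Perf(A_S),F_z)\). Theorem~\ref{thm:integral-module-envelope-trace-descent} then supplies \(\operatorname{tr}^{\mathrm{res,int}}_{\mathcal S}\). To see that the admissibility hypothesis holds, use the one-object spectral presentation: the Eilenberg--MacLane spectral category \(HA_S\) with the strict endofunctor \(H\sigma_z^{-1}\).

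\emph{The ghost target on \(\pi_0\).} Compute \(\pi_0\THH(\Perf(A_S),\cM_{F_z^m})\) in four steps.
\begin{enumerate}
\item Since \(\sigma_z^m=\sigma_{z^m}\), we have \(F_z^m=F_{z^m}\) on the nose.
\item By Theorem~\ref{thm:restriction-target-descent}, \(V_m\mathbf E_m\simeq\THH(\Perf(A_S),\cM_{F_{z^m}})\). So it suffices to treat \(m=1\) with \(z\) replaced by \(z^m\).
\item Lemma~\ref{lem:coefficient-thh-presentation}, together with the Morita invariance of Theorem~\ref{thm:coefficient-thh-morita-invariance}, reduces this coefficient \(\THH\) to the one-object presentation. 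There it is \(\THH(HA_S;H({}_1(A_S)_{\sigma_{z^m}}))\), using Lemma~\ref{lem:graded-graph-identification} to identify the coefficient on the regular generator.
\item This is connective, and its \(\pi_0\) is \(HH_0\) of the discrete bimodule. It is computed by the bottom of the cyclic bar construction, namely the coequalizer of the two maps \(M\otimes A_S\rightrightarrows M\), sending \(x\otimes b\) to \(xb\) and to \(bx\).
\end{enumerate}

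One caveat concerns the tensor product in step 4. \(\THH\) is taken over \(\mathbb S\), so the coequalizer a priori involves \(\otimes_{\mathbb Z}\). Since \(\pi_0\) of a smash product of connective spectra is the tensor product of the \(\pi_0\)'s, one gets \(\otimes_{\mathbb Z}\). The image of the relations is unchanged by this, so the quotient is still by \(\operatorname{span}\{ab-b\sigma_{z^m}(a)\}\) as a subgroup. Because the relations are \(S\)-stable, taking the \(S\)-span instead gives the same subgroup.

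With right action through \(\sigma_{z^m}\), the two maps send \(x\otimes b\) to \(x\sigma_{z^m}(b)\) and to \(bx\). Writing \(x=a\) and exchanging the roles of the variables gives the relations \(ab-b\sigma_{z^m}(a)\) up to sign and renaming. This yields \eqref{eq:graded-twisted-cocenter}.

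\emph{Frobenius.} Use Proposition~\ref{prop:restriction-frobenius} and Theorem~\ref{thm:restriction-trace-frobenius} with \(F=F_z\), and rewrite \(\mathbf E(\Perf(A_S),F_z^m)\) as \(\mathbf E(\Perf(A_S),F_{z^m})\) via the literal equality \(F_z^m=F_{z^m}\). Compatibility with iteration of laced objects is then the Frobenius square for \(K(P_m)\). On ghosts, \eqref{eq:restriction-frobenius-ghost} and the block maps \(\overline b\) are compatible with the identification above: co-Yoneda \(({}_1A_{\sigma_z})^{\otimes m}\cong{}_1A_{\sigma_{z^m}}\) is \(a_1\otimes\cdots\otimes a_m\mapsto a_1\sigma_z(a_2)\cdots\sigma_z^{m-1}(a_m)\).

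\emph{Main obstacle.} The main obstacle is canonicity of \eqref{eq:graded-twisted-cocenter}. The identification passes through several Morita comparisons, and it must not depend on the choice of presentation. I would handle this by checking on zero-simplices, in the way Lemma~\ref{lem:coefficient-thh-presentation} does. The class of an arrow \(F_{z^m}A_S\to A_S\) given by \(x\) maps to the class of \(x\) in the cocenter. This class determines the iso, because \(\pi_0\) is generated by such zero-simplices on the generator.
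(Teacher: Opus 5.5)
Your argument is essentially the paper's proof. Existence comes from Theorem~\ref{thm:integral-module-envelope-trace-descent} applied to the exact automorphism \(F_z\). The ghost target comes from the graph calculation \(\cM_{F_z^m}(A_S,A_S)\cong{}_1(A_S)_{\sigma_{z^m}}\) together with Lemma~\ref{lem:coefficient-thh-presentation}. Frobenius comes from Theorem~\ref{thm:restriction-trace-frobenius} with \(F_z^m=F_{z^m}\). Your explicit coequalizer, the \(\mathbb Z\)- versus \(S\)-span remark, and the zero-simplex canonicity check just spell out what the paper leaves implicit.

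One slip needs fixing. With the paper's convention \((u_!P)(b)=\int^aP(a)\wedge B(b,u(a))\), extension along an automorphism is restriction along its inverse, so \((\sigma_z)_!=F_z\). The one-object presentation is therefore \((HA_S,H\sigma_z)\), not \(H\sigma_z^{-1}\). The latter has graph coefficient \({}_1(A_S)_{\sigma_z^{-1}}\), which contradicts your step~3. In any case, existence needs no particular presentation, since Proposition~\ref{prop:endofunctor-rigidification} supplies one in general.
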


\begin{proof}
The existence of the intrinsic system and trace follows from
Theorem~\ref{thm:integral-module-envelope-trace-descent}, applied to the exact
automorphism of
Lemma~\ref{lem:graded-graph-identification}.  The graph calculation
and \(\sigma_{z^m}=\sigma_z^m\) identify
\(\cM_{F_z^m}\) on the regular generator with
\({}_1(A_S)_{\sigma_{z^m}}\).  Lemma~\ref{lem:coefficient-thh-presentation}
identifies \(\pi_0\) with the coefficient Hochschild group
\[
 HH_0\bigl(A_S;{}_1(A_S)_{\sigma_{z^m}}\bigr)
 =A_S/\operatorname{span}_S
   \{ab-b\sigma_{z^m}(a):a,b\in A_S\}.
\]
The ghost formula is
Corollary~\ref{cor:integral-intrinsic-ghost-formula}.  Finally,
Theorem~\ref{thm:restriction-trace-frobenius}, together with
\(F_z^m=F_{z^m}\), gives the asserted Frobenius substitution and its
compatibility with iteration.
\end{proof}

\subsection{The DKK coefficient on the open parameter torus}
\label{subsec:dkk-open-torus-coefficient}

Dinkins--Karpov--Krylov define a cyclic bimodule \(\mathcal B\) over the
partial compactification \(\operatorname{Spec}\mathbb C[R^-]\)
\cite[Section~1.6.4]{DKK}.  Let \(L=\mathcal O(\mathsf A)\) be the coordinate
ring of the open parameter torus, and put
\[
 \mathcal A^\circ
 =\mathcal A[\hbar^{-1}]\otimes_{\mathbb C}L,
 \qquad
 \mathcal B^\circ
 =\mathcal B[\hbar^{-1}]\otimes_{\mathbb C[R^-]}L.
\]
If \(\zeta\in\mathsf A(L)\) is the universal point, their defining relations
identify
\[
 \mathcal B^\circ\cong{}_1(\mathcal A^\circ)_{\sigma_\zeta}.
\]
Indeed, after localization the monomials \(z^\xi\) are invertible, and the
relations say \(ex=\sigma_\zeta(x)e\) for every homogeneous generator \(x\)
of \(\mathcal A^\circ\); see \cite[equation~(10)]{DKK}.
Applying Theorem~\ref{thm:intrinsic-graded-twisted-traces} to this graph
coefficient gives a genuine restriction system.  For every truncation set
\(\mathcal S\) and \(m\in\mathcal S\), the \(m\)-th ghost target is the
coefficient Hochschild object twisted by \(\sigma_{\zeta^m}\); at \(m=1\),
this is the open-torus localization of their coefficient Hochschild object.

\section{Higher ghosts in toric dynamics}
\label{sec:toric-lacings}

For compactly supported equivariant motives, the \(m\)-th ghost is a sum over
the toric strata fixed by the \(m\)-th iterate.  We prove this by constructing
a semilinear lacing and applying nil-invariant localization along the orbit
stratification.

\subsection{Semilinear compact support and lacing}
\label{sec:semilinear-compact-support}

Let \(B\) be a scalloped algebraic stack and let \(g\colon B\to B\) be a
morphism.  We use the six operations on \(B\) and on representable finite-type
\(B\)-stacks as in \cite{KhanRavi}.  Every exceptional pushforward below is
along a representable morphism of finite type.  Exceptional base change is
provided by \cite[Theorem~7.1(iii)]{KhanRavi}; the ordinary inverse image
may be taken along the arbitrary morphism \(g\).  For a representable
proper morphism \(a\), we also use the identification \(a_!\simeq a_*\) of
\cite[Theorem~7.1(ii)]{KhanRavi}.

If \(p\colon Y\to B\) is representable, separated, and of finite type, define
its compactly supported motive over \(B\) by
\[
  \Mcompact_B(Y)=p_!\one_Y\in\SH(B).
\]
Write
\[
  Y^{(g)}=Y\times_{B,g}B,
  \qquad p^{(g)}\colon Y^{(g)}\longrightarrow B.
\]
Exceptional base change \cite[Theorem~7.1(iii)]{KhanRavi} gives a canonical
equivalence
\begin{equation}\label{eq:base-change-compact-support}
  g^*\Mcompact_B(Y)
  \simeq (p^{(g)})_!\one_{Y^{(g)}}
  =\Mcompact_B(Y^{(g)}).
\end{equation}

Choose a small idempotent-complete stable full subcategory
\(\cC\subset\SH(B)\) that is closed under \(g^*\) and contains the compactly
supported motives under consideration.  In our applications it is the thick
closure of finitely many such motives and their \(g^*\)-iterates.  Set
\begin{equation}\label{eq:left-representable-bimodule}
  \cM_g(x,y)=\Map_{\SH(B)}(g^*x,y),
  \qquad x,y\in\cC.
\end{equation}
This is the left-representable lacing associated with the exact functor \(g^*\)
\cite[Remark~5.15]{HNS}.
Theorem~\ref{thm:integral-module-envelope-trace-descent} gives an intrinsic
restriction trace for \((\cC,g^*)\).  We abbreviate
\[
  \Lace_g(\cC)=\Lace(\cC,\cM_g).
\]
For this left-representable bimodule, an object of the laced category is a pair
\((x,\alpha)\) with
\[
  \alpha\colon g^*x\longrightarrow x.
\]

\begin{definition}
\label{def:semilinear-object}
Let \(a\colon Y\to Y^{(g)}\) be a representable proper \(B\)-morphism.  Its
compactly supported lacing is
\[
 \lambda_a\colon
 g^*\Mcompact_B(Y)
 \overset{\eqref{eq:base-change-compact-support}}{\simeq}
 \Mcompact_B(Y^{(g)})
 \xrightarrow{a_c^*}
 \Mcompact_B(Y),
\]
where \(a_c^*\) is obtained from the unit
\(
  \one_{Y^{(g)}}\to a_*a^*\one_{Y^{(g)}}=a_*\one_Y
\)
and the equivalence \(a_*\simeq a_!\) for representable proper \(a\) from
\cite[Theorem~7.1(ii)]{KhanRavi}.  We denote the resulting object by
\[
  \Xi_g(Y,a)=\bigl(\Mcompact_B(Y),\lambda_a\bigr)
  \in\Lace_g(\cC).
\]
\end{definition}

The construction is contravariantly functorial in proper maps: the composite of
two proper pullbacks agrees with the proper pullback of the composite.  This
follows from the units of the two adjunctions together with the compatibility
of the equivalence \(a_!\simeq a_*\) with composition of proper maps.

\subsection{Twisting an equivariant action}

For the toric applications from this point onward, fix a field \(k\), a
finite-rank lattice \(N\), its dual \(M=N^\vee\), and the split torus
\(T=T_N=\operatorname{Spec}k[M]\).  For a finite-type \(k\)-space \(Y\), set
\(\Mcompact_k(Y):=\Mcompact_{\Spec k}(Y)\); when \(Y\) carries a
\(T\)-action, set \(\Mcompact_T(Y):=\Mcompact_{BT}([Y/T])\).
A fan is a finite rational polyhedral fan
in \(N_{\mathbb R}\), and \(X_\Sigma\) denotes the associated toric
\(k\)-variety.  For \(\sigma\in\Sigma\), put
\[
 N_\sigma=N\cap\operatorname{span}_{\mathbb R}(\sigma),
 \qquad T_\sigma=T_{N_\sigma}\subset T,
 \qquad O_\sigma=T/T_\sigma.
\]
Thus \(T_\sigma\) is the scheme-theoretic stabilizer of the orbit
\(O_\sigma\), and
\(\dim O_\sigma=\operatorname{rk}N-\dim\sigma\).  Whenever fans or their
stabilizer multisets are compared, they lie in this same ambient lattice and
torus; ``embedded stabilizer'' means the actual closed subgroup
\(T_\sigma\hookrightarrow T\), including its position in \(T\).
Let \(\varphi\colon T\to T\) be an endomorphism.
For a \(T\)-space \(Y\), write \(\Res_\varphi Y\) for the same underlying space
equipped with the action
\[
  t\star y=\varphi(t)\cdot y.
\]
A \(\varphi\)-semilinear map \(f\colon Y\to Y\), meaning
\(
  f(t\cdot y)=\varphi(t)\cdot f(y),
\)
is equivalently a \(T\)-equivariant map
\[
  \widetilde f\colon Y\longrightarrow\Res_\varphi Y.
\]
The change of action is encoded by a cartesian square
\begin{equation}\label{eq:change-of-action-square}
\begin{tikzcd}[column sep=large]
 {[\Res_\varphi Y/T]} \arrow[r] \arrow[d,"q_\varphi"']
   & {[Y/T]} \arrow[d,"q"] \\
 BT \arrow[r,"B\varphi"'] & BT.
\end{tikzcd}
\end{equation}

\begin{proposition}
\label{prop:semilinear-proper-pullback}
Suppose that \(Y\) is a qcqs separated finite-type \(T\)-algebraic space and
that \(f\colon Y\to Y\) is proper and \(\varphi\)-semilinear.  Then the map
\[
  [\widetilde f/T]\colon[Y/T]\longrightarrow[\Res_\varphi Y/T]
\]
is representable and proper.  With \(B=BT\) and \(g=B\varphi\),
Definition~\ref{def:semilinear-object} therefore produces a canonical
arrow
\begin{equation}\label{eq:semilinear-proper-arrow}
 (B\varphi)^*\Mcompact_T(Y)\longrightarrow\Mcompact_T(Y).
\end{equation}
The arrows for composable semilinear proper maps agree with the arrows obtained
by composing the corresponding proper pullbacks under the canonical base-change
identifications.
\end{proposition}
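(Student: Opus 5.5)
The plan has three steps: identify \([\widetilde f/T]\) with a base change of \(f\) along the atlas \(Y\to[Y/T]\), read off representability and properness, and then obtain the arrow and its compatibility with composition from Definition~\ref{def:semilinear-object}.

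\emph{Step 1: fiber products.} The map \([\widetilde f/T]\) lies over \(BT\), where the source uses the standard structure map and the target uses \(q_\varphi\). I compute its base change along the smooth atlas \(\Res_\varphi Y\to[\Res_\varphi Y/T]\). Since \(\widetilde f\colon Y\to\Res_\varphi Y\) is a \(T\)-equivariant map of \(T\)-spaces, the square with vertical arrows the quotient maps and horizontal arrows \(\widetilde f\) and \([\widetilde f/T]\) is cartesian. The standard identification is
\[
 [Y/T]\times_{[\Res_\varphi Y/T]}\Res_\varphi Y\;\cong\;(T\times Y)/T\;\cong\;Y,
\]
and under it the projection to \(\Res_\varphi Y\) is \(\widetilde f\). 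On underlying algebraic spaces \(\widetilde f\) is just \(f\).

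\emph{Step 2: representability and properness.} The base change in Step~1 is the algebraic space \(Y\), so \([\widetilde f/T]\) is representable. Properness is fpqc-local on the target, and here the check is smooth-local. It therefore reduces to properness of \(f\colon Y\to Y\), which is the hypothesis. Both \([Y/T]\to BT\) and \([\Res_\varphi Y/T]\to BT\) are representable, separated and of finite type, because \(Y\) is qcqs, separated and of finite type. Hence \(\Mcompact_T(Y)\) and \(\Mcompact_{BT}([\Res_\varphi Y/T])\) are defined.

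\emph{Step 3: the arrow.} By the cartesian square \eqref{eq:change-of-action-square}, \([\Res_\varphi Y/T]\) is canonically \([Y/T]^{(B\varphi)}\) over \(BT\). So \(a=[\widetilde f/T]\) is a representable proper \(BT\)-morphism \([Y/T]\to[Y/T]^{(g)}\) with \(g=B\varphi\), and Definition~\ref{def:semilinear-object} yields \eqref{eq:semilinear-proper-arrow} as \(\lambda_a\).

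\emph{Step 4: composition.} Let \(f\) be \(\varphi\)-semilinear and \(f'\) be \(\varphi'\)-semilinear. Then \(f'f\) is \(\varphi'\varphi\)-semilinear, and its equivariant form factors as
\[
 Y\xrightarrow{\ \widetilde f\ }\Res_\varphi Y\xrightarrow{\ \Res_\varphi\widetilde{f'}\ }\Res_\varphi\Res_{\varphi'}Y=\Res_{\varphi'\varphi}Y .
\]
The middle quotient \([\Res_\varphi\widetilde{f'}/T]\) is the base change of \([\widetilde{f'}/T]\) along \(B\varphi\). Proper pullback \(a_c^*\) commutes with base change: exceptional base change is compatible with the unit \(\one\to a_*a^*\one\) and with \(a_!\simeq a_*\). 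Together with contravariant functoriality of proper pullback, this gives the claimed agreement.

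\emph{Expected obstacle.} The delicate point is the coherence bookkeeping in Step~4. One must check that the pasting of the base-change equivalences \eqref{eq:base-change-compact-support} for \(B\varphi\) and \(B\varphi'\) matches the one for \(B(\varphi'\varphi)\), using the canonical identification \((B\varphi')\circ(B\varphi)\simeq B(\varphi'\varphi)\). I would handle this by taking all identifications from the pseudofunctoriality of \((-)^*\) and \((-)_!\) in the Khan--Ravi formalism. Steps~1 and~2 are routine descent.
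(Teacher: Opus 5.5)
Your proposal is correct and follows essentially the same route as the paper. You check representability and properness after pulling back along the atlas \(\Res_\varphi Y\to[\Res_\varphi Y/T]\), where the map becomes \(f\); the change-of-action square then puts you in the setting of Definition~\ref{def:semilinear-object}; and compatibility with composition comes from functoriality of the proper-pullback units together with coherence of exceptional base change. Your Step~4 factorization through \(\Res_\varphi\widetilde{f'}\), which is a base change of \([\widetilde{f'}/T]\) along \(B\varphi\), simply makes explicit the bookkeeping that the paper leaves implicit.
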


\begin{proof}
Square \eqref{eq:change-of-action-square} identifies the source of
\eqref{eq:semilinear-proper-arrow} with the compactly supported motive of
\([\Res_\varphi Y/T]\).  The quotient map induced by \(\widetilde f\) uses the
same group on source and target.  It is therefore representable, and its
properness can be checked after pulling back along the atlas
\(\Res_\varphi Y\to[\Res_\varphi Y/T]\), where it becomes a base change of
\(f\).  The proper-pullback construction now applies.  Compatibility with
composition is the functoriality of the adjunction units together with coherence
of exceptional base change.
\end{proof}

For a proper \(\varphi\)-semilinear map \(f\colon Y\to Y\), write
\[
 \Xi_f(Y):=\Xi_{B\varphi}
 \bigl([Y/T],[\widetilde f/T]\bigr)
\]
for the resulting semilinear compactly supported lacing.  All lacings of
toric power and monomial maps used below are instances of this notation.

We now choose an identification \(N\simeq\mathbb Z^n\), so
\(T\simeq\Gm^n\), fix \(r\geq1\), and specialize to \(\varphi=[r]\).  Put
\[
  F_\varphi=(B\varphi)^*,
  \qquad \cM_\varphi(x,y)=\Map(F_\varphi x,y),
  \qquad \Lace_\varphi(\cC)=\Lace(\cC,\cM_\varphi),
  \qquad \Kzero^\varphi(\cC)=\Kzero\Lace_\varphi(\cC).
\]
For a subtorus \(H\subset T\), let
\(
  H^{(1)}=\varphi^{-1}(H).
\)
Base change along \(B\varphi\) identifies
\begin{equation}\label{eq:base-change-BH}
 F_\varphi(BH\to BT)_!\one_{BH}
 \simeq (BH^{(1)}\to BT)_!\one_{BH^{(1)}}.
\end{equation}
The inclusion \(H\subset H^{(1)}\) has finite quotient and hence gives a
representable proper map \(BH\to BH^{(1)}\).

\begin{definition}
\label{def:stabilizer-lacing}
Let \(E_T(H)=(BH\to BT)_!\one_{BH}\).  We denote by
\(\mathcal E_\varphi(H)\) the laced object
\[
 \left(
 E_T(H),
 F_\varphi E_T(H)
 \overset{\eqref{eq:base-change-BH}}{\simeq}
 E_T(H^{(1)})
 \longrightarrow E_T(H)
 \right),
\]
where the last arrow is proper pullback along \(BH\to BH^{(1)}\).  Its class in
\(\Kzero^\varphi(\cC)\) is written \(\eps_\varphi(H)\).
\end{definition}

Both \(H\) and \(\varphi^{-1}(H)\) are of multiplicative type, hence nice in
the sense of
\cite[Definition~2.1]{KhanRavi}, and their classifying stacks are scalloped by
\cite[Theorem~2.14]{KhanRavi}.  More generally, if \(H\subset H'\) is an
inclusion of multiplicative-type group schemes with finite quotient, then
\(BH\to BH'\) is representable and finite, hence proper.

\subsection{Nil-invariant localization and the scalar orbit formula}
\label{sec:localization-orbit}

An invariant closed subscheme is not necessarily its own scheme-theoretic
inverse image: the map \(z\mapsto z^r\) at the origin of \(\Aone\) is the
basic example.  The next proposition uses nil-invariance to compare the
resulting thickening with the reduced closed subscheme.

Retain the notation of Section~\ref{sec:semilinear-compact-support}.  Thus
\(a\colon Y\to Y^{(g)}\) is a representable proper \(B\)-morphism and
\(\Xi_g(Y,a)\) is its compactly supported lacing.

\begin{proposition}
\label{prop:nil-semi-localization}
Let \(i\colon Z\hookrightarrow Y\) be a finite-type closed immersion with
quasi-compact open complement \(j\colon U\hookrightarrow Y\).  Suppose that
\(a\) restricts to representable proper \(B\)-morphisms
\[
  a_Z\colon Z\longrightarrow Z^{(g)},
  \qquad
  a_U\colon U\longrightarrow U^{(g)},
\]
and assume the following two scheme-theoretic conditions:
\begin{enumerate}
\item \(a^{-1}(U^{(g)})=U\);
\item for
  \(
    Z'=Y\times_{Y^{(g)}}Z^{(g)},
  \)
  the induced map \(\nu\colon Z\to Z'\) is a finite-type surjective nilpotent
  closed immersion.
\end{enumerate}
If \(\Mcompact_B(U)\), \(\Mcompact_B(Y)\), \(\Mcompact_B(Z)\), and
\(\Mcompact_B(Z')\) belong to \(\cC\), then there is
a cofiber sequence
\begin{equation}\label{eq:laced-localization-sequence}
 \Xi_g(U,a_U)\longrightarrow
 \Xi_g(Y,a)\longrightarrow
 \Xi_g(Z,a_Z)
\end{equation}
in \(\Lace_g(\cC)\).
\end{proposition}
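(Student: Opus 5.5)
The plan is to build the cofiber sequence first in \(\cC\) and then lift it to \(\Lace_g(\cC)\).

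\textbf{The underlying sequence.} Write \(p\colon Y\to B\), and \(p_U=pj\), \(p_Z=pi\). Applying \(p_!\) to the localization cofiber sequence
\[
 j_!j^*\one_Y\to\one_Y\to i_*i^*\one_Y
\]
gives, via \(i_*=i_!\), the sequence
\[
 \Mcompact_B(U)\to\Mcompact_B(Y)\to\Mcompact_B(Z).
\]
The first map is the extension-by-zero (covariant) map and the second is proper pullback along \(i\). Because cofiber sequences in \(\Lace_g(\cC)\) are detected on underlying objects (Definition~\ref{def:laced-category}), it is enough to promote these two maps to laced morphisms. Concretely, we must produce commuting squares relating \(g^*\) of the sequence to the lacings \(\lambda_{a_U}\), \(\lambda_a\), \(\lambda_{a_Z}\).

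\textbf{Identifying \(g^*\) of the sequence.} By exceptional base change \eqref{eq:base-change-compact-support}, \(g^*\) carries the sequence to the localization sequence for \(U^{(g)}\subset Y^{(g)}\supset Z^{(g)}\). Here we use that pullback of the open/closed pair along \(Y^{(g)}\to Y\) is again a complementary open/closed pair.

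\textbf{The square with \(Z\).} Compare the two localization sequences, for \((Y^{(g)},U^{(g)},Z^{(g)})\) and for \((Y,U,Z)\), via \(a\). Proper pullback \(a^*_c\) is induced by the unit \(\one\to a_*a^*\one\) and is natural. By condition (1), \(a^*\) of the open piece \(j^{(g)}_!\one\) is \(j_!\one_U\), because \(a^{-1}(U^{(g)})=U\) and base change holds for the open immersion. The closed piece \(a^*i^{(g)}_*\one\) is \(i'_*\one_{Z'}\) by proper base change. So the map of localization sequences lands in the sequence for \(U\subset Y\supset Z'\), and the square on the open term is \(\lambda_{a_U}\).

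The closed term is the obstacle. It gives proper pullback along \(Z\to Z'\to Z^{(g)}\) only after we identify \(\Mcompact_B(Z')\simeq\Mcompact_B(Z)\). This is exactly where condition (2) enters. Nil-invariance of \(\SH\) (Khan--Ravi: \(\nu_*\nu^*\) is an equivalence for a surjective nilpotent closed immersion, i.e.\ localization with empty open complement) shows that \(\nu\) induces an equivalence
\[
 \nu_c^*\colon\Mcompact_B(Z')\to\Mcompact_B(Z).
\]
The composite \(Z\to Z'\to Z^{(g)}\) equals \(a_Z\). By contravariant functoriality of proper pullback, the composite of \(\nu^*_c\) with the closed-term map is therefore \(\lambda_{a_Z}\).

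\textbf{Checking the square and assembling.} It remains to check that the square with \(i^*_c\colon\Mcompact_B(Y)\to\Mcompact_B(Z)\) commutes. This reduces to commutativity of the proper-pullback square
\[
 Z\to Y\to Y^{(g)}, \qquad Z\to Z^{(g)}\to Y^{(g)},
\]
which holds literally because \(a\) restricts to \(a_Z\). We then use functoriality of proper pullback under composition. The remaining homotopies are obtained from naturality of the units and of base change. This assembles a map of cofiber sequences from \(g^*(\text{sequence})\) to the sequence, which is a cofiber sequence of laced objects. Membership of all four motives in \(\cC\) ensures that everything takes place in \(\Lace_g(\cC)\).
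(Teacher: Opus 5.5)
Your proposal is correct and follows essentially the same route as the paper. Both arguments apply the unit \(\mathrm{id}\to a_*a^*\) to the localization sequence on \(Y^{(g)}\), use hypothesis (1) and proper base change to land in the \((U,Y,Z')\)-sequence, and then compose with the nil-invariance equivalence \(\nu_c^*\), using \(a_Z=a'\nu\), to obtain a map from the \(g^*\)-image of the \((U,Y,Z)\)-sequence to itself with components \(\lambda_{a_U},\lambda_a,\lambda_{a_Z}\), which is a cofiber sequence in the HNS arrow model.

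One point to tighten: it is not enough to promote the two maps separately to laced morphisms, as your opening paragraph suggests. What is needed is the coherent map of cofiber sequences, and your unit construction already supplies it; the separate square checks do not.
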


\begin{proof}
Put \(x_W=\Mcompact_B(W)\) and write \(\alpha_W:g^*x_W\to x_W\) for the
lacing.  Apply the unit \(\mathrm{id}\to a_*a^*\) to the localization triangle on
\(Y^{(g)}\).  Exceptional base change and properness of \(a\) give a morphism
of triangles
\begin{equation}\label{eq:first-localization-morphism}
\begin{tikzcd}[column sep=large]
 g^*x_U \arrow[r] \arrow[d,"\alpha_U"']&g^*x_Y \arrow[r]
 \arrow[d,"\alpha_Y"']&g^*x_Z \arrow[d,"(a')_c^*"]\\
 x_U \arrow[r]&x_Y \arrow[r]&\Mcompact_B(Z'),
\end{tikzcd}
\end{equation}
where \(a':Z'\to Z^{(g)}\); hypothesis~(1) identifies the lower open term
with \(U\).  Nil-invariance makes
\(\nu_c^*:\Mcompact_B(Z')\to\Mcompact_B(Z)\) an equivalence
\cite[Corollary~5.9]{KhanRavi}; compare \cite{ElmantoKhan}.  The corresponding
morphism of localization triangles is the identity on the open and middle
terms and \(\nu_c^*\) on the closed term.  Since \(a_Z=a'\nu\),
\(\nu_c^*(a')_c^*=(a_Z)_c^*=\alpha_Z\); thus its composite with
\eqref{eq:first-localization-morphism} is a morphism from the \(g^*\)-image of
the \((U,Y,Z)\)-triangle to itself with components \(\alpha_U,\alpha_Y,
\alpha_Z\).  Exactness of \(g^*\) and pointwise finite cofibers in the HNS
arrow model \cite[Construction~2.5]{HNS} prove
\eqref{eq:laced-localization-sequence}.
\end{proof}

\subsection{The toric filtration}

Return to \(T=\Gm^n\), \(\varphi=[r]\), and a finite fan \(\Sigma\).  Let
\(X_{\leq d}\subset X_\Sigma\) be the union of all torus orbits of dimension at
most \(d\), with its reduced induced structure.  Orbit-closure relations imply
that it is closed.  On a toric affine chart its ideal is a radical monomial
ideal.  Pullback by the power map sends a monomial \(\chi^m\) to
\(\chi^{rm}\).  Consequently:
\begin{itemize}
\item the complementary open is its exact scheme-theoretic inverse image;
\item the inverse image of \(X_{\leq d}\) is a nilpotent thickening of
  \(X_{\leq d}\).
\end{itemize}
The power map is finite on each affine chart because the corresponding monoid
algebra is finite over the image of \(\chi^m\mapsto\chi^{rm}\).  Hence the
semilinear map used throughout the filtration is proper.

Let \(\cC\subset\SH(BT)\) be the small thick stable subcategory generated by
the
compactly supported quotient-stack motives of these filtration stages, their
strata, and all \(F_\varphi\)-iterates.
Proposition~\ref{prop:semilinear-proper-pullback} defines
\[
  \Xi_{\Sigma,r}
  =\bigl(\Mcompact_T(X_\Sigma),\lambda_{\Sigma,r}\bigr)
  \in\Lace_\varphi(\cC).
\]

\begin{theorem}
\label{thm:semilinear-orbit}
For every finite fan \(\Sigma\) and every \(r\geq1\),
\begin{equation}\label{eq:semilinear-orbit}
  [\Xi_{\Sigma,r}]
  =\sum_{\sigma\in\Sigma}\eps_\varphi(T_\sigma)
  \qquad\text{in }\Kzero^\varphi(\cC).
\end{equation}
The summand at \(\sigma\) is represented by the proper-pullback arrow
\[
 (BT_\sigma^{(1)}\to BT)_!\one
 \longrightarrow
 (BT_\sigma\to BT)_!\one,
 \qquad T_\sigma^{(1)}=\varphi^{-1}(T_\sigma).
\]
This includes inseparable isogenies and non-smooth quotients
\(T_\sigma^{(1)}/T_\sigma\).
\end{theorem}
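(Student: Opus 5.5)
The plan is to induct on the orbit filtration \(X_{\leq 0}\subset X_{\leq 1}\subset\cdots\subset X_{\leq n}=X_\Sigma\), using Proposition~\ref{prop:nil-semi-localization} at each step to split off one orbit dimension at a time. Then we identify each orbit's lacing with the stabilizer lacing \(\mathcal E_\varphi(T_\sigma)\) of Definition~\ref{def:stabilizer-lacing}.

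\textbf{Step 1: filtration.} Set \(Y=X_{\leq d}\), \(Z=X_{\leq d-1}\), and \(U=X_{\leq d}\setminus X_{\leq d-1}\), which is the disjoint union of the \(d\)-dimensional orbits \(O_\sigma\). The power map \([r]\) on \(X_\Sigma\) is \(\varphi\)-semilinear and finite, hence proper by the discussion preceding the theorem. It preserves each orbit, since \(r\sigma=\sigma\). The two bulleted facts about radical monomial ideals supply both hypotheses of Proposition~\ref{prop:nil-semi-localization}:
\begin{itemize}
\item the open complement is the exact scheme-theoretic inverse image, which is hypothesis~(1);
\item the inverse image of \(Z\) is a nilpotent thickening of \(Z\), which is hypothesis~(2).
\end{itemize}
The same facts hold for \(Z\) inside the reduced scheme \(Y\), after restricting charts. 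Membership of the four motives in \(\cC\) holds by the choice of \(\cC\). We therefore get cofiber sequences
\[
\Xi(U)\longrightarrow\Xi(X_{\leq d})\longrightarrow\Xi(X_{\leq d-1})
\]
in \(\Lace_\varphi(\cC)\). Since \(K_0\) is additive, induction on \(d\) gives \([\Xi_{\Sigma,r}]=\sum_\sigma[\Xi_{[r]}(O_\sigma)]\). Here we use that compactly supported motives and lacings are additive on disjoint unions of open and closed pieces, which is the trivial case of the same proposition.

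\textbf{Step 2: identifying an orbit.} Fix \(\sigma\) and write \(O_\sigma=T/T_\sigma\). Then \([O_\sigma/T]\simeq BT_\sigma\) over \(BT\), so \(\Mcompact_T(O_\sigma)\simeq E_T(T_\sigma)\). The power map on \(O_\sigma\) is the map on \(T/T_\sigma\) induced by \(\varphi\). Viewed as a \(T\)-equivariant map \(T/T_\sigma\to\Res_\varphi(T/T_\sigma)\), we have \(\Res_\varphi(T/T_\sigma)\cong T/\varphi^{-1}(T_\sigma)\) as \(T\)-spaces, because the stabilizer of the identity coset under the \(\star\)-action is \(\varphi^{-1}(T_\sigma)=T_\sigma^{(1)}\). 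Hence \([\Res_\varphi O_\sigma/T]\simeq BT_\sigma^{(1)}\), and the quotient \([\widetilde f/T]\) becomes the canonical map \(BT_\sigma\to BT_\sigma^{(1)}\). Under these identifications the change-of-action square \eqref{eq:change-of-action-square} is the base-change square behind \eqref{eq:base-change-BH}. Consequently the lacing of Definition~\ref{def:semilinear-object} is exactly the proper pullback along \(BT_\sigma\to BT_\sigma^{(1)}\), so \([\Xi_{[r]}(O_\sigma)]=\eps_\varphi(T_\sigma)\), with the stated representative. Nothing in this step uses smoothness of \(T_\sigma^{(1)}/T_\sigma\); it only uses that \(T_\sigma^{(1)}\) is of multiplicative type, so the stacks are scalloped and \(BT_\sigma\to BT_\sigma^{(1)}\) is finite.

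\textbf{Main obstacle.} I expect the main difficulty to be the coherence in Step~2: checking that the two base-change identifications, one from Proposition~\ref{prop:semilinear-proper-pullback} and one from \eqref{eq:base-change-BH}, agree as equivalences and not merely as abstract objects, so that the arrows match in \(\Lace_\varphi(\cC)\). I would handle this by writing both as exceptional base change along the same cartesian square \(BT_\sigma^{(1)}\to BT\) over \(B\varphi\). A secondary check is that the scheme-theoretic hypotheses of Step~1 hold in positive characteristic with \(p\mid r\), where the thickening is genuinely non-reduced; nil-invariance \cite[Corollary~5.9]{KhanRavi} is exactly what handles this case.
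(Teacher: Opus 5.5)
Your proposal is correct and follows essentially the same route as the paper. The paper also applies Proposition~\ref{prop:nil-semi-localization} stagewise to $X_{\leq d-1}\subset X_{\leq d}$ and splits each open stratum into orbits. It then identifies $[O_\sigma/T]\simeq BT_\sigma$ and $[\Res_\varphi O_\sigma/T]\simeq BT_\sigma^{(1)}$, so that the orbit lacing becomes the proper pullback of Definition~\ref{def:stabilizer-lacing}. Your remarks on restricting the monomial-ideal argument to $X_{\leq d}$ and on matching the two base-change identifications make explicit points the paper leaves implicit.
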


\begin{proof}
Apply Proposition~\ref{prop:nil-semi-localization} successively to
\(
 X_{\leq d-1}\subset X_{\leq d}.
\)
The open complement at stage \(d\) is the finite disjoint union
\[
  X_{\leq d}\setminus X_{\leq d-1}
  =\coprod_{\dim O_\sigma=d}O_\sigma.
\]
Taking Grothendieck classes of the resulting cofiber sequences gives
\begin{equation}\label{eq:sum-of-orbit-lacings}
 [\Xi_{\Sigma,r}]
  =\sum_{\sigma\in\Sigma}
    [\Xi_{B\varphi}(O_\sigma,[r]|_{O_\sigma})].
\end{equation}

Set \(H=T_\sigma\).  The quotient of the homogeneous orbit and the quotient of
its twisted action are
\[
  [O_\sigma/T]\simeq BH,
  \qquad
  [\Res_\varphi O_\sigma/T]\simeq BH^{(1)}.
\]
Under these equivalences, the map induced by the power map is
\(
  BH\to BH^{(1)}
\)
from \(H\subset H^{(1)}\).  Base change identifies its source motive as in
\eqref{eq:base-change-BH}, and its lacing arrow is the proper pullback in
Definition~\ref{def:stabilizer-lacing}.  Thus the summand in
\eqref{eq:sum-of-orbit-lacings} is \(\eps_\varphi(T_\sigma)\), proving
\eqref{eq:semilinear-orbit}.
\end{proof}

\begin{corollary}
\label{cor:all-iterates}
For every \(m\geq1\), the power functor sends
\(\Xi_{\Sigma,r}\) to \(\Xi_{\Sigma,r^m}\).  It sends the stabilizer term at
\(H\subset T\) to the lacing associated with
\[
  BH\longrightarrow B\varphi^{-m}(H).
\]
Consequently Theorem~\ref{thm:semilinear-orbit} yields the orbit formula
for \([r]^m=[r^m]\) with coefficient bimodule \(\cM_{\varphi^m}\).
\end{corollary}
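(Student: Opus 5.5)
The proof reduces the corollary to Theorem~\ref{thm:semilinear-orbit} applied with \(r^m\) in place of \(r\). The first step is to compute \(P_m\) on the generating objects. By Proposition~\ref{lem:coefficient-composition}, \(P_m(x,\alpha)=(x,\alpha^{[m]})\) with
\(\alpha^{[m]}=\alpha\circ F_\varphi(\alpha)\circ\cdots\circ F_\varphi^{m-1}(\alpha)\).
Since \(B\varphi^m=B([r^m])\), we have \(F_\varphi^m=F_{\varphi^m}\) and \(\cM_{\varphi}^{\circ m}\simeq\cM_{\varphi^m}\). The target of \(P_m\) is therefore \(\Lace_{\varphi^m}(\cC)\).

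Next, take \(\Xi_{\Sigma,r}\), whose lacing arrow is \(\lambda_{\Sigma,r}\), the proper pullback along \([\widetilde f/T]\) for the \(\varphi\)-semilinear power map \(f=[r]\) on \(X_\Sigma\). Each factor \(F_\varphi^{i}(\lambda_{\Sigma,r})\) is identified with the proper pullback along the base change of \([\widetilde f/T]\) along \((B\varphi)^i\). The identification uses exceptional base change \eqref{eq:base-change-compact-support} together with the commutation of \(g^*\) with the unit \(\mathrm{id}\to a_*a^*\), which is proper base change. The ordered composite then becomes the proper pullback along the composite of these base-changed maps. By the composition clause of Proposition~\ref{prop:semilinear-proper-pullback}, that composite is \([\widetilde{f^m}/T]\), where \(f^m=[r^m]\) is \(\varphi^m\)-semilinear. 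The square \eqref{eq:change-of-action-square} for \(\varphi^m\) is the pasting of the squares for \(\varphi\), which matches the base-change identifications. This gives \(P_m\Xi_{\Sigma,r}\simeq\Xi_{\Sigma,r^m}\).

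For the stabilizer term the argument is the same. The \(m\)-fold composite of the arrows \(E_T(\varphi^{-i-1}H)\to E_T(\varphi^{-i}H)\), after base-change identifications, is the proper pullback along the composite
\[
BH\to B\varphi^{-1}H\to\cdots\to B\varphi^{-m}H.
\]
By contravariant functoriality of proper pullback, this is the proper pullback along \(BH\to B\varphi^{-m}(H)\). Finally, Theorem~\ref{thm:semilinear-orbit} applied to \(r^m\), or equivalently applying the additive map \(K_0(P_m)\) to \eqref{eq:semilinear-orbit}, gives
\([\Xi_{\Sigma,r^m}]=\sum_\sigma\eps_{\varphi^m}(T_\sigma)\) in \(K_0\Lace_{\varphi^m}(\cC)\).
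The category \(\cC\) already contains all \(F_\varphi\)-iterates, so it is \(F_{\varphi^m}\)-stable and contains the filtration motives needed for \([r^m]\).

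The main obstacle is the coherence check that the iterated base-change equivalences \(F_\varphi^iM^c_T(\cdot)\simeq M^c_T(\cdot^{(\varphi^i)})\) compose compatibly with \(a_!\simeq a_*\) and with the units. Without this, the composite of the \(m\) proper pullbacks cannot be identified with a single proper pullback. I would handle it by working entirely in the homotopy category, where only equalities of arrows are needed, and invoking the standard compatibility of exceptional base change with composition and with proper pushforward from \cite[Theorem~7.1]{KhanRavi}.
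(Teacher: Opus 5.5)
Your proposal is correct and follows essentially the same route as the paper. You use Proposition~\ref{lem:coefficient-composition} to write \(P_m\Xi_{\Sigma,r}\) as the ordered composite of the \(m\) proper pullbacks, collapse that composite via the composition clause of Proposition~\ref{prop:semilinear-proper-pullback} to the single proper pullback for \([r]^m=[r^m]\) (and, on a homogeneous orbit, to the pullback along \(BH\to B\varphi^{-m}(H)\)), and then apply \(P_m\) to \eqref{eq:semilinear-orbit}. The only cosmetic point is that \(F_\varphi^m\simeq F_{\varphi^m}\) is a canonical equivalence, not a literal equality; this is exactly the base-change coherence you already flag, and the paper likewise absorbs it into that proposition.
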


\begin{proof}
Proposition~\ref{lem:coefficient-composition} identifies
\(P_m\Xi_{\Sigma,r}\) with the lacing whose arrow is the ordered composite of
the \(m\) proper pullbacks.  Compatibility of semilinear proper pullback with
composition, as established in
Proposition~\ref{prop:semilinear-proper-pullback}, identifies this arrow
with the proper pullback for \([r]^m=[r^m]\).  Thus the resulting lacing is
\(\Xi_{\Sigma,r^m}\).  On a homogeneous orbit, the composite is attached to
\[
 H\subset\varphi^{-1}(H)\subset\cdots\subset\varphi^{-m}(H),
\]
and is therefore the proper pullback along
\(BH\to B\varphi^{-m}(H)\).  Apply the exact functor \(P_m\) to
\eqref{eq:semilinear-orbit}.
\end{proof}

\begin{remark}
\label{rem:inseparable-power}
After choosing split coordinates on the quotient torus,
\[
 T_\sigma^{(1)}/T_\sigma
 \simeq\ker\bigl([r]\colon T/T_\sigma\to T/T_\sigma\bigr)
 \simeq\mu_r^{\dim O_\sigma}.
\]
This group scheme is non-smooth when the characteristic divides \(r\).
\end{remark}

\subsection{Cones fixed by iterates of monomial isogenies}

Applying the laced trace to the scalar orbit formula gives the following
coefficient-valued identity.

\begin{corollary}
\label{cor:thh-orbit}
For every finite fan \(\Sigma\) and every \(r\geq1\),
\begin{equation}\label{eq:thh-orbit}
 \operatorname{tr}^{\mathrm{lace}}(\Xi_{\Sigma,r})
 =\sum_{\sigma\in\Sigma}
   \operatorname{tr}^{\mathrm{lace}}
   \bigl(\mathcal E_\varphi(T_\sigma)\bigr)
 \quad\text{in }\pi_0\THH(\cC,\cM_\varphi).
\end{equation}
For every \(m\geq1\), the same formula applied to the iterated lacing is an
identity in \(\pi_0\THH(\cC,\cM_{\varphi^m})\), with the summand at
\(T_\sigma\) represented by \(BT_\sigma\to B\varphi^{-m}(T_\sigma)\).
\end{corollary}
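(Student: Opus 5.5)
The plan is to apply the laced trace, viewed as a group homomorphism on \(K_0\), to the identity of Theorem~\ref{thm:semilinear-orbit}. On \(\pi_0\), the HNS trace \eqref{eq:laced-trace} induces a homomorphism
\[
 \pi_0\operatorname{tr}^{\mathrm{lace}}\colon
 \Kzero^\varphi(\cC)=K_0\Lace_\varphi(\cC)\longrightarrow\pi_0\THH(\cC,\cM_\varphi).
\]
It is additive because it is the map on \(\pi_0\) of a map of spectra, and it factors through \(K_0\) by construction. By Proposition~\ref{prop:laced-trace-on-objects}, it sends the class of a laced object to the zero-simplex class of its lacing arrow. Applying it to \eqref{eq:semilinear-orbit}, whose right-hand side is the sum of the classes \(\eps_\varphi(T_\sigma)=[\mathcal E_\varphi(T_\sigma)]\), gives \eqref{eq:thh-orbit} directly. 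Note that the sum is finite, since \(\Sigma\) is a finite fan.

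For the iterated statement, fix \(m\geq1\) and use Corollary~\ref{cor:all-iterates}. The exact functor \(P_m\) of Proposition~\ref{lem:coefficient-composition} induces a homomorphism on \(K_0\). It sends \([\Xi_{\Sigma,r}]\) to \([\Xi_{\Sigma,r^m}]\), and it sends \(\eps_\varphi(T_\sigma)\) to the class of the laced object \(\bigl(E_T(T_\sigma),\lambda_{\sigma,m}\bigr)\). Here \(\lambda_{\sigma,m}\) is the proper pullback along \(BT_\sigma\to B\varphi^{-m}(T_\sigma)\), composed with the base-change identification \(F_\varphi^mE_T(T_\sigma)\simeq E_T(\varphi^{-m}(T_\sigma))\) obtained by iterating \eqref{eq:base-change-BH}. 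Applying \(P_m\) to \eqref{eq:semilinear-orbit} therefore gives an identity in \(K_0\Lace(\cC,\cM_{\varphi^m})\). Applying the laced trace for \(\cM_{\varphi^m}\) then gives the asserted identity in \(\pi_0\THH(\cC,\cM_{\varphi^m})\).

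There is an alternative route: note that \(\varphi^m=[r^m]\), and apply Theorem~\ref{thm:semilinear-orbit} directly with \(r^m\) in place of \(r\). For this, \(\cC\) must also be closed under \(F_{\varphi^m}=F_\varphi^m\) and contain the relevant motives. It is, since \(\cC\) was defined to contain all \(F_\varphi\)-iterates.

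The only step needing care is the identification of the summand for the iterate. The chain \(T_\sigma\subset\varphi^{-1}(T_\sigma)\subset\cdots\subset\varphi^{-m}(T_\sigma)\) gives a composite of \(m\) proper pullbacks, each twisted by the appropriate power of \(F_\varphi\). This composite must be identified with the single proper pullback along \(BT_\sigma\to B\varphi^{-m}(T_\sigma)\). The identification follows from compatibility of proper pullback with composition, as recorded after Definition~\ref{def:semilinear-object} and in Proposition~\ref{prop:semilinear-proper-pullback}, together with coherence of the iterated base-change equivalences. I would cite Corollary~\ref{cor:all-iterates} for this rather than reprove it.
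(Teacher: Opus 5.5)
Your proposal is correct and follows the paper's proof: apply the homomorphism on \(\pi_0\) induced by the laced trace to the \(K_0\)-identity of Theorem~\ref{thm:semilinear-orbit}. For the iterated statement, first pass through Corollary~\ref{cor:all-iterates} (i.e.\ \(P_m\) on \(K_0\)) and then apply the laced trace with coefficient \(\cM_{\varphi^m}\).
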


\begin{proof}
Apply the homomorphism on \(\pi_0\) induced by
\eqref{eq:laced-trace} to
Theorem~\ref{thm:semilinear-orbit}.  For the second assertion, first
apply Corollary~\ref{cor:all-iterates} and then use the laced trace with
coefficient \(\cM_{\varphi^m}\).
\end{proof}

\begin{theorem}
\label{thm:cones-fixed-by-iterates-trace}
Over the fixed field \(k\), let \(N\simeq\mathbb Z^d\), let \(T=T_N\), and let
\(A\colon N\to N\) be injective with finite cokernel.  Write
\(\psi_A\colon T\to T\) for the induced isogeny.  Suppose that \(\Sigma\) is
a finite fan such that
\begin{equation}\label{eq:strictly-fan-permuting}
 A_{\mathbb R}(\sigma)\in\Sigma
 \quad\text{as a cone for every }\sigma\in\Sigma.
\end{equation}
Then \(A_{\mathbb R}\) permutes the cones of \(\Sigma\), and the induced
toric endomorphism
\(f_A\colon X_\Sigma\to X_\Sigma\) is finite and
\(\psi_A\)-semilinear.  Put \(F_A=(B\psi_A)^*\), and take a small
\(F_A\)-stable thick category \(\cC\subset\SH(BT)\) containing the orbit
motives, the filtration-stage motives, and the compactly supported motive of
\(X_\Sigma\).

For a cycle \(C\in\Sigma/\langle A\rangle\), set
\[
 O_C=\coprod_{\sigma\in C}O_\sigma
\]
and let \(\mathcal E_A(C)\in\Lace(\cC,\cM_{F_A})\) be the compactly
supported lacing of \(f_A|_{O_C}\).  Then
\begin{equation}\label{eq:cone-cycle-decomposition-kzero}
 [\Xi_{\Sigma,A}]
 =\sum_{C\in\Sigma/\langle A\rangle}[\mathcal E_A(C)]
 \quad\text{in }K_0\Lace(\cC,\cM_{F_A}),
\end{equation}
where \(\Xi_{\Sigma,A}\) is the semilinear compactly supported lacing of
\(f_A\).

For every \(m\geq1\), its iterated coefficient trace is
\begin{equation}\label{eq:cones-fixed-by-iterates-thh}
 \operatorname{tr}^{\mathrm{lace}}
 \bigl(P_m\Xi_{\Sigma,A}\bigr)
 =
 \sum_{\substack{\sigma\in\Sigma\\A^m\sigma=\sigma}}
 \operatorname{tr}^{\mathrm{lace}}
 \bigl(E_T(T_\sigma),\lambda_{\sigma,m}\bigr)
 \quad\text{in }\pi_0\THH(\cC,\cM_{F_A^m}),
\end{equation}
where
\[
 \lambda_{\sigma,m}\colon
 F_A^mE_T(T_\sigma)
 \simeq E_T(\psi_A^{-m}(T_\sigma))
 \longrightarrow E_T(T_\sigma)
\]
is proper pullback along
\(BT_\sigma\to B\psi_A^{-m}(T_\sigma)\).  When
\(A=r\,\mathrm{id}_N\) with \(r\geq1\),
\(A\sigma=\sigma\) for every cone \(\sigma\), and
\eqref{eq:cones-fixed-by-iterates-thh} reduces to
Corollary~\ref{cor:thh-orbit}.
\end{theorem}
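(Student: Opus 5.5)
The argument runs parallel to Theorem~\ref{thm:semilinear-orbit} and Corollaries~\ref{cor:all-iterates} and~\ref{cor:thh-orbit}, with the scalar map \([r]\) replaced by \(\psi_A\) and single cones replaced by \(A\)-cycles of cones.

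\emph{Combinatorial preliminaries.} Since \(A\) is injective with finite cokernel, \(A_{\mathbb R}\) is invertible. By \eqref{eq:strictly-fan-permuting} the assignment \(\sigma\mapsto A_{\mathbb R}\sigma\) is an injective map \(\Sigma\to\Sigma\) on a finite set, hence a permutation preserving dimension. It follows that \(A\) maps each cone into a cone, so \(f_A\) is defined. On an affine chart \(U_{A\sigma}\to U_\sigma\) the comorphism is \(\chi^u\mapsto\chi^{A^\vee u}\). Since \(A^\vee(\sigma^\vee\cap M)\) has finite index in \((A\sigma)^\vee\cap M\), the monoid algebra is finite over the image, so \(f_A\) is finite and hence proper. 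Semilinearity \(f_A(t\cdot y)=\psi_A(t)f_A(y)\) holds on the dense torus and therefore everywhere. Thus Proposition~\ref{prop:semilinear-proper-pullback} defines \(\Xi_{\Sigma,A}\).

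\emph{Decomposition in \(K_0\).} We use the same filtration \(X_{\le d}\) by orbit dimension and apply Proposition~\ref{prop:nil-semi-localization} at each stage. Both hypotheses are checked chartwise. For (1), \(f_A^{-1}\) of the union of orbits of dimension \(>d\) is exactly that union, because \(f_A\) maps \(O_\sigma\) onto \(O_{A\sigma}\) and \(A\) preserves dimension. For (2), the pullback of a radical monomial ideal along \(\chi^u\mapsto\chi^{A^\vee u}\) is generated by monomials with the same support. Its radical is therefore the original ideal, and finite generation makes it a nilpotent thickening. Summing the resulting classes, the open stratum at stage \(d\) is \(\coprod_{\dim O_\sigma=d}O_\sigma\). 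This stratum splits \(f_A\)-equivariantly, as a disjoint union, into the \(O_C\) for the cycles \(C\) of that dimension. Disjoint unions give direct sums of laced objects, and this yields \eqref{eq:cone-cycle-decomposition-kzero}.

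\emph{Iterates.} Apply the exact functor \(P_m\), using Proposition~\ref{lem:coefficient-composition} and the composition compatibility in Proposition~\ref{prop:semilinear-proper-pullback}. This identifies \(P_m\mathcal E_A(C)\) with the compactly supported lacing of \(f_A^m|_{O_C}=f_{A^m}|_{O_C}\), where \(f_{A^m}\) is \(\psi_A^m\)-semilinear. Now fix \(\sigma\in C\) and put \(O=O_\sigma\). Since \(f_{A^m}\) maps \(O_\sigma\) to \(O_{A^m\sigma}\), it permutes the summands of \(O_C\) with cycle type given by \(A^m\) on \(C\).

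\emph{Main obstacle: the trace of a lacing that permutes summands.} We must show two things. First, summands moved by the permutation contribute zero to the trace. Second, each fixed summand \(O_\sigma\) gives \(\operatorname{tr}^{\mathrm{lace}}(E_T(T_\sigma),\lambda_{\sigma,m})\).

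For the fixed case, argue as in Theorem~\ref{thm:semilinear-orbit}. We have \([O_\sigma/T]\simeq BT_\sigma\) and \([\Res_{\psi_A^m}O_\sigma/T]\simeq B\psi_A^{-m}(T_\sigma)\). The induced map is \(BT_\sigma\to B\psi_A^{-m}(T_\sigma)\) coming from \(T_\sigma\subset\psi_A^{-m}(T_\sigma)\), and this inclusion holds because \(\psi_A^m(T_\sigma)=T_{A^m\sigma}=T_\sigma\).

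For the moved summands, the lacing arrow on \(x=\bigoplus_{\tau\in C}x_\tau\) is a block matrix whose only nonzero blocks are \(F_A^mx_\tau\to x_{A^{-m}\tau}\). I would use additivity of \(\pi_0\THH\) with respect to direct sums: \(\operatorname{tr}^{\mathrm{lace}}\) of a lacing on a direct sum equals the sum of traces of its diagonal blocks. This follows from the coend or cyclic-bar zero-simplex description in Proposition~\ref{prop:laced-trace-on-objects}, since the zero-simplex of the coend is additive in the coefficient and compatible with the inclusions and projections of the sum. The diagonal block at a non-fixed \(\tau\) is zero, so it contributes nothing.

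Summing over all cycles then gives \eqref{eq:cones-fixed-by-iterates-thh}. When \(A=r\,\mathrm{id}_N\), every cone is fixed, \(\psi_A=[r]\), and the formula reduces to Corollary~\ref{cor:thh-orbit}.
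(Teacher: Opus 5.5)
Your proposal is correct and follows the paper's proof essentially step for step: you check permutation and finiteness on charts, apply Proposition~\ref{prop:nil-semi-localization} stagewise with each stratum grouped into \(A\)-cycles, and then use the block-matrix trace formula, which the paper packages as Proposition~\ref{prop:laced-trace-matrix}, to kill the shifted off-diagonal blocks and identify the diagonal ones with \(\lambda_{\sigma,m}\). One small slip to fix: the chart map is \(U_\sigma\to U_{A\sigma}\), with comorphism \(k[(A\sigma)^\vee\cap M]\to k[\sigma^\vee\cap M]\), and the integrality needed is that every \(v\in\sigma^\vee\cap M\) has a positive multiple in \(A^\vee\bigl((A\sigma)^\vee\cap M\bigr)\). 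This has to be combined with \(f_A^{-1}U_{A\sigma}=U_\sigma\); the reversed inclusion you wrote is not what is needed.
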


\begin{proof}
Since \(A_{\mathbb R}\) is invertible, its self-map of the finite set
\(\Sigma\) is injective and hence permutes the cones.  For \(\tau=A\sigma\),
the affine-chart map is
\[
 k[\tau^\vee\cap M]\longrightarrow k[\sigma^\vee\cap M],
 \qquad \chi^u\longmapsto\chi^{A^\vee u}.
\]
For \(v\in\sigma^\vee\cap M\), some \(qv=A^\vee u\), with
\(u\in\tau^\vee\); hence \(\chi^v\) is integral over the image.  Finite
generation makes every chart map finite.  Moreover the cones mapping into
\(\tau\) are exactly the faces of \(\sigma\), so \(f_A^{-1}U_\tau=U_\sigma\)
scheme-theoretically.  This proves finiteness; semilinearity is the defining
equivariance of the toric map.

The orbit-dimension filtration is invariant.  On a noetherian affine chart,
if \(I\) is the radical ideal of a reduced stage and \(J=f_A^*I\), invariance
and equality of underlying closed sets give \(J\subseteq I=\sqrt J\); hence
\(I/J\) is nilpotent.  Thus both hypotheses of
Proposition~\ref{prop:nil-semi-localization} hold (and descend from the
atlas to the quotient stack).  Applying it stagewise and grouping each open
stratum into \(A\)-cycles proves
\eqref{eq:cone-cycle-decomposition-kzero}.

For a cycle \(C=(\sigma_0,\ldots,\sigma_{q-1})\), the lacing has precisely the
cyclic blocks
\(F_AE_T(T_{\sigma_{i+1}})\to E_T(T_{\sigma_i})\), induced by
\(T_{\sigma_i}\subseteq\psi_A^{-1}(T_{\sigma_{i+1}})\).  After \(P_m\) the
matrix is shifted by \(m\) positions.  Proposition~\ref{prop:laced-trace-matrix} therefore kills its
laced trace unless \(q\mid m\); when \(q\mid m\), its diagonal
blocks are exactly the return maps \(\lambda_{\sigma_i,m}\).  Since this
divisibility is equivalent to \(A^m\sigma_i=\sigma_i\), summing over cycles
gives \eqref{eq:cones-fixed-by-iterates-thh}.
\end{proof}
\subsection{Intrinsic restriction classes and an example detected by the second ghost}

\begin{corollary}
\label{cor:intrinsic-cones-fixed-by-iterates-restriction}
In the setting of Theorem~\ref{thm:cones-fixed-by-iterates-trace}, let \(S\) be
a nonempty truncation set.  The class
\[
 \mathcal Z^{\mathrm{res,int}}_{\Sigma,A,S}
 :=\pi_0\operatorname{tr}^{\mathrm{res,int}}_S
   [\Xi_{\Sigma,A}]
 \in\pi_0R_S\bigl(\mathbf E(\cC,F_A)\bigr)
\]
has the following \(m\)-th ghost for every \(m\in S\):
\begin{equation}\label{eq:intrinsic-cones-fixed-by-iterates-ghost}
 \pi_0\overline g_{m,S}
 \bigl(\mathcal Z^{\mathrm{res,int}}_{\Sigma,A,S}\bigr)
 =
 \sum_{\substack{\sigma\in\Sigma\\A^m\sigma=\sigma}}
 \operatorname{tr}^{\mathrm{lace}}
 \bigl(E_T(T_\sigma),\lambda_{\sigma,m}\bigr)
 \quad\text{in }
 \pi_0\THH(\cC,\cM_{F_A^m}).
\end{equation}
For \(S=\mathbb N_{>0}\), we write
\(\mathcal Z^{\mathrm{res,int}}_{\Sigma,A}\).
\end{corollary}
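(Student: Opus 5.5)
The plan is to combine the general ghost formula of Corollary~\ref{cor:integral-intrinsic-ghost-formula} with the toric trace computation of Theorem~\ref{thm:cones-fixed-by-iterates-trace}; no new geometry should be needed.

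First, I would verify that the intrinsic trace applies to the pair \((\cC,F_A)\). The category \(\cC\subset\SH(BT)\) is small, idempotent-complete and stable, because it is a thick subcategory. It is stable under \(F_A=(B\psi_A)^*\), and this functor is exact. Hence Theorem~\ref{thm:integral-module-envelope-trace-descent} supplies \(\operatorname{tr}^{\mathrm{res,int}}_S\) for every nonempty truncation set \(S\). The one point to check is the \(\mathcal U\)-small presentability hypothesis. For this I would choose a universe large enough to contain \(\cC\), using the stated compatibility with enlargement of universes. By Proposition~\ref{prop:semilinear-proper-pullback} and the hypotheses on \(\cC\), the object \(\Xi_{\Sigma,A}\) lies in \(\Lace(\cC,\cM_{F_A})\). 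It therefore defines a class \([\Xi_{\Sigma,A}]\in K_0\), and hence the class \(\mathcal Z^{\mathrm{res,int}}_{\Sigma,A,S}\).

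Second, for \(m\in S\) I would apply \eqref{eq:integral-intrinsic-ghost-formula} with \(\xi=[\Xi_{\Sigma,A}]\). This gives
\[
 \pi_0\overline g_{m,S}\bigl(\mathcal Z^{\mathrm{res,int}}_{\Sigma,A,S}\bigr)
 =\operatorname{tr}^{\mathrm{lace}}\bigl(P_m[\Xi_{\Sigma,A}]\bigr).
\]
The right-hand side involves \(P_m\) on \(K_0\), and by definition this is the class of \(P_m\) applied to the object. It is therefore exactly the left-hand side of \eqref{eq:cones-fixed-by-iterates-thh}. Substituting the formula of Theorem~\ref{thm:cones-fixed-by-iterates-trace} yields \eqref{eq:intrinsic-cones-fixed-by-iterates-ghost}.

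The main point needing care is that both identities are stated in the same group \(\pi_0\THH(\cC,\cM_{F_A^m})\), with the same identification of the coefficient. In the ghost formula, the target is identified through \(\omega_m^F\), which combines unwinding with co-Yoneda \(\cM_{F_A}^{\circ m}\simeq\cM_{F_A^m}\). In the toric theorem, \(\lambda_{\sigma,m}\) is expressed through the base-change identification \(F_A^mE_T(T_\sigma)\simeq E_T(\psi_A^{-m}(T_\sigma))\). I would check that the iterated lacing \(P_m\) of Proposition~\ref{lem:coefficient-composition} uses the same co-Yoneda equivalence \eqref{eq:coefficient-composition} as \(\omega_m^F\). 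By construction it does. The identification of the composite of \(m\) proper pullbacks with the single proper pullback along \(BT_\sigma\to B\psi_A^{-m}(T_\sigma)\) is then the compatibility with composition in Proposition~\ref{prop:semilinear-proper-pullback}, as already used in Corollary~\ref{cor:all-iterates}. With these identifications matched, the two formulas coincide and the corollary follows.
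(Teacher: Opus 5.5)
Your proposal is correct and follows the paper's proof: apply the intrinsic trace of Theorem~\ref{thm:integral-module-envelope-trace-descent} to \([\Xi_{\Sigma,A}]\), use the ghost formula \eqref{eq:integral-intrinsic-ghost-formula} to rewrite the \(m\)-th ghost as \(\operatorname{tr}^{\mathrm{lace}}(P_m\Xi_{\Sigma,A})\), and then substitute \eqref{eq:cones-fixed-by-iterates-thh}. Your extra check that the coefficient identifications agree is harmless but unnecessary, because both formulas are already stated in \(\pi_0\THH(\cC,\cM_{F_A^m})\) in terms of the same intrinsic functor \(P_m\).
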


\begin{proof}
Apply Theorem~\ref{thm:integral-module-envelope-trace-descent} to
\([\Xi_{\Sigma,A}]\), and combine its intrinsic ghost formula with
Theorem~\ref{thm:cones-fixed-by-iterates-trace}.
\end{proof}

For the scalar matrix \(A=r\,\operatorname{id}_N\), we abbreviate
\(\mathcal Z^{\mathrm{res,int}}_{\Sigma,A,S}\) to
\(\mathcal Z^{\mathrm{res,int}}_{\Sigma,r,S}\).

\begin{corollary}
\label{cor:toric-restriction-frobenius}
In the setting of
Corollary~\ref{cor:intrinsic-cones-fixed-by-iterates-restriction}, let
\(m\geq1\) and suppose \(S/m\ne\varnothing\).  Under
the canonical equivalence \(F_A^m\simeq F_{A^m}\), Frobenius transports the
intrinsic class by
\begin{equation}\label{eq:toric-restriction-frobenius}
 (\pi_0\operatorname{Fr}_{m,S})
 \bigl(\mathcal Z^{\mathrm{res,int}}_{\Sigma,A,S}\bigr)
 =\mathcal Z^{\mathrm{res,int}}_{\Sigma,A^m,S/m}.
\end{equation}
Consequently, under the conjugated block equivalence
\(\overline b_{m,1}^{F_A}\), the \(m\)-th ghost indexed by cones fixed by
\(A^m\) is the first ghost of the Frobenius-transported class:
\begin{equation}\label{eq:fixed-by-iterate-ghost-as-first-frobenius}
 (\pi_0\overline b_{m,1}^{F_A})\,
 (\pi_0\overline g_{m,S})
 \bigl(\mathcal Z^{\mathrm{res,int}}_{\Sigma,A,S}\bigr)
 =
 (\pi_0\overline g_{1,S/m})
 \bigl(\mathcal Z^{\mathrm{res,int}}_{\Sigma,A^m,S/m}\bigr).
\end{equation}
\end{corollary}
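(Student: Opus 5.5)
The plan is to derive \eqref{eq:toric-restriction-frobenius} directly from the Frobenius square of Theorem~\ref{thm:restriction-trace-frobenius}, and then obtain \eqref{eq:fixed-by-iterate-ghost-as-first-frobenius} by projecting to the first ghost using \eqref{eq:restriction-frobenius-ghost}.

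First, apply $\pi_0$ to the square \eqref{eq:restriction-trace-frobenius-square} for $F=F_A$ and evaluate at $[\Xi_{\Sigma,A}]\in K_0\Lace(\cC,\cM_{F_A})$. This gives
\[
 (\pi_0\operatorname{Fr}_{m,S})\bigl(\mathcal Z^{\mathrm{res,int}}_{\Sigma,A,S}\bigr)
 =\pi_0\operatorname{tr}^{\mathrm{res,int}}_{S/m}\bigl(P_m[\Xi_{\Sigma,A}]\bigr).
\]
By hypothesis $S/m\neq\varnothing$, so $m\in S$ and the square applies. It then suffices to identify $P_m\Xi_{\Sigma,A}$ with $\Xi_{\Sigma,A^m}$ as objects of $\Lace(\cC,\cM_{F_{A^m}})$, under $F_A^m\simeq(B\psi_A^m)^*=(B\psi_{A^m})^*=F_{A^m}$. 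This is the monomial analogue of Corollary~\ref{cor:all-iterates}. By Proposition~\ref{lem:coefficient-composition}, $P_m\Xi_{\Sigma,A}$ has lacing arrow equal to the ordered composite of $m$ proper pullbacks. The composition clause of Proposition~\ref{prop:semilinear-proper-pullback} identifies this composite with the proper pullback attached to $f_A^m=f_{A^m}$, which is $\psi_{A^m}$-semilinear. Two further inputs are needed. The category $\cC$ must also be admissible for $A^m$: it is $F_A$-stable, hence $F_{A^m}$-stable, and it contains the same orbit and filtration motives, because $A^m$ satisfies \eqref{eq:strictly-fan-permuting} with the same filtration. The identification $F_A^m\simeq F_{A^m}$ must also be the one used by $\operatorname{Fr}$. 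That identification is the co-Yoneda equivalence $\cM_{F_A}^{\circ m}\simeq\cM_{F_A^m}$ composed with the base-change identification $(B\psi_A)^{*m}\simeq(B\psi_A^m)^*$, so the two descriptions agree.

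For \eqref{eq:fixed-by-iterate-ghost-as-first-frobenius}, take $n=1\in S/m$ in \eqref{eq:restriction-frobenius-ghost}:
\[
 \overline g^{F_A^m}_{1,S/m}\operatorname{Fr}_{m,S}\simeq\overline b^{F_A}_{m,1}\,\overline g^{F_A}_{m,S}.
\]
Applying $\pi_0$ at $\mathcal Z^{\mathrm{res,int}}_{\Sigma,A,S}$ and substituting \eqref{eq:toric-restriction-frobenius} yields the identity. Corollary~\ref{cor:intrinsic-cones-fixed-by-iterates-restriction}, applied to both $(A,S,m)$ and $(A^m,S/m,1)$, confirms that both sides are the sum over cones with $A^m\sigma=\sigma$. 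This is a consistency check rather than a needed step.

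The main obstacle is the coherence bookkeeping in the first step. The equivalence $F_A^m\simeq F_{A^m}$ enters in three places: in $P_m$ through co-Yoneda, in $\operatorname{Fr}$ through the block maps $b_m$, and in the toric identification through base change. These three identifications must be shown to agree, at least on $\pi_0$. I would first try to reduce this to the strict presentation level. There the composition compatibility of Proposition~\ref{prop:semilinear-proper-pullback}, together with the literal identity $q_{n,x^{[m]}}q_{m,x}=q_{mn,x}$, should make the agreement immediate. Only classes in $\pi_0$ are at stake, so homotopies between the identifications suffice and no higher coherence is required.
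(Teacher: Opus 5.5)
Your proposal is correct and follows the paper's proof. You identify \(P_m\Xi_{\Sigma,A}\simeq\Xi_{\Sigma,A^m}\) by composing the proper-pullback arrows, apply the Frobenius square of Theorem~\ref{thm:restriction-trace-frobenius} to get the first identity, and specialize \eqref{eq:restriction-frobenius-ghost} to \(n=1\) for the second. The extra bookkeeping you flag is left implicit in the paper and your treatment is sound. That covers admissibility of \(\cC\) for \(A^m\) and agreement of the identifications \(F_A^m\simeq F_{A^m}\). The planned reduction to strict presentations via \(q_{m,x}\) is unnecessary: only the established square and a \(\pi_0\)-level identification of laced objects are used.
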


\begin{proof}
Composition of the proper-pullback units defining the semilinear lacing gives
a canonical equivalence
\(P_m\Xi_{\Sigma,A}\simeq\Xi_{\Sigma,A^m}\).  Apply
Theorem~\ref{thm:restriction-trace-frobenius} to this class to obtain
\eqref{eq:toric-restriction-frobenius}.  Formula
\eqref{eq:restriction-frobenius-ghost} with \(n=1\) gives
\eqref{eq:fixed-by-iterate-ghost-as-first-frobenius}.
\end{proof}

\begin{definition}
\label{def:trace-evaluation-datum}
Let \(\mathcal V\) be a stable symmetric-monoidal category whose tensor
product is exact in each variable, and let
\(i\colon\cD\hookrightarrow\mathcal V\) be an exact strong
symmetric-monoidal inclusion of a small rigid stable full subcategory
containing \(\one\).  A trace-evaluation datum on \(i\) is a map
\[
 \Theta_{\cD}\colon\THH(\cD)\longrightarrow
 \operatorname{End}_{\mathcal V}(\one)
\]
such that, for every \(y\in\cD\) and every
\([h]\in\pi_0\Map_{\cD}(y,y)\),
\begin{equation}\label{eq:ramzi-zero-simplex-composite}
 \pi_0\Theta_{\cD}\bigl((\pi_0j_y)[h]\bigr)
 =\operatorname{Tr}_{\mathcal V}(h).
\end{equation}
Here \(j_y\) is the inclusion of cyclic-bar zero-simplices.
\end{definition}

\begin{lemma}
\label{lem:hns-ramzi-comparison}
A trace-evaluation datum carries the laced-trace class of every endomorphism
\(h\colon y\to y\) to its categorical trace:
\begin{equation}\label{eq:ramzi-hns-trace-compatibility}
 \pi_0\Theta_{\cD}
 \bigl(\operatorname{tr}^{\mathrm{lace}}(y,h)\bigr)
 =\operatorname{Tr}_{\mathcal V}(h).
\end{equation}
\end{lemma}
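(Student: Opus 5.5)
The plan is to reduce the statement to Proposition~\ref{prop:laced-trace-on-objects} and the defining property \eqref{eq:ramzi-zero-simplex-composite} of a trace-evaluation datum. First I will fix the coefficient. An endomorphism \(h\colon y\to y\) in \(\cD\) is a laced object \((y,h)\) of \(\Lace(\cD,\cM_{\operatorname{id}})\), where \(\cM_{\operatorname{id}}(x,z)=\Map_{\cD}(x,z)\) is the identity (graph of \(\operatorname{id}_{\cD}\)) bimodule. In this case coefficient \(\THH(\cD,\cM_{\operatorname{id}})\) is the ordinary \(\THH(\cD)\), computed by the cyclic bar construction. Its zero-simplices at \(y\) are \(\Map_{\cD}(y,y)\), and \(j_y\) is the inclusion of these into the bar construction.

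Next I apply Proposition~\ref{prop:laced-trace-on-objects} with \(\cM=\cM_{\operatorname{id}}\), \(x=y\), \(\alpha=h\). This gives
\[
 \operatorname{tr}^{\mathrm{lace}}(y,h)=(\pi_0 j_y)[h]
 \quad\text{in }\pi_0\THH(\cD).
\]
To make this precise, I must check that the coend model of \cite[Definition~4.4]{HNS} and the cyclic-bar model used to define \(j_y\) are identified compatibly with zero-simplices. This is the comparison already invoked in \eqref{eq:spectral-coefficient-coend-comparison}, and in simplicial degree zero it sends \(h\) to the point \(h\in\Map(y,y)\), exactly as in the last assertion of Lemma~\ref{lem:coefficient-thh-presentation} with \(m=0\), i.e.\ the identity functor.

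Finally, I apply \(\pi_0\Theta_{\cD}\) to both sides and invoke \eqref{eq:ramzi-zero-simplex-composite}. This yields \eqref{eq:ramzi-hns-trace-compatibility}. Only \(\pi_0\) statements are involved, so no coherence beyond the identification of zero-simplex classes is needed.

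The main obstacle is the second step, namely ensuring that ``the inclusion of cyclic-bar zero-simplices'' in Definition~\ref{def:trace-evaluation-datum} and the degree-zero inclusion in the HNS coend model agree under the chosen equivalence \(\THH(\cD,\cM_{\operatorname{id}})\simeq\THH(\cD)\), rather than differing by some automorphism. I would handle this by noting that both are the canonical map from the \(y\)-component \(\cM(y,y)\) into the colimit \(\int^{x}\cM(x,x)\). The bar realization is the standard simplicial resolution of this coend, and its degree-zero inclusion at \(y\) is by construction the coend's structure map at \(y\).
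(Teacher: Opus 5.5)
Your proposal is correct and follows the paper's proof: Proposition~\ref{prop:laced-trace-on-objects} identifies \(\operatorname{tr}^{\mathrm{lace}}(y,h)\) with \((\pi_0 j_y)[h]\), and the defining identity \eqref{eq:ramzi-zero-simplex-composite} of the datum then gives the claim. Your extra discussion of the coend versus cyclic-bar models is harmless but unnecessary, since that proposition is already stated in terms of the coefficient cyclic bar; also, if you appeal to Lemma~\ref{lem:coefficient-thh-presentation}, take \(f=\operatorname{id}\) and \(m=1\), because \(m=0\) lies outside its stated range \(m\geq1\).
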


\begin{proof}
By Proposition~\ref{prop:laced-trace-on-objects}, the laced class is
\((\pi_0j_y)[h]\).  The defining equality of the datum gives
\eqref{eq:ramzi-hns-trace-compatibility}.
\end{proof}

\begin{remark}
Ramzi constructs a map \(\Theta_{\cD}\) and maps
\(\iota_y\colon\Map_{\cD}(y,y)\to\THH(\cD)\) whose composites have the
required trace values on \(\pi_0\)
\cite[Construction~2.8 and Corollary~2.16]{Ramzi}.  For the \(GW\)-valued
applications in Section~\ref{sec:log-zeta}, we assume
\(\pi_0\iota_y=\pi_0j_y\), so that \(\Theta_{\cD}\) is a trace-evaluation
datum.
\end{remark}

\begin{theorem}
\label{thm:categorical-realization-laced}
Let \(R\) be a commutative ring spectrum and let
\(L\colon\cD\to\Perf(R)\) be an exact functor.  Morita invariance identifies
\(\THH(\Perf(R))\) with \(\THH(R)\), and multiplication in the commutative
algebra \(R\) defines the cyclic-bar augmentation in the composite
\begin{equation}\label{eq:categorical-realization-thh}
 \Theta_L\colon
 \THH(\cD)\longrightarrow\THH(\Perf(R))
 \simeq\THH(R)\longrightarrow R.
\end{equation}
For the last map, see \cite[Remark~4.6.5.10]{LurieHA}.  The composite sends the
laced-trace class of \(h\colon y\to y\) to the
\(R\)-linear categorical trace of \(L(h)\).  More generally, an exact laced
functor
\[
 (\cC,\cM_F)\longrightarrow(\Perf(R),\cM_{\operatorname{id}})
\]
induces the corresponding map on coefficient \(\THH\), again carrying laced
traces to \(R\)-linear categorical traces.
\end{theorem}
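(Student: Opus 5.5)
The plan is to factor \(\Theta_L\) as a morphism of laced categories followed by a map out of \(\THH(R)\), and to evaluate on zero-simplices using Proposition~\ref{prop:laced-trace-on-objects}; the general laced statement is proved first, and the first assertion is its special case \(F=\operatorname{id}\).

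\textbf{Factoring through \(\THH(R)\).} An exact laced functor \((L,\eta)\colon(\cC,\cM_F)\to(\Perf(R),\cM_{\operatorname{id}})\) is a morphism in \(\operatorname{Pair}^{\operatorname{perf}}_\infty\). By functoriality of the HNS coend (Theorem~\ref{thm:coefficient-thh-morita-invariance}) it induces
\[
 \THH(\cC,\cM_F)\longrightarrow\THH(\Perf(R),\cM_{\operatorname{id}})=\THH(\Perf(R)).
\]
On zero-simplices this map sends \(\alpha\in\Omega^\infty\cM_F(x,x)\) to \(\eta(\alpha)\in\Omega^\infty\Map(Lx,Lx)\). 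This holds because the coend's degree-zero inclusion is natural in morphisms of pairs. Then apply the Morita identification \(\THH(\Perf(R))\simeq\THH(R)\), either via \eqref{eq:spectral-coefficient-coend-comparison} with \(A\) the one-object category \(R\), or via Theorem~\ref{thm:coefficient-thh-morita-invariance}. Finally apply the augmentation \(\THH(R)\to R\), which is the map of \(S^1\)-colimits induced by multiplication.

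\textbf{The trace computation.} By Proposition~\ref{prop:laced-trace-on-objects} and naturality, the laced-trace class of \((x,\alpha)\) goes to the zero-simplex class \([h]\in\pi_0\THH(\Perf(R))\), where \(h=\eta(\alpha)\colon Lx\to Lx\). It remains to show that the composite
\[
 \pi_0\Map(P,P)\longrightarrow\pi_0\THH(\Perf(R))\longrightarrow\pi_0R
\]
is the categorical trace. Both sides are additive in \(P\) along cofiber sequences of endomorphisms: the zero-simplex side by additivity of the Dennis/laced trace on \(K_0\), and the trace side by additivity of traces in a stable symmetric monoidal category. Both sides are also compatible with retracts. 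Since every perfect module is a retract of a finite cell module, this reduces the claim to \(P=R^{\oplus n}\). Here a Morita-compatible cyclic-bar argument, namely the HNS trace property \([gh]=[hg]\) in the coend, reduces further to \(P=R\) and \(h=\) multiplication by \(c\in\pi_0R\).

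\textbf{The main obstacle.} For \(P=R\), the zero-simplex \(c\) maps to \(c\) under the augmentation, and the categorical trace of multiplication by \(c\) is \(c\) via coevaluation \(R\to R\otimes_RR\) and evaluation. The hard step is justifying the reduction from general \(P\) to a free module: additivity must be applied to the composite with \(\THH(R)\to R\), and not merely on \(K_0\) of endomorphisms. I would do this by noting that both assignments factor through \(K_0\) of the endomorphism category of \(\Perf(R)\). For the trace side, this is the standard additivity of traces in stable \(\infty\)-categories (Ponto–Shulman type), and it is the part I expect to require the most care.
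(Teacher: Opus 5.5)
Your first step (functoriality of the coefficient coend on zero-simplices, then Morita invariance, then the augmentation) is sound. So are the retract step, via $[ihp]=[hpi]$ on both sides, and the computation for $R^{\oplus n}$ by the matrix and dinaturality argument. Additivity of both sides on $K_0$ of the endomorphism category is also fine, so the step you flagged as hard is not the real obstacle. The gap is the sentence ``every perfect module is a retract of a finite cell module, this reduces the claim to $P=R^{\oplus n}$.''

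Additivity splits a class $[Q,g]$ along a cofiber sequence only when $g$ is a morphism of that cofiber sequence, that is, when $g$ preserves the cell filtration. An arbitrary endomorphism of a finite cell module does not. Knowing that both sides factor through $K_0(\operatorname{End}(\Perf(R)))$ does not help unless that group is generated by classes $(R^{\oplus n}[d],A)$, and you have not shown this. There are partial repairs:
\begin{itemize}
\item For discrete $R$, lift $h$ to a chain map of bounded complexes of finitely generated projectives.
\item For connective $R$, use cellular approximation with cells attached in increasing dimension.
\end{itemize}
For a general, possibly nonconnective, commutative ring spectrum neither device is available. You also cannot instead compare the two maps $\THH(\Perf(R))\to R$ only on zero-simplices at the object $R$. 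For nonconnective $R$ the group $\pi_0\THH(R)$ need not be generated by such classes; for example, for $R=C^*(S^1;\mathbb Q)$ it contains the degree-zero class $\sigma y$ coming from simplicial degree one. A spectrum-level comparison is therefore needed.

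The paper supplies exactly that comparison. It cites the Campbell--Ponto zero-simplex comparison \cite[Theorems~1.2 and~7.8]{CampbellPonto}, which identifies the zero-simplex class of an endomorphism of an \emph{arbitrary} compact module with its bicategorical trace in $\THH(R)$. After the augmentation this is the $R$-linear categorical trace, with no decomposition of $P$ needed. Proposition~\ref{prop:laced-trace-on-objects} and functoriality of coefficient $\THH$ then finish the argument as in your first step.

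Your argument does go through as written when $R$ is a field. Every perfect module is then a finite sum of shifts of $R$, and the shifts are handled by additivity along $P\to 0\to P[1]$. This covers the Betti applications over $\mathbb Q$, but not the theorem as stated.
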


\begin{proof}
Functoriality of \(\THH\), Morita invariance, and the commutative
multiplication of \(R\) give \eqref{eq:categorical-realization-thh}.
The zero-simplex comparison for compact modules is
\cite[Theorems~1.2 and~7.8]{CampbellPonto}; hence the image of a zero-simplex
endomorphism is its \(R\)-linear categorical trace.  Proposition
\ref{prop:laced-trace-on-objects} proves the assertion for laced-trace
classes, and the coefficient version follows by functoriality of laced
\(\THH\).
\end{proof}

\begin{corollary}
\label{cor:motivic-laced-realizations}
For the motivic application, let \(e\colon\Spec k\to BT\) classify the
trivial torsor and let
\(\theta\colon e^*F_\varphi\simeq e^*\) be induced by
\(B\varphi\circ e\simeq e\).  The bimodule maps
\[
 \Map_{\SH(BT)}(F_\varphi x,y)\longrightarrow
 \Map_{\SH(k)}(e^*F_\varphi x,e^*y)
 \xrightarrow{\ (-)\circ\theta_x^{-1}\ }
 \Map_{\SH(k)}(e^*x,e^*y)
\]
give \(e^*\) a laced refinement.  If a small rigid subcategory
\(\cD\subset\SH(k)\) containing \(e^*\cC\) is equipped with a
trace-evaluation datum, this refinement gives
\begin{equation}\label{eq:quadratic-realization-laced-thh}
 \rho_\varphi^{GW}\colon
 \pi_0\THH(\cC,\cM_{F_\varphi})\longrightarrow GW(k).
\end{equation}
For a proper \(\varphi\)-semilinear map \(f\colon Y\to Y\) with
\(\Mcompact_T(Y)\in\cC\), the value of \(\rho_\varphi^{GW}\) on the laced
class of \(\Xi_f(Y)\) is \(\operatorname{Tr}(f_c^*\mid\Mcompact_k(Y))\).

Now let \(k=\mathbb C\), and suppose that the objects of \(e^*\cC\) have
constructible Hodge realizations, so that their underlying rational complexes
are perfect; this holds for the compactly supported motives of finite-type
spaces used below.  Drew's realization, followed by the forgetful functor to
\(\Perf(\mathbb Q)\), gives a linear realization
\begin{equation}\label{eq:betti-realization-laced-thh}
 \rho_\varphi^{\mathrm B}\colon
 \pi_0\THH(\cC,\cM_{F_\varphi})\longrightarrow\mathbb Q,
\end{equation}
whose value on the laced class of \(\Xi_f(Y)\) is
\[
 \sum_i(-1)^i
 \operatorname{Tr}\bigl(f_c^*\mid
 H_c^i(Y^{\mathrm{an}},\mathbb Q)\bigr).
\]
\end{corollary}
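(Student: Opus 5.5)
The plan is to build the laced refinement of \(e^*\) as a morphism of laced categories \((\cC,\cM_{F_\varphi})\to(\cD,\cM_{\operatorname{id}})\), apply functoriality of coefficient \(\THH\), and then compose with the trace-evaluation datum or with the Betti realization. The values on \(\Xi_f(Y)\) are then computed through Proposition~\ref{prop:laced-trace-on-objects} together with Lemma~\ref{lem:hns-ramzi-comparison} or Theorem~\ref{thm:categorical-realization-laced}.

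\emph{Step 1: the laced functor.} The displayed composite is natural in \(x\) and \(y\): naturality of \(\theta\colon e^*F_\varphi\simeq e^*\) makes \((-)\circ\theta_x^{-1}\) compatible with precomposition, and postcomposition commutes with it trivially. It is therefore a bimodule map \(\eta\colon\cM_{F_\varphi}\to(e^{*\,\mathrm{op}}\otimes e^*)^*\cM_{\operatorname{id}}\), and \((e^*,\eta)\) is a morphism in \(\operatorname{Pair}^{\mathrm{perf}}_\infty\). The target is \(\cD\) with the identity coefficient, so \(\THH(\cD,\cM_{\operatorname{id}})=\THH(\cD)\). Since \(\THH\) is a functor on \(\operatorname{Pair}^{\mathrm{perf}}_\infty\) (Theorem~\ref{thm:coefficient-thh-morita-invariance}), composing with \(\Theta_{\cD}\) and taking \(\pi_0\) defines \(\rho^{GW}_\varphi\), since \(\pi_0\operatorname{End}_{\SH(k)}(\one)=GW(k)\) by Morel. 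On cyclic-bar zero-simplices, the induced map sends \([\alpha\colon F_\varphi x\to x]\) to \([e^*\alpha\circ\theta_x^{-1}]\); this is the only point-level fact needed.

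\emph{Step 2: evaluation on \(\Xi_f(Y)\).} By Proposition~\ref{prop:laced-trace-on-objects} and Step~1, the class maps to \((\pi_0 j_{e^*x})[e^*\lambda\circ\theta^{-1}]\), where \(x=\Mcompact_T(Y)\). Lemma~\ref{lem:hns-ramzi-comparison} turns this into \(\operatorname{Tr}(e^*\lambda\circ\theta^{-1})\). It remains to identify \(e^*\lambda\circ\theta^{-1}\) with \(f_c^*\) on \(\Mcompact_k(Y)\). Base change along \(e\) of \([Y/T]\to BT\) is \(Y\to\Spec k\), so \(e^*\Mcompact_T(Y)\simeq\Mcompact_k(Y)\). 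The base change of \([\widetilde f/T]\) is \(f\) itself, once \(\Res_\varphi Y\times_{BT,e}\Spec k\) is identified with \(Y\) via \(\theta\) (the trivial torsor pulled back along \(\varphi\) is trivialized by the same section). Compatibility of proper pullback (units of \(a^*\dashv a_*\) and \(a_!\simeq a_*\)) with base change then identifies the arrows. I expect this coherence check — that the isomorphism \(\theta\) used in the lacing matches the one arising from the cartesian square \eqref{eq:change-of-action-square} pulled back along \(e\) — to be the main obstacle. I would handle it by writing both as the 2-cell \(B\varphi\circ e\simeq e\) and invoking the pseudofunctoriality of exceptional base change.

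\emph{Step 3: the Betti case.} Over \(\mathbb C\), compose the laced \(e^*\) with Drew's Hodge realization followed by the forgetful functor to \(\Perf(\mathbb Q)\). This is an exact functor on the thick subcategory generated by \(e^*\cC\), with perfect values by hypothesis. Theorem~\ref{thm:categorical-realization-laced}, applied with \(R=H\mathbb Q\) to the composite laced functor \((\cC,\cM_{F_\varphi})\to(\Perf(\mathbb Q),\cM_{\operatorname{id}})\), gives \(\rho^{\mathrm B}_\varphi\). It sends the class of \(\Xi_f(Y)\) to the \(\mathbb Q\)-linear trace of the realization of \(f_c^*\) from Step~2. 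Finally, Betti realization of \(\Mcompact_k(Y)\) computes compactly supported cohomology (up to the duality convention, which preserves traces), functorially in proper pullback. The trace of an endomorphism of a perfect complex is the alternating sum of traces on cohomology, which yields the stated formula.
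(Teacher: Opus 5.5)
Your proposal is correct and follows essentially the same route as the paper: build the laced refinement \((e^*,\eta)\colon(\cC,\cM_{F_\varphi})\to(\cD,\cM_{\operatorname{id}})\), push laced classes forward by functoriality of coefficient \(\THH\), evaluate them via Lemma~\ref{lem:hns-ramzi-comparison} (resp.\ via Theorem~\ref{thm:categorical-realization-laced} after Drew's realization and the forgetful functor to \(\Perf(\mathbb Q)\)), and identify the resulting endomorphism with \(f_c^*\) through exceptional base change and naturality of the proper-pullback unit. Your Step~2 spells out the \(\theta\)-coherence that the paper states in one line. Your appeal to ``functoriality in proper pullback'' for the Betti step is what the paper justifies by Drew's adjointable exchange maps and his preservation of units and counits.
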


\begin{proof}
The displayed bimodule morphism gives the laced refinement of \(e^*\);
exceptional base change and naturality of the proper-pullback unit identify
the induced endomorphism with \(f_c^*\).  A trace-evaluation datum and
Lemma~\ref{lem:hns-ramzi-comparison} give the quadratic assertion;
\(\pi_0\operatorname{End}_{\SH(k)}(\one)=GW(k)\), with the bilinear-form
convention also in characteristic \(2\), by \cite[equation~(1.2) and the
discussion following it]{HoyoisTrace}.

Over \(\mathbb C\), Drew's realization is symmetric monoidal, has adjointable
exchange maps, and preserves the relevant units and counits
\cite[Theorem~1.2, Definition~5.1, Definition~5.8,
Proposition~7.14, and Example~8.12]{Drew}.  At the point it takes values in the
derived category of polarizable mixed Hodge structures
\cite[Corollary~4.13 and Definition~4.14]{Drew}.  On compact objects the
forgetful functor lands in \(\Perf(\mathbb Q)\), so the linear construction
above gives \eqref{eq:betti-realization-laced-thh} and the alternating
compactly supported Betti trace.
\end{proof}

\begin{lemma}
\label{lem:gm-power-trace}
For every \(q\geq1\),
\[
 \operatorname{Tr}_c\bigl([q]\mid\Gm;\mathbb Q\bigr)=q-1,
\]
and by K\"unneth the alternating compactly supported trace of the
\(q\)-power map on \(\Gm^{e}\) is \((q-1)^{e}\).
\end{lemma}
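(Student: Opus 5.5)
The statement is an elementary Lefschetz-type computation, and I would prove it directly from compactly supported cohomology of $\Gm^{\mathrm{an}}=\mathbb C^\times$ rather than through any of the motivic machinery above. The alternating trace in question is the one from Corollary~\ref{cor:motivic-laced-realizations}, so I only need the $q$-power map acting on $H_c^i(\mathbb C^\times,\mathbb Q)$ by proper pullback. The power map is finite, hence proper, so this pullback is defined.

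The first step is to identify the groups. Poincaré duality on the oriented real surface $\mathbb C^\times$ gives $H_c^1\cong\mathbb Q$ and $H_c^2\cong\mathbb Q$, with all other degrees zero. Alternatively, the localization sequence for $\mathbb C^\times\subset\mathbb C$ with closed complement the origin gives the same answer.

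The second step is to compute the action degree by degree. On $H_c^2$, proper pullback along a finite map of degree $q$ between connected oriented surfaces multiplies the fundamental class by $q$. I would check this by comparing with integration: $\int f^*\omega=\deg(f)\int\omega$.

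On $H_c^1$, I would use the connecting map $H^0(\{0\})\to H_c^1(\mathbb C^\times)$ in the localization sequence, which is an isomorphism. The power map extends to $\mathbb C$ and fixes the origin. Naturality of the localization sequence under the proper map $z\mapsto z^q$ on $\mathbb C$ then gives the identity on $H^0(\{0\})$, hence the identity on $H_c^1$.

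Alternatively, one can use duality with $H^1$, on which $[q]$ acts by $q$, together with the cup pairing into $H_c^2$ on which it acts by $q$. This again forces the identity on $H_c^1$. The alternating trace is therefore $q-1$.

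The last step is the product case. For $\Gm^e$, the Künneth isomorphism $H_c^\ast(X\times Y)\cong H_c^\ast(X)\otimes H_c^\ast(Y)$ is compatible with proper pullback along product maps. The $q$-power map on $\Gm^e$ is the $e$-fold product of the power map on $\Gm$, and the Koszul signs are absorbed into the alternating sum. Alternating traces are multiplicative on tensor products, which gives $(q-1)^e$.

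The only delicate point is the $H_c^1$ computation, specifically the compatibility of proper pullback with the localization boundary map. I would handle it by the naturality argument above. As a cross-check, the holomorphic Lefschetz count on the compactification $\mathbb P^1$ gives $(q+1)-2=q-1$: the $q+1$ fixed points of $z\mapsto z^q$ on $\mathbb P^1$, minus the two removed fixed points $0$ and $\infty$, each of which contributes $1$.
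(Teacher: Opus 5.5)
Your proposal is correct and follows the paper's route: the paper's proof just records that $H_c^1(\Gm,\mathbb Q)\simeq\mathbb Q$ and $H_c^2(\Gm,\mathbb Q)\simeq\mathbb Q(-1)$ are the only nonzero groups, with $[q]_c^*$ acting with traces $1$ and $q$, and then invokes K\"unneth; your degree argument on $H_c^2$ and your localization-sequence argument on $H_c^1$ supply the justification the paper leaves implicit. Your closing cross-check is really the topological Lefschetz fixed-point theorem together with additivity of compactly supported Lefschetz numbers, not a holomorphic Lefschetz formula, but it is not needed for the proof.
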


\begin{proof}
The only nonzero groups are
\[
 H_c^1(\Gm,\mathbb Q)\simeq\mathbb Q,
 \qquad H_c^2(\Gm,\mathbb Q)\simeq\mathbb Q(-1),
\]
and \([q]_c^*\) has traces \(1\) and \(q\), respectively.
\end{proof}

\begin{proposition}
\label{prop:laced-trace-matrix}
Let \(F\colon\cC\to\cC\) be exact, and let
\[
 x=\bigoplus_{i=1}^q x_i,
 \qquad
 \alpha\colon Fx\longrightarrow x
\]
be a laced object.  Write
\[
 \alpha_{ij}=p_i\alpha F(\iota_j)\colon Fx_j\longrightarrow x_i.
\]
Then
\begin{equation}\label{eq:graph-coefficient-matrix-trace}
 \operatorname{tr}^{\operatorname{lace}}(x,\alpha)
 =
 \sum_{i=1}^q
 \operatorname{tr}^{\operatorname{lace}}(x_i,\alpha_{ii})
 \quad
 \text{in }\pi_0\THH(\cC,\cM_F).
\end{equation}
In particular, a purely off-diagonal lacing has zero laced trace.
\end{proposition}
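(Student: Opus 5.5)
The plan is to compute on $\pi_0$ in the coefficient cyclic-bar model, using Proposition~\ref{prop:laced-trace-on-objects}: $\operatorname{tr}^{\operatorname{lace}}(x,\alpha)$ is the class of $\alpha\in\pi_0\Map(Fx,x)=\pi_0\cM_F(x,x)$ in simplicial degree zero. Since $x=\bigoplus_i x_i$ and $F$ is exact, $Fx\simeq\bigoplus_j Fx_j$ with structure maps $F(\iota_j)$, $F(p_j)$. Hence in $\pi_0\cM_F(x,x)$ we have the matrix decomposition
\[
 \alpha=\sum_{i,j}\iota_i\,\alpha_{ij}\,F(p_j),
\]
obtained from $\sum_i\iota_ip_i=\mathrm{id}_x$ and $\sum_jF(\iota_j)F(p_j)=\mathrm{id}_{Fx}$. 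Here $\pi_0$ of the mapping spectrum is an abelian group and composition is bilinear. The zero-simplex map $\pi_0\cM_F(x,x)\to\pi_0\THH(\cC,\cM_F)$ is additive, because it is induced by a map of spectra. So the trace equals $\sum_{i,j}[\iota_i\alpha_{ij}F(p_j)]$.

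The key step is the cyclic (trace) relation in $\pi_0$ of the coend. For $u\colon y\to z$ in $\cC$ and $\beta\in\pi_0\cM_F(z,y)$, the two images of $\beta\otimes u$ under the coend structure maps give $[u_*\beta]$ and $[u^*\beta]$ in different zero-simplex summands, and these classes agree. This is the $d_0=d_1$ relation from the one-simplices of the cyclic bar. For the graph bimodule, $u^*\beta=\beta\circ F(u)$ and $u_*\beta=u\circ\beta$. Apply this with $u=\iota_i\colon x_i\to x$ and $\beta=\alpha_{ij}F(p_j)\in\pi_0\Map(Fx,x_i)$. This gives
\[
[\iota_i\alpha_{ij}F(p_j)]_x=[\alpha_{ij}F(p_j)F(\iota_i)]_{x_i}=[\alpha_{ij}F(p_j\iota_i)]_{x_i},
\]
which is $[\alpha_{ii}]_{x_i}$ if $i=j$ and $0$ if $i\ne j$, since $p_j\iota_i=\delta_{ij}$ and $F$ preserves zero maps.

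Summing over $i$ and $j$ yields \eqref{eq:graph-coefficient-matrix-trace}. Proposition~\ref{prop:laced-trace-on-objects} identifies $[\alpha_{ii}]_{x_i}$ with $\operatorname{tr}^{\operatorname{lace}}(x_i,\alpha_{ii})$. The final sentence follows by taking all $\alpha_{ii}=0$. The only point needing care is the cyclic relation in $\pi_0$ of the coend. I would obtain it directly from the coend description $\int^{x}\cM(x,x)$: the colimit over the twisted arrow category identifies the two images of each $\beta$ along any arrow $u$, and on $\pi_0$ this is exactly the stated equality.

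Alternatively, additivity of the HNS laced trace could be used. The split filtration would reduce to a triangular matrix, but the off-diagonal blocks do not form a filtration in general. So the cyclic-relation computation is the cleaner route.
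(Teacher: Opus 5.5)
Your proposal is correct and follows the paper's proof exactly. Like the paper, you represent the trace by the zero-simplex class of $\alpha$, expand $\alpha=\sum_{i,j}\iota_i\alpha_{ij}F(p_j)$, and use dinaturality of the coend along $\iota_i\colon x_i\to x$ to move each summand to the class $\alpha_{ij}F(p_j\iota_i)$ at $x_i$, which is $\alpha_{ii}$ for $i=j$ and $0$ otherwise.
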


\begin{proof}
By Proposition~\ref{prop:laced-trace-on-objects}, the trace is
represented by \(\alpha\) in simplicial degree zero.  Write
\[
 \alpha=\sum_{i,j}\iota_i\alpha_{ij}F(p_j).
\]
In the coend
\[
 \THH(\cC,\cM_F)
 \simeq
 \int^{y\in\cC}\Map(Fy,y),
\]
dinaturality for \(\iota_i\colon x_i\to x\) carries the \((i,j)\)-summand
to the class represented at \(x_i\) by
\[
 \alpha_{ij}F(p_j\iota_i).
\]
This vanishes when \(i\neq j\) and equals \(\alpha_{ii}\) when \(i=j\).
\end{proof}

\begin{example}
\label{ex:toric-exchanged-strata-second-ghost}
Let \(k=\mathbb C\), let \(T=\Gm^2\), and let
\[
 A=\begin{pmatrix}0&2\\2&0\end{pmatrix}\colon\mathbb Z^2\longrightarrow
 \mathbb Z^2,
 \qquad
 \psi(t_1,t_2)=(t_2^2,t_1^2).
\]
Consider the two \(A\)-stable fans
\[
 \Sigma_\circ=\{0\},
 \qquad
 \Sigma_\times=
 \{0,\mathbb R_{\geq0}e_1,\mathbb R_{\geq0}e_2\}.
\]
Their toric varieties are
\(U=X_{\Sigma_\circ}=\Gm^2\) and
\(X=X_{\Sigma_\times}=\mathbb A^2\setminus\{0\}\).  The same finite toric
endomorphism
\[
 f(x,y)=(y^2,x^2)
\]
acts on both and is \(\psi\)-semilinear.  Put \(F=(B\psi)^*\), and choose a
small common \(F\)-stable thick subcategory \(\cC\subset\SH(BT)\) containing
the compactly supported motives of \(U\), \(X\), their orbit strata, and all
objects used below, such that the objects of \(e^*\cC\) are dualizable.  As
above, \(\Xi_f(Y)\in\Lace(\cC,\cM_F)\) denotes the
semilinear compactly supported lacing of \(f|_Y\).

For every truncation set \(S\) containing \(2\), define the intrinsic classes
\[
 \mathcal Z^{\mathrm{res,int}}_{Y,f,S}
 =\pi_0\operatorname{tr}^{\mathrm{res,int}}_S[\Xi_f(Y)]
 \in\pi_0R_S\bigl(\mathbf E(\cC,F)\bigr)
 \qquad(Y=U,X).
\]
Their first ghosts agree:
\begin{equation}\label{eq:toric-exchanged-strata-first-ghost}
 \pi_0\overline g_{1,S}
 \bigl(\mathcal Z^{\mathrm{res,int}}_{X,f,S}\bigr)
 =
 \pi_0\overline g_{1,S}
 \bigl(\mathcal Z^{\mathrm{res,int}}_{U,f,S}\bigr)
 \quad\text{in }\pi_0\THH(\cC,\cM_F).
\end{equation}
Their second ghosts are unequal.  Let
\[
 \rho_2\colon\pi_0\THH(\cC,\cM_{F^2})\longrightarrow\mathbb Q
\]
be the Betti linear realization
\eqref{eq:betti-realization-laced-thh} of
Corollary~\ref{cor:motivic-laced-realizations}, with \(F\) replaced by
\(F^2\).  Then
\begin{equation}\label{eq:toric-exchanged-strata-second-ghost}
 \rho_2\!\left(
  \pi_0\overline g_{2,S}
  \bigl(
   \mathcal Z^{\mathrm{res,int}}_{X,f,S}
   -\mathcal Z^{\mathrm{res,int}}_{U,f,S}
  \bigr)
 \right)=2(4-1)=6.
\end{equation}
The two restriction classes are therefore distinct, although their first
ghosts agree.  The nonzero value in
\eqref{eq:toric-exchanged-strata-second-ghost} comes from the two toric orbits
exchanged by \(f\).  Since scalar power maps fix every cone, this
orbit-exchange contribution is absent in the scalar case.

Indeed, here \(Z=X\setminus U\simeq\Gm\amalg\Gm\), and \(f\) exchanges its
components.  The relations \(x^2=f^*y\), \(y^2=f^*x\) prove finiteness, while
\(f^*(xy)=(xy)^2\) shows that the inverse image of \(Z\) is its nilpotent
thickening.  Proposition~\ref{prop:nil-semi-localization} therefore
gives \(\Xi_f(U)\to\Xi_f(X)\to\Xi_f(Z)\).

On \(\Mcompact(Z)=x_1\oplus x_2\), the lacing has only the off-diagonal
blocks \(Fx_2\to x_1\) and \(Fx_1\to x_2\).  Its laced trace is zero by
Proposition~\ref{prop:laced-trace-matrix}, so additivity and the
intrinsic ghost formula give
\eqref{eq:toric-exchanged-strata-first-ghost}.  But
\(f^2(x,y)=(x^4,y^4)\): after \(P_2\), each component has return map \([4]\).
Thus the realized second-ghost difference is
\(2\operatorname{Tr}_c([4]\mid\Gm)=2(4-1)=6\) by
Corollary~\ref{cor:motivic-laced-realizations} and
Lemma~\ref{lem:gm-power-trace}.  This proves
\eqref{eq:toric-exchanged-strata-second-ghost}; equality of the
restriction classes would force equality of this ghost.
\end{example}

\section{Reconstruction from higher ghosts}
\label{sec:finite-reconstruction}

The fixed-cone formula expresses the higher ghosts in terms of periodic
strata.
At the trivial subgroup, these ghosts determine the number of orbits in each
dimension.  With coherent fixed-point realization data, nontrivial fixed
subgroups detect embedded stabilizers and the cycle type on the torus fixed
locus.

\subsection{Geometric fixed-point realizations}

Work over \(\mathbb C\), let \(T=\Gm^d\), \(N=X_*(T)\), \(d\geq1\), and
\(r\geq1\).  We compose geometric fixed points with trace evaluation to
obtain numerical invariants of the stabilizer terms.  Put
\[
 \varphi=[r]\colon T\longrightarrow T,
 \qquad F_\varphi=(B\varphi)^*,
 \qquad \SH_c(\mathbb C)=\SH(\mathbb C)^\omega,
\]
and, for any finite collection of fans, let \(\cC\subset\SH(BT)\) be the
common small \(F_\varphi\)-stable thick subcategory generated by their orbit
and filtration-stage motives and iterates.

Let \(i\colon H\hookrightarrow T\) be a finite constant subgroup such that
\(a=\varphi|_H\) is an automorphism.  This includes a cyclic subgroup
\(C_\ell\) for a prime \(\ell\nmid r\), after choosing a primitive
\(\ell\)-th root of unity, and the subgroup \(T[\ell]\simeq(C_\ell)^d\) used
in the cycle-index theorem.  The canonical comparison of
\cite[Remark~4.9]{KhanRavi} identifies \(\SH(BH)\) with the genuine
\(H\)-equivariant motivic category of Hoyois.  We therefore define
\begin{equation}\label{eq:fixed-point-realization-definition}
  \cF_H\colon
  \SH(BT)\xrightarrow{(Bi)^*}\SH(BH)
  \xrightarrow{\ \simeq\ }\SH^H(\mathbb C)
  \xrightarrow{\Phi^H}\SH(\mathbb C).
\end{equation}
Here \(\Phi^H\) is the geometric fixed-point functor of
\cite[Definition~4.9 and Proposition~4.10]{GepnerHeller}.  It is an exact
symmetric monoidal left adjoint and satisfies
\(
 \Phi^H\Sigma_H^\infty Y_+\simeq\Sigma^\infty(Y^H)_+
\)
for smooth \(H\)-schemes \(Y\).

The orbit filtration and purity reduce the generators of \(\cC\) to compact
motives of smooth torus orbits.  Pullback along stacks preserves compactness
\cite[Theorem~4.10(iii)]{KhanRavi}.  The right adjoint of \(\Phi^H\) is
\(\widetilde E\mathcal F[H]\otimes\pi^*(-)\)
\cite[Proposition~4.10]{GepnerHeller}.  Here \(\pi^*\) preserves colimits by
the fixed-point adjunction constructed in
\cite[Section~4.2]{GepnerHeller}, and tensoring with
\(\widetilde E\mathcal F[H]\) preserves colimits.  The right adjoint therefore
preserves filtered colimits.  Since the source and target are compactly
generated stable \(\infty\)-categories, the adjunction criterion for compact
objects shows that \(\Phi^H\) preserves compact objects.  Consequently
\(\cF_H\) preserves compact objects and \(\cF_H\cC\) consists of compact
objects.

\begin{lemma}
\label{lem:fixed-point-orbit-motive}
For every torus orbit \(O_\sigma=T/T_\sigma\), one has
\begin{equation}\label{eq:fixed-point-orbit-motive}
 \cF_H\Mcompact_T(O_\sigma)\simeq
 \begin{cases}
  \Mcompact_{\mathbb C}(O_\sigma),&H\subset T_\sigma,\\
  0,&H\not\subset T_\sigma.
 \end{cases}
\end{equation}
\end{lemma}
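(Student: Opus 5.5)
We will compute \(\cF_H\Mcompact_T(O_\sigma)\) by identifying the stack and then applying base change. Put \(K=T_\sigma\). The quotient stack is \([O_\sigma/T]\simeq BK\), so \(\Mcompact_T(O_\sigma)=E_T(K)=(BK\to BT)_!\one_{BK}\). The first step is to pull back along \(Bi\colon BH\to BT\). Exceptional base change \cite[Theorem~7.1(iii)]{KhanRavi} gives
\[
 (Bi)^*E_T(K)\simeq (P\to BH)_!\one_P,
 \qquad
 P=BH\times_{BT}BK\simeq [O_\sigma/H],
\]
where \(H\) acts on \(O_\sigma=T/K\) through translation by \(i\). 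Under the comparison of \cite[Remark~4.9]{KhanRavi}, this object is the genuine \(H\)-equivariant compactly supported motive of the smooth \(H\)-scheme \(O_\sigma\). In particular it lies in \(\SH^H(\mathbb C)\) as \(\pi_!\one\) for the smooth affine \(H\)-scheme \(O_\sigma\to\Spec\mathbb C\).

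The second step computes the geometric fixed points. Since \(O_\sigma\) is smooth of dimension \(e=\dim O_\sigma\), purity expresses the compactly supported motive as a Thom twist of the dual of \(\Sigma^\infty_H O_{\sigma,+}\). Concretely, \(\pi_!\one\simeq \pi_\sharp \mathrm{Th}(-T_\pi)\), and the tangent bundle is trivial with trivial \(H\)-action on fibres because \(T\) is commutative. So \(\pi_!\one\simeq\Sigma^{-2e,-e}\Sigma^\infty_H O_{\sigma,+}\), where the representation sphere is trivial. The functor \(\Phi^H\) is symmetric monoidal and exact, and it satisfies \(\Phi^H\Sigma^\infty_HY_+\simeq\Sigma^\infty(Y^H)_+\) \cite{GepnerHeller}. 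Hence
\[
 \cF_H\Mcompact_T(O_\sigma)\simeq\Sigma^{-2e,-e}\Sigma^\infty(O_\sigma^H)_+ .
\]

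The third step is the case split on the fixed locus. \(H\) acts on the torsor \(T/K\) by translation through \(H\to T/K\). The fixed locus is therefore all of \(O_\sigma\) when \(H\subset K\), and empty otherwise. In the empty case we get \(\Sigma^\infty\varnothing_+=0\). In the case \(H\subset K\) the action is trivial, and the same purity identification read over \(\mathbb C\) gives \(\Sigma^{-2e,-e}\Sigma^\infty O_{\sigma,+}\simeq\Mcompact_{\mathbb C}(O_\sigma)\), because \(O_\sigma\simeq\Gm^e\) is smooth with trivialized tangent bundle.

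I expect the main obstacle to be the compatibility in the first two steps. We must check that the stacky exceptional pushforward of Khan–Ravi, transported through \cite[Remark~4.9]{KhanRavi}, agrees with the genuine equivariant \(\pi_!\) in Hoyois's category, so that purity can be applied equivariantly. We also need the twist to be a trivial representation sphere, so that \(\Phi^H\) commutes with it. If this direct route is awkward, I would argue instead with the localization cofiber sequence: embed \(O_\sigma\cong\Gm^e\) into \(\mathbb A^e\), with \(H\) acting linearly by characters, and induct on \(e\). For a single \(\Gm\) with a character \(\chi\) of \(H\), the compactly supported motive is the cofiber of \(\one\to\Sigma^\infty_H\mathbb A^1_\chi\) shifted. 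Geometric fixed points of a nontrivial representation sphere \(S^{\chi}\) equal \(S^0\), which kills the cofiber, while a trivial \(\chi\) returns \(\Mcompact(\Gm)\). Tensoring over the \(e\) factors then yields both cases.
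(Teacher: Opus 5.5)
Your argument is correct and follows the paper's route: exceptional base change to the \(H\)-equivariant compactly supported motive of \(O_\sigma\), then equivariant purity, then \(\Phi^H\Sigma^\infty_HY_+\simeq\Sigma^\infty(Y^H)_+\), then the case split on whether \(O_\sigma^H\) is all of \(O_\sigma\) or empty. The one difference is how the Thom twist is handled. The paper proves a general identity \(\Phi^H\operatorname{Th}_Y(V)\simeq\operatorname{Th}_{Y^H}(V^H)\) using the characteristic-zero averaging splitting, whereas you note that the tangent bundle of \(T/T_\sigma\) is \(H\)-equivariantly trivial with trivial fibre action under translation, so the twist is a trivial Tate twist that commutes with the symmetric monoidal functor \(\Phi^H\); this is a valid and somewhat more economical shortcut for this lemma.
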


\begin{proof}
Let \(u\colon O_\sigma\to\operatorname{Spec}(\mathbb C)\) be the structure
morphism.  Exceptional base change and the smooth pullback equivalence
\(\Sigma^{-\Omega_u}u^!\simeq u^*\)
\cite[Theorem~6.18(2)]{HoyoisSix} (see also the published corrigendum
\cite{HoyoisSixCorrigendum}) identify the pullback of the orbit motive with
the compactly supported equivariant Thom spectrum on \(O_\sigma\).
Geometric fixed points commute with the Thom twists that occur here.  Indeed,
let \(V\to Y\) be an \(H\)-equivariant vector bundle on a smooth
\(H\)-scheme.  Its equivariant Thom spectrum is the cofiber of
\[
 \Sigma_H^\infty(V\smallsetminus Y)_+
 \longrightarrow \Sigma_H^\infty V_+.
\]
Both schemes in this cofiber are smooth, so exactness of \(\Phi^H\) and the
suspension-spectrum formula of \cite[Proposition~4.10]{GepnerHeller}
identify its geometric fixed points with
\[
 \operatorname{cofib}\bigl(
   \Sigma^\infty\bigl((V\smallsetminus Y)^H\bigr)_+
   \longrightarrow \Sigma^\infty(V^H)_+
 \bigr).
\]
Since \(H\) is finite and the base field has characteristic zero, the
averaging idempotent splits \(V|_{Y^H}\) into its invariant subbundle and a
complement.  Hence the fixed locus of the total space is the total space of
\(V^H\to Y^H\), and
\((V\smallsetminus Y)^H=V^H\smallsetminus Y^H\).  The last cofiber is
precisely the Thom spectrum of \(V^H\to Y^H\); in other words,
\[
 \Phi^H\Sigma_H^\infty\operatorname{Th}_Y(V)
 \simeq
 \Sigma^\infty\operatorname{Th}_{Y^H}(V^H).
\]
The equivalence is compatible with direct sums.  Since \(\Phi^H\) is
symmetric monoidal, it extends to virtual Thom twists.

Apply this observation to the purity bundles above.  If
\(H\subset T_\sigma\), then every element of \(H\) acts as the identity on
\(O_\sigma=T/T_\sigma\).  Its differential is therefore the identity, so
\(H\) acts trivially on the tangent and cotangent bundles of \(O_\sigma\).
Their fixed subbundles are the full bundles, and the Thom fixed-point formula
recovers \(\Mcompact_{\mathbb C}(O_\sigma)\) with no residual representation
sphere.  If \(H\not\subset T_\sigma\), translation on \(T/T_\sigma\) has no
\(H\)-fixed point.  The fixed base of every Thom space in the compactly
supported orbit motive is then empty, so its geometric fixed points vanish.
\end{proof}

\begin{lemma}
\label{lem:geometric-fixed-automorphism-invariance}
Let \(H\) be a finite constant group over \(\mathbb C\), let
\(a\in\operatorname{Aut}(H)\), and write \(a^\star\) for restriction of a
genuine \(H\)-spectrum along \(a\).  Change of groups gives a natural
symmetric monoidal equivalence
\begin{equation}\label{eq:geometric-fixed-automorphism-invariance}
 \epsilon_a\colon\Phi^H a^\star\xrightarrow{\ \simeq\ }\Phi^H,
\end{equation}
coherent under composition of automorphisms.
\end{lemma}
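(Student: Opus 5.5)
The plan is to build \(\epsilon_a\) from the structure of genuine equivariant motivic spectra as a functor of the group, using that an automorphism of \(H\) is an isomorphism of groups and so acts compatibly on all the data defining \(\Phi^H\).

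First, since \(a\colon H\to H\) is an isomorphism of constant group schemes, restriction \(a^\star\colon\SH^H(\mathbb C)\to\SH^H(\mathbb C)\) is a symmetric monoidal equivalence with inverse \((a^{-1})^\star\). The assignment \(a\mapsto a^\star\) is a strict functor from \(\operatorname{Aut}(H)^{\mathrm{op}}\) on the level of \(H\)-schemes, since it only changes the action, and hence coherent after stabilization. The reason is that the stabilization of Gepner--Heller/Hoyois inverts representation spheres, and \(a^\star\) permutes them: \(a^\star\) of a representation is again a representation.

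Second, I will recall from \cite[Definition~4.9]{GepnerHeller} that \(\Phi^H\) is the composite of \(\widetilde E\mathcal F[H]\otimes(-)\), the categorical fixed points, and the identification of the resulting localization with \(\SH(\mathbb C)\). The family \(\mathcal F[H]\) of proper subgroups is stable under every automorphism, so \(a^\star E\mathcal F[H]\simeq E\mathcal F[H]\) canonically: both are characterized by the universal property of having fixed points contractible exactly for subgroups in the family, and \(a\) permutes that family. Hence \(a^\star\) commutes with the localization. Categorical \(H\)-fixed points satisfy \((a^\star X)^H= X^H\), since \(a\) maps \(H\) onto itself and the fixed-point functor is right adjoint to inflation from the trivial group, which is invariant because \(a^\star\circ\mathrm{infl}=\mathrm{infl}\) on the nose. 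Composing these gives \(\epsilon_a\).

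Third, for symmetric monoidality I will check that \(\epsilon_a\) is compatible with the monoidal structures on the three constituent steps. This is automatic because \(a^\star\) is strong monoidal and the isomorphism \(a^\star\circ\mathrm{infl}=\mathrm{infl}\) is monoidal. Alternatively, I will verify the identification on generators \(\Sigma^\infty_H Y_+\), where \(\Phi^H\) gives \(\Sigma^\infty(Y^H)_+\) and \((a^\star Y)^H=Y^H\) literally, so the equivalence extends by colimits.

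I expect the main obstacle to be coherence under composition, i.e. the cocycle condition \(\epsilon_{ab}=\epsilon_b\circ\epsilon_a a^\star\) with all higher coherences. I would handle this by packaging everything as a functor from \(B\operatorname{Aut}(H)\) into symmetric monoidal \(\infty\)-categories over the trivial-group case. The unstable construction \(Y\mapsto Y^H\) is strictly \(\operatorname{Aut}(H)\)-invariant on smooth \(H\)-schemes, giving a strict natural transformation. Stabilization and the universal property of the motivic localization are functorial in such symmetric monoidal data, so they transport this strict invariance to coherent invariance of \(\Phi^H\).
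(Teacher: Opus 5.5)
Your proposal is correct and follows essentially the paper's route: functoriality of \(a\mapsto a^\star\) on the groupoid of finite groups and isomorphisms, the formula \(\Phi^H(X)\simeq(X\otimes\widetilde{\mathrm E}\mathcal F[H])^H\), canonical invariance of \(\widetilde{\mathrm E}\mathcal F[H]\) under \(a^\star\) by its universal property (since \(a\) permutes proper subgroups), and substitution. Your inflation-adjunction argument for \((a^\star X)^H\simeq X^H\) spells out a step the paper leaves implicit in the word ``substitution''. One small correction: in the universal property of \(E\mathcal F[H]\), the fixed points for subgroups outside the family should be empty, not merely non-contractible.
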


\begin{proof}
The assignment sending a finite constant group to its genuine motivic stable
category extends to the groupoid of finite groups and isomorphisms: an
isomorphism \(a\) induces the symmetric monoidal restriction equivalence
\(a^\star\), functorially in \(a\).  By
\cite[Definition~4.9 and Proposition~4.10]{GepnerHeller}, the functor
\(\Phi^H\) is computed by
\[
 \Phi^H(X)\simeq
 \bigl(X\otimes\widetilde{\mathrm E}\mathcal P\bigr)^H,
\]
where \(\mathcal P\) is the family of proper subgroups of \(H\) and
\(\widetilde{\mathrm E}\mathcal P\) is the unreduced suspension of the
associated universal motivic \(H\)-space.  An automorphism \(a\) permutes
the proper subgroups of \(H\), hence preserves the family \(\mathcal P\);
consequently \(a^\star\widetilde{\mathrm E}\mathcal P\simeq
\widetilde{\mathrm E}\mathcal P\) canonically, by the universal property
defining \(\widetilde{\mathrm E}\mathcal P\), and coherently in \(a\).
Substitution in the fixed-point formula proves
\eqref{eq:geometric-fixed-automorphism-invariance} and its composition
coherence.  For the topological analogue of this change-of-groups formalism,
compare
\cite[Chapter~V]{MandellMay} and
\cite[Theorem~3.12]{MalkiewichTHHDX}.
\end{proof}

\begin{definition}
\label{def:arrow-level-fixed-point-compatibility}
Let \(\psi\colon T\to T\) be a torus isogeny, put
\(F_\psi=(B\psi)^*\), and let \(H\subset T\) be a finite constant subgroup
such that \(a=\psi|_H\) is an automorphism.  Let \(\mathcal P\) be a
collection of pairs \((m,K)\), where \(m\geq1\), \(K\subset T\) is a subtorus,
\(H\subset K\), and \(\psi^m(K)=K\).  Write
\[
 \gamma_H\colon\SH(BH)\xrightarrow{\ \simeq\ }\SH^H(\mathbb C)
\]
for the comparison of \cite[Remark~4.9]{KhanRavi}.  A coherent fixed-point
realization datum for \((\psi,H)\) on \(\mathcal P\) consists of symmetric
monoidal comparisons
\begin{equation}\label{eq:motivic-change-of-groups-datum}
 \chi_m\colon
 \gamma_H(Ba^m)^*\xrightarrow{\ \simeq\ }(a^m)^\star\gamma_H,
 \qquad m\geq1,
\end{equation}
compatible with multiplication of powers.  Together with
Lemma~\ref{lem:geometric-fixed-automorphism-invariance}, these
comparisons specify coherent equivalences
\[
 \theta_{H,m}\colon\cF_HF_\psi^m\xrightarrow{\ \simeq\ }\cF_H.
\]

For every \((m,K)\in\mathcal P\), the datum also includes the compatibility of
\(\theta_{H,m}\) with equivariant purity, exceptional base change, and the
proper-pullback adjunction unit, expressed by the commutative square
\begin{equation}\label{eq:arrow-level-fixed-point-compatibility}
\begin{tikzcd}[column sep=large]
 \cF_HF_\psi^mE_T(K) \arrow[r,"{\cF_H(\lambda^\psi_{K,m})}"]
   \arrow[d,"\theta_{H,m}"'] &
 \cF_HE_T(K) \arrow[d,"\simeq"]\\
 \Mcompact_{\mathbb C}(T/K)
   \arrow[r,"{(\bar\psi_{K,m})_c^*}"'] &
 \Mcompact_{\mathbb C}(T/K),
\end{tikzcd}
\end{equation}
where \(\bar\psi_{K,m}\colon T/K\to T/K\) is the return map and
\(\lambda^\psi_{K,m}\) is proper pullback along
\(BK\to B\psi^{-m}(K)\).  These squares are required to be compatible with
composition of return maps and with the compositors of the \(\theta_{H,m}\)
whenever all pairs in the comparison belong to \(\mathcal P\).  When
\(\mathcal P\) is the collection of all pairs satisfying the conditions
above, we omit the qualifier \emph{on \(\mathcal P\)}.

For \(H\ne1\), the compatibility
\eqref{eq:arrow-level-fixed-point-compatibility} is part of the realization
datum.  For \(H=1\), it follows from exceptional base change and naturality of
the adjunction unit.
\end{definition}

\begin{lemma}
\label{lem:fixed-points-orbit-lacing}
Assume that \(H=1\), or choose a coherent fixed-point realization datum for
\((\varphi,H)\) on the collection of pairs \((m,K)\) under consideration.
The specified
equivalence
\[
 \theta_{H,m}\colon\cF_HF_\varphi^m\simeq\cF_H
\]
makes \(\cF_H\) a laced functor.  Under this functor, every stabilizer lacing
satisfies
\begin{equation}\label{eq:fixed-point-orbit-lacing}
 \cF_H\bigl(\mathcal E_{\varphi^m}(K)\bigr)
 \simeq
 \begin{cases}
  \bigl(\Mcompact_{\mathbb C}(T/K),[r^m]_c^*\bigr),&H\subset K,\\
  (0,0),&H\not\subset K.
 \end{cases}
\end{equation}
\end{lemma}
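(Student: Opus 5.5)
The plan has two parts. First I will show that \(\theta_{H,m}\) makes \(\cF_H\) a laced functor. Then I will compute \(\cF_H\) on the stabilizer lacings one case at a time, using Lemma~\ref{lem:fixed-point-orbit-motive} and the square \eqref{eq:arrow-level-fixed-point-compatibility}.

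\textbf{Laced functor.} This follows the pattern of Corollary~\ref{cor:motivic-laced-realizations}. Send a lacing \(\alpha\colon F_\varphi^m x\to x\) to the composite
\[
\cF_H x\xrightarrow{\theta_{H,m}^{-1}}\cF_HF_\varphi^m x\xrightarrow{\cF_H\alpha}\cF_H x.
\]
This is the bimodule map \(\Map(F_\varphi^m x,y)\to\Map(\cF_Hx,\cF_Hy)\) given by precomposition with \(\theta_{H,m}^{-1}\). It defines a morphism of laced categories to \((\SH_c(\mathbb C),\cM_{\operatorname{id}})\). The construction is exact, because \(\cF_H\) is exact and cofiber sequences of laced objects are computed on underlying objects.

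For \(H=1\) I take \(\theta_{1,m}\) to be the equivalence induced by \(B\varphi^m\circ e\simeq e\). For \(H\neq1\) I take it from the datum, namely \(\chi_m\) combined with Lemma~\ref{lem:geometric-fixed-automorphism-invariance}. The coherence under composition, \(\theta_{H,mn}\simeq\theta_{H,n}\circ\theta_{H,m}F_\varphi^n\), is exactly the multiplicativity built into the datum. This is what makes the laced functors compatible with \(P_m\), although only the fixed \(m\) is needed here.

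\textbf{Underlying objects.} By \eqref{eq:base-change-BH}, \(E_T(K)\simeq\Mcompact_T(T/K)\), since \([(T/K)/T]\simeq BK\). Lemma~\ref{lem:fixed-point-orbit-motive} then gives \(\cF_HE_T(K)\simeq\Mcompact_{\mathbb C}(T/K)\) when \(H\subset K\), and \(0\) otherwise. In the case \(H\not\subset K\) the underlying object is zero, so the lacing is the zero map and the image is \((0,0)\). Nothing further needs to be checked there.

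\textbf{Main case \(H\subset K\).} The lacing of \(\mathcal E_{\varphi^m}(K)\) is \(\lambda^\varphi_{K,m}\), proper pullback along \(BK\to B\varphi^{-m}(K)\). The pair \((m,K)\) is admissible: \(K\) is a subtorus containing \(H\), and \(\varphi^m(K)=K\) because \([r^m]\) maps a subtorus onto itself. Square \eqref{eq:arrow-level-fixed-point-compatibility} then identifies \(\cF_H(\lambda)\circ\theta_{H,m}^{-1}\) with \((\bar\varphi_{K,m})_c^*\), transported along the identification \(\cF_HE_T(K)\simeq\Mcompact_{\mathbb C}(T/K)\). Finally, the return map \(\bar\varphi_{K,m}\) on \(T/K\) is induced by \(\varphi^m=[r^m]\), so it is the \(r^m\)-power map of the quotient torus. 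This gives \eqref{eq:fixed-point-orbit-lacing}.

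\textbf{Main obstacle.} The delicate point is \(H=1\), where the square is not part of a datum and has to be established directly. There I would use exceptional base change along \(e\) together with naturality of the unit \(\one\to a_*a^*\one\). The pullback \(e^*E_T(K)\) is \(\Mcompact_{\mathbb C}(T/K)\) via the cartesian square of \(BK\to BT\) along \(e\). Under this identification, the base change of \(BK\to B\varphi^{-m}(K)\) is the map \(T/K\to T/\varphi^{-m}(K)\simeq T/K\), which is the return map. The last identification is the same as the one implicit in \(\theta_{1,m}\).
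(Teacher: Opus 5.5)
Your proposal is correct and follows the paper's proof. The underlying objects come from Lemma~\ref{lem:fixed-point-orbit-motive}, the case \(H\not\subset K\) is forced because the underlying object is zero, the case \(H\subset K\), \(H\neq1\) is exactly the datum square \eqref{eq:arrow-level-fixed-point-compatibility} with return map \([r^m]\), and \(H=1\) is handled directly by exceptional base change along \(e\) and naturality of the proper-pullback unit. One small slip: the identification \(E_T(K)\simeq\Mcompact_T(T/K)\) comes from \([(T/K)/T]\simeq BK\) and the definition of \(\Mcompact_T\), not from \eqref{eq:base-change-BH}, which concerns \(F_\varphi E_T(H)\).
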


\begin{proof}
The underlying-object assertion is
\eqref{eq:fixed-point-orbit-motive}.  If \(H\not\subset K\), then
\((T/K)^H=\varnothing\), so geometric fixed points kill the underlying Thom
spectrum and its lacing is the zero endomorphism.  If \(H\subset K\) and
\(H\ne1\), the arrow assertion is exactly
\eqref{eq:arrow-level-fixed-point-compatibility} for
\(\psi=\varphi\), whose return map is \([r^m]\colon T/K\to T/K\).  For
\(H=1\), exceptional base change along the trivial-torsor point and
naturality of the proper-pullback unit identify the resulting arrow directly
with \([r^m]_c^*\) on \(T/K\).
\end{proof}

\subsection{Scalar \texorpdfstring{\(p\)}{p}-typical reconstruction}

For scalar power maps we use the Betti linear realization
\eqref{eq:betti-realization-laced-thh} of
Corollary~\ref{cor:motivic-laced-realizations}.  The relevant traces
are computed in Lemma~\ref{lem:gm-power-trace}.

\begin{theorem}
\label{thm:finite-ptypical-stabilizer-reconstruction}
Let \(k=\mathbb C\), let \(T=\Gm^d\) with \(d\geq1\) and cocharacter
lattice \(N=X_*(T)\), let \(r>1\), and fix a prime \(p\).  Put
\[
 \varphi=[r]\colon T\longrightarrow T,
 \qquad F_\varphi=(B\varphi)^*.
\]
Put
\[
 S_{p,d}=\langle p^d\rangle=\{1,p,\ldots,p^d\}.
\]
Suppose that \(\mathfrak F\) is a finite collection of finite fans in the
common vector space \(N_{\mathbb R}\) and that
\(\cC\subset\SH(BT)\) is the small common thick \(F_\varphi\)-stable
subcategory generated by their orbit and filtration-stage motives and all
of their \(F_\varphi\)-iterates.  Let \(H\hookrightarrow T\) be
either the trivial subgroup or a cyclic constant subgroup of prime order
\(\ell\nmid r\).  If \(H\neq1\), choose a coherent fixed-point realization
datum for \((\varphi,H)\) on the collection of pairs \((m,K)\) with
\(m\geq1\) and \(K\) a stabilizer occurring in \(\mathfrak F\) such that
\(H\subset K\).
Then, for every
\(m\geq1\), there is a numerical
fixed-point trace
\begin{equation}\label{eq:numerical-fixed-point-coefficient-trace}
 \lambda_{H,m}\colon
 \pi_0\THH(\cC,\cM_{F_\varphi^m})\longrightarrow\mathbb Q
\end{equation}
with
\begin{equation}\label{eq:numerical-fixed-point-orbit-value}
 \lambda_{H,m}\!\left(
  \operatorname{tr}^{\mathrm{lace}}
  \bigl(\mathcal E_{\varphi^m}(K)\bigr)
 \right)
 =
 \begin{cases}
  (r^m-1)^{d-\dim K},&H\subset K,\\
  0,&H\not\subset K
 \end{cases}
\end{equation}
for every stabilizer \(K=T_\sigma\) occurring in a fan of \(\mathfrak F\).

For \(\Sigma\in\mathfrak F\), define, for \(0\leq j\leq d\),
\begin{equation}\label{eq:fixed-subgroup-rank-ghost}
 a_{H,j}(\Sigma,r)=
 \lambda_{H,p^j}\!\left(
  \pi_0\overline g_{p^j,S_{p,d}}
  \bigl(\mathcal Z^{\mathrm{res,int}}_{\Sigma,r,S_{p,d}}\bigr)
 \right).
\end{equation}
Then these \(d+1\) numbers recover
\begin{equation}\label{eq:fixed-subgroup-orbit-polynomial}
 P_{\Sigma,H}(z)=
 \sum_{\substack{\sigma\in\Sigma\\H\subset T_\sigma}}
 (z-1)^{\dim O_\sigma}
\end{equation}
by the formula
\begin{equation}\label{eq:fixed-subgroup-lagrange}
 P_{\Sigma,H}(z)=
 \sum_{j=0}^{d}a_{H,j}(\Sigma,r)
 \prod_{\substack{0\leq q\leq d\\q\neq j}}
 \frac{z-r^{p^q}}{r^{p^j}-r^{p^q}}.
\end{equation}
\end{theorem}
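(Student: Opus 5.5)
The plan has three steps: construct \(\lambda_{H,m}\) as a linear realization composed with the fixed-point laced functor, compute its value on stabilizer lacings, and then assemble the ghost values and interpolate.

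\emph{Step 1: the numerical trace.} Lemma~\ref{lem:fixed-points-orbit-lacing} makes \(\cF_H\) a laced functor
\[
 (\cC,\cM_{F_\varphi^m})\to(\SH_c(\mathbb C),\cM_{\operatorname{id}}),
\]
using the chosen datum \(\theta_{H,m}\), or directly when \(H=1\). Since \(\cF_H\) preserves compact objects, the image lands in compact motives. Then compose with Drew's Hodge realization followed by the forgetful functor to \(\Perf(\mathbb Q)\). By Theorem~\ref{thm:categorical-realization-laced}, the induced map on coefficient \(\THH\) sends a laced-trace class to the \(\mathbb Q\)-linear categorical trace of the realized endomorphism. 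Define \(\lambda_{H,m}\) as \(\pi_0\) of this composite. The only point to check is that the laced refinement is compatible with coefficient \(\THH\) functoriality; this is the same argument as in Corollary~\ref{cor:motivic-laced-realizations}, with \(e^*\) replaced by \(\cF_H\).

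\emph{Step 2: values on stabilizer lacings.} Apply \eqref{eq:fixed-point-orbit-lacing}. If \(H\not\subset K\), the image is \((0,0\)) and the trace vanishes. If \(H\subset K\), the image is
\[
 \bigl(\Mcompact_{\mathbb C}(T/K),[r^m]_c^*\bigr),
\]
where \(T/K\simeq\Gm^{d-\dim K}\). Its realized trace is the alternating compactly supported Betti trace, which equals \((r^m-1)^{d-\dim K}\) by Lemma~\ref{lem:gm-power-trace}. This proves \eqref{eq:numerical-fixed-point-orbit-value}.

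\emph{Step 3: ghosts and interpolation.} Corollary~\ref{cor:intrinsic-cones-fixed-by-iterates-restriction} with \(A=r\,\mathrm{id}\) applies because every cone is fixed, and \(p^j\in S_{p,d}\). It gives
\[
 \pi_0\overline g_{p^j,S_{p,d}}\bigl(\mathcal Z^{\mathrm{res,int}}_{\Sigma,r,S_{p,d}}\bigr)
 =\sum_\sigma\operatorname{tr}^{\mathrm{lace}}\bigl(\mathcal E_{\varphi^{p^j}}(T_\sigma)\bigr).
\]
Applying \(\lambda_{H,p^j}\) and using \(d-\dim T_\sigma=\dim O_\sigma\) yields
\[
 a_{H,j}(\Sigma,r)=P_{\Sigma,H}\bigl(r^{p^j}\bigr).
\]
The polynomial \(P_{\Sigma,H}\) has degree at most \(d\). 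The \(d+1\) nodes \(r^{p^j}\), \(0\le j\le d\), are pairwise distinct because \(r>1\). Lagrange interpolation therefore gives \eqref{eq:fixed-subgroup-lagrange}.

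The main obstacle is Step~1 for \(H\ne1\): the laced structure on \(\cF_H\) has to be coherent enough to induce a map on coefficient \(\THH\) for the power \(F_\varphi^m\). The approach is to rely on the compatibility of the \(\theta_{H,m}\) with composition in Definition~\ref{def:arrow-level-fixed-point-compatibility}. Only the single arrow \(\theta_{H,m}\) is actually needed, because the graph bimodule for \(F_\varphi^m\) is left-representable. Pairs \((m,K)\) with \(H\not\subset K\) need no datum, since both sides vanish.
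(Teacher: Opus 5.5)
Your proposal is correct and follows the paper's proof essentially step for step. You build \(\lambda_{H,m}\) from the laced functor \(\cF_H\) (via \(\theta_{H,m}\), or \(e^*\) when \(H=1\)), Drew's Hodge realization and the forgetful functor to \(\Perf(\mathbb Q)\); you evaluate it using Theorem~\ref{thm:categorical-realization-laced}, Lemma~\ref{lem:fixed-points-orbit-lacing} and Lemma~\ref{lem:gm-power-trace}; and you then obtain \(a_{H,j}=P_{\Sigma,H}(r^{p^j})\) from the ghost formula and interpolate at the distinct nodes \(r^{p^j}\). The only detail the paper makes explicit that you leave implicit is that compact motives realize to constructible, hence perfect, complexes, which is what makes the composite land in \(\Perf(\mathbb Q)\).
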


\begin{corollary}
\label{cor:embedded-stabilizer-detection}
In the setting of
Theorem~\ref{thm:finite-ptypical-stabilizer-reconstruction},
Corollary~\ref{cor:toric-restriction-frobenius} identifies, under the block
equivalence, the \(p^j\)-th ghost occurring in the definition of
\(a_{H,j}(\Sigma,r)\) with the first ghost of the corresponding Frobenius
image.  Let
\(\mathcal S\) be the finite set of stabilizers occurring in
\(\mathfrak F\).  For every positive-dimensional \(K\in\mathcal S\), there
exists a cyclic subgroup \(H_K\subset K\) of prime order not dividing \(r\)
such that
\begin{equation}\label{eq:separating-test-family-statement}
 H_K\subset L\quad\Longleftrightarrow\quad K\subset L
 \qquad(L\in\mathcal S).
\end{equation}
Choose one such subgroup for each positive-dimensional \(K\), and choose
coherent fixed-point realization data for every nontrivial \(H_K\) on the
collection of pairs \((m,L)\) with \(m\geq1\), \(L\in\mathcal S\), and
\(H_K\subset L\).
Together with \(H_1=1\), these subgroups form a separating test family.  As
\(H\) ranges over this family, the values
\(a_{H,0}(\Sigma,r)=P_{\Sigma,H}(r)\) determine the multiset
\begin{equation}\label{eq:reconstructed-embedded-stabilizer-multiset}
 \bigl\{T_\sigma\hookrightarrow T:\sigma\in\Sigma\bigr\},
\end{equation}
including multiplicities, by triangular inversion over the poset of
stabilizers.  For fixed \(H\), the remaining \(d\) values interpolate
\eqref{eq:fixed-subgroup-orbit-polynomial}.  A finite test family suffices for
any fixed finite collection of fans.

For two finite fans \(\Sigma\) and \(\Sigma'\) in the same
lattice, take \(\mathfrak F=\{\Sigma,\Sigma'\}\).  After placing their orbit
motives in the common category \(\cC\), the following are equivalent:
\begin{enumerate}
 \item the natural images of their classes
 \(\mathcal Z^{\mathrm{res,int}}_{\Sigma,r,S_{p,d}}\) and
 \(\mathcal Z^{\mathrm{res,int}}_{\Sigma',r,S_{p,d}}\) agree in
 \[
  \pi_0R_{S_{p,d}}
  \bigl(\mathbf E(\cC,F_\varphi)\bigr);
 \]
 \item their realized first-ghost values \(a_{H,0}\) agree for a separating
 test family
 satisfying \eqref{eq:separating-test-family-statement} for the union
 of the stabilizers of \(\Sigma\) and \(\Sigma'\);
 \item their multisets of embedded stabilizers agree.
\end{enumerate}
\end{corollary}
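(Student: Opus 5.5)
The plan has four parts, taken in order: the Frobenius remark, the construction of separating subgroups, triangular inversion, and the three equivalences.

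The first assertion needs no new argument. Apply Corollary~\ref{cor:toric-restriction-frobenius} with \(A=r\,\mathrm{id}_N\) and \(m=p^j\in S_{p,d}\); formula \eqref{eq:fixed-by-iterate-ghost-as-first-frobenius} identifies the \(p^j\)-th ghost, after \(\overline b_{p^j,1}\), with the first ghost of \(\mathcal Z^{\mathrm{res,int}}_{\Sigma,r^{p^j},S_{p,d}/p^j}\).

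Next I construct \(H_K\). Let \(K\in\mathcal S\) be a subtorus of dimension \(e\geq1\), and let \(L\in\mathcal S\) be a stabilizer with \(K\not\subset L\). Then \(K\cap L\) is a closed subgroup of \(K\) of dimension \(<e\). Its component group is finite, so for all primes \(\ell\) larger than every such component-group order, \((K\cap L)[\ell]\) has \(\mathbb F_\ell\)-rank \(\dim(K\cap L)<e\). It is therefore a proper subspace of \(K[\ell]\simeq\mathbb F_\ell^{\,e}\). Now choose \(\ell\nmid r\) with \(\ell>|\mathcal S|\). A vector space over \(\mathbb F_\ell\) is not a union of at most \(|\mathcal S|<\ell+1\) proper subspaces, so there is an \(h\in K[\ell]\) outside every \((K\cap L)[\ell]\). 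Put \(H_K=\langle h\rangle\), which is cyclic of order \(\ell\). If \(K\subset L\), then clearly \(H_K\subset L\). If \(K\not\subset L\), then \(h\notin L\) by the choice of \(h\). This proves \eqref{eq:separating-test-family-statement}. I expect this torsion-rank count to be the only step that needs real care; the rest is bookkeeping.

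Reconstruction then goes by triangular inversion. Take \(j=0\), so \(p^0=1\). The ghost formula of Corollary~\ref{cor:intrinsic-cones-fixed-by-iterates-restriction}, with every cone fixed, combined with \eqref{eq:numerical-fixed-point-orbit-value}, gives
\[
 a_{H,0}(\Sigma,r)=P_{\Sigma,H}(r)
 =\sum_{L\in\mathcal S,\;H\subset L}n_L\,(r-1)^{d-\dim L},
\]
where \(n_L=\#\{\sigma\in\Sigma:T_\sigma=L\}\). For \(H=H_K\), separation turns the condition \(H_K\subset L\) into \(K\subset L\). For the zero-dimensional stabilizer, \(H_1=1\) gives the sum over all \(L\). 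Since \(r>1\), each coefficient \((r-1)^{d-\dim K}\) is nonzero. Descending induction on the poset \(\mathcal S\) therefore recovers each \(n_K\) from \(a_{H_K,0}\) and the already-known \(n_L\) with \(L\supsetneq K\). This determines the multiset \eqref{eq:reconstructed-embedded-stabilizer-multiset}. The interpolation statement for fixed \(H\) is \eqref{eq:fixed-subgroup-lagrange} from Theorem~\ref{thm:finite-ptypical-stabilizer-reconstruction}. Finiteness of the test family holds because \(\mathcal S\) is finite.

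Finally, the equivalences for \(\mathfrak F=\{\Sigma,\Sigma'\}\), with \(\mathcal S\) the union of their stabilizers:
\begin{enumerate}
\item[(1)\(\Rightarrow\)(2)] Each \(a_{H,0}\) is a function of the class, namely \(\lambda_{H,1}\circ\pi_0\overline g_{1,S_{p,d}}\).
\item[(2)\(\Rightarrow\)(3)] This is the triangular inversion above, run over the common poset \(\mathcal S\).
\item[(3)\(\Rightarrow\)(1)] By Theorem~\ref{thm:semilinear-orbit}, \([\Xi_{\Sigma,r}]=\sum_\sigma\eps_\varphi(T_\sigma)\) in \(\Kzero^\varphi(\cC)\). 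This depends only on the multiset of embedded stabilizers, since \(\eps_\varphi(H)\) depends only on \(H\subset T\). So equal multisets give equal classes in \(\Kzero^\varphi(\cC)\), and applying \(\pi_0\operatorname{tr}^{\mathrm{res,int}}_{S_{p,d}}\) gives equal images in \(\pi_0R_{S_{p,d}}(\mathbf E(\cC,F_\varphi))\).
\end{enumerate}
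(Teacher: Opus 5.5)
Your proposal is correct and follows the paper's proof essentially step for step. Both use the same \(\ell\)-torsion covering argument to build \(H_K\): you bound \(\dim(K\cap L)\) and discard primes dividing component-group orders, where the paper uses a character trivial on \(L\) but not on \(K\). Both then perform the same triangular inversion of \(a_{H_K,0}=\sum_{L\supseteq K}n_L(r-1)^{d-\dim L}\) and prove the same three implications. Two details need tidying. Take \(h\neq0\), for instance by adding \(\{0\}\) to the avoided subspaces, which still leaves fewer than \(\ell+1\) of them. Choose a single \(\ell\) that is prime to \(r\), larger than \(|\mathcal S|\), and larger than the relevant component-group orders.
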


\begin{proof}[Proof of
Theorem~\ref{thm:finite-ptypical-stabilizer-reconstruction}]
The numerical map is obtained from \(\cF_H\) of
\eqref{eq:fixed-point-realization-definition}, with \(\cF_1=e^*\), followed
by Drew's symmetric-monoidal six-functor Hodge realization
\cite[Theorem~1.2, Corollary~4.13, Definition~4.14,
Proposition~7.14, and Example~8.12]{Drew} and the forgetful functor:
\[
 \cC\xrightarrow{\ \cF_H\ }\SH_c(\mathbb C)
 \xrightarrow{\ R_{\mathrm{Hdg}}\ }
 D^b(\operatorname{MHS}^{p}_{\mathbb Q})
 \longrightarrow\Perf(\mathbb Q).
\]
Drew identifies the value at the point with the unbounded derived category of
ind-objects of polarizable mixed Hodge structures.  A compact motivic spectrum
is sent to a constructible object, so the composite lands in
\(D^b(\operatorname{MHS}^{p}_{\mathbb Q})\).
For \(H\neq1\), the chosen realization datum and the condition
\(\ell\nmid r\) allow us to apply
Lemma~\ref{lem:fixed-points-orbit-lacing} for every \(m\).  For \(H=1\) we
use ordinary pullback.
Theorem~\ref{thm:categorical-realization-laced}, applied to this laced
realization, defines \(\lambda_{H,m}\) and sends laced-trace classes to the
corresponding \(\mathbb Q\)-linear traces.  The orbit-lacing
lemma kills the
term unless \(H\subset K\), and otherwise gives \([r^m]_c^*\) on
\(T/K\simeq\Gm^{d-\dim K}\).  Adjointable exchange for Hodge realization
preserves compactly supported pullback; Lemma~\ref{lem:gm-power-trace}
therefore proves \eqref{eq:numerical-fixed-point-orbit-value}.

The intrinsic ghost formula
\eqref{eq:integral-intrinsic-ghost-formula} and the orbit formula now
give
\[
 a_{H,j}(\Sigma,r)
 =\sum_{\substack{\sigma\in\Sigma\\H\subset T_\sigma}}
   (r^{p^j}-1)^{\dim O_\sigma}
 =P_{\Sigma,H}(r^{p^j}).
\]
The polynomial has degree at most \(d\), and the \(d+1\) nodes
\(r,r^p,\ldots,r^{p^d}\) are distinct because \(r>1\).  Lagrange
interpolation proves \eqref{eq:fixed-subgroup-lagrange}.
\end{proof}

\begin{proof}[Proof of
Corollary~\ref{cor:embedded-stabilizer-detection}]
For joint detection, fix positive-dimensional \(K\in\mathcal S\).  For each
\(L\not\supset K\), a character vanishing on \(L\) but not on \(K\) shows
that \(K[\ell]\cap L\) is a proper \(\mathbb F_\ell\)-subspace for all but
finitely many \(\ell\).  Choose \(\ell\nmid r\), outside these exceptions and
larger than the number of such \(L\).  Their union cannot cover \(K[\ell]\),
so a nonzero \(h_K\) outside it generates \(H_K\) satisfying
\begin{equation}\label{eq:generic-torsion-subgroup-test}
 H_K\subset L\quad\Longleftrightarrow\quad K\subset L.
\end{equation}
Writing \(n_L=\#\{\sigma:T_\sigma=L\}\), the values \(a_{H_K,0}\) give
\[
 a_{H_K,0}(\Sigma,r)=P_{\Sigma,H_K}(r)
 =\sum_{L\supseteq K}n_L\,(r-1)^{d-\dim L}.
\]
Ordered by decreasing dimension, this system is triangular: the diagonal is
\((r-1)^{d-\dim K}\ne0\), and a connected \(L\supset K\) of equal dimension
equals \(K\).  It recovers every \(n_K\), with \(H=1\) handling the trivial
 stabilizer.  The remaining values recover each \(P_{\Sigma,H}\) by
\eqref{eq:fixed-subgroup-lagrange}.

Applying \(\lambda_{H,1}\circ\pi_0\overline g_{1,S_{p,d}}\) proves
(i)\(\Rightarrow\)(ii), and the triangular inversion above proves
(ii)\(\Rightarrow\)(iii).  Conversely, equal
embedded-stabilizer multisets give equal laced Grothendieck classes by
Theorem~\ref{thm:semilinear-orbit}, hence equal images under the
intrinsic restriction trace at every truncation set.  This proves
(iii)\(\Rightarrow\)(i).
\end{proof}

\begin{corollary}
\label{cor:orbit-dimension-reconstruction}
In the setting of
Theorem~\ref{thm:finite-ptypical-stabilizer-reconstruction}, take
\(H=1\).  The \(d+1\) values \(a_{1,j}\), indexed by
\(1,p,\ldots,p^d\), recover the polynomial
\[
 P_{\Sigma,1}(z)=\sum_{\sigma\in\Sigma}(z-1)^{\dim O_\sigma}.
\]
In particular they recover the number of toric orbits in every dimension.
\end{corollary}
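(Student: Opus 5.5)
This is the case \(H=1\) of Theorem~\ref{thm:finite-ptypical-stabilizer-reconstruction}, followed by an extraction of coefficients. No coherent fixed-point realization datum is required for \(H=1\). We take \(\cF_1=e^*\), and by Lemma~\ref{lem:fixed-points-orbit-lacing} the arrow identification comes from exceptional base change and naturality of the proper-pullback unit.

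\emph{Step 1: evaluate the traces.} Every stabilizer contains the trivial subgroup, so the condition \(H\subset T_\sigma\) holds for every cone. Formula \eqref{eq:numerical-fixed-point-orbit-value} therefore gives \(\lambda_{1,m}\) the value \((r^m-1)^{d-\dim T_\sigma}\) on every summand. Since \(\dim O_\sigma=d-\dim T_\sigma\), combining the intrinsic ghost formula \eqref{eq:integral-intrinsic-ghost-formula} with the scalar orbit formula of Corollary~\ref{cor:thh-orbit} (applied to the iterate \(P_{p^j}\)) yields
\[
 a_{1,j}(\Sigma,r)=\sum_{\sigma\in\Sigma}(r^{p^j}-1)^{\dim O_\sigma}=P_{\Sigma,1}(r^{p^j}).
\]

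\emph{Step 2: interpolate.} The polynomial \(P_{\Sigma,1}\) has degree at most \(d\). Since \(r>1\), the nodes \(r,r^p,\ldots,r^{p^d}\) are pairwise distinct. Lagrange interpolation \eqref{eq:fixed-subgroup-lagrange} then recovers \(P_{\Sigma,1}\) from the \(d+1\) values \(a_{1,j}\).

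\emph{Step 3: read off the orbit counts.} Expand \(P_{\Sigma,1}\) in the basis \(\{(z-1)^e\}_{0\le e\le d}\), for instance by Taylor expansion at \(z=1\). The coefficient of \((z-1)^e\) is \(\#\{\sigma:\dim O_\sigma=e\}\), which is the number of torus orbits of dimension \(e\).

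There is no real obstacle here. The only point needing care is confirming that Theorem~\ref{thm:finite-ptypical-stabilizer-reconstruction} is stated to include \(H=1\) with no auxiliary datum, and it is.
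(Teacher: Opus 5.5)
Your proposal is correct and follows the paper's proof: specialize Theorem~\ref{thm:finite-ptypical-stabilizer-reconstruction} to \(H=1\), where Lemma~\ref{lem:fixed-points-orbit-lacing} needs no realization datum and every stabilizer contains \(H\). Then interpolate via \eqref{eq:fixed-subgroup-lagrange} and read off the orbit counts as the coefficients in the basis \((z-1)^e\). Your Step~1 repeats, for \(H=1\), the evaluation \(a_{1,j}=P_{\Sigma,1}(r^{p^j})\) already carried out in that theorem's proof.
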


\begin{proof}
Apply Lemma~\ref{lem:fixed-points-orbit-lacing} with \(H=1\).
Equation~\eqref{eq:fixed-subgroup-lagrange} then gives \(P_{\Sigma,1}\), since
every stabilizer contains the trivial subgroup.  Its coefficients in the basis
\(1,(z-1),\ldots,(z-1)^d\) are the orbit counts by dimension.
\end{proof}

\subsection{The torus fixed-point cycle index}

\begin{theorem}
\label{thm:toric-fixed-point-cycle-index}
Let \(T=\Gm^d\) over \(\mathbb C\), let \(\Sigma\) be a finite fan in
\(N_{\mathbb R}\), and let \(B\in\operatorname{GL}(N)\) satisfy
\(B\Sigma=\Sigma\).  Fix \(r>1\), put \(A=rB\), and let \(f_A\) be the
induced finite toric endomorphism.  Write \(\Sigma(d)\) for the
full-dimensional cones and let \(h\) be the order of the permutation induced
by \(B\) on \(\Sigma(d)\).  Let \(S\) be any nonempty truncation set
containing every divisor of \(h\); one may take \(S=\langle h\rangle\).
Choose a prime \(\ell\nmid r\), and, after choosing a primitive
\(\ell\)-th root of unity, set
\[
 H=T[\ell]\simeq(C_\ell)^d\subset T(\mathbb C).
\]
Choose a coherent fixed-point realization datum for \((\psi_A,H)\) on the
collection of pairs \((m,T)\) with \(m\in S\).  Choose the small
\(F_A\)-stable category \(\cC\) as in
Corollary~\ref{cor:intrinsic-cones-fixed-by-iterates-restriction}.
For every \(m\in S\), there is a numerical trace
\begin{equation}\label{eq:maximal-cone-numerical-trace}
 \lambda^{\max}_{H,m}\colon
 \pi_0\THH(\cC,\cM_{F_A^m})\longrightarrow\mathbb Q
\end{equation}
such that
\begin{equation}\label{eq:maximal-cone-fixed-count}
 \nu_m:=\lambda^{\max}_{H,m}\!\left(
  \pi_0\overline g_{m,S}
  \bigl(\mathcal Z^{\mathrm{res,int}}_{\Sigma,A,S}\bigr)
 \right)
 =\#\operatorname{Fix}
 \bigl(B^m\colon\Sigma(d)\longrightarrow\Sigma(d)\bigr).
\end{equation}
Under the block identification, \(\nu_m\) is obtained by applying
\(\lambda^{\max}_{H,m}\) to the first ghost of
\(\operatorname{Fr}_{m,S}
 (\mathcal Z^{\mathrm{res,int}}_{\Sigma,A,S})\).

If \(c_q\) is the number of \(q\)-cycles of this permutation, then the
subfamily \(\{\nu_m:m\mid h\}\) recovers its complete cycle type by
\begin{equation}\label{eq:maximal-cone-mobius}
 c_q=\frac1q\sum_{e\mid q}\mu(q/e)\nu_e,
 \qquad q\mid h.
\end{equation}
In particular the intrinsic restriction class determines the permutation
zeta function of the torus fixed locus,
\begin{equation}\label{eq:maximal-cone-zeta}
 \zeta^{T}_{\Sigma,B}(t)
 :=\exp\!\left(\sum_{m\geq1}
 \#\operatorname{Fix}(B^m\mid\Sigma(d))\frac{t^m}{m}\right)
 =\prod_{q\mid h}(1-t^q)^{-c_q}.
\end{equation}
This zeta function depends on the action of \(B\), not merely on the multiset
of embedded orbit stabilizers: the latter records the number of full-dimensional
cones but not their permutation.
\end{theorem}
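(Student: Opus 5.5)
The plan is to follow the template of Theorem~\ref{thm:finite-ptypical-stabilizer-reconstruction}, but with the non-scalar matrix \(A=rB\) and the full \(\ell\)-torsion \(H=T[\ell]\).

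First, I define \(\lambda^{\max}_{H,m}\) as the composite of three maps: the laced realization \(\cF_H\), made laced by the coherent datum \(\theta_{H,m}\colon\cF_HF_A^m\simeq\cF_H\); Drew's Hodge realization followed by the forgetful functor to \(\Perf(\mathbb Q)\); and the linear trace of Theorem~\ref{thm:categorical-realization-laced}.

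By Corollary~\ref{cor:intrinsic-cones-fixed-by-iterates-restriction}, the \(m\)-th ghost is the sum over cones \(\sigma\) with \(A^m\sigma=\sigma\), equivalently \(B^m\sigma=\sigma\), of \(\operatorname{tr}^{\mathrm{lace}}(E_T(T_\sigma),\lambda_{\sigma,m})\). I then evaluate each summand.

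\begin{itemize}
\item \emph{Cones with \(\dim\sigma<d\).} Here \(T_\sigma\) is a proper subtorus. Since \(T[\ell]\subset T_\sigma\) would force \(\dim T_\sigma=d\), we have \(H\not\subset T_\sigma\). By Lemma~\ref{lem:fixed-point-orbit-motive}, \(\cF_H\) kills the underlying object, so the lacing becomes \((0,0)\) exactly as in Lemma~\ref{lem:fixed-points-orbit-lacing}. The trace is therefore \(0\). This step needs no realization datum.
\item \emph{Cones with \(\dim\sigma=d\).} Here \(T_\sigma=T\), the orbit \(O_\sigma\) is a point, and \(E_T(T)=\one\). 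The pair \((m,T)\) lies in the chosen collection. The compatibility square \eqref{eq:arrow-level-fixed-point-compatibility} with \(K=T\) identifies \(\cF_H(\lambda_{\sigma,m})\), after \(\theta_{H,m}\), with \((\bar\psi)_c^*\) on \(\Mcompact_{\mathbb C}(\mathrm{pt})=\one\). This map is the identity, so its realized trace is \(1\).
\end{itemize}

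Summing gives \(\nu_m=\#\{\sigma\in\Sigma(d):B^m\sigma=\sigma\}\), which is \eqref{eq:maximal-cone-fixed-count}. One technical point: the proper pullback along \(BT\to B\psi_A^{-m}(T)=BT\) is the identity, since \(\psi_A\) is surjective. The return map on \(T/T\) is trivially the identity.

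The Frobenius reformulation then follows directly from \eqref{eq:restriction-frobenius-ghost} with \(n=1\), or from \eqref{eq:fixed-by-iterate-ghost-as-first-frobenius} in Corollary~\ref{cor:toric-restriction-frobenius}. It only requires transporting \(\lambda^{\max}_{H,m}\) along \(\overline b_{m,1}\).

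For the combinatorial part, let \(\pi\) be the permutation of \(\Sigma(d)\) induced by \(B\). Then \(\nu_m=\sum_{q\mid m}q\,c_q\). Every cycle length divides \(h\), so for \(q\mid h\) Möbius inversion over the divisors of \(q\) (all of which divide \(h\) and lie in \(S\)) gives \eqref{eq:maximal-cone-mobius}. For general \(m\), \(\nu_m=\nu_{\gcd(m,h)}\), so all fixed-point counts are determined. The product formula \eqref{eq:maximal-cone-zeta} is the standard identity \(\exp\bigl(\sum_m\sum_{q\mid m}qc_q t^m/m\bigr)=\prod_q(1-t^q)^{-c_q}\).

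For the final remark, take \(\Sigma\) the complete fan of \(\mathbb P^1\times\mathbb P^1\) and compare \(B=\mathrm{id}\) with \(B=-\mathrm{id}\). The embedded stabilizer multisets agree, since there are four full-dimensional cones and \(-\mathrm{id}\) preserves every subtorus, but \(\nu_1\) equals \(4\) and \(0\) respectively.

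The main obstacle I expect is the arrow-level identification on the full-dimensional cones. Lemma~\ref{lem:fixed-points-orbit-lacing} is stated only for scalar \(\varphi\), and here \(a=\psi_A|_H=rB|_{T[\ell]}\) is a nontrivial automorphism of \(H\). I will handle this by invoking the general-\(\psi\) form of Definition~\ref{def:arrow-level-fixed-point-compatibility} together with Lemma~\ref{lem:geometric-fixed-automorphism-invariance}. The only arrow to identify is an endomorphism of the unit, and the coherent datum pins it down as the identity.
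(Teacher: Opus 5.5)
Your proposal is correct and follows the paper's proof essentially step for step. You build \(\lambda^{\max}_{H,m}\) from \(\cF_H\) made laced by \(\theta_{H,m}\), followed by Hodge realization and Theorem~\ref{thm:categorical-realization-laced}. The proper-stabilizer summands vanish because \(T[\ell]\not\subset K\) when \(\dim K<d\). The compatibility square \eqref{eq:arrow-level-fixed-point-compatibility} at \(K=T\) makes each full-dimensional fixed cone contribute \(1\), and M\"obius inversion together with the exponential identity finishes the argument. Your \(B=\pm\mathrm{id}\) example on \(\mathbb P^1\times\mathbb P^1\) for the closing remark is a valid, slightly simpler substitute for the paper's Example~\ref{ex:toric-fixed-point-cycle}.
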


\begin{proof}
Since \(A=rB\) is invertible modulo \(\ell\), it restricts to an automorphism
of \(H\).  The chosen realization datum supplies
\(\theta_{H,m}\colon\cF_HF_A^m\simeq\cF_H\).  Composing the resulting laced
functor with Hodge realization and applying
Theorem~\ref{thm:categorical-realization-laced} gives
\eqref{eq:maximal-cone-numerical-trace}.

For every proper orbit stabilizer \(K<T\),
\(|K[\ell]|=\ell^{\dim K}<\ell^d=|H|\), so
Lemma~\ref{lem:fixed-point-orbit-motive} leaves only the point orbits
indexed by \(\Sigma(d)\) in the fixed-cone ghost formula.
Such a cone is fixed by \(A^m\) exactly when it is fixed by \(B^m\).
For \(K=T\), \eqref{eq:arrow-level-fixed-point-compatibility} identifies
each surviving lacing with the identity of the point motive, whose trace is
\(1\).  This proves \eqref{eq:maximal-cone-fixed-count};
Corollary~\ref{cor:toric-restriction-frobenius} gives its Frobenius
interpretation.

A \(q\)-cycle contributes all of its \(q\) elements to \(\nu_m\) exactly when
\(q\mid m\).  Therefore
\(
 \nu_m=\sum_{q\mid m}q c_q
\).
M\"obius inversion gives \eqref{eq:maximal-cone-mobius}; every cycle
length divides \(h\), so the displayed finite family suffices.  The standard
exponential identity for a finite permutation gives
\eqref{eq:maximal-cone-zeta}.
\end{proof}

\begin{example}
\label{ex:toric-fixed-point-cycle}
Let \(\Sigma_\square\) be the fan of
\(\mathbb P^1\times\mathbb P^1\), with rays
\(\pm e_1,\pm e_2\), and put
\[
 B_2=-I,
 \qquad
 B_4=\begin{pmatrix}0&-1\\1&0\end{pmatrix},
 \qquad A_i=rB_i.
\]
Use the common truncation set \(S=\langle4\rangle=\{1,2,4\}\) for both
endomorphisms, and choose for each \(A_i\) the coherent fixed-point
realization datum required in
Theorem~\ref{thm:toric-fixed-point-cycle-index}.
Both \(A_i\) give finite toric endomorphisms of the same
toric surface, so
their embedded-stabilizer multisets agree.  The action of \(B_2\) on
the four maximal cones consists of two \(2\)-cycles, while that of \(B_4\)
is one \(4\)-cycle.  The numerical fixed-point data for the two coefficient
systems are
\[
\begin{array}{c|ccc|c}
 &\nu_1&\nu_2&\nu_4&\zeta^T(t)\\ \hline
 B_2&0&4&4&(1-t^2)^{-2}\\
 B_4&0&0&4&(1-t^4)^{-1}.
\end{array}
\]
The displayed values follow immediately from the indicated cycle
decompositions and Theorem~\ref{thm:toric-fixed-point-cycle-index}.
\end{example}

\subsection{Limitations of additive invariants for scalar power maps}

\begin{proposition}
\label{prop:additive-ceiling}
The class \([\Xi_{\Sigma,r}]\) is determined by the multiset
\[
  \bigl\{T_\sigma\hookrightarrow T:\sigma\in\Sigma\bigr\},
\]
including multiplicities.  Hence, whenever two finite fans in the same lattice
have the same multiset of embedded stabilizers, their semilinear laced
Grothendieck classes agree.  Every homomorphism out of
\(\Kzero^\varphi(\cC)\), including every additive trace or fixed-point
realization, has the same limitation.
\end{proposition}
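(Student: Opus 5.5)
The plan is to read the statement off Theorem~\ref{thm:semilinear-orbit}. That theorem gives
\[
 [\Xi_{\Sigma,r}]=\sum_{\sigma\in\Sigma}\eps_\varphi(T_\sigma)
 \qquad\text{in }\Kzero^\varphi(\cC).
\]
By Definition~\ref{def:stabilizer-lacing}, each summand \(\eps_\varphi(H)\) is the class of the laced object \(\mathcal E_\varphi(H)\). This object is built only from the following data:
\begin{itemize}
\item the closed subgroup \(H\hookrightarrow T\);
\item the isogeny \(\varphi=[r]\);
\item the base-change identification \eqref{eq:base-change-BH};
\item proper pullback along \(BH\to B\varphi^{-1}(H)\).
\end{itemize}
None of these refers to the cone \(\sigma\) itself, only to its embedded stabilizer \(T_\sigma\). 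So the first step is to check that \(\eps_\varphi(H)\) depends only on the embedded subgroup \(H\subset T\), up to canonical equivalence in \(\Lace_\varphi(\cC)\). It follows that the right-hand side is the function
\[
 \sum_{H}n_H\,\eps_\varphi(H),\qquad n_H=\#\{\sigma\in\Sigma: T_\sigma=H\},
\]
of the multiset of embedded stabilizers.

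The second step handles two fans \(\Sigma,\Sigma'\) in the same lattice. Both must live in one common category \(\cC\), and the paper already fixes this convention: \(\cC\) is the common small \(F_\varphi\)-stable thick subcategory generated by the orbit and filtration-stage motives of the finitely many fans under consideration. Inside this \(\cC\), the objects \(\mathcal E_\varphi(H)\) for a common subgroup \(H\) are literally the same object. Hence equal multisets give equal sums, and so \([\Xi_{\Sigma,r}]=[\Xi_{\Sigma',r}]\).

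The last assertion is formal. Any homomorphism out of \(\Kzero^\varphi(\cC)\) takes equal values on equal classes. This covers the \(\pi_0\) laced trace, the intrinsic restriction trace on \(\pi_0\), and the numerical realizations \(\lambda_{H,m}\) or \(\rho^{\mathrm B}\). Each of these therefore factors through the multiset of embedded stabilizers.

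The only point needing care is the first step: the orbit identification \([O_\sigma/T]\simeq BT_\sigma\) used in the proof of Theorem~\ref{thm:semilinear-orbit} must be canonical and depend only on \(T_\sigma\). It must also carry the power-map lacing to the stabilizer lacing independently of \(\sigma\). I would verify this directly from \(O_\sigma=T/T_\sigma\). The power map on \(T/T_\sigma\) is induced from \([r]\) on \(T\), so the quotient stacks and the semilinear proper map depend only on the pair \((T,T_\sigma)\). Since two cones with the same stabilizer have literally the same orbit \(T/T_\sigma\) as \(T\)-space, I expect no real obstacle here.
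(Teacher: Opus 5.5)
Your proposal is correct and follows the paper's own argument. You read off \([\Xi_{\Sigma,r}]=\sum_{\sigma}\varepsilon_\varphi(T_\sigma)\) from Theorem~\ref{thm:semilinear-orbit}, observe that each summand is determined by the embedded subtorus \(T_\sigma\subset T\) and its inverse image under \(\varphi\), conclude that equal multisets give equal sums in a common \(\cC\), and apply an arbitrary homomorphism. The \emph{point needing care} you flag is already handled inside the proof of that theorem, since \(\varepsilon_\varphi(H)\) is defined directly from \(H\subset\varphi^{-1}(H)\) without reference to \(\sigma\).
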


\begin{proof}
By Theorem~\ref{thm:semilinear-orbit}, the class is the sum of the terms
\(\eps_\varphi(T_\sigma)\).  Each term is functorially determined by the
embedded subtorus \(T_\sigma\subset T\) and its inverse image under
\(\varphi\); no face relation or attaching map occurs after passage to the
Grothendieck group.  Equal multisets therefore give equal sums, and applying any
group homomorphism preserves that equality.
\end{proof}

\begin{example}
\label{ex:incidence-not-recoverable}
For example, fans with the same embedded-stabilizer multiset can have
different cone-incidence relations.  In \(N=\mathbb Z^2\),
consider the two fans whose cones are the zero cone, the four rays spanned by
\(\pm e_1\) and \(\pm e_2\), and two cones of dimension two: for the first fan the
opposite quadrants
\(\operatorname{cone}(e_1,e_2)\) and \(\operatorname{cone}(-e_1,-e_2)\), and
for the second fan the adjacent quadrants
\(\operatorname{cone}(e_1,e_2)\) and \(\operatorname{cone}(-e_1,e_2)\).
Both are fans, and both are stable under every scalar map \([r]\).  The rays
\(\pm e_1\) span the same sublattice, as do \(\pm e_2\), so the two
embedded-stabilizer multisets coincide: the trivial subgroup once, each of
the two coordinate subtori twice, and \(T\) twice.  In the first fan the two
two-dimensional cones meet only at the origin; in the second they share the ray
spanned by \(e_2\).  By Proposition~\ref{prop:additive-ceiling}, the
semilinear laced Grothendieck classes of the two associated toric systems
agree, so no homomorphism out of \(\Kzero^\varphi(\cC)\) distinguishes them.
\end{example}

\section{Intrinsic classes and motivic trace evaluation}
\label{sec:log-zeta}

For an ordinary endomorphism the ambient functor is
\(F=\operatorname{id}\), and the endomorphism itself is the lacing.  Given a
trace-evaluation datum, the \(m\)-th realized ghost is the categorical trace
of the \(m\)-th iterate.  These traces are the coefficients of the
\(\mathbb A^1\)-logarithmic zeta function of
Bilu--Ho--Srinivasan--Vogt--Wickelgren \cite{BHSVW}.  The restriction class
is also Frobenius-compatible.  A finite-field example shows that the resulting
realized ghost sequence need not come from the standard big-Witt ring.

Let \(k\) be any field, let \(\mathbf1_k\) be the unit of \(\SH(k)\), and let
\(GW(k)\) denote the Grothendieck--Witt ring of nondegenerate symmetric
bilinear forms over \(k\) (also in characteristic \(2\)).
Morel's computation, in the arbitrary-field form recalled by Hoyois, gives
\[
 \pi_0\operatorname{End}_{\SH(k)}(\mathbf1_k)\cong GW(k);
\]
see \cite[Corollary~6.43 and Lemma~3.10]{MorelA1}; the removal of the
perfectness hypothesis is recalled in \cite[footnote~1]{HoyoisTrace}.

\subsection{The intrinsic TR-trace class}

For every smooth proper \(k\)-scheme \(X\), motivic Atiyah duality makes
\(\Sigma^\infty_+X\) strongly dualizable in \(\SH(k)\); see
\cite[Section~3, equations~(3.1)--(3.4)]{HoyoisTrace}.  Fix a small thick rigid
symmetric-monoidal
\(\mathcal C_k\subset\SH(k)\) containing the required smooth proper motives,
with duals computed in \(\SH(k)\).  Using the reduced bar models of
Theorem~\ref{thm:reduced-bar-replacement}, the construction of
Section~\ref{sec:morita-descent} applies to
\((\mathcal C_k,\operatorname{id})\).  A laced object is an endomorphism
\((x,\alpha)\), with \(P_m(x,\alpha)=(x,\alpha^m)\), and every ghost lands in
\(\THH(\mathcal C_k)\).  A trace-evaluation datum for
\(\mathcal C_k\hookrightarrow\SH(k)\), in the sense of
Definition~\ref{def:trace-evaluation-datum}, realizes the ghosts of this class
in \(GW(k)\).

For a smooth proper \(k\)-scheme \(X\) with an endomorphism
\(\varphi\colon X\to X\), write
\[
 E_X=\Sigma^\infty_+X\in\mathcal C_k,
 \qquad
 f=\Sigma^\infty_+\varphi\colon E_X\longrightarrow E_X.
\]

\begin{definition}
\label{def:tr-trace-class}
Let \(\mathcal C_k\subset\SH(k)\) be the chosen small thick rigid
symmetric-monoidal subcategory containing \(E_X\).  For a nonempty
truncation set \(S\), define
\begin{equation}\label{eq:tr-trace-class}
 \mathcal Z^{\TR}_{X,\varphi,S;\mathcal C_k}
 :=\bigl(\pi_0\operatorname{tr}^{\mathrm{res,int}}_S\bigr)[E_X,f]
 \in\pi_0R_S\bigl(\mathbf E(\mathcal C_k,
 \operatorname{id})\bigr).
\end{equation}
For \(S=\mathbb N_{>0}\), write
\(\mathcal Z^{\TR}_{X,\varphi;\mathcal C_k}\).  When the ambient category is
fixed, we suppress it from the notation.  We also write
\(\operatorname{Fr}_r:=\operatorname{Fr}_{r,\mathbb N_{>0}}\).
\end{definition}

The definition is compatible with an exact fully faithful enlargement
\(j:\mathcal C_k\hookrightarrow\mathcal D_k\):
\[
 j_{*,S}\bigl(\mathcal Z^{\TR}_{X,\varphi,S;\mathcal C_k}\bigr)
 =\mathcal Z^{\TR}_{X,\varphi,S;\mathcal D_k}.
\]
For a fixed \(X\), it is enough to take a small skeleton of the full thick
symmetric-monoidal subcategory of \(\SH(k)\) generated by
\(\mathbf1_k,E_X,E_X^\vee\).  This subcategory is essentially small and
rigid: motivic Atiyah duality makes the three generators strongly dualizable,
and the strongly dualizable objects form a thick symmetric-monoidal
subcategory of \(\SH(k)\).
By Theorem~\ref{thm:integral-module-envelope-trace-descent},
\eqref{eq:tr-trace-class} is independent of the chosen presentation
and compatible with truncation.

\subsection{Comparison with the BHSVW logarithmic zeta function}

Let
\[
 \Theta_{\mathcal C_k}\colon\THH(\mathcal C_k)\longrightarrow
 \operatorname{End}_{\SH(k)}(\mathbf 1_k)
\]
be the map in the chosen trace-evaluation datum.  By
Lemma~\ref{lem:hns-ramzi-comparison}, its map on \(\pi_0\) sends the
laced-trace class of an endomorphism \(h\colon y\to y\) to its motivic
categorical trace \(\operatorname{Tr}_{\SH(k)}(h)\in GW(k)\).
For \(m\geq1\), define the realized ghost homomorphism
\begin{equation}\label{eq:gw-ghost}
 \operatorname{gh}^{GW}_m
 :=(\pi_0\Theta_{\mathcal C_k})\circ
 \pi_0\overline g_{m,\mathbb N_{>0}}\colon
 \pi_0R_{\mathbb N_{>0}}
 \bigl(\mathbf E(\mathcal C_k,\operatorname{id})\bigr)
 \longrightarrow GW(k).
\end{equation}

Bilu--Ho--Srinivasan--Vogt--Wickelgren define, for a smooth proper \(X\) with
an endomorphism \(\varphi\), the \(\mathbb A^1\)-logarithmic zeta function
\cite[Definition~1.1]{BHSVW}
\begin{equation}\label{eq:bhsvw-definition}
 \operatorname{dlog}\zeta^{\mathbb A^1}_{X,\varphi}(t)
 =\sum_{m\geq1}\operatorname{Tr}_{\SH(k)}(\varphi^m)\,t^{m-1}
 \in GW(k)[[t]],
\end{equation}
where \(\operatorname{Tr}_{\SH(k)}(\varphi^m)\) is the categorical trace of
\(\varphi^m\) on the dualizable object \(E_X\); applying the rank recovers the
logarithmic derivative of the classical zeta function.

\begin{theorem}
\label{thm:tr-refines-bhsvw}
Let \(X\) be a smooth proper \(k\)-scheme with an endomorphism \(\varphi\),
and let \(\mathcal C_k\) be as above, equipped with the chosen
trace-evaluation datum.  Then, for every \(m\geq1\),
\begin{equation}\label{eq:ghost-realizes-trace}
 \pi_0\overline g_{m,\mathbb N_{>0}}
 (\mathcal Z^{\TR}_{X,\varphi})
 =\operatorname{tr}^{\mathrm{lace}}(E_X,f^m)
 \quad\text{in }\pi_0\THH(\mathcal C_k),
\end{equation}
and consequently
\begin{equation}\label{eq:gw-ghost-is-trace}
 \operatorname{gh}^{GW}_m(\mathcal Z^{\TR}_{X,\varphi})
 =\operatorname{Tr}_{\SH(k)}(\varphi^m).
\end{equation}
Hence the realized ghosts assemble to the BHSVW logarithmic zeta function:
\begin{equation}\label{eq:dlog-from-ghosts}
 \operatorname{dlog}\zeta^{\mathbb A^1}_{X,\varphi}(t)
 =\sum_{m\geq1}
 \operatorname{gh}^{GW}_m(\mathcal Z^{\TR}_{X,\varphi})\,t^{m-1}.
\end{equation}
Moreover the class is Frobenius-compatible: for every \(r\geq1\),
\begin{equation}\label{eq:frobenius-transports-zeta}
 \operatorname{Fr}_{r}\bigl(\mathcal Z^{\TR}_{X,\varphi}\bigr)
 =\mathcal Z^{\TR}_{X,\varphi^{r}},
\end{equation}
so that
\(\operatorname{gh}^{GW}_n(\operatorname{Fr}_r\mathcal Z^{\TR}_{X,\varphi})
 =\operatorname{gh}^{GW}_{rn}(\mathcal Z^{\TR}_{X,\varphi})\).
\end{theorem}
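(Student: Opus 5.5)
The plan is to specialize the general machinery to \(F=\operatorname{id}\) on \(\mathcal C_k\) and then evaluate with the trace-evaluation datum.

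\textbf{Ghost formula.} Apply Corollary~\ref{cor:integral-intrinsic-ghost-formula} with \(S=\mathbb N_{>0}\) and \(\xi=[E_X,f]\in K_0\Lace(\mathcal C_k,\cM_{\operatorname{id}})\). This gives
\[
\pi_0\overline g_{m,\mathbb N_{>0}}(\mathcal Z^{\TR}_{X,\varphi})=\operatorname{tr}^{\mathrm{lace}}(P_m[E_X,f]).
\]
By Proposition~\ref{lem:coefficient-composition} with \(F=\operatorname{id}\), the iterate \(P_m(E_X,f)\) is \((E_X,f^{[m]})\), where \(f^{[m]}=f\circ\cdots\circ f=f^m\). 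Here \(\cM_{\operatorname{id}^m}=\cM_{\operatorname{id}}\) is the identity bimodule, so the target is \(\THH(\mathcal C_k)\). This proves \eqref{eq:ghost-realizes-trace}.

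\textbf{Realized ghosts.} Applying \(\pi_0\Theta_{\mathcal C_k}\) and Lemma~\ref{lem:hns-ramzi-comparison} to \(h=f^m\) gives
\[
\operatorname{gh}^{GW}_m(\mathcal Z^{\TR}_{X,\varphi})=\operatorname{Tr}_{\SH(k)}(f^m).
\]
Since \(\Sigma^\infty_+\) is a functor, \(f^m=\Sigma^\infty_+(\varphi^m)\), so this is \(\operatorname{Tr}_{\SH(k)}(\varphi^m)\) as in BHSVW, computed on the same dualizable object \(E_X\). Rigidity of \(\mathcal C_k\) with duals computed in \(\SH(k)\) ensures the trace does not depend on the ambient category. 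Summing against \(t^{m-1}\) and comparing with \eqref{eq:bhsvw-definition} gives \eqref{eq:dlog-from-ghosts}.

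\textbf{Frobenius.} Apply Theorem~\ref{thm:restriction-trace-frobenius} with \(S=\mathbb N_{>0}\) and \(F=\operatorname{id}\). Then \(S/r=\mathbb N_{>0}\) and \(F^r=\operatorname{id}\), so the square gives
\[
\operatorname{Fr}_r\operatorname{tr}^{\mathrm{res,int}}[E_X,f]=\operatorname{tr}^{\mathrm{res,int}}[P_r(E_X,f)]=\operatorname{tr}^{\mathrm{res,int}}[E_X,f^r].
\]
The right-hand side is \(\mathcal Z^{\TR}_{X,\varphi^r}\) by Definition~\ref{def:tr-trace-class}, since \(f^r=\Sigma^\infty_+\varphi^r\). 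This gives \eqref{eq:frobenius-transports-zeta}.

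For the ghost consequence, use \eqref{eq:restriction-frobenius-ghost}: \(\overline g_{n}\operatorname{Fr}_r\simeq\overline b_{r,n}\overline g_{rn}\). Alternatively, compute both sides directly: by the first part, the left side realizes to \(\operatorname{Tr}((\varphi^r)^n)=\operatorname{Tr}(\varphi^{rn})\), which is the right side. The direct route avoids needing to know that \(\overline b_{r,n}\) is compatible with \(\Theta\).

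\textbf{Main obstacle.} The only delicate point is the identification \(\operatorname{Fr}_{r,S}\) with target \(\mathbf E(\mathcal C_k,\operatorname{id}^r)=\mathbf E(\mathcal C_k,\operatorname{id})\). This requires the canonical equivalence \(\operatorname{id}^r\simeq\operatorname{id}\) to be the one used in identifying the classes. Since \(\operatorname{id}^r=\operatorname{id}\) strictly in the presentation \((A,\operatorname{id})\), I would note that \(x^{[r]}=x\) literally, and that \(q_{r,x}\) together with the block maps then provide the identification. The equality of classes on \(\pi_0\) follows.
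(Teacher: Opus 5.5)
Your proposal is correct and follows essentially the same route as the paper: the intrinsic ghost formula with \(P_m[E_X,f]=[E_X,f^m]\) gives the first identity, trace evaluation through the datum gives the realized ghosts and the logarithmic zeta function, and the Frobenius square with \(\mathbb N_{>0}/r=\mathbb N_{>0}\) gives Frobenius compatibility. As in the paper, the final ghost identity comes directly from the realized trace formula applied to \(\varphi^r\); your remark that \(\operatorname{id}^r=\operatorname{id}\) holds strictly is harmless bookkeeping that the paper leaves implicit.
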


\begin{proof}
For \(F=\operatorname{id}\), the intrinsic ghost formula gives
\eqref{eq:ghost-realizes-trace} because
\(P_m[E_X,f]=[E_X,f^m]\).  Trace evaluation sends its right
side to \(\operatorname{Tr}_{\SH(k)}(\varphi^m)\), by motivic Atiyah duality
\cite[Section~3 and Proposition~3.6]{HoyoisTrace}; this proves
\eqref{eq:gw-ghost-is-trace}, and summing proves
\eqref{eq:dlog-from-ghosts}.

Likewise the canonical iteration equivalence identifies
\(P_r[E_X,f]\) with \([E_X,f^r]\).  The Frobenius square of
Theorem~\ref{thm:restriction-trace-frobenius}, together with
\(\mathbb N_{>0}/r=\mathbb N_{>0}\), gives
\eqref{eq:frobenius-transports-zeta}.  Applying
\eqref{eq:gw-ghost-is-trace} yields
\(\operatorname{gh}^{GW}_n(\operatorname{Fr}_r\mathcal Z)
=\operatorname{Tr}(\varphi^{rn})\).
\end{proof}

\subsection{The local fixed-point formula as a realized ghost}

When the fixed loci are \'etale, Hoyois's quadratic refinement of the
Grothendieck--Lefschetz--Verdier trace formula computes each realized ghost.

\begin{proposition}
\label{prop:hoyois-local-ghost}
In the setting of Theorem~\ref{thm:tr-refines-bhsvw}, suppose that for
some \(m\geq1\) the fixed scheme \(X^{\varphi^m}\) is \'etale over \(k\), i.e.\
\(1-d\varphi^m_x\) is invertible at every fixed point \(x\).  Then
\begin{equation}\label{eq:hoyois-local-ghost}
 \operatorname{gh}^{GW}_m(\mathcal Z^{\TR}_{X,\varphi})
 =\sum_{x\in X^{\varphi^m}}
 \operatorname{Tr}_{\kappa(x)/k}
 \bigl\langle\det(1-d\varphi^m_x)\bigr\rangle,
\end{equation}
where \(\operatorname{Tr}_{\kappa(x)/k}\) is the Scharlau transfer of the
residue field extension.
\end{proposition}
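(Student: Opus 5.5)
The proof has two steps. First, Theorem~\ref{thm:tr-refines-bhsvw} gives, for every \(m\geq1\),
\[
 \operatorname{gh}^{GW}_m(\mathcal Z^{\TR}_{X,\varphi})=\operatorname{Tr}_{\SH(k)}(\varphi^m).
\]
This rests on the intrinsic ghost formula \eqref{eq:integral-intrinsic-ghost-formula} with \(F=\operatorname{id}\), where \(P_m[E_X,f]=[E_X,f^m]\), together with the trace-evaluation datum through Lemma~\ref{lem:hns-ramzi-comparison}. So the left side of \eqref{eq:hoyois-local-ghost} is the motivic categorical trace of the single endomorphism \(\psi:=\varphi^m\) of the smooth proper scheme \(X\), computed on the strongly dualizable object \(E_X=\Sigma^\infty_+X\). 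No restriction-system input is needed beyond this reduction.

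Second, we apply Hoyois's quadratic Lefschetz--Verdier formula to \(\psi\). The hypothesis says that \(1-d\psi_x\) is invertible at every fixed point, which is equivalent to the graph of \(\psi\) meeting the diagonal transversally. Hence the fixed scheme \(X^{\psi}\) is a finite étale \(k\)-scheme, namely a finite disjoint union of \(\Spec\kappa(x)\) with \(\kappa(x)/k\) finite separable. Hoyois's theorem \cite{HoyoisTrace} expresses \(\operatorname{Tr}_{\SH(k)}(\psi)\) as a sum of local contributions over the connected components of the fixed locus. For an étale fixed point, the local index is the Scharlau transfer along \(\kappa(x)/k\) of the rank-one form \(\langle\det(1-d\psi_x)\rangle\). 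Summing over \(x\in X^{\psi}\) gives \eqref{eq:hoyois-local-ghost}.

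The main obstacle is matching conventions with \cite{HoyoisTrace}. Three points need checking:
\begin{itemize}
\item whether the categorical trace there is computed for \(\Sigma^\infty_+\psi\) or for its dual/transpose, and whether that changes the trace (in a symmetric monoidal setting the trace of \(h\) equals that of \(h^\vee\));
\item whether the local index is \(\langle\det(1-d\psi_x)\rangle\) or \(\langle\det(d\psi_x-1)\rangle\), since these differ by \(\langle(-1)^{\dim X}\rangle\);
\item that the transfer is the Scharlau trace form, valid also in characteristic \(2\) and for separable residue extensions.
\end{itemize}
I would resolve these by citing the precise statement of Hoyois's Lefschetz formula for étale fixed loci. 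I would then check the sign convention on the simplest case: \(X=\Spec L\) with \(L/k\) finite separable and \(\psi=\operatorname{id}\). There \(d\psi_x\) acts on the zero tangent space, so the determinant is \(1\), and the trace is the Euler characteristic \(\operatorname{Tr}_{L/k}\langle1\rangle\). Both sides agree, which confirms the convention.
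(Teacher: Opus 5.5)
Your proposal is correct and is the paper's own argument: the paper proves the proposition by combining \eqref{eq:gw-ghost-is-trace} with Hoyois's \'etale fixed-point formula \cite[Corollary~1.10]{HoyoisTrace}, exactly as you do. One caveat: your sanity check on \(X=\Spec L\) cannot detect the \(\langle(-1)^{\dim X}\rangle\) ambiguity, because \(\dim X=0\) there. The convention \(\langle\det(1-d\varphi^m_x)\rangle\) must therefore be taken from the cited corollary itself; a positive-dimensional test such as \(z\mapsto -z\) on \(\mathbb P^1_{\mathbb R}\) separates the two conventions, giving signatures \(2\) and \(-2\).
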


\begin{proof}
This follows from \eqref{eq:gw-ghost-is-trace} and Hoyois's fixed-point
formula \cite[Corollary~1.10]{HoyoisTrace}.
\end{proof}

\subsection{Failure of standard big-Witt factorization}

For a commutative ring \(A\), let \(HA\) be its Eilenberg--Mac Lane spectrum
and \(\TR_{\mathrm{big}}(HA)\) the limit over all positive fixed-point levels,
as opposed to a \(p\)-typical limit.

For the big Witt ring
\[
 W(A)=\{(a_d)_{d\geq1}:a_d\in A\},
\]
the standard ghost map is
\[
 w=(w_m)_{m\geq1}\colon W(A)\longrightarrow A^{\mathbb N_{>0}},
 \qquad
 w_m((a_d)_d)=\sum_{d\mid m}d\,a_d^{m/d}.
\]
By \cite[Theorem~9.5 and Lemma~9.6]{CLMPZ}, there is a natural ring
isomorphism
\[
 I_A\colon W(A)\xrightarrow{\ \sim\ }
 \pi_0\TR_{\mathrm{big}}(HA)
\]
such that the standard \(m\)-th \(\TR\)-ghost satisfies
\(g_m^{\TR}\circ I_A=w_m\).  We write \(HGW(k)\) for the Eilenberg--Mac
Lane \(E_\infty\)-ring of the commutative ring \(GW(k)\).

\begin{corollary}
\label{cor:no-gw-zeta}
Let \(q\) be odd, choose a nonsquare \(u\in\mathbb F_q^\times\), and put
\(X=\operatorname{Spec}\mathbb F_{q^2}\).  Define
\[
 B_m=
 \begin{cases}
  0,&m\text{ odd},\\
  \langle1\rangle+\langle u\rangle,&m\text{ even}.
 \end{cases}
\]
For the relative \(q\)-power Frobenius \(\varphi\), any trace-evaluation
datum as above identifies the realized ghost sequence of
\(\mathcal Z^{\TR}_{X,\varphi}\) with \((B_m)_m\).  Consequently it is not a
standard big-Witt ghost sequence and admits no ghost-compatible
factorization through the standard big-\(\TR\) target.
\end{corollary}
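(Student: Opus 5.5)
The plan is to compute the realized ghosts directly from Theorem~\ref{thm:tr-refines-bhsvw}, and then obstruct Witt integrality already at level \(2\) using the discriminant in \(GW(\mathbb F_q)\). Put \(k=\mathbb F_q\) and \(L=\mathbb F_{q^2}\). By \eqref{eq:gw-ghost-is-trace}, for any trace-evaluation datum,
\[
 \operatorname{gh}^{GW}_m(\mathcal Z^{\TR}_{X,\varphi})=\operatorname{Tr}_{\SH(k)}(\varphi^m).
\]
This holds independently of the datum chosen, since only the defining property \eqref{eq:ramzi-zero-simplex-composite} enters through Lemma~\ref{lem:hns-ramzi-comparison}. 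So it suffices to compute the motivic traces of the iterates of \(\varphi\) on \(E_X\).

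\emph{Step 1: computing the traces.} The relative \(q\)-power Frobenius acts on \(X=\operatorname{Spec}L\) through the generator \(\sigma\) of \(\operatorname{Gal}(L/k)\). Thus \(\varphi^m\) corresponds to \(\sigma^m\), which is the identity for \(m\) even and \(\sigma\) for \(m\) odd.

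For \(m\) odd, the fixed scheme \(X^{\varphi^m}=X\times_{X\times X}\Gamma_\sigma\) is \(L\otimes_{L\otimes L}L\) along two different maps. This is empty, because \(L\otimes_kL\cong L\times L\) and the diagonal and graph of \(\sigma\) pick out different factors. The empty scheme is trivially étale, so Proposition~\ref{prop:hoyois-local-ghost} gives \(B_m=0\).

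For \(m\) even, \(X^{\varphi^m}=X\) is étale with the single point \(x\), \(\kappa(x)=L\), and \(\det(1-d\varphi^m_x)=1\) on the zero tangent space. Proposition~\ref{prop:hoyois-local-ghost} then gives the Scharlau transfer \(\operatorname{Tr}_{L/k}\langle1\rangle\), which is the trace form of \(L/k\). Write \(L=k(\sqrt u)\) and use the basis \(1,\sqrt u\). The trace form is diagonal, namely \(\langle2\rangle+\langle2u\rangle\). Nondegenerate forms over a finite field of odd characteristic are classified by rank and discriminant. Both \(\langle2\rangle+\langle2u\rangle\) and \(\langle1\rangle+\langle u\rangle\) have rank \(2\) and discriminant \(u\) modulo squares, so they agree. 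This identifies the ghost sequence with \((B_m)_m\).

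\emph{Step 2: not a Witt ghost sequence.} Suppose \(w((a_d)_d)=(B_m)_m\) for some \((a_d)_d\in W(GW(k))\).
\begin{itemize}
 \item From \(w_1\) we get \(a_1=B_1=0\).
 \item From \(w_2\) we get \(a_1^2+2a_2=2a_2=\langle1\rangle+\langle u\rangle\).
\end{itemize}
The discriminant \(GW(k)\to k^\times/k^{\times2}\) restricts to a homomorphism on the rank-\(2\) part, or equivalently one can use the additive description \(GW(\mathbb F_q)\cong\mathbb Z\oplus\mathbb Z/2\) with generators \(\langle1\rangle\) and \(\langle u\rangle-\langle1\rangle\), the latter of order \(2\) since \(2\langle u\rangle\cong2\langle1\rangle\). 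Every element of the form \(2a\) has zero \(\mathbb Z/2\)-component. However, \(\langle1\rangle+\langle u\rangle=2\langle1\rangle+(\langle u\rangle-\langle1\rangle)\) has nonzero \(\mathbb Z/2\)-component. This is a contradiction, so \((B_m)\notin w(W(GW(k)))\).

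\emph{Step 3: no ghost-compatible factorization.} Suppose the class factored through \(\pi_0\TR_{\mathrm{big}}(HGW(k))\) compatibly with ghosts, i.e.\ some element \(y\) there had \(g^{\TR}_m(y)=B_m\) for all \(m\). Then \(I_{GW(k)}^{-1}(y)\) would be a Witt vector with ghost sequence \((B_m)\), because \(g_m^{\TR}\circ I=w_m\). This contradicts Step~2.

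The main obstacle is Step~1: making the identification of \(\varphi^m\) with the Galois action, and the vanishing for odd \(m\), precise at the level of the motivic trace. Invoking Hoyois's formula through Proposition~\ref{prop:hoyois-local-ghost} handles both parities uniformly, provided one accepts that the empty fixed scheme counts as étale with empty sum. If that feels too formal, the fallback is to compute the odd case by base change to \(L\). There \(E_X\) becomes \(\one\oplus\one\), \(\sigma\) swaps the summands, and an off-diagonal endomorphism has trace \(0\); this detects \(0\) because \(GW(k)\to GW(L)\) is injective on the rank and discriminant data.
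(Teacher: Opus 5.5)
Your proof is correct and follows the paper's argument: Theorem~\ref{thm:tr-refines-bhsvw} plus Hoyois's fixed-point formula gives the traces, and the level-\(2\) Witt identity is ruled out by the discriminant, which is exactly your \(\mathbb Z/2\)-component. The one difference is that you compute the trace form \(\langle 2\rangle+\langle 2u\rangle\cong\langle1\rangle+\langle u\rangle\) by hand, where the paper cites BHSVW and Calle--Ginnett.

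Drop the fallback for odd \(m\), because it is wrong. Since \(u\) is a square in \(L=k(\sqrt u)\), the map \(GW(k)\to GW(L)\) kills \(\langle u\rangle-\langle1\rangle\), so base change cannot distinguish \(0\) from \(\langle u\rangle-\langle1\rangle\). It is your main argument, via the empty (hence étale) fixed locus, that actually proves \(B_m=0\).
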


\begin{proof}
The fixed-point calculation of \cite[Theorem~8.9, equation~(23), and
Example~8.12]{BHSVW}, with the transfer formula of
\cite[Appendix~A, Theorem~A.1]{CalleGinnett}, identifies the traces with
\((B_m)_m\); Theorem~\ref{thm:tr-refines-bhsvw} identifies those
traces with the realized ghosts.

If \((B_m)_m=w((a_d)_d)\), then the first two big-Witt ghost identities give
\[
 B_1=a_1,
 \qquad
 B_2=a_1^2+2a_2.
\]
Thus \(a_1=0\) and \(B_2=2a_2\).  But the determinant homomorphism
\[
 \det\colon GW(\mathbb F_q)\longrightarrow
 \mathbb F_q^\times/(\mathbb F_q^\times)^2\cong\mathbb Z/2
\]
vanishes on \(2\,GW(\mathbb F_q)\), whereas
\(\det(B_2)=u\) is nontrivial.  This contradiction proves the claim; the
statement about the standard big-\(\TR\) target follows from the
ghost-compatible isomorphism \(I_{GW(\mathbb F_q)}\).
\end{proof}

Under the standard identification
\(W(A)\cong(1+tA[[t]])^\times\) of
\cite[equations~(9.1)--(9.2)]{CLMPZ}, the big-Witt ghost map is the negative
logarithmic derivative; see also \cite[Lemma~9.7]{CLMPZ}.  Hence the corollary
also implies
that there is no
\begin{equation}\label{eq:no-gw-power-series}
 Z(t)\in1+t\,GW(\mathbb F_q)[[t]]
 \quad\text{with}\quad
 \frac{Z'(t)}{Z(t)}=\sum_{m\geq1}B_m\,t^{m-1}.
\end{equation}
Indeed, \(Z^{-1}\) would correspond to a big Witt vector with ghost
coordinates \((B_m)_m\).

A related obstruction appears in \cite[Remark~6.3]{BHSVW}: when \(k^\times\)
contains a nonsquare, \(GW(k)\) admits no compatible power structure.

After rationalization one may form
\[
 Z_{\mathbb Q}(t)=\exp\!\left(\sum_{m\geq1}
 \frac{\operatorname{Tr}_{\SH(k)}(\varphi^m)}{m}\,t^m\right)
 \in1+t\,GW(k)_{\mathbb Q}[[t]].
\]
Thus the obstruction above is integral.

\subsection{Relation to the semilinear toric class}

\begin{proposition}
\label{prop:toric-realizes-bhsvw}
Let \(X_\Sigma\) be a smooth proper toric variety with the finite toric
endomorphism \(f_A\) of
Theorem~\ref{thm:cones-fixed-by-iterates-trace}, and let
\(e\colon\Spec k\to BT\) classify the trivial torsor.  Choose the source
category \(\cC\) as in that theorem and assume that every object of
\(e^*\cC\) is dualizable.  Choose a small thick rigid symmetric-monoidal
subcategory \(\mathcal C_k\subset\SH(k)\) containing the full image
\(e^*\cC\), together with a trace-evaluation datum on its inclusion in
\(\SH(k)\).  The equivalence
\(e^*F_A\simeq e^*\) gives a morphism of endofunctor pairs
\[
 (e^*,\theta)\colon(\cC,F_A)\longrightarrow
 (\mathcal C_k,\operatorname{id}).
\]
Write \((e^*,\theta)_*\) for the induced map on
intrinsic restriction targets.  It carries the toric class to
\[
 (e^*,\theta)_*\bigl(\mathcal Z^{\mathrm{res,int}}_{\Sigma,A}\bigr)
 \in\pi_0R_{\mathbb N_{>0}}\bigl(\mathbf E(\mathcal C_k,
 \operatorname{id})\bigr),
\]
the \(\TR\)-trace class attached to the laced object
\((\Mcompact_k(X_\Sigma),f_{A,c}^*)\).  The realized ghosts of this class recover the
BHSVW logarithmic zeta function of \((X_\Sigma,f_A)\):
\begin{equation}\label{eq:toric-ghosts}
 \operatorname{gh}^{GW}_m
 \bigl((e^*,\theta)_*\mathcal Z^{\mathrm{res,int}}_{\Sigma,A}\bigr)
 =\operatorname{Tr}_{\SH(k)}(f_A^m),
 \qquad m\geq1.
\end{equation}
\end{proposition}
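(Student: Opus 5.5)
The plan is to combine three ingredients: naturality of the intrinsic restriction trace along morphisms of endofunctor pairs, the laced refinement of \(e^*\) from Corollary~\ref{cor:motivic-laced-realizations}, and the ghost formula of Corollary~\ref{cor:integral-intrinsic-ghost-formula}.

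First I would record that \((e^*,\theta)\) is a morphism in \(\operatorname{Fun}(B\mathbb N,\operatorname{Cat}^{\mathrm{perf}}_\infty)\). The functor \(e^*\colon\cC\to\mathcal C_k\) is exact. The equivalence \(\theta\colon e^*F_A\simeq e^*\) comes from \(B\psi_A\circ e\simeq e\), and this 2-cell determines a map of \(B\mathbb N\)-diagrams after rigidification via \eqref{eq:endofunctor-diagram-rigidification}. Theorem~\ref{thm:integral-module-envelope-trace-descent} says the trace is natural in such morphisms. Hence
\[
(e^*,\theta)_*\,\operatorname{tr}^{\mathrm{res,int}}_{\mathbb N_{>0}}[\Xi_{\Sigma,A}]
=\operatorname{tr}^{\mathrm{res,int}}_{\mathbb N_{>0}}\bigl[(e^*,\theta)_\sharp\Xi_{\Sigma,A}\bigr],
\]
where \((e^*,\theta)_\sharp\) is the induced functor on laced categories, sending \((x,\alpha)\) to \((e^*x,\,e^*\alpha\circ\theta_x^{-1})\). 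The source side of naturality uses the functoriality of \(\mathbf K^{\mathrm{cyc}}\) and of \(\kappa_S\) from Lemma~\ref{lem:module-envelope-truncated-source}.

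Second, I would identify \((e^*,\theta)_\sharp\Xi_{\Sigma,A}\) with \((\Mcompact_k(X_\Sigma),f_{A,c}^*)\). This is the same computation as in the proof of Corollary~\ref{cor:motivic-laced-realizations}. Exceptional base change along \(e\) identifies \(e^*\Mcompact_T(X_\Sigma)\) with \(\Mcompact_k(X_\Sigma)\), because the pullback of \([X_\Sigma/T]\to BT\) along the trivial torsor is \(X_\Sigma\). Naturality of the proper-pullback unit in Definition~\ref{def:semilinear-object} then shows that the twisted endomorphism is \(f_{A,c}^*\). Under this identification the pushed-forward class is exactly the \(\TR\)-trace class of this laced object, in the sense of Definition~\ref{def:tr-trace-class}, with \(\Mcompact_k(X_\Sigma)\) in place of \(E_X\).

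Third, for the ghosts I would apply \eqref{eq:integral-intrinsic-ghost-formula} with \(F=\operatorname{id}\). It gives \(\pi_0\overline g_m\) of the class as \(\operatorname{tr}^{\mathrm{lace}}(\Mcompact_k(X_\Sigma),(f_{A,c}^*)^m)\). Composition of proper pullbacks (Proposition~\ref{prop:semilinear-proper-pullback}) identifies \((f_{A,c}^*)^m\) with \((f_A^m)_c^*\). Lemma~\ref{lem:hns-ramzi-comparison} then converts this to \(\operatorname{Tr}_{\SH(k)}((f_A^m)_c^*\mid\Mcompact_k(X_\Sigma))\).

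Finally, since \(X_\Sigma\) is smooth and proper, \(\Mcompact_k(X_\Sigma)\simeq\Sigma^\infty_+X_\Sigma\), and \(f_c^*\) corresponds to the pushforward-induced map \(\Sigma^\infty_+f_A\). This uses \(f_!\simeq f_*\) for proper \(f\) together with motivic Atiyah duality as in \cite{HoyoisTrace}. The trace of the transpose agrees with the trace of the original map, which gives \eqref{eq:toric-ghosts}. Here I would need to check that the unit map \(\one\to f_*f^*\one\) dualizes to \(\Sigma^\infty_+f\); I would quote \cite[Section~3]{HoyoisTrace} for this.

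I expect the main obstacle to be the first step: upgrading the pseudonatural datum \((e^*,\theta)\) to an honest morphism of \(B\mathbb N\)-diagrams. One must also make sure that the intrinsic ghost and trace are natural for such morphisms, and not only for Morita equivalences. I would handle this by presenting \((e^*,\theta)\) through Proposition~\ref{prop:endofunctor-rigidification} as a strict morphism of spectral presentations, and then invoking the strict naturality of Lemma~\ref{lem:clmpz-strict-functoriality} before descent.
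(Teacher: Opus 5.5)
Your proposal is correct and follows the paper's proof. You push \(\Xi_{\Sigma,A}\) forward along the laced refinement of \(e^*\) (naturality in the non-invertible morphism \((e^*,\theta)\) is already built into the descended functor on \(\operatorname{Fun}(B\mathbb N,\operatorname{Cat}^{\mathrm{perf}}_\infty)\), so no extra strictification is needed), identify the image with \((\Mcompact_k(X_\Sigma),f_{A,c}^*)\) by exceptional base change, apply the ghost formula with \(F=\operatorname{id}\) and trace evaluation, and finish with motivic Atiyah duality and invariance of trace under duality.

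One correction in your last step: \(\Mcompact_k(X_\Sigma)=p_!\one\simeq p_*\one\) is the dual \((\Sigma^\infty_+X_\Sigma)^\vee\), not \(\Sigma^\infty_+X_\Sigma\) itself, and \(f_{A,c}^*\) is the transpose of \(\Sigma^\infty_+f_A\). That is exactly why your appeal to trace invariance under transposition, as in the paper, closes the argument.
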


\begin{proof}
By exceptional base change, the functor induced by \((e^*,\theta)\) sends
\(\Xi_{\Sigma,A}\) to \((\Mcompact_k(X_\Sigma),f_{A,c}^*)\).  Hence the induced
map on \(K_0\) sends \([\Xi_{\Sigma,A}]\) to the class of this object.  The
intrinsic ghost formula and categorical realization show that its \(m\)-th
realized ghost is \(\operatorname{Tr}((f_{A,c}^*)^m)\).
Motivic Atiyah duality \cite[Section~3, equations~(3.1)--(3.4)]{HoyoisTrace}
identifies this laced object with the dual of
\((\Sigma^\infty_+X_\Sigma,\Sigma^\infty_+f_A)\); trace is invariant under
duality, proving \eqref{eq:toric-ghosts}.
\end{proof}

\begin{remark}
In characteristic zero, the orbit-generated category \(\cC\) satisfies the
dualizability hypothesis.  Smooth pullback expresses the compactly supported
orbit motives as Thom twists of suspension spectra
\cite[Theorem~6.18(2)]{HoyoisSix}, so the finite orbit filtration has compact
stages.  Pullback along \(e\) preserves compact objects
\cite[Theorem~4.10(iii)]{KhanRavi}, and compact objects of \(\SH(k)\) are
strongly dualizable \cite[Theorem~3.2.1]{ElmantoKhan}.  Over a field of
exponential characteristic \(p>0\), the corresponding dualizability statement
holds after passage to \(\SH(-)[1/p]\).
\end{remark}

\appendix
\section{Model replacements and coherence verifications}
\label{app:technical-coherence}

This appendix gives the model-categorical results used in the descent and
Frobenius arguments.  It treats pseudonatural localization, admissible bar
models, change of enrichment, and the compatibility of ordered block
composition with the CLMPZ restriction maps.

\subsection{Rectification of pseudonatural comparisons}
\label{app:pseudonatural-rectification}

\begin{lemma}
\label{lem:pseudonatural-localization}
Let \(I\) be an ordinary category, and let \(\mathcal F,\mathcal G\) be normal
pseudofunctors from \(I\) to relative categories, stable model categories, or
spectral categories.  Assume that their transition functors preserve the
specified weak equivalences.  A pseudonatural transformation
\(\eta\colon\mathcal F\Rightarrow\mathcal G\) whose components preserve
weak equivalences induces a natural transformation between the associated
\(I\)-diagrams of \(\infty\)-categories.  For spectral categories, the
associated diagrams are obtained functorially from their relative categories
of cofibrant perfect modules.  If every component of \(\eta\) becomes an
equivalence after the specified DK localization, the induced transformation
is a natural equivalence.  These constructions are compatible with composition
of pseudonatural transformations and with postcomposition by functors defined
on the localized diagrams.
\end{lemma}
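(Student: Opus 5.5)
The approach is to pass through the Grothendieck construction. A normal pseudofunctor \(\mathcal F\colon I\to\mathrm{RelCat}\) yields a relative category \(\int_I\mathcal F\) over \(I\). Its marked morphisms are the pairs \((u\colon i\to j,\ \phi\colon\mathcal F(u)x\to y)\) with \(\phi\) a weak equivalence. Since the transition functors preserve weak equivalences, the DK localization \((\int_I\mathcal F)[W^{-1}]\to N(I)\) is a cocartesian fibration. Its straightening is the associated \(I\)-diagram \(\mathcal F_\infty\colon N(I)\to\operatorname{Cat}_\infty\). For the model-categorical and stable variants we first restrict to cofibrant objects. In the spectral case we replace each \(\mathcal F(i)\) by its relative category of cofibrant perfect modules, with extension of scalars as transition functors. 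Because extension of scalars preserves cofibrant perfect modules and equivalences between them, this reduces that case to the relative-category one functorially in \(I\).

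Next, a pseudonatural transformation \(\eta\colon\mathcal F\Rightarrow\mathcal G\) induces a functor \(\int\eta\colon\int_I\mathcal F\to\int_I\mathcal G\) over \(I\). On objects it is \((i,x)\mapsto(i,\eta_ix)\). On a morphism \((u,\phi)\) it gives \(\bigl(u,\ \eta_j(\phi)\circ\eta_u^{-1}\bigr)\), where \(\eta_u\colon\eta_j\mathcal F(u)\cong\mathcal G(u)\eta_i\) is the pseudonaturality isomorphism. The unit and composition axioms of \(\eta\), together with normality, give strict functoriality. Since each \(\eta_i\) preserves weak equivalences and \(\eta_u\) is an isomorphism, \(\int\eta\) preserves the markings. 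It also carries the cocartesian lifts \((u,\mathrm{id})\) to cocartesian morphisms, namely isomorphisms composed with cocartesian lifts. After localization it is therefore a map of cocartesian fibrations, and straightening turns it into a natural transformation \(\mathcal F_\infty\Rightarrow\mathcal G_\infty\). The cocartesian-preservation check is the step I expect to need care. The lifts in the localized total category are images of \((u,\mathrm{id})\), but only after one knows that localizing fiberwise computes the localized total space. I would cite the standard result (Hinich, or Lurie's marked-simplicial-set straightening) that for a pseudofunctor whose transition maps preserve weak equivalences, the localization of the Grothendieck construction is the Grothendieck construction of the fiberwise localizations.

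If each \(\eta_i\) becomes an equivalence after localization, then the induced map of cocartesian fibrations is a fiberwise equivalence and hence an equivalence. Its straightening is then a natural equivalence.

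Compatibility with composition follows because \(\int(\eta'\circ\eta)=\int\eta'\circ\int\eta\) holds strictly: the pseudonaturality constraints of the composite are the pasted constraints. Straightening is functorial. For postcomposition by a functor \(\Psi\) on localized diagrams, the induced transformation is \(\Psi\) applied to \(\mathcal F_\infty\Rightarrow\mathcal G_\infty\) by definition, so compatibility is just naturality of whiskering in \(\operatorname{Fun}(N(I),\operatorname{Cat}_\infty)\).
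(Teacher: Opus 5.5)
Your route is the paper's route: the Grothendieck construction, a functor \(\int\eta\) over \(N(I)\) that carries cocartesian edges to cocartesian edges because the constraints \(\eta_u\) are invertible, Hinich's localization of families (his Proposition~2.1.4), and straightening. Your explicit formula \((u,\eta_j(\phi)\circ\eta_u^{-1})\) and the strict identity \(\int(\eta'\circ\eta)=\int\eta'\circ\int\eta\) are correct. They spell out what the paper calls functoriality of the Grothendieck construction.

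However, the step that defines \(W\) fails as you wrote it. You mark, and then invert, every pair \((u,\phi)\) with \(\phi\) a weak equivalence, for arbitrary \(u\). This class contains the cocartesian lifts \((u,\mathrm{id}_{\mathcal F(u)x})\). The projection to \(N(I)\) sends such a lift to \(u\), so it does not descend to \((\int_I\mathcal F)[W^{-1}]\) unless those \(u\) are invertible. Consequently there is no cocartesian fibration \((\int_I\mathcal F)[W^{-1}]\to N(I)\) to straighten. For example, take \(I=[1]\) with one-point fibers: then \(\int_I\mathcal F=[1]\), and its localization at the nonidentity arrow is contractible. In general, localizing at cocartesian edges computes the colimit of the diagram, not the family.

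The class you wrote down is the class of edges that become cocartesian \emph{after} localization. That is the right marking in the sense of Lurie's marked simplicial sets, but it is not the class to invert. Take \(W\) to be the fiberwise weak equivalences \((\mathrm{id}_i,\phi)\) instead. The paper also adds cocartesian lifts of isomorphisms of \(I\), which are already invertible in the total category, so this changes nothing. This \(W\) is exactly the input to Hinich's result, and the hypothesis that the transition functors preserve weak equivalences is what makes it apply. Moreover, \(\int\eta\) preserves this \(W\) because \(\eta_{\mathrm{id}_i}=\mathrm{id}\). With this correction the rest of your argument goes through unchanged and coincides with the paper's.

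A minor point: in the model-category case, do not restrict to cofibrant objects. The hypotheses say only that the transition functors preserve weak equivalences, not cofibrancy, so the restricted assignment need not be a pseudofunctor. Use the whole underlying relative category, as the paper does; its Dwyer--Kan localization is already the underlying \(\infty\)-category.
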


\begin{proof}
For stable model categories, pass to the underlying relative categories.  For
spectral categories, first pass functorially to cofibrant perfect modules with
their weak equivalences; a DK equivalence induces an equivalence after this
localization.  It therefore suffices to treat relative categories.  The
cocartesian straightening statement below is the dual of the cartesian form
of \cite[Theorem~3.2.0.1]{LurieHTT}.

The Grothendieck constructions of the two normal pseudofunctors are
cocartesian fibrations over \(NI\); the compositors are precisely the
cocartesian composition data.  A pseudonatural transformation gives a
functor between the total categories over \(NI\), and because its naturality
constraints are invertible, this functor carries cocartesian edges to
cocartesian edges.
Mark \(NI\) by its equivalences, and mark each total category by the
fiberwise weak equivalences together with the cocartesian lifts of the marked
base edges.  Fiberwise Dwyer--Kan localization then yields a map of
cocartesian fibrations over \(NI\) whose fibers are the localized fibers;
this is the localization of families of \(\infty\)-categories of
\cite[Proposition~2.1.4]{HinichDK}.  Its hypotheses hold by the assumption on
the transition functors; pseudofunctorial inverses over base equivalences
become equivalences after localization.  Straightening--unstraightening
\cite[Theorem~3.2.0.1]{LurieHTT} then produces the asserted natural
transformation of \(I\)-diagrams.  A fiberwise DK equivalence becomes a
fiberwise equivalence, hence a natural equivalence, since equivalences in the
functor \(\infty\)-category are detected objectwise.  Functoriality of the
Grothendieck construction gives compatibility with composition, and
postcomposition is formal in the resulting functor \(\infty\)-categories.
\end{proof}

\begin{lemma}
\label{lem:pseudonatural-pair-morita-localization}
Let \(I\) be an ordinary category.  A normal pseudofunctor from \(I\) to
spectral category--bimodule pairs determines, by derived perfect-module
localization and derived transport of the bimodules, an \(I\)-diagram in
\(\operatorname{Pair}^{\operatorname{perf}}_\infty\).  A pseudonatural
transformation of such diagrams determines a natural transformation after
localization.  If every component is a coefficient Morita equivalence, the
resulting transformation is a natural equivalence.  The construction is
compatible with composition and with postcomposition by coefficient
\(\THH\).  If all spectral pairs involved are admissible, it is also
compatible with postcomposition by the restriction-system functor
\(\mathbf E\) of
Proposition~\ref{prop:admissible-pair-restriction-system}.  This functor
descends across coefficient Morita localization by
Theorem~\ref{thm:admissible-pair-morita-invariance}.
\end{lemma}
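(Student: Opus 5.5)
The plan is to reduce everything to Lemma~\ref{lem:pseudonatural-localization}, in three steps.

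\emph{Step 1: the diagram in \(\operatorname{Pair}^{\operatorname{perf}}_\infty\).} Given a normal pseudofunctor \(i\mapsto(A_i,M_i)\) of spectral pairs, I first pass functorially to cofibrant perfect modules and derived extensions of the bimodules. The transition maps \(u_!\) preserve weak equivalences, and the compositors are the coherent isomorphisms \(v_!u_!\cong(vu)_!\), together with the induced isomorphisms of transported bimodules. I then form the Grothendieck construction of the pseudofunctor valued in relative categories of pairs; this is a category over \(I\) whose fibers carry the bimodule data. Applying Hinich's localization of families and straightening, as in the proof of Lemma~\ref{lem:pseudonatural-localization}, gives a functor \(I\to\operatorname{Cat}^{\mathrm{perf}}_\infty\). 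This functor is lifted along the bicartesian fibration \(\operatorname{Pair}^{\operatorname{perf}}_\infty\to\operatorname{Cat}^{\mathrm{perf}}_\infty\) by the derived coefficient maps. Normality ensures identities go to identities.

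\emph{Step 2: transformations and equivalences.} A pseudonatural transformation gives a functor of total categories that preserves cocartesian edges, because its constraints are invertible. It therefore gives a natural transformation after localization. If every component is a coefficient Morita equivalence, then the proof of Theorem~\ref{thm:coefficient-thh-morita-invariance} shows that each component is an equivalence in \(\operatorname{Pair}^{\operatorname{perf}}_\infty\). Equivalences in functor categories are detected objectwise, so the transformation is a natural equivalence. Compatibility with composition comes from functoriality of the Grothendieck construction. Postcomposition with coefficient \(\THH\) is then formal, since \(\THH\) is a functor on \(\operatorname{Pair}^{\operatorname{perf}}_\infty\).

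\emph{Step 3: compatibility with \(\mathbf E\), the main obstacle.} The functor \(\mathbf E\) is defined point-set on admissible pairs, not on \(\operatorname{Pair}^{\operatorname{perf}}_\infty\), so I must first show that it factors through Morita localization. I would localize the category of admissible pairs at coefficient Morita equivalences. Theorem~\ref{thm:admissible-pair-morita-invariance} makes \(\mathbf E\) invert these maps, so it descends to this localization. I then compare this localization with the relevant part of \(\operatorname{Pair}^{\operatorname{perf}}_\infty\) via the functor of Step 1.

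For a pseudofunctor of admissible pairs, I apply \(Q^{\mathrm{cat}}\)-type replacements, which are ordinary functors, to the Grothendieck construction. The descended \(\mathbf E\) can then be evaluated on the straightened diagram. Its values on the pseudonatural components agree with the strict point-set values, up to the rectification equivalences. Checking that these identifications respect the compositors is where I expect the real work to lie. I would handle it by noting that the norm-diagonal restriction maps are natural for strict maps by Proposition~\ref{prop:admissible-pair-restriction-system}. The pseudofunctoriality then enters only through invertible 2-cells, and these become identities after localization.
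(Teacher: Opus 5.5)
Your proposal is correct and follows the paper's proof. It uses the Grothendieck construction with the compositors as cocartesian data, then Hinich's fiberwise localization and straightening exactly as in Lemma~\ref{lem:pseudonatural-localization}. It identifies coefficient Morita equivalences with equivalences in \(\operatorname{Pair}^{\operatorname{perf}}_\infty\) detected objectwise, and for \(\mathbf E\) it uses Theorem~\ref{thm:admissible-pair-morita-invariance} plus the universal property of localization.

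The paper's proof stops at that last point. Your extra \(Q^{\mathrm{cat}}\) replacement in Step~3 is unnecessary, since admissibility is a hypothesis of the lemma. Also, the invertible 2-cells are not made into identities by localization; they become coherent equivalences.
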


\begin{proof}
By \cite[Definition~2.2]{HNS}, the assignment
\(\cC\mapsto\operatorname{Bimod}(\cC)\) classifies a cartesian and
cocartesian fibration whose total \(\infty\)-category is the category of
laced categories.  We use this total category as
\(\operatorname{Pair}^{\operatorname{perf}}_\infty\).  Perfect-module
localization sends a spectral pair to an object of this total category: the
category is sent to \(\operatorname{Perf}(A)\), and the coefficient is derived
left Kan extended from representables.  The invertible compositors of a
normal pseudofunctor supply the cocartesian composition data, while the
constraints of a pseudonatural transformation give a map between the
resulting cocartesian fibrations over \(NI\).  Fiberwise Dwyer--Kan
localization and straightening, exactly as in
Lemma~\ref{lem:pseudonatural-localization}, therefore produce the
asserted diagram and natural transformation in
\(\operatorname{Pair}^{\operatorname{perf}}_\infty\).

By Definition~\ref{def:coefficient-morita-equivalence}, a component
is a coefficient Morita equivalence precisely when the corresponding edge in
this total \(\infty\)-category is an equivalence: its underlying functor is an
equivalence of idempotent-complete stable \(\infty\)-categories, and its
cocartesian coefficient transport is an equivalence.  Equivalences in a
functor \(\infty\)-category are detected objectwise.  Compatibility with
composition and with coefficient \(\THH\) follows from functoriality in the
localized total category.  On admissible point-set models,
Proposition~\ref{prop:admissible-pair-restriction-system} defines
\(\mathbf E\), and
Theorem~\ref{thm:admissible-pair-morita-invariance} shows that it inverts
coefficient Morita equivalences.  It therefore factors through their
localization, proving the final assertion.
\end{proof}

\subsection{Admissible spectral pairs}
\label{app:pair-replacement-details}

The intrinsic argument of
Theorem~\ref{thm:coefficient-thh-morita-invariance} applies to
stable categories and categorical bimodules.  To compare it with the CLMPZ
cyclic models, we isolate the cofibrancy hypotheses under which their
ordinary bars represent the corresponding derived relative tensor products.

The admissible spectral pairs and their morphisms were introduced in
\S\ref{subsec:twistings-graph-coefficients}.  For \(s>1\), we parenthesize
the ordinary power recursively by
\[
 M^{\odot_A s}=B(M^{\odot_A(s-1)},A,M).
\]
At every stage the newly attached factor \(M\) is pointwise cofibrant, so
\cite[Definition~4.7]{CLMPZ} identifies this ordinary two-sided bar with the
corresponding derived coend.  Thus ordinary bar composition on an admissible
pair presents relative tensor product in the Morita \(\infty\)-category.  The
pointed Dennis-trace model also uses the Reedy-cofibrancy statement of
Lemma~\ref{lem:compatible-enhancement-bar-models}.

\begin{lemma}
\label{lem:pointwise-cofibrant-graph-multibar}
Let \(A\) be a pointwise-cofibrant spectral category and let \(M\) be a
pointwise-cofibrant \(A\)-bimodule.  In the pointed case, assume in addition
the pointed admissibility clauses of
\S\ref{subsec:twistings-graph-coefficients}.  Then \((A,M)\) is an
admissible spectral pair.  For every \(s\geq1\), the recursively
parenthesized ordinary bar power represents the derived coefficient power:
\begin{equation}\label{eq:admissible-ordinary-derived-power}
 M^{\odot_A s}\xrightarrow{\ \simeq\ }
 M^{\mathbb L\odot_A s}.
\end{equation}
After forgetting the \(C_s\)-action, the CLMPZ unwinding equivalence is
natural in \((A,M)\) and gives
\begin{equation}\label{eq:admissible-unwinding-derived}
 V_s\THH^{(s)}(A;M)
 \simeq \THH(A;M^{\odot_A s})
 \simeq \THH(A;M^{\mathbb L\odot_A s}).
\end{equation}
Moreover, \(\THH^{(s)}(A;M)\) is cofibrant as an orthogonal
\(C_s\)-spectrum, and the norm diagonal supplies the restriction
equivalences.  In particular, if \(f\colon A\to A\) is a spectral
endofunctor of a pointwise-cofibrant spectral category, then
\((A,A(f-,-))\), regarded as an unpointed pair, is admissible and all the
preceding conclusions apply to it.
\end{lemma}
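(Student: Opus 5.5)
The argument is by induction on \(s\), carried out on the recursively parenthesized bar \(M^{\odot_A s}=B(M^{\odot_A(s-1)},A,M)\), followed by an appeal to the CLMPZ cofibrancy and unwinding statements.

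\emph{Admissibility.} This is immediate from the definition in \S\ref{subsec:twistings-graph-coefficients}. The hypotheses are exactly pointwise cofibrancy of the mapping spectra and of the coefficient values, together with the pointed clauses when they apply. For the graph pair \((A,A(f-,-))\) in the unpointed sense, each value \(A(fa,b)\) is a mapping spectrum of \(A\), so it is cofibrant.

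\emph{Derived powers.} Induct on \(s\); the case \(s=1\) is trivial. For the step, assume \(M^{\odot_A(s-1)}\to M^{\mathbb L\odot_A(s-1)}\) is a pointwise equivalence between pointwise-cofibrant bimodules. The ordinary two-sided bar \(B(-,A,M)\) is computed degreewise by smash products with the cofibrant factor \(M\) and the cofibrant spectra \(A(a,b)\). Such smash products preserve weak equivalences and cofibrancy, and realization of Reedy-cofibrant simplicial spectra does the same. Hence \(M^{\odot_A s}\) is pointwise cofibrant, and the bar is invariant under the inductive equivalence. \cite[Definition~4.7]{CLMPZ} identifies \(B(N,A,M)\) with the derived coend whenever \(N\) and \(M\) are pointwise cofibrant and \(A\) is pointwise cofibrant. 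Associativity of derived relative tensor products then gives \eqref{eq:admissible-ordinary-derived-power}. The point I would have to check is the Reedy cofibrancy of the simplicial bar, meaning that the degeneracies are cofibrations. I would take this from the unit cofibrations \(\mathbb S\to A(a,a)\) in the pointed case, or from Lemma~\ref{lem:compatible-enhancement-bar-models}. In the unpointed case I would use the fat realization instead, which is homotopy invariant without any Reedy condition.

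\emph{Unwinding and equivariant cofibrancy.} By \cite[Proposition~7.6]{CLMPZ}, choosing slot \(0\) gives a natural isomorphism, or at least an equivalence, \(V_s\THH^{(s)}(A;M)\simeq\THH(A;M^{\odot_A s})\). This is Definition~\ref{def:marked-unwinding} with a general coefficient in place of the graph coefficient. It is natural because it only reorders factors, so it commutes with strict maps of pairs. Composing with \(\THH\) applied to \eqref{eq:admissible-ordinary-derived-power} yields \eqref{eq:admissible-unwinding-derived}; here I use that cyclic-bar \(\THH\) of pointwise-cofibrant coefficients preserves pointwise equivalences. For the equivariant statements, I would invoke the cofibrancy lemma preceding \cite[Proposition~2.19]{CLMPZ}. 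Each simplicial level of \(\THH^{(s)}(A;M)\) is a wedge of \(C_s\)-indexed smash products of cofibrant spectra, and these are cofibrant orthogonal \(C_s\)-spectra. Realization preserves this cofibrancy. The same cofibrancy makes point-set geometric fixed points derived, so the norm diagonal of \cite[Proposition~2.19]{CLMPZ} supplies the restriction equivalences \(\Phi^{C_r}\THH^{(rs')}\simeq\THH^{(s')}\).

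I expect the main obstacle to be this equivariant cofibrancy in the unpointed case, because the usual argument uses the unit cofibrations. I would try first to avoid it by passing to the fat or reduced model \(Q_{\mathrm{red}}\) of Theorem~\ref{thm:reduced-bar-replacement}, which is a levelwise equivalent model. That model lets me apply the same wedge decomposition.
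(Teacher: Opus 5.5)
Your overall skeleton is the paper's: induction on the left-nested bar, \cite[Definition~4.7]{CLMPZ} plus the associator, \cite[Proposition~7.6]{CLMPZ} for unwinding, and cofibrancy of the graph values as mapping spectra. The problem is that both places where you add arguments of your own break in the unpointed case. That is exactly the case the final clause of the lemma needs, for instance the graph pair of \(Q^{\mathrm{cat}}A\) in Proposition~\ref{prop:graph-coefficient-functor}, where only pointwise cofibrancy is available.

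\emph{The derived-power step.} Your induction carries the hypothesis that \(M^{\odot_A(s-1)}\) is pointwise cofibrant. You can only get this from Reedy cofibrancy, that is, from cofibrations \(\mathbb S\to A(a,a)\), and these are not assumed for unpointed pairs. Switching to fat realization does not rescue it. The lemma is about the ordinary bar power, and comparing fat with ordinary realization needs the same degeneracy condition. The missing observation is that the left factor need not be cofibrant at all. The recursive parenthesization \(B(M^{\odot_A(s-1)},A,M)\) is chosen so that the newly attached factor is always the original \(M\). The paper reads \cite[Definition~4.7]{CLMPZ} as identifying this bar with the derived relative tensor product as soon as \(A\) and that attached copy of \(M\) are pointwise cofibrant. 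The inductive equivalence and the associator of \cite[Definition~4.10]{CLMPZ} then give \eqref{eq:admissible-ordinary-derived-power} without any cofibrancy hypothesis on \(M^{\odot_A(s-1)}\).

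\emph{Equivariant cofibrancy.} Your direct argument (levelwise wedges of induced norms, then ``realization preserves cofibrancy'') again needs Reedy cofibrancy. The fallback to \(Q_{\mathrm{red}}\) fails for two reasons. First, \(Q_{\mathrm{red}}\) of Theorem~\ref{thm:reduced-bar-replacement} is defined only on zero-reduced spectral Waldhausen categories, not on an arbitrary admissible pair \((A,M)\). Second, cofibrancy of \(\THH^{(s)}(A;M)\) is a point-set property of that particular object, and it cannot be deduced from a weakly equivalent replacement. The paper does not argue this by hand. It quotes \cite[Proposition~7.4]{CLMPZ} (compare \cite[Remark~6.18]{CLMPZ}), which supplies cofibrancy of the successive derived realizations and the norm-diagonal isomorphisms \(\Phi^{C_r}\THH^{(rs)}(A;M)\cong\THH^{(s)}(A;M)\) directly from pointwise cofibrancy of \(A\) and \(M\), with no unit hypothesis.

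With these two replacements, your outline becomes the paper's proof.
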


\begin{proof}
For \(s=1\) there is nothing to prove.  Suppose that the assertion is known
for \(s-1\).  Since \(A\) and the newly attached copy of \(M\) are pointwise
cofibrant, \cite[Definition~4.7]{CLMPZ} identifies
\(B(M^{\odot_A(s-1)},A,M)\) with the derived relative tensor product of its
two bimodule factors.  The inductive hypothesis and associativity of relative
tensor product in the Morita \(\infty\)-category, modeled by the associator
of \cite[Definition~4.10]{CLMPZ}, give
\eqref{eq:admissible-ordinary-derived-power}.

Proposition~7.6 of \cite{CLMPZ} gives the first equivalence in
\eqref{eq:admissible-unwinding-derived}; the second is induced by
\eqref{eq:admissible-ordinary-derived-power}.  Proposition~7.4 of
\cite{CLMPZ} gives the cofibrancy and norm-diagonal assertions for the
successive derived realizations; compare \cite[Remark~6.18]{CLMPZ}.  Finally,
the values of \(A(f-,-)\) are mapping
spectra of \(A\), so the graph bimodule is pointwise cofibrant.
\end{proof}

\begin{proposition}
\label{prop:admissible-pair-restriction-system}
The assignment
\[
 (A,M)\longmapsto\{\THH^{(n)}(A;M)\}_{n\geq1}
\]
defines a functor
\[
 \mathsf{Pair}^{\operatorname{adm}}
 \longrightarrow\operatorname{RSys}^{g}.
\]
\end{proposition}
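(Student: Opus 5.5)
The plan is to build the functor levelwise from the CLMPZ cyclic bar construction and then check that the restriction arrows are natural. On objects, an admissible pair \((A,M)\) is sent to the orthogonal \(C_n\)-spectra \(\THH^{(n)}(A;M)\). These are the geometric realizations of the cyclic multibar with \(n\) cyclically ordered coefficient slots. By Lemma~\ref{lem:pointwise-cofibrant-graph-multibar} they are cofibrant \(C_n\)-spectra, and the norm diagonal of \cite[Proposition~2.19]{CLMPZ} supplies maps \(\Phi^{C_r}\THH^{(rs)}(A;M)\to\THH^{(s)}(A;M)\) that are equivalences when \(\Phi^{C_r}\) is left-derived. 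Cofibrancy means the point-set geometric fixed points already compute the derived ones. So each admissible pair yields a genuine restriction system in the sense of \cite[Definition~8.1]{CLMPZ}, as in \cite[Proposition~7.4 and Example~8.7]{CLMPZ}.

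On morphisms, a map \((u,\eta)\colon(A,M)\to(B,N)\) acts in each multisimplicial degree. On a summand
\[
A(a_0,a_1)\wedge\cdots\wedge M(\cdot,\cdot)\wedge\cdots
\]
it applies \(u\) to every mapping-spectrum factor and \(\eta\) to every coefficient factor. This respects all face and degeneracy maps because \(u\) is a spectral functor and \(\eta\) a bimodule map, so composition and the actions are preserved strictly. It also commutes with the \(C_n\)-action that rotates coefficient slots, since the same \(\eta\) is applied in every slot. Functoriality for composites and identities is immediate from the formula. In the pointed case, strict preservation of the chosen zero object and \(M(0,a)=0=M(a,0)\) make the wedge summands indexed by \(0\) go to the zero summand. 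The map therefore descends to the reduced model.

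The step I expect to be the main obstacle is the commutation of these level maps with the restriction arrows. Each restriction arrow is the composite of the diagonal identification of \(\Phi^{C_r}\) of an \(r\)-fold smash power with the norm-diagonal map. I would handle it at the point-set level before realization. In each degree, \(\Phi^{C_r}\) of the \(C_r\)-indexed smash of the repeated block is computed by the diagonal, and the norm diagonal of \cite[Proposition~2.19]{CLMPZ} is a natural transformation of functors of the input smash factors. The square with \(u\) and \(\eta\) applied factorwise then commutes, by naturality in the cofibrant factors. Cofibrancy of the factors is exactly what admissibility guarantees, and it is used so that the point-set \(\Phi^{C_r}\) is the one to which the natural diagonal applies.

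Finally, naturality passes through the subdivision and realization used to build the \(C_n\)-action, because both are functorial. Compatibility for composable restriction arrows \(\Phi^{C_r}\Phi^{C_{r'}}\) follows from the associativity of the norm diagonal, which is again natural. This gives a functor into the ordinary category \(\operatorname{RSys}^g\), whose morphisms are maps of pre-restriction systems.
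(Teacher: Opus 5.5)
Your proposal is correct and takes essentially the same route as the paper: cofibrancy and the natural isomorphisms $\Phi^{C_r}\THH^{(rs)}(A;M)\cong\THH^{(s)}(A;M)$ from \cite[Proposition~7.4 and Example~8.7]{CLMPZ} make each value a genuine restriction system, morphisms act factorwise on the cyclic bar, and naturality of the norm diagonal gives compatibility with all restriction arrows. The paper also explicitly invokes functorial genuine suspension and prolongation at the end to obtain the functor, a step your proposal leaves out but which is formal.
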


\begin{proof}
The pointwise-cofibrancy conditions in
\S\ref{subsec:twistings-graph-coefficients} are precisely the input
hypotheses of \cite[Definition~7.3 and Example~8.7]{CLMPZ}.
Proposition~7.4 of \cite{CLMPZ} makes every cyclic level a cofibrant
orthogonal equivariant spectrum and gives natural isomorphisms
\[
 \Phi^{C_r}\THH^{(rs)}(A;M)\cong\THH^{(s)}(A;M)
 \qquad(r,s\geq1).
\]
The raw geometric fixed points therefore compute the left-derived ones, and
Example~8.7 assembles the levels into a genuine restriction system.  Maps of
admissible pairs act on every mapping-spectrum and
coefficient factor in the cyclic bar; naturality of the norm diagonal makes
these level maps compatible with all divisibility arrows.  Functorial genuine
suspension and prolongation give the asserted functor.
\end{proof}

\subsection{Choice of spectral enrichment}
\label{app:change-enrichment-details}

\begin{lemma}
\label{lem:orthogonal-morita-model}
Let \(\operatorname{SpCat}^{\Sigma}\) be the category of small categories
enriched in symmetric spectra, equipped with the DK model structure used in
\cite[Theorem~2.2]{BGT}.  There is an ordinary functor
\begin{equation}\label{eq:orthogonal-realization-functor}
 \mathbb O\colon
 \operatorname{SpCat}^{\Sigma}
 \longrightarrow
 \operatorname{SpCat}^{\mathcal O}
\end{equation}
with the following properties.
\begin{enumerate}
\item There is a natural equivalence
\begin{equation}\label{eq:orthogonal-realization-perfect}
 \operatorname{Perf}(\mathbb O A)
 \simeq \operatorname{Perf}(A)
\end{equation}
after Morita localization.  In particular, \(\mathbb O\) carries Morita
equivalences to Morita equivalences.
\item The functor \(\mathbb O\) carries strict diagrams, strict commutative
squares, and literal power identities to diagrams, squares, and identities
of orthogonal spectral categories holding on the nose.
\item For every small ordinary category \(I\), if
\(W_{\mathrm{Mor}}\) denotes the objectwise Morita equivalences, then
\begin{equation}\label{eq:symmetric-presentation-diagrams}
 N\operatorname{Fun}(I,\operatorname{SpCat}^{\Sigma})
 [W_{\mathrm{Mor}}^{-1}]
 \simeq
 \operatorname{Fun}
 \bigl(NI,\operatorname{Cat}^{\operatorname{perf}}_\infty\bigr).
\end{equation}
\end{enumerate}
Thus a rectified symmetric-spectral diagram determines a strict
orthogonal-spectral diagram to which the CLMPZ construction applies.
\end{lemma}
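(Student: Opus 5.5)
The plan is to define \(\mathbb O\) as the change-of-enrichment functor induced by the strong symmetric monoidal left Quillen functor \(\mathbb P\colon\operatorname{Sp}^{\Sigma}\to\operatorname{Sp}^{\mathcal O}\), the prolongation from symmetric to orthogonal spectra, precomposed with a functorial cofibrant replacement of mapping spectra. Concretely, I would take the BGT pointwise-cofibrant replacement \(Q\colon\operatorname{SpCat}^{\Sigma}\to\operatorname{SpCat}^{\Sigma}\). It is an ordinary functor with a natural DK equivalence \(QA\to A\). I then apply \(\mathbb P\) to every mapping spectrum of \(QA\). Because \(\mathbb P\) is strong symmetric monoidal, composition and units transport on the nose. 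Because both \(Q\) and \(\mathbb P\) are ordinary functors, \(\mathbb O=\mathbb P_*Q\) preserves composition of spectral functors literally. This gives (2) directly: any strict diagram, strictly commuting square, or identity such as \(f^m=f\circ\cdots\circ f\) is sent to the corresponding strict datum.

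For (1), I would use that \(\mathbb P\dashv\mathbb U\) is a Quillen equivalence of stable monoidal model categories. On pointwise-cofibrant spectral categories, base change along it then induces a DK-style equivalence of module theories. Concretely, there is a natural Quillen equivalence between right \(QA\)-modules in symmetric spectra and right \(\mathbb O A\)-modules in orthogonal spectra. This is the change-of-enrichment result of Schwede--Shipley applied to pointwise-cofibrant enriched categories. It restricts to compact objects, giving \(\Perf(\mathbb O A)\simeq\Perf(QA)\simeq\Perf(A)\), naturally in \(A\) because every map in the chain is natural. Morita equivalences are defined by \(\Perf\) becoming an equivalence, so by two-out-of-three they are preserved.

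For (3), I would first rectify. The projective model structure on \(\operatorname{Fun}(I,\operatorname{SpCat}^{\Sigma})\), built on the DK model structure, presents the functor \(\infty\)-category \(\operatorname{Fun}(NI,N\operatorname{SpCat}^{\Sigma}[W_{\mathrm{DK}}^{-1}])\); this is the standard rectification of homotopy-coherent diagrams in a combinatorial model category. I would then localize further at objectwise Morita equivalences. BGT identify \(N\operatorname{SpCat}^{\Sigma}[W_{\mathrm{Mor}}^{-1}]\simeq\operatorname{Cat}^{\operatorname{perf}}_\infty\) as a left Bousfield localization (BGT Theorems 3.1, 4.23, Proposition 4.28). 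Objectwise left Bousfield localization of a projective diagram category presents \(\operatorname{Fun}(NI,-)\) of the localized \(\infty\)-category, because localization commutes with cotensoring by \(NI\) when the local objects are detected objectwise.

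I expect this last step to be the main obstacle. One must check that the Morita model structure is left proper and combinatorial, so that the projective and localized structures exist and agree. One must also check that inverting objectwise Morita equivalences on the ordinary diagram category agrees with the \(\infty\)-categorical functor category. I would try first to invoke Lurie HTT A.3.3 and the general fact that \(\operatorname{Fun}(K,\mathcal C[W^{-1}])\simeq\operatorname{Fun}(K,\mathcal C)[W^{-1}_{\mathrm{objwise}}]\) for a model category \(\mathcal C\), in the form of Hinich's localization of diagrams, which applies directly to relative categories and bypasses the Bousfield-localization check.
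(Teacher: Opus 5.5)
Your construction and your arguments for (1) and (2) are the paper's. You set \(\mathbb O=\mathbb P\circ Q_\Sigma\) with \(\mathbb P\) strong symmetric monoidal and left Quillen, and use the Schwede--Shipley change-of-enrichment Quillen equivalence on modules, which carries representables to representables and therefore restricts to perfect objects. Two small corrections: since the symmetric spectra here are simplicial, \(\mathbb P\) should be geometric realization followed by prolongation, as in the paper; and the paper takes a cofibrant replacement in the DK model structure, whose mapping spectra are cofibrant by \cite[Proposition~6.3]{SchwedeShipleyMonoidal}, which is the pointwise cofibrancy you need.

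For (3) you also begin as the paper does, with \cite[Proposition~1.3.4.25]{LurieHA} for the projective DK structure. The divergence is the second localization. You try to carry it out model-categorically, as an objectwise left Bousfield localization, and that choice is the only source of your left-properness and combinatoriality worries. The paper avoids them by staying \(\infty\)-categorical at this point:
\begin{itemize}
\item \cite[Theorem~4.23]{BGT} makes \((\operatorname{SpCat}^{\Sigma})_\infty\to\operatorname{Cat}^{\mathrm{perf}}_\infty\) an accessible reflective localization.
\item Postcomposition with a reflective localization is a reflective localization of \(\operatorname{Fun}(NI,-)\), and it inverts exactly the objectwise Morita equivalences.
\item Since \(W_{\mathrm{DK}}\subset W_{\mathrm{Mor}}\), the two successive localizations compose to the single localization of the strict diagram category at \(W_{\mathrm{Mor}}\).
\end{itemize}
This is just your own remark that localization commutes with \(\operatorname{Fun}(NI,-)\), read directly in \(\infty\)-categories, and it needs no properness.

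Your Hinich fallback needs more than you state. It is valid only once \((\operatorname{SpCat}^{\Sigma},W_{\mathrm{Mor}})\) carries suitable homotopical structure, such as a model structure whose weak equivalences are the Morita equivalences. Rectification of diagrams fails for general relative categories, so ``applies directly to relative categories'' overstates what that result gives you.
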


\begin{proof}
Choose a functorial cofibrant replacement
\(Q_{\Sigma}\to\operatorname{id}\) in the DK model structure on symmetric
spectral categories.  Let
\[
 \mathbb P\colon\operatorname{Sp}^{\Sigma}\longrightarrow
 \operatorname{Sp}^{\mathcal O}
\]
be the composite of geometric realization and prolongation in the left-hand
part of the comparison diagram of
\cite[Section~7]{SchwedeShipleyMonoidal}, and put
\(\mathbb O A=\mathbb P(Q_{\Sigma}A)\).  The functor \(\mathbb P\) is a
strong symmetric-monoidal left Quillen functor, so it acts on enriched
categories and enriched functors without introducing coherence maps.  This
proves part~(2).

The category \(Q_{\Sigma}A\) is cofibrant, and
\cite[Theorem~6.5 and the proof of Corollary~1.2]{SchwedeShipleyMonoidal}
give a Quillen equivalence
\[
 \operatorname{Mod}_{Q_{\Sigma}A}
 \simeq_Q
 \operatorname{Mod}_{\mathbb P Q_{\Sigma}A}.
\]
It sends representable modules to representable modules.  Hence it restricts
to the equivalence of perfect objects in
\eqref{eq:orthogonal-realization-perfect}.  The construction is
natural in spectral functors.  Together with the DK equivalence
\(Q_{\Sigma}A\to A\), it follows that a Morita equivalence
\(A\to B\) induces a Morita equivalence
\(\mathbb O A\to\mathbb O B\).  This proves part~(1).

For part~(3), first give the strict diagram category the projective model
structure associated with the DK model structure.  Its underlying
\(\infty\)-category is
\[
 \operatorname{Fun}
 \bigl(NI,(\operatorname{SpCat}^{\Sigma})_\infty\bigr)
\]
by \cite[Proposition~1.3.4.25]{LurieHA}.  The Morita localization
\((\operatorname{SpCat}^{\Sigma})_\infty\to
\operatorname{Cat}^{\operatorname{perf}}_\infty\) is the accessible
localization of \cite[Theorem~4.23]{BGT}.  Postcomposition with this
localization is again a localization of functor \(\infty\)-categories, and a
natural transformation is inverted precisely when all its components are
Morita equivalences.  Since every DK equivalence is a Morita equivalence,
localizing the ordinary strict diagram category directly at
\(W_{\mathrm{Mor}}\) gives
\eqref{eq:symmetric-presentation-diagrams}.
\end{proof}

\subsection{Reduced bar models}
\label{app:reduced-bar-models-details}

The CLMPZ point-set construction uses a chosen zero object.  The following
reduction replaces all other zero objects by that choice, functorially.

\begin{lemma}
\label{lem:zero-reduction}
Let \(A\) be a spectral Waldhausen category with chosen zero object \(0_A\).
There is a spectral Waldhausen category \(A^\circ\) whose objects are
\[
 \{0_A\}\amalg
 \{a\in\operatorname{ob}A\mid \operatorname{id}_a\ne0_{a,a}\}.
\]
It is the full spectral subcategory of \(A\) on these objects.  In
particular,
\[
 A^\circ(0_A,a)=0=A^\circ(a,0_A)
\]
as literal equalities of spectra, and \(0_A\) is the only zero object of the
Waldhausen base of \(A^\circ\).

Let \(\mathsf{SpWaldCat}^{\mathrm z}\) denote the ordinary category whose
objects are spectral Waldhausen categories and whose morphisms preserve
cofibrations, weak equivalences, pushouts along cofibrations, and the class
of zero objects, but are not required to carry the chosen zero object to the
chosen zero object on the nose.  A morphism \(F\colon A\to B\) in this
category induces a strictly
zero-preserving exact spectral functor
\[
 F^\circ\colon A^\circ\longrightarrow B^\circ
\]
by sending \(a\) to \(F(a)\) when \(F(a)\) is not a zero object and to
\(0_B\) otherwise.  Let \(\mathsf{SpWaldCat}^{\circ}\) be the category of
spectral Waldhausen categories with a unique chosen zero object and exact
spectral functors preserving it on the nose.  The assignment
\(A\mapsto A^\circ\) is an ordinary
functor
\[
 (-)^\circ\colon
 \mathsf{SpWaldCat}^{\mathrm z}
 \longrightarrow\mathsf{SpWaldCat}^{\circ}
\]
and
\[
 (GF)^\circ=G^\circ F^\circ
\]
on the nose.  The inclusions \(i_A\colon A^\circ\to A\) are DK and Morita
equivalences and form a pseudonatural transformation; hence they determine a
natural equivalence after localization.
\end{lemma}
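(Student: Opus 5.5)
The plan is to verify the assertions of Lemma~\ref{lem:zero-reduction} one at a time directly from the construction. Everything is elementary except the pseudonaturality of the inclusions.

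\emph{The category \(A^\circ\).} We take \(A^\circ\) to be the full spectral subcategory of \(A\) on the indicated objects, with cofibrations and weak equivalences restricted from \(A\). Since \(0_A\) is a zero object and we work with literal zero mapping spectra for the chosen zero, the identity \(A^\circ(0_A,a)=0=A^\circ(a,0_A)\) holds by definition. Any other zero object \(z\) has \(\operatorname{id}_z=0\), so it is excluded, and \(0_A\) is the unique zero object.

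To see that \(A^\circ\) is again spectral Waldhausen, we note that a pushout in \(A\) along a cofibration whose result is a zero object can be replaced by \(0_A\) via the unique isomorphism to it. So we choose pushouts in \(A^\circ\) by taking those of \(A\) and replacing zero results by \(0_A\).

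\emph{The inclusion \(i_A\).} The inclusion \(i_A\) is fully faithful by construction. It is essentially surjective up to isomorphism, because every omitted object is a zero object and hence isomorphic to \(0_A\). It is therefore a DK equivalence, and a fortiori a Morita equivalence.

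\emph{The induced functor \(F^\circ\).} On objects, \(F^\circ\) is defined as stated. On mapping spectra it uses \(F\) whenever both \(F(a)\) and \(F(b)\) are nonzero; when either is a zero object the target spectrum is literally \(0\), so the map is forced. Composition is preserved because any composite passing through a zero object factors through a zero spectrum, on both sides. The equality \((GF)^\circ=G^\circ F^\circ\) follows by a case analysis on objects:
\begin{itemize}
\item If \(GF(a)\) is nonzero, then \(F(a)\) is nonzero (since \(G\) preserves zero objects), and both sides send \(a\) to \(GF(a)\).
\item If \(F(a)\) is a zero object, then \(F^\circ(a)=0_B\) and \(G^\circ(0_B)=0_C\), because \(G(0_B)\) is a zero object.
\item If \(F(a)\) is nonzero but \(GF(a)\) is a zero object, then both sides give \(0_C\).
\end{itemize}
On morphisms the two sides agree, since each is forced to be zero or equal to \(GF\). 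Identities are preserved. Exactness, that is, preservation of cofibrations and of the chosen pushouts, follows from that of \(F\) together with the replacement convention above.

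\emph{Pseudonaturality.} This is the main obstacle. We need isomorphisms \(i_B F^\circ\cong F i_A\); we take the identity at objects with \(F(a)\) nonzero, and the unique isomorphism \(0_B\to F(a)\) at the remaining objects. This unique isomorphism is given by the zero map, which is an isomorphism because both source and target are zero objects. These components are enriched natural: for a morphism landing in or leaving a zero object, both composites lie in a contractible, indeed literally zero or terminal, mapping spectrum. The coherence with composition reduces to the uniqueness of isomorphisms between zero objects, that is, to the fact that the relevant zero mapping spectra have a single point.

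With pseudonaturality in hand, Lemma~\ref{lem:pseudonatural-localization} converts the family of inclusions into a natural equivalence after localization. This uses that every component \(i_A\) is a DK equivalence, as shown above.
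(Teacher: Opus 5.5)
Your proposal is correct and follows the paper's proof essentially step for step. The shared route is: take the full subcategory and transport zero pushouts to $0_A$; define $F^\circ$ componentwise and check $(GF)^\circ=G^\circ F^\circ$ by the same case analysis; take identity or unique-isomorphism components for the pseudonatural comparison $i_BF^\circ\cong Fi_A$; then apply Lemma~\ref{lem:pseudonatural-localization}. The one step the paper spells out that you use silently is that mapping spectra to or from a non-chosen zero object $z$ are zero spectra, though not literally $0$; this follows by transporting the base isomorphism $z\cong 0_A$ through the enrichment, and it is what justifies your ``factors through a zero spectrum'' and ``terminal mapping spectrum'' steps.
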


\begin{proof}
The full Waldhausen subcategory on the displayed objects inherits
cofibrations and weak equivalences.  If a pushout in \(A\) has nonzero
pushout object, it belongs to \(A^\circ\).  If its pushout object is a zero
object, transport the pushout cocone along its unique isomorphism to
\(0_A\).  The resulting square is a pushout in the full subcategory.
This proves the pushout axiom; the remaining Waldhausen axioms are inherited
in the same way.  The asserted literal equalities involving \(0_A\) are part
of Definition~3.12(i) of \cite{CLMPZ}.

The collapse is enriched for the following reason.  If
\(z\) is any zero object of the Waldhausen base, the unique maps
\(z\rightleftarrows0_A\) are inverse isomorphisms in that base.  The base
enrichment sends them to enriched arrows, and pre- and postcomposition give
mutually inverse isomorphisms of spectra
\[
 A(z,a)\cong A(0_A,a)=0,
 \qquad
 A(a,z)\cong A(a,0_A)=0.
\]
Thus every mapping spectrum to or from any zero object is a zero object of
spectra, although it need not be the chosen literal zero spectrum occurring
at \(0_A\).

On objects take \(F^\circ\) as in the statement.  On a mapping spectrum
use the map induced by \(F\) if neither image is zero, and otherwise use the
unique map to the zero mapping spectrum of \(B^\circ\).  If the image of an
endpoint is zero, the enriched-functor square commutes uniquely.  If the
endpoint images are nonzero but a composable pair passes through an object
\(b\) for which \(F(b)\) is zero, the
corresponding composite for \(F\) factors through \(B(Fb,-)\) and
\(B(-,Fb)\), which are zero spectra by the preceding paragraph.  It is
therefore the zero map, as required after collapsing \(F(b)\) to
\(0_B\).  When none of the three images is zero, compatibility is that of
\(F\).  The unit at a collapsed object is the unique map
\(\mathbb S\to0\), so identities are also preserved.  Hence \(F^\circ\) is
an enriched functor.

The functor \(F\) sends zero objects to zero objects.  Applying \(F\) to a
pushout square in \(A^\circ\) gives a pushout square in \(B\); replacing any
zero vertices by \(0_B\) transports that square along unique isomorphisms
and therefore leaves it a pushout.  Cofibrations and weak equivalences are
unchanged between nonzero objects and are preserved under these
isomorphisms.  Thus \(F^\circ\) is exact.

For an object \(a\), if \(F(a)\) is zero then both
\(G^\circ F^\circ(a)\) and \((GF)^\circ(a)\) are \(0_C\); if \(F(a)\) is
nonzero, both are \(G(F(a))\) when this object is nonzero and \(0_C\)
otherwise.  Thus the two composites agree on objects.  On a mapping spectrum,
if either final image is zero both maps are the unique map to a literal zero
spectrum; otherwise both are the map induced by \(GF\).  Consequently
\((GF)^\circ=G^\circ F^\circ\) on the nose.

The inclusion \(i_A\) is spectrally fully faithful.  Every omitted object is
a zero object and is therefore actually isomorphic to \(0_A\), so \(i_A\)
is a DK equivalence.  For \(F\colon A\to B\), the comparison between
\(i_BF^\circ\) and \(Fi_A\) is the identity when \(F(a)\) is nonzero and the
unique isomorphism \(0_B\cong F(a)\) otherwise.  Enriched naturality follows
from the same zero-spectrum calculation used above.  The uniqueness of maps
between zero objects supplies the unit and composition coherences, so these
comparisons form a pseudonatural transformation.  The last assertion follows
from Lemma~\ref{lem:pseudonatural-localization}.
\end{proof}

\begin{lemma}
\label{lem:compatible-enhancement-bar-models}
Let \(A\) have a unique chosen zero object \(0\), be pointwise cofibrant, and
assume that
\[
 \mathbb S\longrightarrow A(a,a)
\]
is a cofibration for every \(a\ne0\), and let \(M\) be a
pointwise-cofibrant reduced \(A\)-bimodule.  Then every ordinary iterated bar
power \(M^{\odot_A n}\) is pointwise cofibrant and reduced, and
\begin{equation}\label{eq:ordinary-derived-coefficient-power}
 M^{\odot_A n}
 \xrightarrow{\ \simeq\ }
 M^{\mathbb L\odot_A n}
\end{equation}
is an equivalence.  The cyclic and multicyclic bars are Reedy cofibrant, so
ordinary realization computes derived realization.
\end{lemma}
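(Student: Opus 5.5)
The argument is an induction on \(n\) along the recursive parenthesization \(M^{\odot_A n}=B(M^{\odot_A(n-1)},A,M)\), modeled on Lemma~\ref{lem:pointwise-cofibrant-graph-multibar}. The new inputs are reducedness and the cofibrant unit hypothesis. These are needed for the pointed Reedy statement.

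\emph{Step 1: pointwise cofibrancy and reducedness.} For \(n=1\) both properties are the hypothesis. For the inductive step, fix objects \(a,b\). The two-sided bar \(B(M^{\odot(n-1)},A,M)(a,b)\) is the realization of the simplicial spectrum whose \(k\)-simplices are wedges, over strings \(c_0,\dots,c_k\), of the smash products
\[
 M^{\odot(n-1)}(a,c_0)\wedge A(c_0,c_1)\wedge\cdots\wedge A(c_{k-1},c_k)\wedge M(c_k,b).
\]
Each factor is cofibrant, so each simplicial level is cofibrant. Reducedness follows because any string beginning at \(a=0\) or ending at \(b=0\) contains a literal zero factor \(M^{\odot(n-1)}(0,c_0)=0\) or \(M(c_k,0)=0\). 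The smash product with a literal \(0\) is literally \(0\), so these levels are literal zero spectra.

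\emph{Step 2: Reedy cofibrancy.} This is the main point. A degenerate simplex inserts an identity \(\mathbb S\to A(c,c)\). The latching object at level \(k\) is therefore a union, over the degeneracy positions, of the smash terms in which some factor \(A(c_i,c_{i+1})\) with \(c_i=c_{i+1}\) is replaced by \(\mathbb S\). The latching map is then a wedge, over strings, of iterated pushout-products of the unit maps \(\mathbb S\to A(c,c)\) with cofibrant objects.

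When \(c\ne0\), the unit map is a cofibration by hypothesis, and the pushout-product axiom for orthogonal spectra makes the pushout-product a cofibration. When \(c=0\), the unit \(\mathbb S\to A(0,0)=0\) is not a cofibration. However, such strings already contain a zero coefficient or mapping factor at \(0\), so both the summand and its latching contribution are literally \(0\), and the latching map is the identity of \(0\).

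I expect this case split to be the delicate part. The pointed admissibility clauses are exactly what prevents the non-cofibrant unit at \(0\) from obstructing the argument. The same computation applies verbatim to the cyclic bar and to the multicyclic bars, since every degeneracy again inserts a unit at some object. Reedy cofibrant simplicial spectra realize to their homotopy colimit, so ordinary realization computes derived realization.

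\emph{Step 3: the equivalence \eqref{eq:ordinary-derived-coefficient-power}.} By Step~1 the newly attached factor \(M\) and \(A\) are pointwise cofibrant, and by Step~2 the bar is Reedy cofibrant. Hence \cite[Definition~4.7]{CLMPZ} identifies \(B(M^{\odot(n-1)},A,M)\) with the derived relative tensor product \(M^{\odot(n-1)}\otimes^{\mathbb L}_A M\).

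The inductive hypothesis \(M^{\odot(n-1)}\simeq M^{\mathbb L\odot(n-1)}\) is a pointwise equivalence between pointwise-cofibrant bimodules. Derived tensor products preserve pointwise equivalences between pointwise-cofibrant bimodules, so the identification passes to \(n\). The associator of \cite[Definition~4.10]{CLMPZ} identifies the result with \(M^{\mathbb L\odot n}\), independently of the parenthesization, exactly as in Lemma~\ref{lem:pointwise-cofibrant-graph-multibar}. This completes the induction.
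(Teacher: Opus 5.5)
Your proposal is correct and follows the paper's own argument. Summands indexed by strings through \(0\) are literally zero, and the remaining latching maps are wedges of pushout-products of the unit cofibrations \(\mathbb S\to A(c,c)\) smashed with cofibrant factors; induction on \(n\) via \cite[Definition~4.7]{CLMPZ} then gives the comparison with \(M^{\mathbb L\odot_A n}\). One small point of ordering: levelwise cofibrancy in your Step~1 does not by itself make \(M^{\odot_A n}\) pointwise cofibrant. That follows from the Reedy cofibrancy established in Step~2.
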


\begin{proof}
A summand indexed by a tuple containing \(0\) is
literally zero.  On all other summands, degeneracies insert unit cofibrations.
The latching maps are therefore coproducts of iterated pushout products of
cofibrations, smashed with cofibrant mapping and coefficient spectra.  Hence
the bars are Reedy cofibrant.  Induction on \(n\) gives
\eqref{eq:ordinary-derived-coefficient-power}.
\end{proof}

\begin{theorem}
\label{thm:reduced-bar-replacement}
Fix an ambient universe.  Let
\[
 Q^{\mathrm W}\colon\mathsf{SpWaldCat}^{\circ}\longrightarrow
 \mathsf{SpWaldCat}^{\circ},
 \qquad
 q\colon Q^{\mathrm W}\Longrightarrow\operatorname{id},
\]
be the functorial cofibrant replacement of
\cite[Remark~3.16]{CLMPZ}, restricted to spectral Waldhausen categories with
a unique chosen zero object and exact functors preserving it on the nose.
We write \(Q_{\mathrm{red}}=Q^{\mathrm W}\).  For every
\(A\in\mathsf{SpWaldCat}^{\circ}\), the map
\begin{equation}\label{eq:reduced-bar-replacement}
 q_A\colon Q_{\mathrm{red}}A\longrightarrow A
\end{equation}
has the following properties:
\begin{enumerate}
\item the object set and Waldhausen base are unchanged, and the chosen zero
      object remains the unique zero object;
\item \(q_A\) is a trivial fibration on every mapping spectrum, hence a DK
      and Morita equivalence;
\item the mapping spectra of \(Q_{\mathrm{red}}A\) are cofibrant, the unit
      maps at all nonzero objects are cofibrations, and mapping spectra to or
      from the chosen zero object are literally zero;
\item for a strictly zero-preserving exact endofunctor \(H\), the graph pair
\[
 \bigl(Q_{\mathrm{red}}A,\,
       Q_{\mathrm{red}}A(Q_{\mathrm{red}}H-,-)\bigr)
\]
      is an admissible spectral pair in the sense of
      \S\ref{subsec:twistings-graph-coefficients};
\item applying \(Q_{\mathrm{red}}\) to a strict diagram of zero-reduced
      twistings gives another strict diagram of twistings; literal
      endofunctor intertwinings and power identities are preserved.
\end{enumerate}
The augmentation of each graph pair is a coefficient Morita
equivalence.  If the construction is performed in two universes, the two
outputs become naturally equivalent after DK and coefficient-Morita
localization by comparison with their common enlarged input.
\end{theorem}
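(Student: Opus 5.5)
The plan is to take \(Q_{\mathrm{red}}=Q^{\mathrm W}\) to be exactly the CLMPZ functorial replacement of \cite[Remark~3.16]{CLMPZ}, and to read off (1)--(3) from its construction; (4) and (5) will then be formal consequences.

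\emph{Properties (1)--(3).} That replacement keeps objects and the Waldhausen base. It factors each unit \(\mathbb S\to A(a,a)\), and each composition-generated map, through a functorial cofibration followed by a trivial fibration in orthogonal spectra, using the small-object argument in the under-category of \(\mathbb S\). So (1) and the first half of (2) are built into the definition. The DK and Morita claims in (2) follow because \(q_A\) is the identity on objects and a levelwise weak equivalence on mapping spectra.

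The literal-zero clause in (3) is the one point where the restriction to \(\mathsf{SpWaldCat}^{\circ}\) is used. I would modify the replacement so that it is the identity on the spectra \(A(0,a)=0=A(a,0)\). The generating cells attached at these pairs are then indexed by the empty set. I expect this to be the main obstacle: one must check that the cell attachment for a composite through \(0\) does not force a nonzero spectrum. Composites through \(0\) land in a literal zero spectrum, so no cells are needed there, and the replacement can be defined summandwise on pairs of nonzero objects. Cofibrancy of the remaining mapping spectra and of the units at nonzero objects is the output of the small-object argument.

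\emph{Properties (4) and (5).} For (4), \(Q_{\mathrm{red}}H\) strictly preserves \(0\) by functoriality, so \(Q_{\mathrm{red}}A(Q_{\mathrm{red}}H-,-)\) vanishes literally whenever a variable is \(0\). Its values are mapping spectra of \(Q_{\mathrm{red}}A\), which are cofibrant by (3). These are precisely the pointed admissibility clauses of \S\ref{subsec:twistings-graph-coefficients}, so Lemma~\ref{lem:pointwise-cofibrant-graph-multibar} and Lemma~\ref{lem:compatible-enhancement-bar-models} apply. Property (5) is just the statement that \(Q_{\mathrm{red}}\) is an ordinary functor: \(uf=gu\) and \(f^m\) compose to literal equalities after applying it.

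\emph{Augmentation and universes.} The augmentation of graph pairs is a coefficient Morita equivalence for two reasons. First, \(q_A\) is a DK equivalence. Second, \(q_A\) intertwines \(Q_{\mathrm{red}}H\) with \(H\) strictly, so the derived transport of the graph bimodule maps by a pointwise weak equivalence onto \(A(H-,-)\); this is checked on representables and then extended by derived co-Yoneda. Independence of the universe follows by applying both constructions to the input viewed in the larger universe and comparing via the augmentations, using two-out-of-three.
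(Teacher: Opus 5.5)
\textbf{Where the proposal agrees with the paper.} Your treatment of (4), (5), the augmentation and the universe comparison agrees with the paper. Your handling of the zero object is also, in substance, the paper's. An unmodified small-object argument would attach cells at pairs involving \(0\), since lifting squares into a zero target always exist. So one runs the whole factorization on the nonzero objects and then adjoins \(0\) with literally zero mapping spectra; composites through \(0\) then vanish automatically. This must be done on the full spectral subcategory of nonzero objects, not ``summandwise on pairs,'' because composition couples the pairs.

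\textbf{The construction behind (1)--(2).} Here the proposal goes wrong. The replacement is not a factorization of the units \(\mathbb S\to A(a,a)\), or of individual mapping spectra, in orthogonal spectra under \(\mathbb S\). Such a factorization produces neither a composition law nor a base enrichment, so it does not give a spectral Waldhausen category. The paper instead factors the base-enrichment functor \(\Sigma^\infty A_0^\times\to A^\times\) as a cofibration followed by a trivial fibration, in the fixed-object-set model structure on spectral categories. Here \(A_0^\times\) is the Waldhausen base on the nonzero objects. Working under \(\Sigma^\infty A_0^\times\) is what keeps the Waldhausen base, and the trivial-fibration factor gives (2).

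\textbf{The missing input for (3).} In that model, cofibrancy is not ``the output of the small-object argument.'' That argument only yields a relative cell complex of spectral categories. Each attached free cell generates smash-product composites with the existing mapping spectra, so pointwise cofibrancy needs a separate theorem. The paper uses \cite[Proposition~6.3]{SchwedeShipleyMonoidal}: the mapping spectra of \(\Sigma^\infty A_0^\times\) are wedges of sphere spectra, so a cofibration of spectral categories out of it is a pointwise cofibration with pointwise-cofibrant target. The unit at a nonzero \(a\) is then the composite
\[
 \mathbb S\longrightarrow\Sigma^\infty A_0^\times(a,a)\longrightarrow Q^{\mathrm W}A(a,a).
\]
The first map is the inclusion of the summand indexed by \(\operatorname{id}_a\), which is not the basepoint because \(a\neq0\); the second is a pointwise cofibration. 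Without this input, (3) is unproved, and so is the admissibility in (4), which depends on it.
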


\begin{proof}
Functoriality and the natural augmentation are
\cite[Remark~3.16]{CLMPZ}.  On the nonzero object set, choose the functorial
factorization
\[
 \Sigma^\infty A_0^\times
 \longrightarrow (Q^{\mathrm W}A)^\times
 \xrightarrow{\ q_A\ }A^\times,
\]
where the first arrow is a cofibration and the second is a trivial fibration
in the fixed-object-set model structure.  Here \(A^\times\) and
\(A_0^\times\) denote the restrictions of the spectral enrichment and its
Waldhausen base to the nonzero objects.  The mapping spectra of
\(\Sigma^\infty A_0^\times\) are wedges of sphere spectra and are
cofibrant.  Hence \((Q^{\mathrm W}A)^\times\) is cofibrant under this base
enrichment.  By \cite[Proposition~6.3]{SchwedeShipleyMonoidal}, its mapping
spectra are cofibrant.  At a nonzero object \(a\), the unit is the composite
\[
 \mathbb S\longrightarrow
 \Sigma^\infty A_0^\times(a,a)\longrightarrow
 (Q^{\mathrm W}A)^\times(a,a).
\]
The first arrow includes the identity summand and the second is a pointwise
cofibration by the same proposition, so the unit is a cofibration.  The
second factor is, by construction, a trivial fibration on every mapping
spectrum.  Adjoining the chosen zero object with zero mapping spectra proves
the first three assertions.

The graph coefficient is made from mapping spectra of
\(Q_{\mathrm{red}}A\), hence is pointwise cofibrant.  It is reduced because
\(H(0)=0\) on the nose.  Therefore
Lemma~\ref{lem:compatible-enhancement-bar-models} proves the ordinary
bar and Reedy-cofibrancy assertions and admissibility.  The augmentation is
a coefficient Morita equivalence because both its category map and its
graph-coefficient map are pointwise weak equivalences.  Finally, an ordinary
functor carries a strict diagram to a strict diagram; the identities in
part~(5) follow from preservation of composition.
For an enlargement of universes, naturality of the augmentation gives the
asserted comparison after localization.
\end{proof}

\begin{proposition}
\label{prop:replacement-comparison-diagrams}
Let \(v\colon z\to z'\) be a strict morphism of zero-reduced twistings.
Assume that its underlying spectral functor is a DK equivalence, its graph
pair is a coefficient Morita equivalence, and it induces equivalences on the
cyclic source \(K\)-theory spectra compatibly with the fixed-cycle maps.
Then
\[
 Q_{\mathrm{red}}v\colon Q_{\mathrm{red}}z\longrightarrow
 Q_{\mathrm{red}}z'
\]
has the same properties and induces an equivalence of CLMPZ trace arrows in
\(\operatorname{Fun}(\Delta^1,\operatorname{RSys}^g_\infty)\).  The
conclusion is natural for strict diagrams of such maps.
\end{proposition}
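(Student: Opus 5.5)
The plan is to verify the four properties in order: first for the underlying spectral functor, then for the graph pair, then for the cyclic source spectra. After that, the CLMPZ trace arrows are compared through the naturality of Lemma~\ref{lem:clmpz-strict-functoriality}.

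\emph{Underlying functor.} By Theorem~\ref{thm:reduced-bar-replacement}(1)--(2), the augmentations \(q_z\colon Q_{\mathrm{red}}z\to z\) and \(q_{z'}\) leave object sets and Waldhausen bases unchanged and are levelwise trivial fibrations on mapping spectra. Naturality gives \(q_{z'}\circ Q_{\mathrm{red}}v=v\circ q_z\). Two-out-of-three for DK equivalences then shows that \(Q_{\mathrm{red}}v\) is a DK equivalence.

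\emph{Graph pair.} The augmentation of each graph pair is a coefficient Morita equivalence by the final clause of Theorem~\ref{thm:reduced-bar-replacement}. The graph pair of \(v\) is one by hypothesis. Two-out-of-three in \(\operatorname{Pair}^{\operatorname{perf}}_\infty\) applies, as in the proof of Proposition~\ref{prop:graph-coefficient-functor}, so \(Q_{\mathrm{red}}v\) induces a coefficient Morita equivalence of admissible graph pairs. Theorem~\ref{thm:admissible-pair-morita-invariance} then makes the target map \(\mathbf E(Q_{\mathrm{red}}z)\to\mathbf E(Q_{\mathrm{red}}z')\) an equivalence of genuine restriction systems.

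\emph{Source.} The source restriction system depends only on the Waldhausen base and the twisted-endomorphism categories, and the augmentation leaves the Waldhausen base literally unchanged. So, as recorded in Lemma~\ref{lem:clmpz-strict-functoriality}, the source of the replaced arrow is literally \(\mathbf K^{\mathrm{res}}(z)\), and the map induced by \(Q_{\mathrm{red}}v\) is the one induced by \(v\). By hypothesis this map is an equivalence on every cyclic level, compatibly with the fixed-cycle maps. Suspension and the derived prolongation of \cite[Proposition~A.7]{CLMPZSW} preserve levelwise equivalences of naive fixed-cycle systems, as in the proof of Lemma~\ref{lem:module-envelope-source-morita}, so the genuine source map is an equivalence.

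\emph{Arrow and naturality.} Equivalences in \(\operatorname{Fun}(\Delta^1,\operatorname{RSys}^g_\infty)\) are detected on the two vertices, so \(\mathcal T_{\mathrm{CLMPZ}}(v)\) is an equivalence. Naturality for strict diagrams follows because \(Q_{\mathrm{red}}\) is an ordinary functor and \(\mathcal T_{\mathrm{CLMPZ}}\) is a functor on the nerve.

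The main obstacle is the source step. One must check that the twisted-endomorphism categories, and hence the whole \(w_\bullet S_\bullet\) diagram, really are unaffected by \(Q_{\mathrm{red}}\): lacing arrows \(Ha\to a\) live in the unchanged Waldhausen base. If instead the CLMPZ source uses enriched data of the replacement, I would argue that \(q\) induces a levelwise equivalence there by twisted additivity \cite[Theorem~7.8]{CLMPZ}, and finish with two-out-of-three.
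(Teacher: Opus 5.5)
Your proposal is correct and follows the paper's proof essentially step for step. You use two-out-of-three in the square formed by \(Q_{\mathrm{red}}v\), \(v\), and the augmentations for the DK and coefficient-Morita properties, note that the augmentation leaves the source literally unchanged, apply Theorem~\ref{thm:admissible-pair-morita-invariance} on the target, and detect equivalences of trace arrows at the two vertices, which is the same route; your twisted-additivity fallback is not needed, since the paper uses exactly that the source comparison maps are identities.
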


\begin{proof}
In the natural square formed by \(Q_{\mathrm{red}}v\), \(v\), and the
augmentations, the horizontal maps are DK and coefficient Morita
equivalences.  The lower vertical map has these properties by hypothesis, so
two-out-of-three gives them for \(Q_{\mathrm{red}}v\).  On the source, the
horizontal maps are identities: Theorem~\ref{thm:reduced-bar-replacement}
leaves the Waldhausen bases, endofunctors, and fixed-cycle maps unchanged,
and genuine suspension and prolongation preserve these identities.  The
lower vertical map is an equivalence by hypothesis.  On the target,
Theorem~\ref{thm:admissible-pair-morita-invariance} applies at every
cyclic level and is compatible with all restriction arrows.  Finally,
Lemma~\ref{lem:clmpz-strict-functoriality} makes these maps a morphism
of trace arrows, which is an equivalence because both vertices are.  Since
\(Q_{\mathrm{red}}\) and the CLMPZ construction are functors on strict
morphisms, this argument is natural for strict diagrams.
\end{proof}

\begin{remark}
\label{rem:standing-replacement-convention}
In Sections~\ref{sec:morita-descent}
and~\ref{sec:iteration-frobenius}, the notation
\(Q_{\mathrm{red}}\) means the functorial replacement of the zero-reduced
input twisting.  The \(w_\bullet S_\bullet\)-objects, graph coefficients,
additivity maps, and restriction maps are then those of the standard CLMPZ
construction.  This convention makes strict maps of input twistings induce
maps of the associated CLMPZ diagrams.
\end{remark}

\subsection{Compatibility with iteration}
\label{app:power-comparison-details}

We prove Theorem~\ref{lem:clmpz-graph-power-comparison} on the CLMPZ
\(\Sigma_\Delta\)-diagram, whose objects \((q;k_1,\ldots,k_q)\) index
iterated \(S\)-constructions; \(w_{k_0}\) is a separate simplicial direction.

\begin{proof}[Proof of Theorem~\ref{lem:clmpz-graph-power-comparison}]
On morphisms define
\[
 \operatorname{Comp}_m(u_i)=u_0.
\]
Iterating the cyclic lacing equations proves functoriality.  Cofibrations,
weak equivalences, and pushouts are coordinatewise, and exactness of \(f\)
shows that ordered composition preserves them.

Apply \(\operatorname{Comp}_m\) levelwise to
\[
 w_{k_0}S^{(q)}_{k_1,\ldots,k_q}A.
\]
Since the \(w\)- and \(S\)-operators, permutations, insertions, and structural
isomorphisms act pointwise, these functors assemble to a map of the complete
left \(\Sigma_\Delta\)-diagrams.  If \(u\colon(A,f)\to(B,g)\) is strict, then
\(uf=gu\) implies
\[
 u\bigl(\alpha_{m-1}f(\alpha_{m-2})\cdots f^{m-1}(\alpha_0)\bigr)
 =u(\alpha_{m-1})g(u\alpha_{m-2})\cdots
   g^{m-1}(u\alpha_0),
\]
so ordered composition is natural in strict morphisms.  The equivalences
\(\omega\) of
Definition~\ref{def:marked-unwinding} and ordered bar composition give
the corresponding map of the right diagrams.  On zero-simplices it sends
\[
 \alpha_0\wedge\cdots\wedge\alpha_{m-1}
\]
to the ordered composite displayed in the statement.

The backwards additivity maps are wedges of subquotient inclusions.  Since the
endofunctor commutes strictly with those inclusions, both routes apply the same
spectral functor to every bar factor and then compose the same ordered arrows.
Thus we obtain a map of CLMPZ trace zigzags, preserved by realization,
genuine suspension, and derived prolongation.  Inverting the additivity
equivalence in the functor category gives
\eqref{eq:clmpz-graph-power-comparison}.  The duplication formula is
immediate.
\end{proof}

\subsection{Ordered block composition on the full CLMPZ diagram}
\label{app:block-comparison-details}

\begin{lemma}
\label{lem:ordered-multibar-fubini}
Let \(A\) be a pointwise-cofibrant spectral category and let
\(M_1,\ldots,M_r\) be pointwise-cofibrant \(A\)-bimodules.  Write
\(B^{(r)}(M_1,A,\ldots,A,M_r)\) for the fully expanded ordered multibar, with
one simplicial direction for each intervening copy of \(A\).  Every binary
parenthesization \(\mathfrak p\) of
\(M_1\odot_A\cdots\odot_A M_r\) gives an iterated two-sided bar
\(B_{\mathfrak p}\).  Fubini for the realization coends gives a natural
isomorphism
\begin{equation}\label{eq:ordered-multibar-fubini}
 \theta_{\mathfrak p}\colon
 \bigl\|B^{(r)}(M_1,A,\ldots,A,M_r)\bigr\|
 \xrightarrow{\ \cong\ } B_{\mathfrak p}.
\end{equation}
For two parenthesizations, the comparison
\(\theta_{\mathfrak q}\theta_{\mathfrak p}^{-1}\) represents the
relative-bar associator of \cite[Definition~4.10]{CLMPZ}.  These comparisons
are natural in \(A\), in the \(M_i\), and in degreewise maps of expanded
multibars.  Their composites are compatible with refinement of
parenthesizations; in particular, the two boundary composites for four
factors agree by the pentagon.  On a \(d\)-fold repeated word, the same
naturality makes the comparisons commute with the norm diagonal and with
the induced map on geometric \(C_d\)-fixed points.
\end{lemma}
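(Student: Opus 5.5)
The plan is to derive everything from the Fubini theorem for iterated realization coends and then read off the associator and naturality statements from it. Write the fully expanded ordered multibar as an \((r-1)\)-fold multisimplicial orthogonal spectrum. Its degree \((n_1,\ldots,n_{r-1})\) term is the wedge over object strings of
\[
 M_1\wedge A^{\wedge n_1}\wedge M_2\wedge\cdots\wedge A^{\wedge n_{r-1}}\wedge M_r .
\]
A binary parenthesization \(\mathfrak p\) is a rooted binary tree with \(r\) leaves and \(r-1\) internal vertices. Each internal vertex corresponds to exactly one intervening copy of \(A\), namely the one between the rightmost leaf of its left subtree and the leftmost leaf of its right subtree. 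The iterated bar \(B_{\mathfrak p}\) realizes these simplicial directions in the order given by the tree.

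To construct \(\theta_{\mathfrak p}\), argue by induction on \(r\). At the root, split the word into a left block and a right block. By induction the inner bars are realizations of the expanded multibars of the two sub-words. Since the smash product of orthogonal spectra preserves colimits in each variable, the two-sided bar of these two realizations, realized in the root direction, is a coend over \(\Delta^{r-1}\) of the smash of the sub-multibars. Fubini for coends identifies this with the diagonal-free total realization \(\|B^{(r)}\|\). Every step is a canonical isomorphism of colimits, so naturality in \(A\), in the \(M_i\), and in degreewise maps is automatic.

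Next I would show that \(\theta_{\mathfrak q}\theta_{\mathfrak p}^{-1}\) is the associator. It suffices to treat a single rotation \(\mathfrak p\to\mathfrak q\), because any two trees are connected by rotations. For three factors, the associator of \cite[Definition~4.10]{CLMPZ} is, by construction, exactly the interchange of the two realization coends. For a rotation at an interior vertex, it is that same interchange smashed with identities. Compatibility with refinement then follows because each \(\theta\) is uniquely determined as the canonical colimit isomorphism. The pentagon holds because both composites around it are the unique canonical isomorphism \(\|B^{(4)}\|\cong B_{\mathfrak p}\) for the same target tree. In general, all composites of rotation isomorphisms between two fixed trees are equal, since each equals \(\theta_{\mathfrak q}\theta_{\mathfrak p}^{-1}\).

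The last claim is about a \(d\)-fold repeated word \(M_1,\ldots,M_s\) repeated \(d\) times. The norm diagonal of \cite[Proposition~2.19]{CLMPZ} is defined degreewise on the expanded multibar as the diagonal inclusion of the repeated simplicial coordinates and repeated object strings. The comparisons commute with it by naturality in degreewise maps, once it is applied to the subdivision and diagonal multisimplicial maps. On geometric \(C_d\)-fixed points, the identification \(\Phi^{C_d}\cong\) unrepeated multibar is a degreewise natural isomorphism of point-set fixed points on cofibrant models, using Lemma~\ref{lem:compatible-enhancement-bar-models} and \cite[Proposition~7.4]{CLMPZ}. Naturality of \(\theta\) in degreewise maps therefore gives the compatibility.

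I expect the main obstacle to be this final step. The parenthesization \(\mathfrak p\) must be chosen \(C_d\)-compatibly, that is, as the \(d\)-fold repetition of a tree for one block. Otherwise the rotation action does not act on \(B_{\mathfrak p}\) at all. So I would restrict the statement to such block-repeated trees and check that the permutation action of \(C_d\) on the simplicial directions commutes with the Fubini reordering.
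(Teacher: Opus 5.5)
Your proposal is correct and follows essentially the paper's own argument: both sides are evaluations of the same multiple realization coend, \(\theta_{\mathfrak p}\) comes from Fubini and associativity of the smash product, and refinement compatibility, the three-factor associator and the pentagon all follow from uniqueness of these canonical colimit isomorphisms. The repeated-word claim is naturality of the norm diagonal, which the paper applies to maps of the form \(\beta^{\wedge d}\) via Lemma~\ref{lem:norm-diagonal-block-interchange}, so your restriction to block-repeated parenthesizations is exactly what is implicitly used there; the one step the paper adds is the identification, through the left-associated parenthesization and \cite[Definition~4.7]{CLMPZ}, of each \(B_{\mathfrak p}\) with the derived relative tensor product, so that the point-set comparisons also represent the derived associativity equivalences used later.
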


\begin{proof}
At the point-set level, the source of
\eqref{eq:ordered-multibar-fubini} is the multiple coend in orthogonal
spectra
\[
 \int^{[q_1],\ldots,[q_{r-1}]}
 B^{(r)}_{q_1,\ldots,q_{r-1}}
 \wedge(\Delta^{q_1})_+\wedge\cdots\wedge(\Delta^{q_{r-1}})_+ .
\]
The iterated bar attached to any parenthesization evaluates this same coend
in the order prescribed by that parenthesization.  Associativity of smash
and Fubini for coends therefore give the displayed natural isomorphism.  It
is compatible with the object-indexed coproducts and is natural in every
additional CLMPZ simplicial direction, hence also after their realizations.
The coherence of these canonical coend isomorphisms identifies every
composite associated with a refinement with the direct Fubini isomorphism.
For three factors this is the relative-bar associator, and for four factors
the resulting boundary diagram is its pentagon.

Let \(\mathfrak p_0\) be the recursively left-associated parenthesization.
At each stage of \(B_{\mathfrak p_0}\), the newly attached factor is one of
the pointwise-cofibrant bimodules \(M_i\).  The argument of
Lemma~\ref{lem:pointwise-cofibrant-graph-multibar}, using
\cite[Definition~4.7]{CLMPZ} at each stage, therefore identifies
\(B_{\mathfrak p_0}\) with the corresponding iterated relative tensor
product in the Morita \(\infty\)-category.  For an arbitrary parenthesization
\(\mathfrak p\), the isomorphism
\(\theta_{\mathfrak p}\theta_{\mathfrak p_0}^{-1}\) transports this derived
model to \(B_{\mathfrak p}\).  Thus the Fubini comparisons represent the
canonical associativity equivalences for derived relative tensor products;
for three factors this is the associator of
\cite[Definition~4.10]{CLMPZ}.

Lemma~\ref{lem:norm-diagonal-block-interchange}, applied to the degreewise
Fubini maps, shows that these comparisons commute with all restriction maps.
For four factors, equality of the two Fubini composites gives the pentagon in
the restriction-system \(\infty\)-category.
\end{proof}

\begin{lemma}
\label{lem:admissible-graph-target-block}
Let \(A\) be a pointwise-cofibrant spectral category and let
\(f\colon A\to A\) be a strict spectral endofunctor.  Regard the graph pairs
as unpointed.  For \(m,n\geq1\), ordered bar composition and enriched
co-Yoneda define a natural genuine \(C_n\)-equivalence
\begin{equation}\label{eq:admissible-graph-target-block}
 b_{m,n}\colon
 \operatorname{Res}^{C_{mn}}_{C_n}\THH^{(mn)}(A;X_f)
 \xrightarrow{\ \simeq\ }
 \THH^{(n)}(A;X_{f^m}).
\end{equation}
As \(n\) varies, these maps commute with every restriction arrow and hence
define a natural equivalence of genuine restriction systems
\[
 b_m\colon
 \operatorname{sh}_m\mathbf E(A,f)
 \xrightarrow{\ \simeq\ }
 \mathbf E(A,f^m).
\]
The construction is natural for strict morphisms
\(u\colon(A,f)\to(B,g)\), where \(uf=gu\).  It is unital,
and for \(m,n,k\geq1\) the relative-bar associator gives a natural
comparison
\begin{equation}\label{eq:admissible-target-block-associativity}
 b_{n,k}^{f^m}\circ
 \operatorname{Res}^{C_{nk}}_{C_k}(b_{m,nk}^{f})
 \simeq b_{mn,k}^{f}.
\end{equation}
These comparisons are natural transformations in the functor
\(\infty\)-category of restriction systems and satisfy the associativity
coherence for fourfold products.
\end{lemma}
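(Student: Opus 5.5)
The plan is to build \(b_{m,n}\) degreewise on the cyclic bar model and then check its equivariance and its compatibility with the norm diagonal. In simplicial multidegree, \(\THH^{(mn)}(A;X_f)\) is a wedge of smash products
\[
A(a_0,a_0')\wedge X_f(a_0',a_1)\wedge\cdots
\]
with \(mn\) cyclically ordered coefficient slots. Restriction to \(C_n=\langle\rho_{mn}^m\rangle\) remembers only rotation by \(m\) slots. So the \(mn\) slots group into \(n\) consecutive blocks of length \(m\), and \(C_n\) permutes these blocks cyclically.

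\emph{Step 1: the level map.} Within a block, the slots together with the intervening copies of \(A\) form an expanded ordered multibar \(B^{(m)}(X_f,A,\ldots,A,X_f)\). By Lemma~\ref{lem:ordered-multibar-fubini}, realizing those internal simplicial directions identifies the block with the iterated bar \(X_f^{\odot_A m}\). Enriched co-Yoneda (Proposition~\ref{lem:coefficient-composition}, in its strict form \(X_f^{\odot m}\to X_{f^m}\) given by composing \(f^{m-1}\alpha_0\) through \(\alpha_{m-1}\)) then maps this to \(X_{f^m}\). Doing this blockwise commutes with rotation by blocks, so we get a \(C_n\)-map to \(\THH^{(n)}(A;X_{f^m})\).

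\emph{Step 2: it is a genuine \(C_n\)-equivalence.} It suffices to check geometric \(C_d\)-fixed points for every \(d\mid n\). The source \(\THH^{(mn)}(A;X_f)\) is cofibrant by Lemma~\ref{lem:pointwise-cofibrant-graph-multibar}, so raw fixed points compute the derived ones. The CLMPZ isomorphism \(\Phi^{C_d}\THH^{(mn)}\cong\THH^{(mn/d)}\), given by the norm diagonal, identifies \(\Phi^{C_d}b_{m,n}\) with \(b_{m,n/d}\); this is the square \eqref{eq:block-shift-geometric-fixed}. After forgetting the residual action, the marked unwinding \(\omega\) identifies \(b_{m,n/d}\) with \(\THH\) applied to the co-Yoneda map \(X_f^{\odot mn/d}\simeq X_{f^m}^{\odot n/d}\). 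That map is an equivalence by \eqref{eq:admissible-ordinary-derived-power} together with co-Yoneda.

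\emph{Step 3: compatibility with the restriction arrows.} Using \eqref{eq:block-shift-restriction-arrow}, this reduces to the statement that the norm diagonal on a \(q\)-fold repeated word commutes with blockwise Fubini and composition. I would isolate this as a separate interchange lemma (Lemma~\ref{lem:norm-diagonal-block-interchange}). It holds because the diagonal only duplicates factors and the block maps are applied factor by factor.

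\emph{Step 4: naturality, unit and associativity.} Naturality in strict morphisms follows from \(uf=gu\), exactly as in the proof of Theorem~\ref{lem:clmpz-graph-power-comparison}. The case \(m=1\) is the identity. For \eqref{eq:admissible-target-block-associativity}, note that grouping first into blocks of length \(m\) and then into blocks of length \(n\), versus grouping directly into blocks of length \(mn\), are two parenthesizations of the same ordered multibar. Their Fubini comparison is the relative-bar associator, and co-Yoneda composition is strictly associative. The fourfold coherence is then the pentagon from Lemma~\ref{lem:ordered-multibar-fubini}.

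The main obstacle is Step 3 together with the equivariance in Step 2. The Fubini realization of the intra-block simplicial directions has to be compatible with the norm diagonal and with the subdivision used to define the \(C_{mn}\)-action. My first approach is to work in the fully expanded multisimplicial model, where every map is a degreewise wedge inclusion or composition, and to realize only at the end.
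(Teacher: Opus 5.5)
Your proposal is correct and follows essentially the same route as the paper: row-wise bar augmentation followed by enriched co-Yoneda for the level map, and Lemma~\ref{lem:norm-diagonal-block-interchange} both for compatibility with the restriction arrows and, with \cite[Proposition~7.4]{CLMPZ}, for identifying \(\Phi^{C_d}b_{m,n}\) with \(b_{m,n/d}\). It then uses unwinding to reduce to the derived co-Yoneda equivalence, and Lemma~\ref{lem:ordered-multibar-fubini} for the unit, the associativity comparison, and the pentagon. The differences are cosmetic: you route the level map through the Fubini identification with the iterated bar, and in Step~2 you should justify the fixed-point identification by the interchange lemma rather than by the display \eqref{eq:block-shift-geometric-fixed}, since the paper derives that display from this very lemma.
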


\begin{proof}
Lemma~\ref{lem:pointwise-cofibrant-graph-multibar} makes the graph
pairs admissible, identifies their ordinary bar powers with the corresponding
derived powers, and supplies the genuine restriction systems.  In the
multibar model of \cite[Proposition~7.6]{CLMPZ}, retain \(n\) outer cyclic
slots and group the \(mn\) graph coefficients into \(n\) consecutive rows of
length \(m\).
The ordinary bar augmentation in each row, followed by the enriched
co-Yoneda map of
\cite[Example~4.6 and the proof of Proposition~7.15]{CLMPZ}, gives
\[
 X_f^{\odot_A m}\longrightarrow X_{f^m}.
\]
Rotation by \(m\) input slots rotates the rows once, so the resulting map is
\(C_n\)-equivariant and gives
\eqref{eq:admissible-graph-target-block}.

Lemma~\ref{lem:norm-diagonal-block-interchange}, applied to the block maps
above, shows that \(b_{m,n}\) commutes with the restriction arrows of
\cite[Example~8.7]{CLMPZ}.

For \(d\mid n\), this norm-diagonal comparison and
\cite[Proposition~7.4]{CLMPZ} identify the geometric \(C_d\)-fixed-point map
of \(b_{m,n}\), with its residual action, with \(b_{m,n/d}\).  After forgetting
that action, Proposition~7.6 of \cite{CLMPZ} identifies this map with
coefficient \(\THH\) applied to the derived co-Yoneda equivalence
\[
 X_f^{\mathbb L\odot_A m}\simeq X_{f^m}.
\]
It is therefore an equivalence.  This holds for every subgroup of \(C_n\),
so \(b_{m,n}\) is a genuine \(C_n\)-equivalence.

Strict naturality follows by applying a morphism \(u\), with \(uf=gu\), to
every mapping and coefficient factor.  For three block lengths, both
composites arise from the same expanded ordered multibar.  By
Lemma~\ref{lem:ordered-multibar-fubini}, the comparison between their
parenthesizations is the relative-bar associator; this gives
\eqref{eq:admissible-target-block-associativity}.  The same lemma
identifies the two fourfold composites and supplies the pentagon.  Its
naturality with respect to the norm-diagonal square shows that these
comparisons commute with every restriction arrow before passage to the
restriction-system \(\infty\)-category.  We choose the one-fold block map to
be the identity, so \(b_1=\operatorname{id}\).
\end{proof}

\begin{lemma}
\label{lem:norm-diagonal-block-interchange}
Work with the reduced bar-cofibrant inputs of
Theorem~\ref{thm:reduced-bar-replacement}.  Fix a multisimplicial
degree of a cyclic-bar or twisted-additivity term, and let
\(\beta\colon Z\to W\) be any degreewise map built, without permuting
factors, from enriched composition, units, reassociation, and ordinary bar
augmentations.  This includes the Fubini comparisons and ordered block maps
used here.  If a word is the
\(d\)-fold repeat of a shorter word, then the square
\begin{equation}\label{eq:norm-diagonal-block-interchange}
\begin{tikzcd}[column sep=large]
 \Phi^{C_d}(Z^{\wedge d})
   \arrow[r,"{\Phi^{C_d}(\beta^{\wedge d})}"]
   \arrow[d,"{D_d^{-1}}"'] &
 \Phi^{C_d}(W^{\wedge d})
   \arrow[d,"{D_d^{-1}}"]\\
 Z \arrow[r,"\beta"'] & W
\end{tikzcd}
\end{equation}
commutes on the point-set level.  Here \(D_d\) is the norm diagonal and
is an isomorphism because every spectrum in the indicated bar summand is
cofibrant.  The same square commutes after taking the wedges indexed by
object strings and by the CLMPZ subquotient inclusions.  It is equivariant
for the residual cyclic action.  Consequently such maps commute with every
CLMPZ restriction-system structure map before prolongation.
\end{lemma}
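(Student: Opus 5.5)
The plan is to check the square \eqref{eq:norm-diagonal-block-interchange} one smash factor at a time, using the explicit point-set description of the norm diagonal on cofibrant orthogonal spectra. Recall that for a cofibrant orthogonal spectrum \(Z\), the CLMPZ norm diagonal \(D_d\colon Z\to\Phi^{C_d}(Z^{\wedge d})\) is the natural map induced on spaces by the diagonals \(Z(V)\to(Z(V)^{\wedge d})^{C_d}\). By the left-deformability discussion preceding \cite[Proposition~2.19]{CLMPZ}, it is an isomorphism on cofibrant inputs. The key observation is that \(D_d\) is natural for \emph{all} maps of orthogonal spectra, and it is also symmetric monoidal: for cofibrant \(X,Y\) the square relating \(D_d(X\wedge Y)\) to \(D_d(X)\wedge D_d(Y)\), via the shuffle isomorphism \((X\wedge Y)^{\wedge d}\cong X^{\wedge d}\wedge Y^{\wedge d}\) and the lax monoidal structure of \(\Phi^{C_d}\), commutes strictly. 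I will cite both properties from \cite[Proposition~2.19]{CLMPZ} and its proof.

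First, reduce to the elementary pieces. By hypothesis \(\beta\) is a composite of smash products of identity maps with single structural maps: enriched composition \(A(b,c)\wedge A(a,b)\to A(a,c)\), units \(\mathbb S\to A(a,a)\), associativity isomorphisms of \(\wedge\), and bar augmentations, which are themselves built from composition or action maps. The square is compatible with composition of maps, since \(\Phi^{C_d}((\beta'\beta)^{\wedge d})=\Phi^{C_d}(\beta'^{\wedge d})\Phi^{C_d}(\beta^{\wedge d})\). It is compatible with smash products of maps by the monoidality of \(D_d\). It therefore suffices to treat one structural map \(\gamma\colon X\to Y\) between cofibrant spectra. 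For such \(\gamma\), the square is exactly the naturality of \(D_d\), which holds strictly. The coproducts indexed by object strings and by the CLMPZ subquotient inclusions are handled the same way. A \(d\)-fold repeated index string is \(C_d\)-fixed, so \(\Phi^{C_d}\) of the indexed wedge picks out precisely the repeated summands. Here I use that \(\Phi^{C_d}\) commutes with wedges, and that the \(C_d\)-orbits of nonrepeated strings are free and contribute nothing.

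Second, check the residual equivariance and the hypothesis ``without permuting factors''. The \(C_d\) action on \(Z^{\wedge d}\) permutes the \(d\) blocks. Since \(\beta\) acts within each block and does not shuffle factors across block boundaries, \(\beta^{\wedge d}\) is \(C_d\)-equivariant, so \(\Phi^{C_d}(\beta^{\wedge d})\) is defined. The residual \(C_{n/d}\)-rotation permutes blocks of the shorter word in the same way on both sides, which gives equivariance. For the final clause, the CLMPZ restriction arrows at a fixed multisimplicial degree are exactly the inverse norm diagonals on repeated summands, composed with the identifications from \cite[Proposition~7.4]{CLMPZ}. The square then says that \(\beta\) commutes with these arrows degreewise, and this persists after realization because realization commutes with \(\Phi^{C_d}\) on Reedy cofibrant inputs (Lemma~\ref{lem:compatible-enhancement-bar-models}).

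I expect the main obstacle to be the strict monoidality of \(D_d\) combined with the shuffle. The bar summands are smash products of many mapping and coefficient spectra, and an enriched composition pairs factors that sit next to each other within a single repeat but lie in different positions of the full \(d\)-fold word. I will first try to write everything in terms of the \(d\)-fold smash of a single block, using the shuffle isomorphism once. After that, the required identity is an equality of diagonal maps of pointed spaces levelwise, where it can be verified by inspection.
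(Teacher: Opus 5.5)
Your proposal is correct, and its key step is the same as the paper's: naturality of the norm diagonal \(D_d\colon Y\to\Phi^{C_d}(Y^{\wedge d})\) of \cite[Proposition~2.19]{CLMPZ}, which is a point-set isomorphism for cofibrant \(Y\).

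\textbf{Where you differ.} The paper applies this naturality directly to \(\beta\colon Z\to W\). You instead first break \(\beta\) into elementary structural maps and then reassemble the squares using strict monoidality of \(D_d\) together with the shuffle isomorphism.

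\textbf{The detour buys nothing.} Naturality holds for every map of orthogonal spectra. The sentence you write for a single structural map \(\gamma\) therefore applies verbatim to \(\beta\) itself. Moreover, \(Z\) and \(W\) are smash products of cofibrant spectra, hence cofibrant, so both vertical maps are invertible. The decomposition only adds a dependency (strict compatibility of \(D_d\) with smash products and shuffles) that the paper never needs.

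\textbf{The anticipated obstacle does not arise.} The square is stated with \(\Phi^{C_d}(\beta^{\wedge d})\) on top. So the repeated-word summand is identified with \(Z^{\wedge d}\), where \(Z\) is the \emph{entire} shorter-word summand. For the maps in question, nothing straddles two repeats:
\begin{itemize}
\item for the block maps \(b_{m,n}\) with \(d\mid n\), each repeat consists of exactly \(n/d\) complete rows;
\item the Fubini comparisons only reassociate.
\end{itemize}
Hence no factor of \(Z\) ever has to be split off or shuffled across positions.

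\textbf{A small correction to your wedge argument.} Orbits of index strings that are not \(d\)-fold repeats need not be free. They are induced from proper subgroups of \(C_d\), which is still enough for \(\Phi^{C_d}\) to kill them; the paper says this in the block-comparison discussion of the appendix.

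Your treatment of residual equivariance, and of the final clause that the restriction arrows are the inverse norm diagonals on repeated summands, matches the paper's.
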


\begin{proof}
For a cofibrant orthogonal spectrum \(Y\), the norm diagonal
\[
 D_d\colon Y\longrightarrow \Phi^{C_d}(Y^{\wedge d})
\]
is a natural point-set isomorphism by
\cite[Proposition~2.19]{CLMPZ}.  Naturality applied to
\(\beta\colon Z\to W\) is precisely the commutativity of
\eqref{eq:norm-diagonal-block-interchange} after using the inverses
of the vertical isomorphisms.  It applies to every object-indexed wedge
summand.  Geometric fixed points commute with the relevant wedges and
realizations, while reduced bar cofibrancy identifies the displayed raw fixed
points with the derived ones; see
Lemma~\ref{lem:compatible-enhancement-bar-models} and the proof of
\cite[Proposition~7.4]{CLMPZ}.

Because \(\beta\) does not permute factors, the same naturality applies at
each intermediate stage and after every subquotient inclusion.  Rotation of
the shorter word permutes the \(d\) repeated factors
on both sides, so the square is equivariant for the residual cyclic action.
This is the rotation calculation in the proof of
\cite[Proposition~7.4]{CLMPZ}.  Since the CLMPZ restriction maps are assembled
levelwise from these inverse norm diagonals, the squares assemble over all
degrees and divisors.  Their coherence follows from
\cite[Example~8.7 and Theorem~8.8]{CLMPZ}.
\end{proof}

Let \(A\) be a zero-reduced spectral Waldhausen category and let
\(H\colon A\to A\) be a strictly zero-preserving exact spectral
endofunctor.  The following construction gives the block square used in the
proof of Theorem~\ref{thm:restriction-trace-frobenius}.
Apply \(Q_{\mathrm{red}}\) to the input twisting \((A,H)\)
before forming the standard CLMPZ diagrams, as in
Remark~\ref{rem:standing-replacement-convention}.  For \(m,n\geq1\), ordered block
composition defines a \(C_n\)-equivariant exact functor
\begin{equation}\label{eq:general-block-composition-source}
 \operatorname{Comp}_{m\mid n}\colon
 \operatorname{Res}^{C_{mn}}_{C_n}
 \operatorname{End}^{(mn)}({}_HA/A_{\operatorname{id}})
 \longrightarrow
 \operatorname{End}^{(n)}({}_{H^m}A/A_{\operatorname{id}}).
\end{equation}
The functors at the various multisimplicial indices assemble to a morphism of
CLMPZ Dennis-trace diagrams, including the backwards twisted-additivity arrow.
After genuine suspension and derived prolongation, inverting that arrow in
the genuine \(C_n\)-equivariant functor category gives a natural
homotopy-commutative square of genuine \(C_n\)-orthogonal spectra
\begin{equation}\label{eq:general-block-composition-clmpz-square}
\begin{tikzcd}[column sep=large]
 \operatorname{Res}^{C_{mn}}_{C_n}\mathbf K_{mn}(H)
 \arrow[r,"\operatorname{Res}\operatorname{trc}^{(mn)}"]
 \arrow[d,"K(\operatorname{Comp}_{m\mid n})"'] &
 \operatorname{Res}^{C_{mn}}_{C_n}\mathbf E_{mn}(H)
 \arrow[d,"b_{m,n}"]\\
 \mathbf K_n(H^m)
 \arrow[r,"\operatorname{trc}^{(n)}"'] &
 \mathbf E_n(H^m).
\end{tikzcd}
\end{equation}
Here \(\mathbf K_r(H)\) and \(\mathbf E_r(H)\) denote the prolonged CLMPZ
source and target at level \(r\).  For every \(d\mid n\), geometric fixed
points identify the target map with
\begin{equation}\label{eq:general-block-comparison-geometric-fixed}
 \Phi^{C_d}b_{m,n}\simeq b_{m,n/d}
\end{equation}
under the residual actions and the identifications of cyclic groups induced
by the chosen generators.  Hence
\(b_{m,n}\) is a genuine \(C_n\)-equivalence.  For every \(r\mid n\), the
squares
\eqref{eq:general-block-composition-clmpz-square} at levels \(n\) and
\(r\) commute with the CLMPZ restriction arrows.

If \(\Delta_r\) denotes CLMPZ duplication and
\(P_m\colon\operatorname{End}({}_HA/A_{\operatorname{id}})\to
\operatorname{End}({}_{H^m}A/A_{\operatorname{id}})\) is the iteration
functor given by ordered composition, then
\begin{equation}\label{eq:general-block-composition-duplication}
 \operatorname{Comp}_{m\mid n}\Delta_{mn}=\Delta_nP_m
\end{equation}
on the nose.  For \(m,n,k\geq1\), lexicographic rectangular indexing gives
\begin{equation}\label{eq:general-block-composition-associativity}
 \operatorname{Comp}_{n\mid k}\operatorname{Comp}_{m\mid nk}
 =\operatorname{Comp}_{mn\mid k},
\end{equation}
and the associator for the enriched relative bar induces
\begin{equation}\label{eq:general-target-block-associativity}
 b_{n,k}^{H^m}\circ
 \operatorname{Res}^{C_{nk}}_{C_k}(b_{m,nk}^{H})
 \simeq b_{mn,k}^{H},
\end{equation}
with the evident subgroup identifications.  These constructions are natural
for strict morphisms \(uH=Ku\).

Indeed, for a cycle \((\alpha_i:Ha_i\to a_{i+1})_{i\in\mathbb Z/mn}\), the
\(j\)-th output arrow is the ordered composite
\[
 H^ma_{jm}\xrightarrow{H^{m-1}\alpha_{jm}}\cdots
 \xrightarrow{\alpha_{jm+m-1}}a_{(j+1)m}.
\]
Retaining morphism components indexed by \(jm\) defines an exact functor:
the cyclic morphism equations give functoriality, and coordinatewise
pushouts commute with the displayed composite because \(H\) is exact.
Rotation by \(m\) input places is one output rotation, so the functor is
\(C_n\)-equivariant.

Apply this formula at every
\(w_{k_0}S^{(q)}_{k_1,\ldots,k_q}A\).  The operators and subquotient
inclusions act coordinatewise and preserve the ordered blocks, so the
components form a morphism of \(\Sigma_\Delta\)-diagrams on the source side.
On the cyclic bar, use the lexicographic identification
\(\mathbb Z/n\times\{0,\ldots,m-1\}\cong\mathbb Z/mn\).
Bar associativity and enriched co-Yoneda compose each row of \(m\) graph
coefficients to \(X_{H^m}\).  Inner faces, the cyclic boundary face,
degeneracies, and rotation are preserved row by row.  On zero-simplices this
is the ordered lacing composite above, so the forward Dennis-trace square
commutes before realization.  The same calculation after each subquotient
inclusion treats every twisted-additivity summand.  After genuine suspension
and derived prolongation, inverting the backwards twisted-additivity
equivalence in the \(C_n\)-equivariant functor category yields
\eqref{eq:general-block-composition-clmpz-square}.

For \(d\mid n\), a fixed object word is a \(d\)-fold repetition of a shorter
word.  On its bar summand the block map is
\(\beta_{\mathbf a}^{\wedge d}\); the norm-diagonal square of
Lemma~\ref{lem:norm-diagonal-block-interchange} identifies its
geometric fixed map with the shorter block map \(b_{m,n/d}\), including the
residual action.  Nonfixed word orbits are induced from proper subgroups and
have trivial geometric \(C_d\)-fixed points, as in
\cite[Proposition~7.4]{CLMPZ}.  This proves
\eqref{eq:general-block-comparison-geometric-fixed}.

If \(r\mid n\), both source routes identify an \(n/r\)-fold repeated cycle
with the same shorter cycle.  On the target, the restriction map is the
inverse norm diagonal on the repeated word, so
Lemma~\ref{lem:norm-diagonal-block-interchange} gives compatibility
with block composition.  Thus the pre-prolongation zigzag commutes, and
functorial prolongation preserves it.
Finally, duplication repeats the ordered \(m\)-fold composite, proving
\eqref{eq:general-block-composition-duplication}.  Lexicographic
threefold indexing makes the two source groupings identical.  The canonical
relative-bar associator compares the target groupings and, by naturality,
commutes with the \(\Sigma_\Delta\)-, additivity-, norm-, and restriction
maps.  This proves the associativity statements and their naturality.

\section{Strict functorial models for perfect-module presentations}
\label{app:split-strictification}

The perfect-module assignment
\((A,f)\mapsto(\mathcal P_A,f_!)\) is naturally pseudofunctorial.  A split
Grothendieck construction replaces it by the strict functorial models used in
Propositions~\ref{thm:split-perfect-strictification}
and~\ref{lem:power-coherent-perfect-envelope}.  Their transition maps
commute strictly with the distinguished endofunctors, while Morita
localization recovers the original perfect-module diagrams.  Under this
localization, the strict identities recover the intrinsic iteration
equivalences of Lemma~\ref{lem:iteration-multiplicative-coherence}.
Homotopy-coherent \(B\mathbb N\)-diagrams are rigidified in
Proposition~\ref{prop:endofunctor-rigidification}.

\subsection{The perfect-module pseudofunctor}

For a presentation \(x=(A,f)\in\mathsf{Pres}_{\mathcal U}\), we use its
pointwise-cofibrant orthogonal realization from
Lemma~\ref{lem:orthogonal-morita-model}, again denoted by \((A,f)\).
The Waldhausen base of \(\mathcal P_x=\mathcal P_A\) consists of all
\(\mathcal U\)-small cofibrant perfect right \(A\)-modules and their ordinary
module maps.  Its spectral enrichment is specified below.  Put
\[
 T_x:=f_!\colon \mathcal P_x\longrightarrow\mathcal P_x.
\]

\begin{lemma}
\label{lem:many-object-perfect-model}
Let \(A\) be a \(\mathcal U\)-small pointwise-cofibrant orthogonal spectral
category.  Its cofibrant perfect modules admit a spectral Waldhausen
enrichment whose mapping spectra compute the derived module mapping spectra.
For every spectral functor \(u\colon A\to B\) between such categories,
ordinary extension of scalars restricts to an exact spectral functor
\[
 u_!\colon\mathcal P_A\longrightarrow\mathcal P_B
\]
preserving tensors with finite simplicial sets.  These functors have the
enriched co-Yoneda unit and composition isomorphisms.  The stable localization
of \(\mathcal P_A\) is \(\operatorname{Perf}(A)\), naturally in \(A\), and the
localized functor is \(\mathbf Lu_!\).
The Yoneda functor \(y_A\colon A\to\mathcal P_A\), sending \(a\) to
\(A(-,a)\), is a DK embedding and a Morita equivalence, with enriched
natural isomorphisms \(u_!y_A\cong y_Bu\).
\end{lemma}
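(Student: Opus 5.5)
The plan is to build the enrichment from the standard model structure on right \(A\)-modules. Since \(A\) is pointwise cofibrant, the category \(\operatorname{Mod}_A\) of orthogonal-spectrum-valued right modules carries the projective stable model structure of Schwede--Shipley. This model structure is spectrally enriched, tensored and cotensored over orthogonal spectra and simplicial sets, and satisfies the pushout-product axiom.

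\textbf{Enrichment.} Let \(\mathcal P_A\) be the full subcategory of cofibrant modules that are compact in the homotopy category.
\begin{itemize}
\item For the mapping spectra, I take the internal module hom \(F_A(M,N)\). If I want every mapping spectrum cofibrant, as needed for the CLMPZ input, I apply a cofibrant replacement of the hom objects via a fibrant replacement \(N\to RN\). This is exactly where the admissible replacement \(Q_{\mathrm{red}}\) is used later.
\item The weak equivalences are the stable equivalences. The cofibrations are the cofibrations between cofibrant perfect modules.
\item Pushouts along cofibrations and tensors with finite simplicial sets preserve cofibrancy and perfectness. Compact objects are closed under finite colimits, so \(\mathcal P_A\) is a spectral Waldhausen category.
\item Because the source is cofibrant, \(F_A(M,N)\) computes the derived mapping spectrum when \(N\) is fibrant. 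In general, the derived mapping spectrum is obtained after fibrant replacement in the target, which is the replacement used above.
\end{itemize}

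\textbf{Extension of scalars.} Define \(u_!M=M\wedge_A B(-,u-)\), the enriched left Kan extension. Restriction \(u^*\) preserves fibrations and trivial fibrations, so \(u_!\) is a spectral left Quillen functor.
\begin{itemize}
\item It preserves cofibrant objects, cofibrations, pushouts, and tensors with simplicial sets.
\item By Ken Brown's lemma, it preserves weak equivalences between cofibrant objects.
\item It preserves compactness because its right adjoint \(u^*\) preserves filtered colimits (homotopy colimits).
\end{itemize}
The enriched co-Yoneda isomorphisms \(A(-,a)\wedge_A B(-,u-)\cong B(-,ua)\) give \(u_!y_A\cong y_Bu\). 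Associativity of \(\wedge\) gives the compositors \(v_!u_!\cong(vu)_!\); these are isomorphisms of left adjoints, uniquely determined by the corresponding equalities \((vu)^*=u^*v^*\) of right adjoints, which makes their coherence automatic.

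\textbf{Localization.} The stable localization of \(\mathcal P_A\) is the full subcategory of compact objects in the underlying \(\infty\)-category of \(\operatorname{Mod}_A\), which is \(\operatorname{Perf}(A)\). The localized functor of \(u_!\) is its left derived functor \(\mathbf Lu_!\). Naturality in \(A\) follows from the pseudofunctoriality above together with Lemma~\ref{lem:pseudonatural-localization}.

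\textbf{Yoneda.} Representables are cofibrant, being free on \(\mathbb S\) smashed with cofibrant \(A(-,a)\), and compact. The enriched Yoneda lemma gives an isomorphism \(F_A(A(-,a),A(-,b))\cong A(a,b)\). Using the derived mapping spectra, \(y_A\) is therefore DK fully faithful, provided the chosen fibrant replacement of the target is accounted for. I expect this to be the main technical point: I would arrange the enrichment so that the comparison is a levelwise equivalence rather than an isomorphism. Finally, \(y_A\) is a Morita equivalence because representables generate \(\operatorname{Perf}(A)\) under finite colimits, shifts, and retracts.
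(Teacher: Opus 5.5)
There is a genuine gap, and it is the step you defer as ``the main technical point.'' You never construct a spectral enrichment on cofibrant perfect modules that both computes derived mapping spectra and has strictly associative, unital composition on which \(u_!\) acts strictly.

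Cofibrant perfect \(A\)-modules are not fibrant in the stable model structure on orthogonal spectra. So the plain internal hom \(F_A(M,N)\) need not be derived, and then the mapping spectra fail the homotopy-invariance and cofiber-to-fiber-sequence conditions required of a spectral Waldhausen category. The obvious repairs do not work:
\begin{itemize}
\item Replacing the target by \(RN\) breaks strictness. The composite \(F_A(N,RP)\wedge F_A(M,RN)\to F_A(M,RP)\) needs an enriched multiplication \(RR\to R\). Making \(u_!\) a spectral functor needs a natural map \(u_!RN\to Ru_!N\) compatible with composition. A fibrant replacement supplies neither.
\item Restricting to bifibrant modules fails, because pushouts along cofibrations, tensors with simplicial sets, and \(u_!\) do not preserve fibrancy.
\item \(Q_{\mathrm{red}}\) cannot be the fix. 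It is a trivial fibration on mapping spectra, so it only cofibrantly replaces mapping spectra that are already correct. Derivedness must be built into \(\mathcal P_A\) itself.
\end{itemize}
The Yoneda step you worry about is actually harmless, since \(F_A(A(-,a),N)\cong N(a)\) is homotopical in \(N\). The problem is the general object.

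The missing idea in the paper is a change of enrichment to EKMM \(S\)-modules, where every object is fibrant. It uses the Mandell--May strong monoidal Quillen equivalence \(N\colon\operatorname{Sp}^{\mathcal O}\rightleftarrows\operatorname{Sp}^{\mathrm{EKMM}}\colon N^\#\), with the positive stable model structure on the orthogonal side, and \(K=F_1S^1\to\mathbb S\). It sets \(J_AP=N(K\wedge P)\) and \(\mathcal P_A(P,Q)=N^\#F_{NA}(J_AP,J_AQ)\).
\begin{itemize}
\item Smashing with \(K\) makes \(P\) positively cofibrant, so \(J_AP\) is a cofibrant \(NA\)-module.
\item Since \(J_AQ\) is automatically fibrant, \(F_{NA}(J_AP,J_AQ)\) is already derived and composes strictly.
\item The lax monoidal right Quillen functor \(N^\#\) transports this to an orthogonal enrichment that preserves the homotopy pullbacks needed for the Waldhausen axioms.
\item The strict isomorphism \(\kappa_u\colon J_Bu_!\cong(Nu)_!J_A\) defines the enriched action of \(u_!\) and makes the co-Yoneda compositors and \(u_!y_A\cong y_Bu\) enriched.
\end{itemize}
In this model the Yoneda comparison \(A(a,b)\to N^\#F_{NA}(J_AA(-,a),J_AA(-,b))\) is an equivalence rather than an isomorphism. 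That is why \(y_A\) is only a DK embedding: the ``levelwise equivalence'' you anticipated comes out of the construction. The rest of your outline matches the paper: extension of scalars as a left Quillen functor preserving compactness, the localization being \(\operatorname{Perf}(A)\) with \(\mathbf Lu_!\), and Morita density of representables.
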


\begin{proof}
Let
\[
 N\colon\operatorname{Sp}^{\mathcal O}
 \rightleftarrows\operatorname{Sp}^{\mathrm{EKMM}}:N^\#
\]
be the monoidal Quillen equivalence of
\cite[Chapter~I]{MandellMay}, with the positive stable model structure on
orthogonal spectra.  Write \(K=F_1S^1\), with its stable equivalence
\(K\to\mathbb S\).  Applying the strong symmetric-monoidal functor \(N\)
to the mapping spectra gives an EKMM-enriched category \(NA\).  Set
\begin{equation}\label{eq:perfect-module-derived-enrichment}
 J_AP=N(K\wedge P),\qquad
 \mathcal P_A(P,Q)=N^\#F_{NA}(J_AP,J_AQ).
\end{equation}
Here \(F_{NA}\) is the internal mapping spectrum of right \(NA\)-modules.
Composition comes from its composition and the lax symmetric-monoidal
structure of \(N^\#\); the base enrichment sends an ordinary module map to
its image under \(J_A\).  This is the many-object version of
\cite[Remark~3.7, Lemma~3.9, and Example~5.2]{CLMPZSW}.

The module model structures used here exist for arbitrary small enriched
categories by \cite[Theorem~7.2(1)]{SchwedeShipleyMonoidal}, including its
EKMM case.  Smashing a cofibrant \(A\)-module with \(K\) gives a positively
cofibrant \(A\)-module: this follows on the free generating cells
\(A(-,a)\wedge i\) from the positive cofibrancy of \(K\), and then by
pushouts and retracts.  The induced functor from positively cofibrant
\(A\)-modules to \(NA\)-modules is left Quillen, so \(J_AP\) is cofibrant.
Every \(NA\)-module is fibrant, since fibrations are detected objectwise and
every EKMM spectrum is fibrant.

For completeness, the change of enrichment gives a Quillen equivalence on
these module categories.  Use the positive projective structure on
\(A\)-modules and the adjunction obtained by applying \(N\) and \(N^\#\)
objectwise, with restriction along \(A\to N^\#NA\) on the right.  A
positively cofibrant module is objectwise cofibrant in the ordinary stable
structure, because \(A\) is pointwise cofibrant.  The unit is consequently
an objectwise stable equivalence by \cite[Chapter~I, Proposition~3.5]{MandellMay}.
The right adjoint preserves and detects weak equivalences by
\cite[Chapter~I, Lemma~3.3]{MandellMay}, so the adjunction
is a Quillen equivalence.  This is the argument for the positive and EKMM
models in \cite[Section~7, proof of Corollary~1.2]{SchwedeShipleyMonoidal}.
The ordinary and positive projective structures have the same weak
equivalences, and \(K\wedge P\to P\) is a weak equivalence for cofibrant
\(P\).  Thus \(J_A\) presents the resulting equivalence of module
\(\infty\)-categories.  It follows that
\eqref{eq:perfect-module-derived-enrichment} computes their mapping spectra
and restricts to the corresponding perfect objects.

Cofibrant perfect \(A\)-modules are closed under zero objects, retracts,
mapping cylinders, and pushouts along cofibrations, so their ordinary base
is a Waldhausen category.  The functor \(J_A\) preserves these pushouts and
cofibrations and sends weak equivalences between cofibrant modules to weak
equivalences.  In EKMM modules, the two-variable mapping spectrum between
cofibrant objects preserves weak equivalences in each variable.  Applied
contravariantly to a cofibration pushout it gives a pullback along a
fibration, hence a homotopy pullback.  Applied covariantly it also gives a
homotopy pullback, since the original homotopy pushout is a homotopy
pullback in this stable model category.  The right Quillen functor \(N^\#\)
preserves these homotopy pullbacks and weak equivalences between fibrant
objects.  Mapping spectra to and from zero are zero.  These observations
verify all three axioms of \cite[Definition~3.12]{CLMPZ}; smallness follows
on passing to the next universe.

Ordinary enriched extension of scalars is
\[
 (u_!P)(b)=\int^{a\in A}P(a)\wedge B(b,u(a)).
\]
It is a simplicially enriched left Quillen functor, preserves pushouts and
tensors, and carries representables to representables.  Hence it preserves
cofibrant perfect modules.  There is a natural isomorphism of \(NB\)-modules
\begin{equation}\label{eq:perfect-module-enrichment-base-change}
 \kappa_u\colon J_Bu_!\xrightarrow{\ \cong\ }(Nu)_!J_A,
\end{equation}
because \(N\) is strong symmetric monoidal and preserves the coends, and
extension of scalars commutes with the external factor \(K\).  The
EKMM-enriched functor \((Nu)_!\) induces
\[
 F_{NA}(J_AP,J_AQ)
 \longrightarrow
 F_{NB}((Nu)_!J_AP,(Nu)_!J_AQ).
\]
Conjugating by \(\kappa_u\) and applying \(N^\#\) defines the action of
\(u_!\) on \eqref{eq:perfect-module-derived-enrichment}.  It preserves
composition and the base enrichment by construction.

For composable \(u,v\), the two comparisons from \(J_Cv_!u_!\) to
\((N(vu))_!J_A\) are the same isomorphism of the iterated coend.  In
particular, the usual co-Yoneda compositor
\(c_{v,u}\colon v_!u_!\cong(vu)_!\) is enriched for the mapping spectra
\eqref{eq:perfect-module-derived-enrichment}.  The unit and pentagon
identities follow from those of the coend compositors.  This also proves
naturality of the comparison with \(\operatorname{Perf}(A)\).

The action on representable modules defines \(y_A\) for this enrichment.
On its mapping spectra, the comparison just proved is the derived Yoneda
equivalence
\[
 A(a,b)\xrightarrow{\ \sim\ }
 N^\#F_{NA}(J_AA(-,a),J_AA(-,b));
\]
equivalently, apply the proof of \cite[Lemma~3.10]{CLMPZSW} to each pair
\((a,b)\).  The representables generate the perfect modules under finite
cofibers and retracts, so \(y_A\) is Morita-dense.  Co-Yoneda gives
\(u_!A(-,a)\cong B(-,u(a))\), and
\eqref{eq:perfect-module-enrichment-base-change} makes these isomorphisms
enriched and compatible with composition.  The same comparison with
\(u=f^m\) identifies the graph coefficient on representables with
\(A(f^m-,-)\).
\end{proof}

\begin{lemma}
\label{lem:normalized-perfect-module-pseudofunctor}
The extension-of-scalars pseudofunctor on pointwise-cofibrant orthogonal
spectral categories is
pseudonaturally equivalent to a normal pseudofunctor for which
\[
 (\operatorname{id}_A)_!=\operatorname{id}_{\mathcal P_A}
\]
and all unit constraints are identities.  The nonidentity transition
functors are the usual enriched left Kan extensions, and their compositors
are the co-Yoneda isomorphisms
\(v_!u_!\simeq(vu)_!\).  This normalization is compatible with strict
endomorphism diagrams and with passage to derived functors.
\end{lemma}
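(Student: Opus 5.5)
The plan is to rebuild the pseudofunctor \(A\mapsto\mathcal P_A\), \(u\mapsto u_!\) of Lemma~\ref{lem:many-object-perfect-model} on each morphism separately. I keep every nonidentity transition functor and redefine only the value on identity morphisms. For each \(A\), set \(\widetilde{(\operatorname{id}_A)}_!:=\operatorname{id}_{\mathcal P_A}\), and let \(\widetilde{u}_!:=u_!\) whenever \(u\) is not an identity. Now \((\operatorname{id}_A)_!\) is the coend \(P\mapsto\int^{a}P(a)\wedge A(-,a)\). The enriched co-Yoneda isomorphism \(\iota_A\colon(\operatorname{id}_A)_!\cong\operatorname{id}_{\mathcal P_A}\) is natural, and it is compatible with the enrichment through \(\kappa_{\operatorname{id}}\) of \eqref{eq:perfect-module-enrichment-base-change}. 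This \(\iota_A\) supplies the comparison data.

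The compositors are then defined casewise. For a composable pair of nonidentity maps I use the co-Yoneda compositor \(c_{v,u}\). When one factor is an identity, say \(u=\operatorname{id}\), the new composite \(\widetilde v_!\widetilde u_!=v_!\) already equals \(\widetilde{(v)}_!\), so I take the identity compositor. This makes every unit constraint an identity, which is exactly normality. The comparison \(\eta\colon\widetilde{(-)}_!\Rightarrow(-)_!\) has identity components on objects. Its naturality 2-cells are \(\iota_A^{-1}\) on identity morphisms and identities on all other morphisms.

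The step I expect to be the main obstacle is checking that these new compositors satisfy the associativity pentagon. For this I want to show that the modified compositors are obtained by conjugating the old ones with \(\iota\). The key identities are \(c_{v,\operatorname{id}}=v_!\iota_A\) and \(c_{\operatorname{id},v}=\iota_B v_!\) as isomorphisms \(v_!(\operatorname{id})_!\cong v_!\) and \((\operatorname{id})_!v_!\cong v_!\). I would verify both by evaluating the iterated coend: each reduces to the same Fubini/co-Yoneda collapse, which is the triangle identity for coends. Given these identities, the pentagon and unit coherence for \(\widetilde{(-)}_!\) follow formally from those of \((-)_!\). The same identities show that \(\eta\) is a pseudonatural equivalence, and it is an equivalence because each component is the identity functor.

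Compatibility with strict endomorphism diagrams follows because an endofunctor \(f\) is non-identity unless \(f=\operatorname{id}\). In that case \(T_x=\operatorname{id}\) exactly, and the powers \(f^m\) are handled by the same case split. Compatibility with derived functors follows because \(\iota_A\) is an isomorphism between cofibrant-preserving left Quillen functors, so it descends to \(\mathbf L(\operatorname{id})_!\simeq\operatorname{id}\) under the localization of Lemma~\ref{lem:pseudonatural-localization}.
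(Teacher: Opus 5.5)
Your strategy is the paper's. You keep every nonidentity \(u_!\), replace \((\operatorname{id}_A)_!\) by \(\operatorname{id}_{\mathcal P_A}\), and transport the compositors along the unit isomorphism. The triangle identities \(c_{v,\operatorname{id}}=v_!\iota_A\) and \(c_{\operatorname{id},v}=\iota_Bv_!\) make the unit compositors identities, and the pentagon follows by conjugation.

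However, your casewise definition of the new compositors has a hole. You take \(c_{v,u}\) whenever \(u\) and \(v\) are both nonidentity. This fails when \(vu=\operatorname{id}_A\). Examples are \(u\) a nontrivial automorphism with \(v=u^{-1}\), or \(u=f\), \(v=f^{m-1}\) for an endofunctor \(f\ne\operatorname{id}\) with \(f^m=\operatorname{id}_A\); such pairs occur among strict morphisms and inside strict endomorphism diagrams. In that case \(c_{v,u}\) lands in \((\operatorname{id}_A)_!\), while the new value of the composite is \(\widetilde{(\operatorname{id}_A)}_!=\operatorname{id}_{\mathcal P_A}\). So your compositor is not well-typed there. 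Your claim that the new compositors are the \(\iota\)-conjugates of the old ones is false in exactly this case, and you have not established the pentagon or the pseudonaturality of \(\eta\) for such pairs.

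The fix is the uniform transport formula the paper uses. Set \(\alpha_u=\operatorname{id}\) for nonidentity \(u\) and \(\alpha_{\operatorname{id}_A}=\iota_A^{-1}\). For every composable pair, define \(\widetilde c_{v,u}:=\alpha_{vu}^{-1}\,c_{v,u}\,(\alpha_v*\alpha_u)\). This gives three cases:
\begin{itemize}
\item if \(u\), \(v\) and \(vu\) are all nonidentity, \(\widetilde c_{v,u}=c_{v,u}\);
\item if \(u\) and \(v\) are nonidentity but \(vu=\operatorname{id}_A\), \(\widetilde c_{v,u}=\iota_A\circ c_{v,u}\);
\item if either factor is an identity, \(\widetilde c_{v,u}\) is the identity, by your two triangle identities.
\end{itemize}
With this definition, the pentagon, the unit coherence, and the pseudonaturality of \(\alpha\) (your \(\eta\)) follow formally from those of the original pseudofunctor. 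The paper notes explicitly that its formula covers the \(vu=\operatorname{id}\) case.

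Your remaining points agree with the paper. Strict endomorphism relations are untouched because no nonidentity functor changes. Derived functors are unchanged because \(\iota_A\) is an isomorphism.
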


\begin{proof}
Let \(\mathbb F\) be the enriched left Kan extension pseudofunctor, with
unit isomorphism
\(\lambda_A\colon\operatorname{id}_{\mathcal P_A}\xrightarrow{\sim}
\mathbb F(\operatorname{id}_A)\).  Define \(\mathbb F^{\mathrm n}(u)=
\mathbb F(u)\) for a nonidentity arrow and
\(\mathbb F^{\mathrm n}(\operatorname{id}_A)=
\operatorname{id}_{\mathcal P_A}\).  For every arrow \(u\), let
\[
 \alpha_u\colon\mathbb F^{\mathrm n}(u)\xrightarrow{\ \sim\ }\mathbb F(u)
\]
be the identity when \(u\) is nonidentity and \(\lambda_A\) when
\(u=\operatorname{id}_A\).  If \(u\) and \(v\) are composable, define the
normalized compositor by transport:
\[
 c^{\mathrm n}_{v,u}
 :=\alpha_{vu}^{-1}\,c_{v,u}\,
   (\alpha_v*\alpha_u)\colon
 \mathbb F^{\mathrm n}(v)\mathbb F^{\mathrm n}(u)
 \xrightarrow{\ \sim\ }\mathbb F^{\mathrm n}(vu).
\]
This formula also covers the case in which \(u,v\) are nonidentity but
\(vu\) is an identity.  The triangle identities imply
\(c^{\mathrm n}_{u,\operatorname{id}}=
 c^{\mathrm n}_{\operatorname{id},u}=\operatorname{id}\), and conjugating
the original pentagon by the \(\alpha_u\) proves the normalized pentagon.
The family \(\alpha\) is the required pseudonatural equivalence.  Since the
normalization changes no nonidentity functor and only transports structural
isomorphisms, it preserves strict endomorphism relations and induces the
same derived functors.
\end{proof}

\begin{lemma}
\label{lem:perfect-section-restriction}
Let \(I\) be either \(B\mathbb N\) or a finite directed cycle, and let
\(\mathcal G\colon I\to\operatorname{ModCat}_\Delta\) be the corresponding
diagram of orthogonal module model categories obtained from the symmetric
spectral presentations of Lemma~\ref{lem:orthogonal-morita-model}.
Assume that every transition functor preserves perfect objects.  Write
\(\operatorname{Sec}^{\operatorname{pc}}_{\operatorname{perf}}(\mathcal G)\)
for the full subcategory of projectively cofibrant strict sections with
perfect underlying objects, and write
\(\operatorname{End}_{\operatorname{perf}}(\mathcal G)\) for the Waldhausen
category of strict sections with cofibrant perfect underlying objects,
equipped with the objectwise weak equivalences and the Waldhausen structure
used by the CLMPZ twisted-endomorphism construction.  Then the inclusion
induces equivalences
\begin{align*}
 N\operatorname{Sec}^{\operatorname{pc}}_{\operatorname{perf}}(\mathcal G)
 [W^{-1}]
 &\simeq
 N\operatorname{End}_{\operatorname{perf}}(\mathcal G)[W^{-1}] \\
 &\simeq
 \operatorname{RLim}^{\operatorname{lax}}
   (\mathcal G_\infty)^{\operatorname{perf}}.
\end{align*}
It also induces natural equivalences of connective \(K\)-theory spectra
\[
 K\operatorname{End}_{\operatorname{perf}}(\mathcal G)
 \simeq
 K\operatorname{Sec}^{\operatorname{pc}}_{\operatorname{perf}}(\mathcal G)
 \simeq
 K\bigl(
   \operatorname{RLim}^{\operatorname{lax}}
   (\mathcal G_\infty)^{\operatorname{perf}}
  \bigr).
\]
After Dwyer--Kan localization, the same conclusions are natural for strict
maps of such diagrams whose vertex maps are simplicial left Quillen functors
preserving perfect objects and whose transition squares commute strictly.
\end{lemma}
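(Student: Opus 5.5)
The plan is to pass through the homotopy theory of strict diagrams of model categories and their lax limits. The three categories involved will be compared via a common model structure on sections.

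\textbf{Step 1: model structure on strict sections.} Since \(I\) is \(B\mathbb N\) or a finite directed cycle, and every \(\mathcal G(i)\) is a combinatorial simplicial module model category with left Quillen transitions, the category of strict sections carries a projective-type model structure. I would build it as lax-limit sections: an object is \((x_i)\) with structure maps \(\mathcal G(e)x_i\to x_{j}\), and the weak equivalences and fibrations are objectwise. For \(B\mathbb N\) this is the category of modules over the free monad generated by the transition functor. For a cycle it is the analogous algebra category.

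\textbf{Step 2: identify the localization with the lax limit.} I expect the underlying \(\infty\)-category of this model category to be \(\operatorname{RLim}^{\operatorname{lax}}(\mathcal G_\infty)\). I would justify this through the Lurie/Hinich result that lax limits of left Quillen diagrams are presented by section categories; the left Quillen hypothesis is what makes the derived transitions the localized ones. Restricting to objects with perfect underlying vertices then gives the perfect part on both sides. Since perfectness is an objectwise condition preserved by the transitions, the restriction is compatible.

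\textbf{Step 3: compare cofibrant replacements.} Projectively cofibrant sections have cofibrant underlying objects, because the forgetful functor preserves cofibrant objects when the transitions are left Quillen and \(I\) is free. So \(\operatorname{Sec}^{\operatorname{pc}}_{\operatorname{perf}}\subset\operatorname{End}_{\operatorname{perf}}\). Any section with cofibrant perfect vertices admits a functorial projectively cofibrant replacement by an objectwise equivalence. This replacement stays perfect, since perfectness is invariant under weak equivalence. It follows that both relative categories have the same DK localization.

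\textbf{Step 4: \(K\)-theory.} For the \(K\)-theory statements I would invoke Waldhausen approximation in the Blumberg–Mandell / Dwyer–Kan form, under which \(K\) of a Waldhausen category with a cylinder functor and saturated weak equivalences depends only on its homotopy category. This gives \(K\operatorname{End}_{\operatorname{perf}}\simeq K\operatorname{Sec}^{\operatorname{pc}}_{\operatorname{perf}}\simeq K\) of the stable localization, identified in Step 2.

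\textbf{Step 5: naturality.} Strict maps of diagrams given by simplicial left Quillen vertex maps with strictly commuting transition squares induce left Quillen functors on sections. These respect the replacements and the identifications of Steps 2–4 after localization.

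\textbf{Main obstacle.} I expect the hardest part to be Step 2: the rigorous identification of the localized section category with the right-lax limit in the paper's convention (structure map \(Fx\to x\)). The cycle case needs care because \(I\) is not free on a single endomorphism, and the direction of laxness must be matched. My first attempt would be to reduce to the \(B\mathbb N\) case by unrolling the cycle into a single endofunctor on the product \(\prod_i\mathcal G(i)\).
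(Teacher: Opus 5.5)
Your outline has the same skeleton as the paper's proof: a projective model structure on strict sections, identification of its localization with the right-lax limit, functorial projectively cofibrant replacement, the Blumberg--Mandell/BGT comparison for $K$-theory, and naturality. Your unrolling of the cycle into one endofunctor on $\prod_i\mathcal G(i)$ is a harmless alternative to the paper's rerunning of the argument for the $r$-cycle.

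\textbf{The gap is in Steps~1--2.}
\begin{itemize}
\item You assume each $\mathcal G(i)$ is a \emph{combinatorial} simplicial model category. In fact the vertices are module categories over \emph{orthogonal} spectral categories, i.e.\ spectra in topological spaces, and these are not locally presentable.
\item The lax-limit presentation theorem the paper uses, \cite[Proposition~3.2 and Theorem~1.1]{HarpazLax}, is available for combinatorial simplicial vertices. The paper therefore invokes it only for the diagram $\mathcal G^\Sigma$ of modules over the cofibrant symmetric spectral categories $Q_\Sigma A_i$ of Lemma~\ref{lem:orthogonal-morita-model}, and then transports the result to $\mathcal G$.
\end{itemize}

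The transport runs as follows.
\begin{itemize}
\item The vertexwise change-of-enrichment Quillen equivalences commute with the transitions up to coherent co-Yoneda isomorphisms. Hence they induce an adjunction on strict sections.
\item The orthogonal section category gets its projective model structure without any combinatoriality. Sections are identified with modules over an enriched path category, with free section $\coprod_{\alpha\colon i\to j}\alpha_!P$, and \cite[Theorem~7.2(1)]{SchwedeShipleyMonoidal} is applied.
\item Projectively cofibrant and fibrant sections are objectwise cofibrant and fibrant. So the Quillen-equivalence criterion on sections reduces to the vertices.
\end{itemize}
Without this transfer, or some other argument valid for non-combinatorial vertices, your Step~2 is unjustified.

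\textbf{Step~4 also needs tightening.}
\begin{itemize}
\item $K$-theory is determined by the Dwyer--Kan localization, not by the homotopy category.
\item \cite[Lemma~7.11 and Corollary~7.12]{BGT} require a functorial cofibration/weak-equivalence factorization and saturated weak equivalences, which you do not verify.
\item The paper supplies the factorization via coordinatewise mapping cylinders $Q_i\amalg_{P_i}(P_i\otimes\Delta^1)$. These carry strict lacing maps because the transitions are simplicial left Quillen functors preserving tensors and pushouts of cofibrant objects, and they remain cofibrant and perfect.
\item Saturation holds because weak equivalences are detected coordinatewise.
\end{itemize}
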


\begin{proof}
First make the comparison on modules over the cofibrant symmetric spectral
categories \(Q_\Sigma A_i\) appearing before realization and prolongation
in Lemma~\ref{lem:orthogonal-morita-model}.  Denote this diagram by
\(\mathcal G^\Sigma\).  Its vertex model categories are simplicial and
combinatorial, so \cite[Proposition~3.2 and Theorem~1.1]{HarpazLax} give the
projective section model and identify its localization with the right-lax
limit.  We next transport this comparison to the orthogonal module models.

The change of enrichment in
Lemma~\ref{lem:orthogonal-morita-model} gives vertexwise simplicial Quillen
equivalences from \(\mathcal G^\Sigma\) to \(\mathcal G\).  The left
adjoints commute with the transition functors up to the co-Yoneda
isomorphisms, since realization and prolongation preserve the defining
coends.  These isomorphisms satisfy the composition constraints and induce
an adjunction on strict sections, whose right adjoint is given vertexwise
by the right adjoints and their mate transformations.  The orthogonal
section category has the projective model structure: its free section at
\(i\) has \(j\)-th value
\[
 \coprod_{\alpha:i\to j}\alpha_!P.
\]
Indeed, by extension--restriction adjunction, a section structure map
\(\alpha_!P_i\to P_j\) is a module map \(P_i\to\alpha^*P_j\).
Sections are therefore modules over the enriched path category obtained
by adjoining the arrows of \(I\) to the vertex spectral categories, with
the relations expressing these module maps.  Its free-module construction
is the displayed free-section construction.  The transferred module model
structure of \cite[Theorem~7.2(1)]{SchwedeShipleyMonoidal} thus has
objectwise fibrations and weak equivalences.  Projective
cofibrations are objectwise cofibrations, since every transition is left
Quillen.  Consequently a projectively cofibrant section is objectwise
cofibrant, and a projectively fibrant section is objectwise fibrant.  The
Quillen-equivalence criterion for the adjunction on sections is therefore
the criterion at each vertex.  This proves that it is a Quillen
equivalence, compatible with evaluation and with restriction of the
indexing category.  Harpaz's comparison now gives the asserted right-lax
limit for \(\mathcal G\) as well.  The subcategories of sections with
perfect underlying objects correspond, since the vertex equivalences
preserve compact objects.

Projective cofibrant replacement of a strict section is an objectwise weak
equivalence.  Its source has cofibrant perfect underlying objects whenever
the original section does; hence it belongs to
\(\operatorname{Sec}^{\operatorname{pc}}_{\operatorname{perf}}(\mathcal G)\).
It follows that the inclusion into
\(\operatorname{End}_{\operatorname{perf}}(\mathcal G)\) induces an
equivalence on Dwyer--Kan localizations.  Under Harpaz's equivalence, the
mapping space between two sections is the homotopy pullback imposing the
lacing equations.  For \(I=B\mathbb N\), this is precisely the
arrow-fibration model of \cite[Construction~2.5]{HNS}; the same argument for
the directed \(r\)-cycle gives the cyclic laced category.  Finite limits and
cofibers are computed vertexwise, so the resulting equivalence is exact.

To compare the Waldhausen \(K\)-theory spectrum used by CLMPZ with the
connective \(K\)-theory of the localized category, let
\(g=(g_i:P_i\to Q_i)\) be a morphism of strict sections.  Its
coordinatewise cylinder is
\[
 \operatorname{Cyl}(g_i)
 =Q_i\amalg_{P_i,g_i}(P_i\otimes\Delta^1).
\]
The transition functors are simplicial left Quillen functors; they preserve
tensors and pushouts on the cofibrant objects under consideration.  The
lacing equations for \(g\) therefore induce strict lacing maps on these
cylinders.  Thus every morphism factors functorially as a cofibration followed
by a weak equivalence, and the cylinder remains cofibrant and perfect in each
coordinate.  Weak equivalences are DHKS-saturated because they are detected
coordinatewise in module categories.  The hypotheses of
\cite[Lemma~7.11 and Corollary~7.12]{BGT} are consequently satisfied, and
Waldhausen \(K\)-theory agrees with the connective \(K\)-theory of the
localized stable \(\infty\)-category.  The same comparison applied to the
DK-equivalent projectively cofibrant subcategory gives the two displayed
equivalences of spectra.

The vertex functors preserve cofibrant perfect objects and weak equivalences
between them, so a strict map of diagrams as in the statement induces a
functor on the localized perfect section categories.  Harpaz's comparison
identifies it with the functor on right-lax limits, and naturality of the BGT
comparison for weakly exact functors gives naturality on \(K\)-theory.
\end{proof}

\begin{lemma}
\label{lem:strict-sections-iteration}
Under the equivalences of
Lemma~\ref{lem:perfect-section-restriction}, restriction of a strict
\(B\mathbb N\)-section along
\(\mu_m\colon B\mathbb N\to B\mathbb N\) corresponds to restriction of the
associated right-lax cone along \(\mu_m\).  In particular, the strict functor
that sends a structure arrow \(TP\to P\) to its ordered composite
\(T^mP\to P\) induces the intrinsic iteration functor
\[
 P_m\colon\operatorname{Lace}(\cC,\cM_F)
 \longrightarrow\operatorname{Lace}(\cC,\cM_{F^m}).
\]
This identification is natural in strict maps of endofunctor diagrams.  For
\(m,n\geq1\), the comparison for two successive restrictions is the canonical
equivalence \(P_n^{F^m}P_m^F\simeq P_{mn}^F\), and these comparisons are
unital and satisfy the associativity coherence.
\end{lemma}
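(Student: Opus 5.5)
The plan is to reduce everything to a comparison of right-lax limits over the two indexing categories related by \(\mu_m\), and to transport that comparison through the Harpaz section model of Lemma~\ref{lem:perfect-section-restriction}. The first step is to note that precomposition with \(\mu_m\) is an ordinary functor on strict diagrams and on strict sections. A strict \(B\mathbb N\)-diagram \(\mathcal G\) with generator \(T\) restricts to the diagram \(\mu_m^*\mathcal G\) with generator \(T^m\). A strict section \((P,\,TP\to P)\) restricts to \((P,\,T^mP\to P)\), with structure arrow the ordered composite \(\alpha\circ T(\alpha)\circ\cdots\circ T^{m-1}(\alpha)\). Restriction preserves projective cofibrancy, because free sections for \(\mu_m^*\mathcal G\) are retracts of restricted free sections: the coproduct over \(\mathbb N\) splits by residues modulo \(m\). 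Restriction also preserves objectwise weak equivalences and perfection. It therefore induces a functor on the localized section categories.

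Second, I would check that Harpaz's equivalence is natural in the indexing category, so that the induced functor is identified with restriction of right-lax cones along \(\mu_m\). I expect this to be the main obstacle. Harpaz's comparison \cite[Theorem~1.1]{HarpazLax} is stated for a fixed diagram, so I would reprove the needed naturality directly through the unstraightening description. The localized strict sections of \(\mathcal G\) are the localization of the category of cocartesian sections of the Grothendieck construction over \(NB\mathbb N\), relative to the vertex structure. The functor \(\mu_m\) induces a pullback of these cocartesian fibrations and hence a restriction of lax sections. Since both constructions are functorial in maps of indexing categories before localization, the comparison square commutes after Dwyer--Kan localization, with the commuting witness given by the identity on underlying objects. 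Mapping spaces are the homotopy pullbacks imposing the lacing equations, and on these the two functors visibly agree. Combined with \eqref{eq:laced-category-right-lax-limit} and Proposition~\ref{lem:coefficient-composition}, this identifies the strict ordered-composition functor with \(P_m\).

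Third, for naturality in strict maps \(uf=gu\), I would note that restriction along \(\mu_m\) commutes on the nose with postcomposition by strict maps of diagrams. The final clause of Lemma~\ref{lem:perfect-section-restriction} then transports this to the localized level. For coherence, I would use that \(\mu_n\mu_m=\mu_{mn}\) and \(\mu_1=\operatorname{id}\) hold strictly. Strict restriction therefore satisfies \((\mu_m)^*(\mu_n)^*=(\mu_{mn})^*\) literally on sections; the order of the ordered composites matches because both are \(\alpha\circ T\alpha\circ\cdots\). The comparisons built in the second step are instances of a single natural transformation, the functoriality of the Grothendieck construction in the base. Their composites are therefore the canonical ones of Lemma~\ref{lem:iteration-multiplicative-coherence}. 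Unit and associativity coherences then follow from associativity of composition in \(B\mathbb N\) at the strict level.
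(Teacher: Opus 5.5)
Your proposal is correct and follows the paper's own route. Both arguments precompose strict sections with \(\mu_m\) and use that Harpaz's comparison between localized strict sections and right-lax limits is natural in both the diagram and the indexing category; the paper simply asserts this naturality, while you sketch a justification via base change of the Grothendieck construction. Both then read off the ordered composite \(\alpha\circ T(\alpha)\circ\cdots\circ T^{m-1}(\alpha)\) and obtain the unit and associativity coherences from \(\mu_n\mu_m=\mu_{mn}\).

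There are two small slips in your sketch. First, right-lax cones correspond to \emph{all} sections of the Grothendieck construction over \(B\mathbb N\), not cocartesian ones; cocartesian sections would give the strict limit, with \(TP\to P\) invertible. Second, for preservation of projective cofibrancy, what you need is that a restricted free section splits by residues into a coproduct of free \(\mu_m^*\mathcal G\)-sections on the cofibrant objects \(T^jP\), \(0\le j<m\). The statement that free sections are retracts of restricted ones points in the wrong direction. You can also bypass this check entirely by working in the objectwise-cofibrant model \(\operatorname{End}_{\operatorname{perf}}\).
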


\begin{proof}
Harpaz's equivalence between localized strict sections and right-lax limits is
natural both in the diagram and in the indexing category.  Precomposition by
\(\mu_m\) therefore gives the asserted commutative square of localized section
categories.  On a strict section it replaces \(TP\to P\) by the ordered
\(m\)-fold composite, so this functor is the one displayed in the statement.
The identities \(\mu_n\mu_m=\mu_{mn}\) identify two successive restrictions
with restriction along \(\mu_{mn}\); functoriality of precomposition supplies
the unit and associativity coherence.
\end{proof}

\begin{proposition}
\label{prop:perfect-module-pseudofunctor}
The assignment
\[
 x=(A,f)\longmapsto(\mathcal P_x,T_x)
\]
extends to a normal pseudofunctor from \(\mathsf{Pres}_{\mathcal U}\) to the
\(2\)-category of spectral Waldhausen categories with exact endofunctor.

For every strict morphism
\[
 u\colon x=(A,f)\longrightarrow y=(B,g),
\]
the induced exact spectral functor
\(u_!\colon\mathcal P_x\to\mathcal P_y\)
is equipped with a natural isomorphism
\begin{equation}\label{eq:perfect-module-beck-chevalley}
 \beta_u\colon u_!T_x\xRightarrow{\ \sim\ }T_yu_!.
\end{equation}
For composable \(u\) and \(v\), there are coherent compositors
\begin{equation}\label{eq:perfect-module-compositor}
 c_{v,u}\colon v_!u_!\xRightarrow{\ \sim\ }(vu)_!,
\end{equation}
with identity unit constraints.

After stable localization, this pseudofunctor induces the intrinsic
endofunctor diagram
\[
 x=(A,f)\longmapsto
 \bigl(\operatorname{Perf}(A),\mathbf Lf_!\bigr).
\]
Moreover, the twisted-endomorphism category of the chosen presentation
presents the corresponding
laced category:
\begin{equation}\label{eq:perfect-sections-lace}
 N\operatorname{End}
 \bigl({}_{T_x}\mathcal P_x/(\mathcal P_x)_{\operatorname{id}}\bigr)
 [W^{-1}]
 \simeq
 \operatorname{Lace}
 \bigl(\operatorname{Perf}(A),\cM_{\mathbf LT_x}\bigr),
\end{equation}
naturally in \(x\) in the pseudofunctorial sense.  The induced comparison on
connective algebraic \(K\)-theory is an equivalence
\begin{equation}\label{eq:perfect-sections-k-lace}
 K\operatorname{End}
 \bigl({}_{T_x}\mathcal P_x/(\mathcal P_x)_{\operatorname{id}}\bigr)
 \simeq
 K^{\operatorname{lace}}
 \bigl(\operatorname{Perf}(A),\cM_{\mathbf LT_x}\bigr).
\end{equation}
The analogous statements hold for every cyclic \(r\)-fold
twisted-endomorphism category, where the localized \(r\)-fold category is
the cyclic laced category
\(\mathcal L_r(\operatorname{Perf}(A),\mathbf LT_x)\) of
Definition~\ref{def:cyclic-laced-category}.
\end{proposition}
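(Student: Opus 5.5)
The plan is to assemble the pseudofunctor directly from the enriched extension-of-scalars data of Lemma~\ref{lem:many-object-perfect-model}, normalize it by Lemma~\ref{lem:normalized-perfect-module-pseudofunctor}, and then read off the localized statements from Lemma~\ref{lem:perfect-section-restriction}.

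On objects we take \(x=(A,f)\mapsto(\mathcal P_A,f_!)\), where \(\mathcal P_A\) carries the enrichment \eqref{eq:perfect-module-derived-enrichment}. On a strict morphism \(u\) we take \(u_!\), which Lemma~\ref{lem:many-object-perfect-model} makes an exact spectral functor. The compositors \(c_{v,u}\) are the enriched co-Yoneda isomorphisms of iterated coends. We then apply Lemma~\ref{lem:normalized-perfect-module-pseudofunctor}, which makes the unit constraints identities and transports the pentagon.

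The Beck--Chevalley isomorphism is
\[
 \beta_u:=c_{g,u}^{-1}\circ c_{u,f}\colon u_!f_!\cong(uf)_!=(gu)_!\cong g_!u_!.
\]
This is well defined because \(uf=gu\) holds on the nose. Compatibility of \(\beta\) with composition (\(\beta_{vu}\) as the pasting of \(\beta_v\) and \(\beta_u\)) and with identities then follows formally from the pentagon and unit coherences for \(c\). This is the only genuine coherence verification in the first part, and it is a routine pasting argument.

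For the localized statements, Lemma~\ref{lem:many-object-perfect-model} identifies the stable localization of \(\mathcal P_A\) with \(\operatorname{Perf}(A)\) and that of \(u_!\) with \(\mathbf Lu_!\). Applying Lemma~\ref{lem:pseudonatural-localization} to the normal pseudofunctor then gives the strict diagram \(x\mapsto(\operatorname{Perf}(A),\mathbf Lf_!)\) in \(\operatorname{Fun}(B\mathbb N,\operatorname{Cat}^{\mathrm{perf}}_\infty)\). The \(B\mathbb N\)-structure at each \(x\) is the strict endofunctor \(f_!\), so the localized endofunctor is \(\mathbf Lf_!\).

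For \eqref{eq:perfect-sections-lace}, observe that an object of the twisted-endomorphism category \({}_{T_x}\mathcal P_x/(\mathcal P_x)_{\mathrm{id}}\) is exactly a strict \(B\mathbb N\)-section \(f_!P\to P\) of the module-category diagram generated by \(f_!\). By Lemma~\ref{lem:many-object-perfect-model}, \(f_!\) preserves perfect objects and is simplicial left Quillen. Lemma~\ref{lem:perfect-section-restriction} with \(I=B\mathbb N\) then identifies the localization with \(\operatorname{RLim}^{\mathrm{lax}}\) of \((\operatorname{Perf}(A),\mathbf Lf_!)\). By \eqref{eq:laced-category-right-lax-limit}, this is \(\operatorname{Lace}(\operatorname{Perf}(A),\cM_{\mathbf Lf_!})\). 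The \(K\)-theory equivalence \eqref{eq:perfect-sections-k-lace} is the second display of the same lemma, which uses the BGT comparison via cylinders. The cyclic \(r\)-fold case is identical, taking \(I\) to be the directed \(r\)-cycle and invoking Definition~\ref{def:cyclic-laced-category}.

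The main obstacle is pseudofunctorial naturality in \(x\). A morphism \(u\) induces a functor on twisted-endomorphism categories only after conjugating the lacing by \(\beta_u\), sending \((P,\alpha)\) to \((u_!P,u_!\alpha\circ\beta_u^{-1})\). Composition of such functors agrees only up to the compositors \(c_{v,u}\). My first attempt would be to form the Grothendieck construction of the sections pseudofunctor and apply Lemma~\ref{lem:pseudonatural-localization}. Coherence of the induced maps then reduces to the \(\beta\)-pasting identity already established. Finally, the naturality clause of Lemma~\ref{lem:perfect-section-restriction} (strict maps of diagrams) would be applied after replacing \(u_!\) by its mate-strictified version, which is where the comparison with Harpaz's right-lax limits must be checked to be compatible with these constraints.
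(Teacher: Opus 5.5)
Your proposal follows the paper's proof essentially step for step: the normalized extension-of-scalars pseudofunctor from Lemmas~\ref{lem:many-object-perfect-model} and~\ref{lem:normalized-perfect-module-pseudofunctor}, the definition \(\beta_u=c_{g,u}^{-1}c_{u,f}\) with its coherence derived from the pseudofunctor coherence of the \(c_{v,u}\), and Lemma~\ref{lem:perfect-section-restriction} applied to \(B\mathbb N\) and to the directed \(r\)-cycle for the laced-category and \(K\)-theory comparisons. Your extra discussion of pseudofunctorial naturality goes beyond the paper, which leaves this point implicit even though the naturality clause of Lemma~\ref{lem:perfect-section-restriction} is stated only for strictly commuting squares. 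You note that the transition squares commute only up to \(\beta_u\) and propose to handle this with the Grothendieck construction and Lemma~\ref{lem:pseudonatural-localization}; that sketch is reasonable but not fully carried out, in particular the step you call ``mate-strictification''.
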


\begin{proof}
Use the exact extension-of-scalars functors of
Lemma~\ref{lem:many-object-perfect-model} and the normal pseudofunctor with
compositors \(c_{v,u}\) supplied by
Lemma~\ref{lem:normalized-perfect-module-pseudofunctor}.

For \(uf=gu\), define
\[
 \beta_u:=c_{g,u}^{-1}c_{u,f}\colon u_!f_!\xrightarrow{\ \sim\ }g_!u_!.
\]
The pseudofunctor coherence for the \(c_{v,u}\) implies the composition
coherence for the \(\beta_u\).

By Lemma~\ref{lem:many-object-perfect-model}, stable localization of
\((\mathcal P_A,f_!)\) is \((\operatorname{Perf}(A),\mathbf Lf_!)\).
Applying Lemma~\ref{lem:perfect-section-restriction} to
\(B\mathbb N\) gives
\eqref{eq:perfect-sections-lace} and
\eqref{eq:perfect-sections-k-lace}; applying it to the finite directed
\(r\)-cycle gives the cyclic statement.
\end{proof}

\subsection{Construction of
\texorpdfstring{\(\mathcal P_x^{\operatorname{st}}\)}{Pst}}

We now give the split model of
Proposition~\ref{thm:split-perfect-strictification}.  The transition maps on
mapping spectra use the pseudofunctor compositors, which make the transition
functors strictly associative.

\begin{proof}[Detailed proof of
Proposition~\ref{thm:split-perfect-strictification}]
For \(x\in\mathsf{Pres}_{\mathcal U}\), let the unreduced split category have
\[
 \operatorname{ob}\mathcal P_x^{\operatorname{spl}}
 =
 \left\{
  (a\colon z\to x,\;P\in\mathcal P_z)
 \right\},
\]
where \(a\) ranges over the morphisms of \(\mathsf{Pres}_{\mathcal U}\)
with target \(x\).  For objects \((a,P)\) and \((b,Q)\), put
\begin{equation}\label{eq:split-perfect-mapping-spectrum}
 \mathcal P_x^{\operatorname{spl}}
 \bigl((a,P),(b,Q)\bigr)
 :=
 \mathcal P_x(a_!P,b_!Q).
\end{equation}
Composition is inherited from \(\mathcal P_x\).  Define evaluation by
\[
 e_x^{\operatorname{spl}}\colon
 \mathcal P_x^{\operatorname{spl}}\longrightarrow\mathcal P_x,
 \qquad
 e_x^{\operatorname{spl}}(a,P)=a_!P,
\]
and declare cofibrations and weak equivalences to be those created by
\(e_x^{\operatorname{spl}}\).  This functor is spectrally fully faithful,
creates zero objects
and pushouts, and hits every object through the section
\(P\mapsto(\operatorname{id}_x,P)\), so it creates the
spectral-Waldhausen structure.  We choose
\((\operatorname{id}_x,0_{\mathcal P_x})\) as its zero object.

Applying Lemma~\ref{lem:zero-reduction}, set
\[
 \mathcal P_x^{\operatorname{st}}
 :=(\mathcal P_x^{\operatorname{spl}})^\circ .
\]
Thus \(\mathcal P_x^{\operatorname{st}}\) has a single chosen zero object
\(0_x\), and every mapping spectrum to or from \(0_x\) is literally zero.
Let \(\mathcal P_x^\circ\) be the zero reduction of
\(\mathcal P_x\), and let
\[
 e_x\colon\mathcal P_x^{\operatorname{st}}\longrightarrow\mathcal P_x^\circ
\]
be the composite of the inclusion with
\(e_x^{\operatorname{spl}}\).  Every retained object \((a,P)\) is actually
isomorphic to
\((\operatorname{id}_x,a_!P)\), represented by the identity of \(a_!P\);
every omitted object evaluates to a zero object and is represented by
\(0_x\).  Hence \(e_x\) is a DK equivalence and is essentially surjective by
actual isomorphisms.

Define \(j_x\colon\mathcal P_x^\circ\to
\mathcal P_x^{\operatorname{st}}\) by
\[
 j_x(P)=
 \begin{cases}
  (\operatorname{id}_x,P),&P\text{ is not a zero object},\\
  0_x,&P=0.
 \end{cases}
\]
On mapping spectra it is induced by the identity when both displayed
objects are nonzero and by the unique map to a zero mapping spectrum
otherwise.  It is spectrally fully faithful because the mapping spectra to
or from the chosen zero are literally zero.  Thus \(j_x\) is a DK and
Morita equivalence, proving part~(2).

The split categories are functorial in the presentation.  For
\(u\colon x\to y\), put
\(u_*^{\operatorname{spl}}(a,P)=(ua,P)\); on mapping spectra, send
\(h\colon a_!P\to b_!Q\) to the composite
\begin{equation}\label{eq:split-transition-map}
 (ua)_!P
 \xrightarrow{\,c_{u,a}^{-1}\,}
 u_!a_!P
 \xrightarrow{\,u_!(h)\,}
 u_!b_!Q
 \xrightarrow{\,c_{u,b}\,}
 (ub)_!Q.
\end{equation}
The pseudofunctor pentagon reduces the composite defining
\(v_*^{\operatorname{spl}}u_*^{\operatorname{spl}}\) to the same conjugate
as \((vu)_*^{\operatorname{spl}}\), including the identity case.  The functor
\(u_*^{\operatorname{spl}}\) preserves the class of zero objects, although it
need not preserve the chosen representative.  It is therefore a morphism in
\(\mathsf{SpWaldCat}^{\mathrm z}\), and we may put
\[
 u_*^{\operatorname{st}}
 :=(u_*^{\operatorname{spl}})^\circ.
\]
It sends \(0_x\) to \(0_y\); for a nonzero \((a,P)\), it is \((ua,P)\)
when the latter is nonzero and \(0_y\) otherwise.  Since zero reduction is
an ordinary functor, the equality
\((vu)_*^{\operatorname{spl}}
 =v_*^{\operatorname{spl}}u_*^{\operatorname{spl}}\) gives
\((vu)_*^{\operatorname{st}}=v_*^{\operatorname{st}}u_*^{\operatorname{st}}\)
literally.

The distinguished endofunctor on the split category is
\(T_x^{\operatorname{spl}}(a\colon z\to x,P)=(a,T_zP)\), where
\(z=(B,h)\) and \(T_z=h_!\).  For
\(b\colon w=(C,k)\to x\), put \(T_w=k_!\).  On a mapping-spectrum element
\(v\colon a_!P\to b_!Q\), set
\begin{equation}\label{eq:split-endofunctor-map}
 a_!T_zP
 \xrightarrow{\,\beta_a\,}
 T_xa_!P
 \xrightarrow{\,T_x(v)\,}
 T_xb_!Q
 \xrightarrow{\,\beta_b^{-1}\,}
 b_!T_wQ.
\end{equation}
The endofunctor \(T_x^{\operatorname{spl}}\) preserves the class of zero
objects, so it is a morphism in \(\mathsf{SpWaldCat}^{\mathrm z}\).  We may
therefore set
\[
 T_x^{\operatorname{st}}=(T_x^{\operatorname{spl}})^\circ.
\]
Thus \(T_x^{\operatorname{st}}(0_x)=0_x\), while the displayed formula
holds for a nonzero object unless its image is zero, in which case the image
is \(0_x\).
Compatibility with the transition functors is the pseudofunctor identity
\begin{equation}\label{eq:split-beta-coherence}
 \beta_{ua}(c_{u,a}T_z)
 =(T_yc_{u,a})(\beta_u a_!)u_!(\beta_a).
\end{equation}
Substituting \(\beta=c^{-1}c\) reduces both sides to the same composite of
compositors.  After zero reduction, naturality of \(\beta_u\) shows that
\(u_*^{\operatorname{st}}T_x^{\operatorname{st}}\) and
\(T_y^{\operatorname{st}}u_*^{\operatorname{st}}\) agree strictly.  This proves
the literal relations
\eqref{eq:split-perfect-literal-relations} and hence part~(1).

The exact endofunctor \(T_x\) induces
\(T_x^\circ\colon\mathcal P_x^\circ\to\mathcal P_x^\circ\).
The definition of \(j_x\), including its zero case, gives
\[
 j_xT_x^\circ=T_x^{\operatorname{st}}j_x
\]
on objects and mapping spectra; hence \(j_x\) is a strict morphism of
twistings \eqref{eq:standard-to-split-envelope}.  The evaluation
functors \(e_x\), the compositors \(c_{u,a}\), and the unique isomorphisms
between zero objects form a pseudonatural equivalence from the zero-reduced
split diagram to the zero reduction of the perfect-module pseudofunctor.
The inclusions \(\mathcal P_x^\circ\to\mathcal P_x\) recover the original
pseudofunctor after Morita localization.
Lemma~\ref{lem:pseudonatural-localization} turns it into the natural
equivalence
\eqref{eq:split-evaluation-localized}, proving part~(3).

For part~(4), evaluation on twisted-endomorphism categories is
\begin{equation}\label{eq:split-end-evaluation}
 \epsilon_x(a,P,h)=\bigl(a_!P,\,h\beta_a^{-1}\bigr),
\end{equation}
and sends the unique zero object to the unique zero section.  It is an exact
equivalence whose strict section is induced by \(j_x\).  The constraints
\(c_{u,a}\) and \eqref{eq:split-beta-coherence} make
the \(\epsilon_x\) pseudonatural in \(x\).  Composing with the
strict-section comparison of
Proposition~\ref{prop:perfect-module-pseudofunctor} gives the
natural equivalence \eqref{eq:split-source-lace-natural} for
\(r=1\); the same argument for strict sections of the free directed
\(r\)-cycle gives the cyclic comparison for every \(r\).
\end{proof}

\subsection{A strict functorial model compatible with iteration}

The same split category admits an endofunctor compatible with passage from
\(x=(A,f)\) to \(x^{[m]}=(A,f^m)\).  This gives the power-compatible model
\((\widetilde{\mathcal P}_x,S_x)\) of
Proposition~\ref{lem:power-coherent-perfect-envelope}.

\begin{proof}[Detailed proof of
Proposition~\ref{lem:power-coherent-perfect-envelope}]
The underlying spectral Waldhausen category is
\(\mathcal P_x^{\operatorname{st}}\):
\begin{align}
 \operatorname{ob}\widetilde{\mathcal P}_x
 &=\operatorname{ob}\mathcal P_x^{\operatorname{st}},          \label{eq:power-envelope-objects}\\
 \widetilde{\mathcal P}_x
 (X,Y)
 &=\mathcal P_x^{\operatorname{st}}(X,Y).                       \label{eq:power-envelope-mapping}
\end{align}
If \(z=(B,h)\), define the distinguished endofunctor on objects by
\begin{equation}\label{eq:power-envelope-endofunctor}
 S_x(a\colon z\to x,P)=(ah,P);
\end{equation}
this formula is interpreted through zero reduction: \(S_x(0_x)=0_x\), and
the right-hand side is replaced by \(0_x\) when \((ah,P)\) is a zero object.
It is well defined because \(ah\colon z\to x\) is again a morphism of
presentations and exact functors preserve zero objects.  On mapping spectra,
for nonzero source and target, take
\(v\colon a_!P\to b_!Q\) with \(a\colon(B,h)\to(A,f)\) and
\(b\colon(C,k)\to(A,f)\), put
\begin{equation}\label{eq:power-envelope-map-on-homs}
 (ah)_!P
 \xrightarrow{\,c_{f,a}^{-1}\,}
 f_!a_!P
 \xrightarrow{\,f_!(v)\,}
 f_!b_!Q
 \xrightarrow{\,c_{f,b}\,}
 (fb)_!Q=(bk)_!Q,
\end{equation}
using \(ah=fa\) and \(fb=bk\); use the unique map to a zero mapping spectrum
if either image is zero.  This is the zero reduction of an exact functor
conjugate to \(f_!\), so it defines an exact spectral endofunctor.  The
transition functors are the zero reductions of
\(\widetilde u_*^{\operatorname{spl}}(a,P)=(ua,P)\) and are defined on
nonzero mapping spectra by the same
conjugation formula \eqref{eq:split-transition-map} used for
\(\mathcal P_x^{\operatorname{st}}\); composition on the nose is proved in
the same way, and the pentagon
together with the equality \(uf=f'u\) gives
\(\widetilde u_*S_x=S_y\widetilde u_*\)
literally.  This proves the literal relations of part~(1) of
Proposition~\ref{lem:power-coherent-perfect-envelope}.

Iteration gives
\(S_x^m(a\colon z\to x,P)=(ah^m,P)\) on nonzero objects, with zero images
replaced by \(0_x\).  Since \(ah^m=f^ma\), the formula
\[
 q_{m,x}\colon
 0_x\longmapsto0_{x^{[m]}},\qquad
 (a\colon z\to x,P)\longmapsto
 (a\colon z^{[m]}\to x^{[m]},P)
\]
defines the functor \(q_{m,x}\) of
\eqref{eq:power-comparison-map}.  On mapping
spectra, the comparison between the iterated composite of the compositors
and the single compositor for \(f^m\) is the canonical associativity map
supplied by the normal pseudofunctor; naturality of that map shows that
\(q_{m,x}\) is a strict morphism of twistings.  Because \(q_{m,x}\) is the
identity on the mapping spectra
\eqref{eq:power-envelope-mapping}, the relations
\eqref{eq:power-comparison-coherence} are literal.  Normality and the
identities \(z^{[1]}=z\), \(x^{[1]}=x\) give
\(q_{1,x}=\operatorname{id}\).  Essential surjectivity holds because the
target zero object is in the image and every nonzero target object is
isomorphic, through the identity of its evaluated module, to an object with
first coordinate \(\operatorname{id}_{x^{[m]}}\).

The two endofunctors on the common underlying spectral Waldhausen category
are naturally isomorphic:
\begin{equation}\label{eq:ordinary-powered-action-comparison}
 \theta_x\colon T_x^{\operatorname{st}}\xRightarrow{\ \sim\ }S_x,
 \qquad
 \theta_{x,(a,P)}=c_{a,h}\colon a_!h_!P\xrightarrow{\ \sim\ }(ah)_!P.
\end{equation}
If the two displayed images are collapsed to \(0_x\), including at
\(0_x\) itself, the component of \(\theta_x\) is the identity of \(0_x\).
Iterating \(\theta_x\) gives a natural isomorphism
\(\theta_x^{(m)}\colon
 (T_x^{\operatorname{st}})^m\xRightarrow{\ \sim\ }S_x^m\).
To compare these powers by strict morphisms of twistings, let
\(\mathcal B_x^{(m)}\) have one zero object and two copies
\((X,1),(X,2)\) of every nonzero object
\(X\in\mathcal P_x^{\operatorname{st}}\).  Mapping spectra to or from the
zero object are zero, and the mapping spectrum between two nonzero copies is
\(\mathcal P_x^{\operatorname{st}}(X,Y)\), with composition inherited from
\(\mathcal P_x^{\operatorname{st}}\).  Equivalently, this is the zero
reduction of the unreduced two-copy category.  It is a spectral Waldhausen
category, and the two copy inclusions are strictly zero-preserving exact
functors.  Its endofunctor \(U_x^{(m)}\) is obtained by conjugating
\((T_x^{\operatorname{st}})^m\) by the identity on the first copy and by
\(\theta_x^{(m)}\) on the second.  Explicitly, put
\(\epsilon_{1,X}=\operatorname{id}\) and
\(\epsilon_{2,X}=\theta_{x,X}^{(m)}\).  For every mapping-spectrum element
\(v\colon(X,i)\to(Y,j)\), including the mixed-copy cases, define
\[
 U_x^{(m)}(v)=
 \epsilon_{j,Y}\,(T_x^{\operatorname{st}})^m(v)\,
 \epsilon_{i,X}^{-1}.
\]
Naturality of \(\theta_x^{(m)}\) makes this an enriched functor.  The
inclusions of the first and second copies give the two maps in
\eqref{eq:power-envelope-bridge}.  Each is fully faithful and
essentially surjective, with the identity maps providing isomorphisms between
the two copies.

For a morphism of presentations \(u\colon x\to y\), let
\(\mathcal B_u^{(m)}\) act on both copies by \(u_*^{\operatorname{st}}\) and on
all mapping spectra by the conjugation formula
\eqref{eq:split-transition-map}.  The pseudofunctor pentagon gives
\begin{equation}\label{eq:powered-bridge-presentation-naturality}
 u_*^{\operatorname{st}}\theta_x^{(m)}
 =\theta_y^{(m)}u_*^{\operatorname{st}},
\end{equation}
so \(\mathcal B_u^{(m)}U_x^{(m)}=U_y^{(m)}\mathcal B_u^{(m)}\) literally.
These functors preserve the Waldhausen structures and chosen zero objects,
so \(\mathcal B_u^{(m)}\) and the two copy inclusions are strict morphisms of
zero-reduced twistings.  Their defining squares commute on the nose,
naturally in the presentation.

The identity maps between the two copies give enriched natural isomorphisms on
every cyclic source category.  For \(q_{m,x}\), the isomorphism from each
target object to one in its image extends vertexwise to cyclic lacing strings.
Since \(q_{m,x}\) is the identity on mapping spectra and strictly intertwines
the endofunctors, it gives an equivalence on every cyclic source.  These
equivalences are compatible with rotation, repetition, and the fixed-cycle
maps, and hence induce \(K\)-equivalences.  The two inclusions and \(q_{m,x}\)
are also DK equivalences of the input spectral Waldhausen categories.  Since
they intertwine the distinguished endofunctors, their graph-coefficient maps are
coefficient Morita equivalences.  They therefore satisfy all the hypotheses
of
Proposition~\ref{prop:replacement-comparison-diagrams}.  After
\(Q_{\mathrm{red}}\), they induce equivalences of CLMPZ trace arrows, natural
in the presentation.

The compatibility of these equivalences with powers is recorded in
Lemma~\ref{lem:powered-bridge-multiplicative-coherence} below.
\end{proof}

\begin{lemma}
\label{lem:powered-bridge-multiplicative-coherence}
The two-copy spans \eqref{eq:power-envelope-bridge} for \((x,m)\) and
\((x^{[m]},1)\), together with \(q_{m,x}\), induce a natural equivalence in
the Morita localization of CLMPZ trace arrows,
\[
 \chi_{m,x}\colon
 \mathcal T_{\mathrm{CLMPZ}}
  (\mathcal P_x^{\operatorname{st}},(T_x^{\operatorname{st}})^m)
 \simeq
 \mathcal T_{\mathrm{CLMPZ}}
  (\mathcal P_{x^{[m]}}^{\operatorname{st}},
   T_{x^{[m]}}^{\operatorname{st}}).
\]
The equivalence \(\chi_{1,x}\) is the identity.  For \(m,n\geq1\), restrict
\(\chi_{m,x}\) along \(\mu_n\) and then compose it with
\(\chi_{n,x^{[m]}}\).  The resulting equivalence is naturally equivalent to
\(\chi_{mn,x}\).  These comparisons are natural in \(x\),
unital, and satisfy the associativity coherence for three factors.
On a fixed zero-reduced spectral Waldhausen category, the same two-copy
construction associates an equivalence of CLMPZ trace arrows to any natural
isomorphism between strictly zero-preserving exact spectral endofunctors.
\end{lemma}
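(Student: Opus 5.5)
The plan is to construct \(\chi_{m,x}\) as a zigzag of strict morphisms of zero-reduced twistings. We then check unit, multiplicativity and associativity at the point-set level wherever possible, and pass to the Morita localization only at the end.

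\emph{Construction.} By the detailed proof of Proposition~\ref{lem:power-coherent-perfect-envelope}, the first-copy inclusion \(\iota_1\colon(\mathcal P_x^{\operatorname{st}},(T_x^{\operatorname{st}})^m)\to(\mathcal B_x^{(m)},U_x^{(m)})\), the second-copy inclusion \(\iota_2\colon(\widetilde{\mathcal P}_x,S_x^m)\to(\mathcal B_x^{(m)},U_x^{(m)})\), and \(q_{m,x}\colon(\widetilde{\mathcal P}_x,S_x^m)\to(\widetilde{\mathcal P}_{x^{[m]}},S_{x^{[m]}})\) all satisfy the hypotheses of Proposition~\ref{prop:replacement-comparison-diagrams}. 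After \(Q_{\mathrm{red}}\) they therefore become equivalences in \(\operatorname{Fun}(\Delta^1,\operatorname{RSys}^g_\infty)\), naturally in \(x\). The \(m=1\) span for \(x^{[m]}\) compares \((\widetilde{\mathcal P}_{x^{[m]}},S_{x^{[m]}})\) with \((\mathcal P^{\operatorname{st}}_{x^{[m]}},T^{\operatorname{st}}_{x^{[m]}})\). Define \(\chi_{m,x}\) as the composite
\[
\iota_{1}^{(1),x^{[m]}}\circ(\iota_2^{(1),x^{[m]}})^{-1}\circ q_{m,x}\circ\iota_2^{-1}\circ\iota_1 ,
\]
with the outer maps read in the obvious direction. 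Naturality in \(x\) holds because every map in the zigzag commutes strictly with \(\mathcal B^{(m)}_u\), \(u_*^{\operatorname{st}}\) and \(\widetilde u_*\), by \eqref{eq:powered-bridge-presentation-naturality} and \eqref{eq:power-comparison-coherence}. For the last assertion of the lemma, about an arbitrary natural isomorphism \(\theta\colon H\Rightarrow H'\) on a fixed zero-reduced category, repeat the two-copy construction with \(\epsilon_2=\theta\). This gives a span of strict twisting morphisms, and the same proposition applies.

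\emph{Unit and multiplicativity.} For \(m=1\) we have \(\theta^{(1)}_x=\theta_x\), \(q_{1,x}=\operatorname{id}\), and the span for \((x,1)\) appears twice with opposite orientation, so \(\chi_{1,x}\) is an identity up to a canonical homotopy. For \(m,n\), I would build a single three-copy category \(\mathcal B_x^{(m,n)}\), with copies \(1,2,3\) related by the isomorphisms \(\theta^{(mn)}_x\) and \((\theta^{(m)})\)-based comparisons. This category receives strict inclusions from every category occurring in the zigzags for \(\chi_{mn,x}\), for \(\mu_n^*\chi_{m,x}\), and for \(\chi_{n,x^{[m]}}\). Restriction along \(\mu_n\) on the source side is compatible with these inclusions by Lemma~\ref{lem:strict-sections-iteration}. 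On the target side the needed identification is the block equivalence of Lemma~\ref{lem:admissible-graph-target-block}. The identity \(q_{n,x^{[m]}}q_{m,x}=q_{mn,x}\) holds literally. Consequently both composites become composites of inverses of strict inclusions into one common category, and any two such composites agree canonically in the localization. Associativity for three factors follows in the same way from a four-copy category.

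\emph{Main obstacle.} The step I expect to be hardest is the compatibility of the copy-isomorphisms. It requires \(\theta_x^{(mn)}\) to equal the composite of \(\theta^{(m)}\) applied \(n\) times and \(\theta^{(n)}_{x^{[m]}}\) after transport by \(q_{m,x}\). I would prove this literally from the pentagon for the compositors \(c_{a,h}\) of the normal pseudofunctor in Lemma~\ref{lem:normalized-perfect-module-pseudofunctor}, so that the multi-copy category carries a strictly well-defined endofunctor. The remaining homotopy coherence then reduces to the contractibility of the groupoid of copies, which is indiscrete because all copy maps are identities on evaluated modules.
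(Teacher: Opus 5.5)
Your construction of \(\chi_{m,x}\), its naturality, the unit \(\chi_{1,x}=\operatorname{id}\), and the final assertion for a natural isomorphism \(H\Rightarrow H'\) are fine and match the paper. The unit holds because the \((x,1)\)-span is traversed twice in opposite directions.

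The multiplicativity step, however, does not work as described. There are three problems.

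\emph{The meaning of restriction along \(\mu_n\).} Restricting \(\chi_{m,x}\) along \(\mu_n\) only replaces every endofunctor in its zigzag by its \(n\)-th power: a strict morphism intertwining \(T\) with \(T'\) also intertwines \(T^n\) with \(T'^n\). No cyclic level is regrouped. So Lemma~\ref{lem:strict-sections-iteration} (the functor \(P_n\)) and the block equivalence \(b_{m,n}\) of Lemma~\ref{lem:admissible-graph-target-block} play no role here.

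\emph{The multi-copy roof does not exist as described.} A three-copy category built over \(x\) cannot receive strict inclusions from \(\mathcal P^{\operatorname{st}}_{x^{[m]}}\), \(\widetilde{\mathcal P}_{x^{[m]}}\), \(\mathcal P^{\operatorname{st}}_{x^{[mn]}}\), and so on. Their objects are pairs \((a\colon z\to x^{[m]},P)\), and \(q_{m,x}\) is only essentially surjective onto them, not an inclusion. So the claim that both routes become composites of inverses of strict inclusions into one category fails. The same objection applies to your four-copy category.

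\emph{The identity you call the main obstacle is mistyped.} The functor \(q_{m,x}\) does not intertwine \((T^{\operatorname{st}}_x)^m\) with \(T^{\operatorname{st}}_{x^{[m]}}\); they differ by the compositor \(h_!^m\cong(h^m)_!\). Hence \(\theta^{(mn)}_x\) and \(\theta^{(n)}_{x^{[m]}}\) cannot be compared through \(q_{m,x}\).

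The missing observation makes the coherence essentially literal. Since \(\theta^{(k)}\) is the \(k\)-fold horizontal iterate of \(\theta\), one has \((\theta^{(m)}_x)^{*n}=\theta^{(mn)}_x\). It follows that \(\bigl(\mathcal B^{(m)}_x,(U^{(m)}_x)^n\bigr)=\bigl(\mathcal B^{(mn)}_x,U^{(mn)}_x\bigr)\) and \(\bigl(\mathcal B^{(1)}_{x^{[m]}},(U^{(1)}_{x^{[m]}})^n\bigr)=\bigl(\mathcal B^{(n)}_{x^{[m]}},U^{(n)}_{x^{[m]}}\bigr)\) on the nose, with the same copy inclusions. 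Consequently:
the restricted first span of \(\chi_{m,x}\) is the first span of \(\chi_{mn,x}\);
the restricted \((x^{[m]},1)\)-span and the first span of \(\chi_{n,x^{[m]}}\) are one span traversed in opposite directions, so they cancel;
and \(q_{n,x^{[m]}}q_{m,x}=q_{mn,x}\) identifies what remains with \(\chi_{mn,x}\).
The threefold coherence follows the same way.

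This is what the paper's short proof records. Restriction along \(\mu_n\) identifies the left endpoints. Both routes are the same ordered string of compositors, with the pentagon comparing reassociations. The literal identity for the \(q\)'s identifies the right legs.
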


\begin{proof}
Proposition~\ref{prop:replacement-comparison-diagrams} gives the
equivalences after replacement.  Restriction along \(\mu_n\) identifies their
left endpoints: it
carries \((T_x^{\operatorname{st}})^m\) to
\((T_x^{\operatorname{st}})^{mn}\) and the corresponding model over
\(x^{[m]}\) to its \(n\)-th power.  Both routes reassociate the same ordered
string of pseudofunctor compositors.
The normalized unit constraints give \(\chi_{1,x}=\operatorname{id}\), the
pentagon compares the binary routes, and the same pentagon coherence identifies
the two triple comparisons.  The literal identity
\(q_{n,x^{[m]}}q_{m,x}=q_{mn,x}\) identifies their right legs.
\end{proof}

\begingroup
\renewcommand{\bibliofont}{\footnotesize}
\raggedright
\raggedbottom
\bibliographystyle{alpha}
\bibliography{references}
\endgroup

\end{document}